\numberwithin{equation}{section}
\newtheorem{theorem}{Theorem}[section]
\newtheorem{proposition}[theorem]{Proposition}
\newtheorem{lemma}[theorem]{Lemma}
\newtheorem{corollary}[theorem]{Corollary}
\theoremstyle{definition}
\newtheorem{definition}[theorem]{Definition}
\newtheorem{remark}[theorem]{Remark}
\newlength{\breite}
\newlength{\hbreite}
\newcommand{\lstackrel}[2]{
\mathrel{\mathop{#2}\limits^{\makebox[0pt]{$\scriptstyle #1$}}}
\settowidth{\breite}{$\scriptstyle #1$}
\settowidth{\hbreite}{$#2$}
\addtolength{\breite}{-1 \hbreite}
\addtolength{\breite}{-.5 \breite}
\makebox[\breite]{}
}
\newcommand{\g}{c}
\DeclareMathOperator{\hoel}{h\text{\"o}l}
\DeclareMathOperator{\im}{im}
\DeclareMathOperator{\spt}{spt}
\title[Gradient Flow of Knot Energies]{The Gradient Flow of O'Hara's Knot Energies}
\author{
Simon Blatt}
\address[Simon Blatt]{Paris Lodron Universit\"at Salzburg, Hellbrunner Strasse 34, 5020 Salzburg, Austria}
\email{simon.blatt@sbg.ac.at}
\subjclass[2010]{53C44, 35S10}
\begin{document}

\date{\today}

\begin{abstract}
Jun O'Hara invented a family of knot
energies $E^{j,p}$, $j,p \in (0, \infty)$, in \cite{OHara1992}.
We study the negative gradient flow of the sum of one of the energies $E^\alpha = E^{\alpha,1}$, $\alpha \in (2,3)$, and a positive multiple of the length. 

Showing that the gradients of these knot energies can be written as the normal part of 
a quasilinear operator, we derive short time existence results for these flows.
We then prove long time existence and convergence to critical points.
\end{abstract}

\maketitle

\section{Introduction}

Is there an optimal way to tie a knot in Euclidean space? And if so, how nice are these optimal shapes? Is there a natural way to transform a given knot into this optimal
shape?

To give a precise meaning to such questions a variety of energies for
immersions have been invented and studied during the last twenty five years which 
are subsumed under the term knot energies. 

In this article we deal with the third of the questions above. But first of all, let us gather some known answers to the first two questions. 

The first family of geometric knot energies goes back to O'Hara. In \cite{OHara1992}, O'Hara suggested for $j,p \in (0,\infty)$ the energy
\begin{equation*} 
E^{j,p}(\g) = \iint_{(\mathbb R / l\mathbb Z)^2} \bigg(\frac 1
{|\g(x)-\g(y)|^j} - \frac 1 {d_\g (x,y)^j} \bigg)^p
|\g'(x)|\cdot |\g'(y)| dx dy
\end{equation*}
of a regular closed curve $\g \in C^{0,1}(\mathbb R / l \mathbb Z, \mathbb R^n)$. Here, $d_\g (x,y)$ denotes the distance of the points $x$ and $y$
along the curve $\g$, i.e., the length of the shorter arc connecting these
two points.


O'Hara observed that these energies are knot energies if and only if $jp > 2$ \cite[Theorem~1.9]{OHara1992} in the sense that then both pull-tight of a knot and selfintersections are punished. Furthermore, he showed that minimizers of the energies exist within every knot class if $jp >2$. So: Yes, there is an optimal way to tie a knot -- actually even several ways to do so.

Abrams et al proved in \cite{Abrams2003} that for $p\geq 1$ and $jp -1 < 2p$ these energies are minimized by circles and that these energies are infinite for every closed regular curve if $jp-1 \geq 2p$.\footnote{In fact one can even show that for $jp -1 \geq 2p$ the energy is only finite for open curves that
are part of a straight line - in which case the energy is 0 (cf. \cite[Remark~1.3]{Blatt2015d}).}

There is a reason why for the rest of our questions we will only consider the case $p=1$:
For $p \not= 1$, we expect that the first
variation of $E^{j,p}$ leads to a \emph{degenerate} elliptic operator of fractional
order -- even after breaking the symmetry of the equation coming from the invariance under re-parameterizations. We will only consider the non-degenerate case $p=1$ and look at the one-parameter family
\begin{equation} \label{eq:DefinitionOfKnotEnergies}
 E^{\alpha}(\g) := E^{\alpha, 1} (\g) = \iint_{\mathbb R / L \mathbb Z }  \bigg(\frac 1
{|\g(x)-\g(y)|^\alpha} - \frac 1 {d_\g (x,y)^\alpha} \bigg)
|\g'(x) | \, |\g'(y)| dx dy.
\end{equation}
We leave the case $p \not= 1$ for a later study.

The most prominent member of this
family is $E^2$ which is also known as M\"obius energy due to the fact
that it is invariant under M\"obius transformations.  While for $\alpha \in (2,3)$ the Euler-Lagrange equation is a non-degenerate elliptic sub-critical operator it is a critical equation for the case of the M\"obius energy $E^2$.

In \cite{Freedman1994}, Freedman, He, and Wang showed that even $E^2$ can be minimized within every prime knot class. Whether or not the same is true for composite knot classes is an open problem, though there are some clues that this might not be the case in very such knot class.
Furthermore, they derived a formula  for the $L^2$-gradient of the  M\"obius
energy which was extended by Reiter in \cite{Reiter2012} to the 
energies $E^\alpha$ for $\alpha \in [2,3)$. They showed that the
first variation of these functionals can be given by
\begin{equation*}
  \nabla_h E^\alpha (\g) := \lim_{\varepsilon \rightarrow 0} \frac {E^{\alpha}(\g + \varepsilon h) - E^\alpha (\g)}{ h} = \int_{\mathbb R / l\mathbb Z}  \langle
 H^\alpha(\g) (x), h(x) \rangle  \cdot |\g'(x)|dx
\end{equation*}
where
\begin{multline} \label{eq:GradientOfEAlpha}
 H^\alpha(\g)(x) \\:= p.v. \int_{-\frac l2}^{\frac l2} P^\bot_{\g'(x)} \bigg\{2 \alpha \frac  {\g(x+w)
   -\g(x)}{|\g(x+w)-\g(x)|^{2+\alpha}}  
 -(\alpha-2) \frac
 {\kappa(x)}{d_\g(x+w,x)^\alpha}
 \\-2 \frac
 {\kappa(x)}{|\g(x+w)-\g(x)|^\alpha} \bigg\} |\g'(x+w)|
dw.
\end{multline}
Here, $P^\bot_{\g'}(u) = u - \langle u, \frac {\g'}{|\g'|}\rangle \frac {\g'}{|\g'|}$ denotes the orthogonal projection onto the normal part, and $ p.v \int_{-\frac l2}^{\frac l2}= \lim_{\varepsilon \downarrow 0} \int_{[-l/2,l/2] \setminus [-\varepsilon, \varepsilon]}$ denotes Cauchy's principal value.

In the case that $\g$ is parameterized by arc length this reduces to 
\begin{multline}
H^\alpha(\g)(x) \\:= p.v. \int_{-\frac l2}^{\frac l2} P_{\g'(x)}^\bot \bigg\{2 \alpha \frac  {\g(s+w)
   -\g(s)}{|\g(s+w)-\g(s)|^{2+\alpha}}  
 -(\alpha-2) \frac
 {\g''(s)}{|w|^\alpha}
 \\-2 \frac
 {\g''(s)}{|\g(s+w)-\g(s)|^\alpha} \bigg\} 
dw.
\end{multline}

Using the M\"obius invariance of $E^2$, Freedman, He, and Wang showed that local minimizers of the M\"obius energy are of class $C^{1,1}$ \cite{Freedman1994} -- and thus gave a first answer to the question about the niceness of the optimal shapes. Zheng-Xu He combined this with a sophisticated bootstrapping argument to find that minimizers of the M\"obius energy are of class $C^\infty$ \cite{He2000}.
Reiter could prove that critical points $\g$ of $E^\alpha$, $\alpha \in (2,3)$ with
$\kappa \in L^\alpha$ are smooth embedded curves \cite{Reiter2012}, a result we
extended to critical points of finite energy in \cite{Blatt2013} and to the M\"obius energy in \cite{Blatt2015a}.
 
Let us now turn to the last of the three questions we started this article with: Is there a natural way to transform a given knot into its optimal
shape? Since $E^\alpha$ is not scaling invariant for $\alpha \in (2,3)$,
the $L^2$-gradient flow of $E^\alpha$ alone cannot have a nice
asymptotic behavior. We want to avoid that the curve would get larger and larger in order to decrease the energy. Here, the length $L(c)$ of the curve $c$ will help us.

To transform a given knotted curve into
a nice representative, we will look at the $L^2$-gradient flow of $E=E^\alpha + \lambda L$ for $\alpha \in (2,3)$ and $\lambda >0$ instead.
This leads to the evolution equation
\begin{equation} \label{eq:EvolutionEquation}
 \partial_t \g = -H^\alpha (\g) + \lambda \kappa_\g.
\end{equation} 

%

We will see that the right hand side of this equation can be written as the normal part of a  quasilinear elliptic but non-local operator of order $\alpha+1 \in [3,4)$. 

The main result of this article is the following theorem. Roughly speaking,
it tells us that, given an initial regular embedded curve of class $C^\infty$, there exists a unique solution to the above evolution equations.
This solution is immortal and converges to a critical point. More precisely we have:

\begin{theorem} \label{thm:LongTimeExistenceResult}
Let $\alpha \in (2,3)$ and
$\g_0 \in C^{\infty}(\mathbb R / \mathbb Z, \mathbb R^n)$ be an injective regular curve. Then
there is a unique smooth solution 
$$\g \in C^\infty([0,\infty) \times \mathbb R / \mathbb Z)$$
to \eqref{eq:EvolutionEquation} 
with initial data $\g(0) = \g_0$ that after suitable re-parameterizations converges smoothly to a critical point of
$E^\alpha + \lambda L$.
\end{theorem}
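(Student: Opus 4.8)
The plan is to treat \eqref{eq:EvolutionEquation} as a quasilinear parabolic equation of fractional order $\alpha+1\in[3,4)$, to combine short-time existence with global a priori estimates coming from the monotonicity of $E = E^\alpha + \lambda L$, and to close with a \L{}ojasiewicz--Simon argument for the asymptotics. For the short-time theory the first task is to make precise the claim that the right-hand side of \eqref{eq:EvolutionEquation} is the normal part of a quasilinear operator: writing $\g$ in a parametrization close to arc length and expanding $H^\alpha(\g)$, one isolates a leading term of the form $-a(x)\,(-\partial_x^2)^{(\alpha+1)/2}\g$ with strictly positive coefficient $a$ depending on $|\g'|$, the remainder being of strictly lower order --- here the sub-criticality available for $\alpha\in(2,3)$, in contrast with the M\"obius case, is essential. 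Since reparametrization invariance makes the equation only degenerate parabolic, I would restore ellipticity by adding a tangential term, a DeTurck-type modification, to obtain a genuine quasilinear nonlocal parabolic equation; short-time existence, uniqueness, and instantaneous smoothing (so that the smooth datum $\g_0$ yields a smooth solution) then follow from maximal regularity and analytic-semigroup theory for such operators in little-H\"older or Sobolev--Slobodeckij spaces, and undoing the reparametrization produces a solution of \eqref{eq:EvolutionEquation} that is unique up to reparametrization.

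Next, for long-time existence, suppose the maximal existence time $T$ is finite. Along the flow $\tfrac{d}{dt}E(\g(t)) = -\int_{\mathbb R/\mathbb Z}|\partial_t \g|^2\,|\g'|\,dx \le 0$, so $E^\alpha(\g(t))\le E(\g_0)$ for all $t<T$. By the known characterization of finiteness of the O'Hara energy this bounds a fractional Sobolev norm of the arc-length reparametrization of $\g(t)$ and, more importantly, keeps the bi-Lipschitz constant of $\g(t)$ bounded away from zero, so the curves stay embedded and non-degenerate up to $t=T$. The crux is then to bootstrap this to uniform bounds on $\|\g(t)\|_{C^k}$ for every $k$ on $[0,T)$: using the smoothing estimates for the linearized flow together with interpolation inequalities and the energy bound, one should obtain these bounds inductively, contradicting the maximality of $T$ and thereby establishing immortality. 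This a priori estimate is the analytically heaviest step.

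Finally, the identity $\int_0^\infty \int_{\mathbb R/\mathbb Z} |\partial_t\g|^2\,|\g'|\,dx\,dt = E(\g_0)-\lim_{t\to\infty}E(\g(t)) < \infty$ together with the global bounds yields times $t_j\to\infty$ along which $\g(t_j)$ subconverges smoothly, after reparametrization, to a critical point $\g_\infty$ of $E^\alpha+\lambda L$. To upgrade this to convergence of the whole flow I would prove a \L{}ojasiewicz--Simon gradient inequality for $E$ near $\g_\infty$: the energy is real-analytic on the relevant Banach space --- the only singularity of the integrand, on the diagonal, being tamed by the subtraction in \eqref{eq:DefinitionOfKnotEnergies} --- and its second variation is a zeroth-order perturbation of an elliptic nonlocal operator of order $\alpha+1$, hence Fredholm once one factors out the reparametrization and translation directions. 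The usual differential inequality for the energy defect then forces $\int_{t_0}^\infty \|\partial_t\g\|\,dt<\infty$, so $\g(t)$ converges, in every $C^k$ and after a suitable reparametrization, to $\g_\infty$, with a rate. I expect the two genuine obstacles to be (i) the higher-order a priori estimates for this fractional quasilinear flow and (ii) verifying the hypotheses, analyticity of the gradient and the Fredholm property of the Hessian, required for the \L{}ojasiewicz--Simon inequality in this nonlocal setting.
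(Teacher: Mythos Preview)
Your outline is correct and tracks the paper's strategy closely: quasilinear structure of $H^\alpha$ for short-time existence, energy monotonicity plus higher-order a priori bounds for long-time existence, and a \L{}ojasiewicz--Simon inequality for convergence of the full flow. Two points of comparison are worth making.

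For short-time existence you propose a DeTurck-type tangential correction, whereas the paper breaks the reparametrization invariance by writing the evolving curve as a normal graph over a fixed reference curve and solving the resulting strictly parabolic equation for the normal component. Both devices achieve the same end and the analysis downstream is essentially the same.

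The substantive difference is in the long-time step, which you correctly flag as the hardest. Your sketch invokes ``smoothing estimates for the linearized flow together with interpolation''; the paper instead runs a geometric energy method. One computes the evolution of $\mathcal E^k=\int|\partial_s^k\kappa|^2\,ds$, identifies a dissipative leading term $-c_\alpha\|D^{k+2+(\alpha+1)/2}\g\|_{L^2}^2$ coming from the principal part $Q^\alpha$, and then estimates every remaining term by Gagliardo--Nirenberg--Sobolev inequalities in Besov spaces (which arise naturally from the double integral in $H^\alpha$) together with a Kato--Ponce commutator estimate to handle the projection $P^\bot_{\g'}$. The point is that the a priori control furnished by the energy is only a $W^{(\alpha+1)/2,2}$ bound on the arc-length parametrization, and the interpolation has to be set up so that this weak norm enters only with total exponent strictly below $2$; the paper does this calculation explicitly. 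A direct parabolic-smoothing bootstrap would face the circularity that the coefficients depend on the very norms you are trying to bound, so the energy method is the more robust route here.

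Your description of the \L{}ojasiewicz--Simon step matches the paper's: analyticity of the gradient and the Fredholm property of the second variation (as a compact perturbation of $(-\Delta)^{(\alpha+1)/2}$ on normal vector fields) are exactly the hypotheses verified.
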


Lin and Schwetlick showed similar results for the elastic energy plus some positive multiple of the M\"obius energy and the length \cite{Lin2010}. They succeeded in treating the term in the
$L^2$-gradient coming from the M\"obius energy as a lower order perturbation of the gradient of the elastic energy of curves. This allowed them to carry over the analysis due to Dziuk, Kuwert and Sch\"atzle of the latter flow \cite{Dziuk2002}. They proved long time existence for their flow and sub-convergence to a critical point up to re-parameterizations and translations.

The situation is quite different in the case we treat in this article. We have to understand
the gradient of O'Hara's energies in a much more detailed way and have to use sharper estimates than
in the work of Lin and Schwetlick. Furthermore, in contrast to Lin and Schwetlick we show that the complete flow, without going to a subsequence and applying suitable translations,  converges to a critical point of our energy. 
%

We want to conclude this introduction with an outline of the proof of Theorem~\ref{thm:LongTimeExistenceResult}. In Section~\ref{sec:ShortTimeExistence}, we prove short time existence results of these flow for initial data in little H\"older spaces and smooth dependence on the initial data. To do that, we show that the gradient of $E^\alpha$ is the normal part  of an abstract quasilinear differential operator of fractional order (cf. Theorem~\ref{thm:QuasilinearStructure}). Combining Banach's fixed-point theorem with a maximal regularity result for the linearized equation, we get existence for a short amount of time.
In order to keep this article as easily accessible as possible, we give a detailed prove of the necessary maximal regularity result.

The most important ingredient to the proof of long time existence in Section~\ref{sec:ProofOfLongTimeExistence} is a strengthening of the classification of curves of finite energy $E^\alpha$ in \cite{Blatt2012} using fractional Sobolev spaces. For $s \in (0,1)$, $p \in [1,\infty)$ and $k\in \mathbb N_0$ the space $W^{k+s,p}(\mathbb R / \mathbb Z, \mathbb R^n)$ consists of all functions $f\in W^{k,p}(\mathbb R / \mathbb R, \mathbb R^n )$ for which
$$
 |f^k|_{W^{s,p}}:= \left( \int_{\mathbb R / \mathbb Z} \int_{\mathbb R / \mathbb Z} \frac {|f(x) - f(y)|^p}{|x-y|^{1+sp}} dx dy \right)^{ 1/ p} 
$$
is finite. This space is equipped with the norm $\|f\|_{W^{k+s,p}} := \|f\|_{W^{k,p}} + |f^{(k)}|_{W^{s,p}}.$  For a thorough discussion of the subject of fractional Sobolev space we point the reader to the monograph of Triebel \cite{Triebel1983}. Chapter 7 of \cite{Adams2003} and the very nicely written and easy accessible introduction to the subject \cite{DiNezza2012}.

We know that a curve parameterized by arc length has finite energy $E^\alpha$ if and only if it is bi-Lipschitz and belongs to the space $W^{\frac {\alpha +1}2,2}$. In Theorem~\ref{thm:Coercivity} we show that even
\begin{equation*}
 |\gamma'|_{W^{\frac {\alpha -1} 2 ,2}} \leq C E^\alpha
\end{equation*}
and hence the $W^{\frac {\alpha +1} 2 ,2 }$-norm of the flow is uniformly bounded in time.

We then derive the evolution equation of
\begin{equation} \label{eq:Energies}
 \mathcal E^k = \int_{\mathbb R / \mathbb Z} |\partial_s^k\kappa|^2 |\g'(x)|dx.
\end{equation}
where $\kappa$ denotes the curvature of the curve $\g$ and 
$\partial_s := \frac {\partial_x}{|\g'(x)|}$ is the derivative with respect to 
the arc length parameter $s$. Note that in contrast to previous works like the work due
to Dziuk, Kuwert, and Sch\"atzle on elastic flow or the work due to Lin and
Schwetlick on the gradient flow of the elastic plus a positive multiple of the 
M\"obius energy, we do not consider the normal derivatives of the curvature but
the full derivatives with respect to arc length.

Using Gagliardo-Nirenberg-Sobolev inequalities for Besov spaces, which quite naturally appear 
during the calculation, together with commutator estimates we can then show that also the $\mathcal E^k$ are bounded uniformly in time. By standard arguments this will lead to
long time existence and smooth subconvergence to a critical point after suitable translations and re-parameterization of the curves.

To get the full statement, we study the behavior of solutions near such critical
points using a {\L}ojasievicz-Simon gradient estimate. This allows us to show that
flows starting close enough to a critical point and remaining above this critical point in the sense of the energy, exist for all time and converge to critical points. More precisely we have:

\begin{theorem}[Long time existence above critical points] \label{thm:LongTimeExistenceI}
Let $\g_{M}\in C^{\infty}(\mathbb{R}/\mathbb{Z}, \mathbb R^n)$ be a critical
point of the energy $E= E^\alpha + \lambda  L$, $\alpha \in [2,3)$, let $k\in \mathbb N$,
$\delta>0$, and $\beta >\alpha$. Then there is a constant $\varepsilon >0$ such that the
following is true:

Suppose that  $(\g_t)_{t\in [0,T)}$ is a maximal solution of the gradient flow of the energy
$E_\lambda$ for $\lambda >0$ with smooth initial data satisfying
\begin{equation*}
  \|\g_0 - \g_M\|_{C^{\beta}} \leq \varepsilon
\end{equation*}
and 
\begin{equation*}
 E(\g_t) \geq E(\g_M) 
\end{equation*}
whenever there is a diffeomorphism $\phi_t:\mathbb R / \mathbb Z
\rightarrow \mathbb R /\mathbb Z$ such that $\|\g_t \circ \phi_t -
\g_M\|_{C^\beta}\leq \delta$.
 Then the flow $(\g_t)_t$ exists for all times and converges,
after suitable re-parameterizations, smoothly to a critical point $\g_{\infty}$
of $E_\lambda$ satisfying \[
E(\g_{\infty})=E(\g_{M}).\]
\end{theorem}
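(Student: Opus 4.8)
### Proof proposal for Theorem~\ref{thm:LongTimeExistenceI}

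The plan is to combine the {\L}ojasiewicz--Simon gradient inequality near the critical point $\g_M$ with the parabolic smoothing and short-time existence theory already available from Theorem~\ref{thm:QuasilinearStructure} and the maximal regularity results of Section~\ref{sec:ShortTimeExistence}. First I would fix a tubular neighbourhood of $\g_M$ in a suitable little H\"older space $h^{\beta}$ in which the energy $E = E^\alpha + \lambda L$, restricted to curves parameterised proportionally to arclength (to break the reparameterisation invariance), is analytic, and establish the {\L}ojasiewicz--Simon estimate
\begin{equation*}
 |E(\g) - E(\g_M)|^{1-\theta} \leq C\, \|\nabla E(\g)\|_{L^2}
\end{equation*}
for some $\theta \in (0,1/2]$ and all $\g$ with $\|\g - \g_M\|_{h^\beta} \leq \varepsilon_0$. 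This is the step I expect to be the main obstacle: one must verify that the $L^2$-gradient $H^\alpha(\g) - \lambda \kappa$ is an analytic map between appropriate fractional Sobolev or little H\"older spaces, that its linearisation at $\g_M$ is Fredholm of index zero (which follows from its structure as a normal part of an elliptic operator of order $\alpha+1$ modulo lower-order terms, using the coercivity of Theorem~\ref{thm:Coercivity}), and then invoke the abstract {\L}ojasiewicz--Simon theorem in the style of Chill, or of Chill--Fa{\v{s}}angov{\'a}--Pr\"uss, adapted to the gradient-flow-with-symmetry setting. Care is needed because the gradient is only the \emph{normal} part of the quasilinear operator, so one must quotient out the tangential direction consistently, exactly as in the short-time existence argument.

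Granting the {\L}ojasiewicz inequality, the second step is the standard energy--dissipation bookkeeping. Along the flow, $\frac{d}{dt} E(\g_t) = -\|\partial_t \g_t\|_{L^2}^2 = -\|\nabla E(\g_t)\|_{L^2}^2$ (up to the tangential reparameterisation, which does not change the energy), so combining with the {\L}ojasiewicz estimate gives, as long as the flow stays in the neighbourhood where it holds,
\begin{equation*}
 -\frac{d}{dt} (E(\g_t) - E(\g_M))^{\theta} \geq c\, \|\partial_t \g_t\|_{L^2},
\end{equation*}
and integrating in time yields a uniform bound on $\int_0^T \|\partial_t \g_t\|_{L^2}\, dt$ in terms of $(E(\g_0) - E(\g_M))^\theta$, which by hypothesis is small when $\varepsilon$ is small. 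The $L^1$-in-time bound on the $L^2$-velocity, upgraded via interpolation and the uniform higher-order energy bounds $\mathcal E^k$ from Section~\ref{sec:ProofOfLongTimeExistence} (which hold on any finite time interval on which the solution stays non-degenerate and bi-Lipschitz), controls the motion of the curve in $C^\beta$ after correcting by a time-dependent diffeomorphism $\phi_t$.

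The third step is a continuity/bootstrap argument showing the flow never leaves the small neighbourhood, so that $T = \infty$. Suppose $t^\ast$ is the first time at which $\|\g_{t^\ast} \circ \phi_{t^\ast} - \g_M\|_{C^\beta} = \delta$ (or at which the smoothing estimates would fail); on $[0,t^\ast]$ the hypothesis $E(\g_t) \geq E(\g_M)$ is in force, so the {\L}ojasiewicz mechanism applies and the total $L^2$-displacement up to $t^\ast$ is bounded by $C(E(\g_0)-E(\g_M))^\theta \leq C' \varepsilon^{\theta'}$, strictly less than $\delta$ for $\varepsilon$ small --- contradiction. Hence the flow exists globally, stays near $\g_M$, the velocity is integrable in time, so $\g_t \circ \phi_t$ is Cauchy in $C^\beta$ and converges to some limit curve $\g_\infty$; standard parabolic regularity (the uniform $\mathcal E^k$ bounds) promotes this to smooth convergence. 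Finally, since $E(\g_t)$ is non-increasing and bounded below by $E(\g_M)$, it converges; the limit must be $E(\g_\infty)$, and because $\|\partial_t \g_t\|_{L^2} \to 0$ along a sequence, $\g_\infty$ is a critical point, with $E(\g_\infty) = \lim_t E(\g_t) \geq E(\g_M)$; a short argument (the {\L}ojasiewicz inequality again, or the fact that the energy decreased by at most $C\varepsilon^{\theta'}$) pins down $E(\g_\infty) = E(\g_M)$ as claimed.
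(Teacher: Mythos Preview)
Your proposal is correct and follows essentially the same strategy as the paper: a {\L}ojasiewicz--Simon gradient estimate near $\g_M$ (via analyticity of the gradient and Fredholm index-zero of the linearisation, exactly as you outline and as the paper proves in Theorem~\ref{thm:LojasiewichSimonGradientEstimate} using Chill's criterion), followed by the standard energy--dissipation differential inequality, a first-exit-time contradiction argument, and interpolation to upgrade the $L^2$ displacement bound to $C^\beta$.

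Two minor technical differences are worth noting. First, the paper fixes the reparameterisation gauge by writing curves as normal graphs over $\g_M$ (Theorem~\ref{thm:LongTimeExistenceForGraphs} together with Lemma~\ref{lem:TubularNeighborhoodAndNormalGraphs}), rather than by parameterising proportionally to arclength as you suggest; the normal-graph choice meshes directly with the short-time existence theory of Section~\ref{sec:ShortTimeExistence} and avoids tracking the evolving speed $|\g_t'|$. Second, for the uniform higher-order bounds needed in the interpolation step, the paper does not invoke the $\mathcal E^k$ estimates of Section~\ref{sec:ProofOfLongTimeExistence}; instead it uses only the parabolic smoothing built into the short-time existence result (Theorem~\ref{thm:ShortTimeExistenceForGraphs}): as long as $\|N_t\|_{C^\beta}$ stays below a fixed threshold, restarting the flow gives uniform $C^{\tilde k}$ bounds after a fixed small time, and this is enough to interpolate $\|N_t - N_{T/2}\|_{C^\beta} \leq C \|N_t - N_{T/2}\|_{L^2}^\sigma$. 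Your route through the $\mathcal E^k$ bounds would also work, but is slightly heavier machinery than needed here.
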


This shows that in the situation of Theorem~\ref{thm:LongTimeExistence} the complete solution converges to a critical point of $E^\alpha + \lambda L$ -- even without applying any translations or re-parameterizations.

\section{Short Time Existence} \label{sec:ShortTimeExistence}

 This section is devoted to an almost self-contained proof of short time existence for equation \eqref{eq:EvolutionEquation}. We will show that for all $c \in C^\infty(\mathbb R/ \mathbb Z)$ a solution exists for some time.

 For any space $X \subset C^1(\mathbb R / \mathbb Z, \mathbb R^n)$ we will denote by $X_{ir}$ the (open) subspace consisting of all injective (embedded) and regular curves in $X$.

\begin{theorem} [Short time existence for smooth data] \label{thm:ShortTimeExistence}
 Let $c_0 \in C_{ir}^\infty(\mathbb R /  \mathbb Z)$. Then there exists some $T = T(c_0) >0$ and a unique solution
 $$
  c\in C^\infty([0,T) \times \mathbb R / \mathbb Z, \mathbb R^n )
 $$
 of \eqref{eq:EvolutionEquation}.
\end{theorem}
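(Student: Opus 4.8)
The plan is to turn \eqref{eq:EvolutionEquation} into a non-degenerate quasilinear parabolic problem, solve that by a contraction argument based on a maximal regularity estimate, and then bootstrap to smoothness; uniqueness comes along with the fixed point.

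\emph{Breaking the re-parameterization invariance.} The right-hand side $-H^\alpha(\g)+\lambda\kappa_\g$ is purely normal, so \eqref{eq:EvolutionEquation} is invariant under tangential re-parameterizations and its symbol degenerates in the tangential direction. I would therefore look for the solution as a normal graph over the fixed smooth embedded regular curve $c_0$: write $c(t,x)=c_0(x)+N(t,x)$ with $N(t,x)$ orthogonal to $c_0'(x)$, and add to the velocity a tangential term (depending on $c$) chosen so that $\partial_t c$ stays orthogonal to $c_0'$; adding such a term changes the solution only by re-parameterization and hence not the geometric flow. For $c$ sufficiently $C^1$-close to $c_0$ this is admissible, and by the quasilinear structure of the gradient (Theorem~\ref{thm:QuasilinearStructure}) the resulting evolution equation for $N$ has the form
\begin{equation*}
 \partial_t N + \mathfrak a(N)\,(-\partial_x^2)^{\frac{\alpha+1}{2}} N = \mathfrak f(N), \qquad N(0)=0,
\end{equation*}
where $\mathfrak a(\cdot)>0$ is a smooth positive scalar depending on $N$ and its first spatial derivative, and $\mathfrak f$ collects non-local terms of order strictly below $\alpha+1$, depending smoothly on $N$.

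\emph{Maximal regularity and the fixed point.} Work in the little H\"older spaces $h^{k+\beta}(\mathbb R/\mathbb Z,\mathbb R^n)$ and the associated parabolic H\"older spaces on $[0,T]$. The analytic core is a maximal regularity statement for the frozen-coefficient linear problem $\partial_t N + \mathfrak a_0\,(-\partial_x^2)^{(\alpha+1)/2} N = g$, $N(0)=N_0$: for $g$ in the parabolic little-H\"older space and $N_0$ in the correct trace space there is a unique solution with optimal regularity, and the norm of the solution operator is bounded uniformly as $T\downarrow 0$ once the contribution of $N_0$ is extended off and absorbed. I would prove this directly, using that $(-\partial_x^2)^{(\alpha+1)/2}$ generates an analytic semigroup on the little-H\"older scale together with the characterization of the interpolation spaces. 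Given this, recast the $N$-equation as a fixed-point problem $N=\Phi(N)$, where $\Phi(\tilde N)$ solves the linear problem with coefficient $\mathfrak a(\tilde N)$ and right-hand side $\mathfrak f(\tilde N)$; the maximal regularity estimate, the smoothness of $\mathfrak a$ and $\mathfrak f$ in their arguments, the fact that $\mathfrak f$ is genuinely lower order, and the smallness of the data ($N_0=0$) for small $T$ make $\Phi$ a self-map of a small ball in a short-time parabolic space and a contraction there. Banach's fixed-point theorem yields a unique short-time $N$, hence a solution $c$ of \eqref{eq:EvolutionEquation}; running the same argument with $N_0$ as a parameter gives smooth dependence on the initial data.

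\emph{Smoothness and uniqueness.} Parabolic smoothing then upgrades the solution. Since the coefficients and right-hand side are smooth functions of $N$ and its (fractional) derivatives, one bootstraps by differentiating the equation in $x$ and $t$ --- difference quotients combined with the smooth-dependence statement --- to get $N\in C^\infty((0,T)\times\mathbb R/\mathbb Z)$, and, the datum being smooth and compatible, up to $t=0$, so $c\in C^\infty([0,T)\times\mathbb R/\mathbb Z,\mathbb R^n)$. For uniqueness, any smooth solution $c$ of \eqref{eq:EvolutionEquation} with $c(0)=c_0$ remains $C^1$-close to $c_0$ on a short time interval, hence can be written as a normal graph after composing with the time-dependent diffeomorphism solving the ODE prescribed by its tangential velocity; this yields a solution of the $N$-equation with $N(0)=0$, which by the fixed point must be the one constructed above.

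\emph{Main obstacle.} The technical heart is the maximal regularity result in little H\"older spaces for the fractional-order operator, together with the estimates that make the fixed point work: one must control the non-local lower-order term $\mathfrak f$ and the coefficient difference $\mathfrak a(N_1)-\mathfrak a(N_2)$ in these spaces uniformly for small time. Because $H^\alpha$ is a singular integral rather than a differential operator, the splitting into principal part plus genuinely lower-order, uniformly bounded remainder --- and the legitimacy of freezing coefficients --- has to be handled with care, and this is precisely where Theorem~\ref{thm:QuasilinearStructure} and sharp fractional-space estimates are needed.
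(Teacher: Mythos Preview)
Your proposal is correct and follows essentially the same route as the paper: break the re-parameterization invariance by writing the flow as a normal graph over $c_0$, use the quasilinear structure (Theorem~\ref{thm:QuasilinearStructure}) to reduce to an equation of the form $\partial_t N + a(N)Q^\alpha N = f(N)$, establish maximal regularity for the linear problem in little H\"older spaces via the fractional heat kernel and freezing coefficients, apply Banach's fixed-point theorem, bootstrap to smoothness, and recover the actual equation \eqref{eq:EvolutionEquation} (and uniqueness) by composing with the time-dependent diffeomorphism generated by the tangential component. The only cosmetic difference is that the paper freezes the coefficient at the initial datum $a(u_0)$ in the fixed-point map and puts $(a(u_0)-a(w))Q^\alpha w$ on the right, whereas you freeze at $\tilde N$; both versions work once the variable-coefficient maximal regularity (the paper's Lemma~\ref{lem:MaximalRegularity}) is in hand.
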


We can strengthen this result above.
In fact, we can reduce the regularity of the initial curve even below the level where our evolution equation makes sense. We still can prove existence and in a sense also uniqueness of a family of curves with normal velocity given by the gradient of $E^\alpha + \lambda L$. For this purpose we will work in little H\"older spaces $h^\beta$, $\beta \notin \mathbb N$,
which are the completion of $C^\infty$ with respect to the $C^\beta$-norm.

\begin{theorem} [Short time existence for non-smooth data] \label{thm:ShortTimeExistenceNonSmooth}
 For $\alpha \in (2,3)$
let $\g_0\in
h^{\beta}_{i,r}(\mathbb{R}/\mathbb{Z},\mathbb{R}^{n})$ for some $\beta >\alpha$,
$\beta \notin \mathbb N$.
Then there is a constant $T>0$ and a re-parameterization $\phi
\in C^{\beta} (\mathbb R / \mathbb Z, \mathbb R / \mathbb Z)$ such that there is a
solution $$\g\in C([0,T),h_{i,r}^{\beta}(\mathbb R / \mathbb Z,
\mathbb R ^n))\cap C^1((0,T),C^{\infty}(\mathbb{R}/\mathbb{Z},\mathbb{R}^{n}))$$
of the initial value problem
\begin{equation*}
\begin{cases}
\partial^{\bot}_{t}\g =-H(\g) + \lambda \kappa_\g& \forall t\in[0,T],\\
\g(0)=\g_{0} \circ \phi.\end{cases}
\end{equation*}
This solution is unique in the sense that for each other solution $$\tilde
\g \in C([0,\tilde T),h_{i,r}^{\beta}(\mathbb R / \mathbb Z,
\mathbb R ^n))\cap C^1((0,\tilde
T),C^{\infty}(\mathbb{R}/\mathbb{Z},\mathbb{R}^{n}))$$ and all
$t\in (0,\min(T,\tilde T)]$ there is a smooth diffeomorphism
$\phi_t \in C^\infty (\mathbb R / \mathbb Z, \mathbb R / \mathbb Z)$
such that
\begin{equation*}
  \g(t, \cdot ) = \tilde \g (t, \phi_t (\cdot)).
\end{equation*}
\end{theorem}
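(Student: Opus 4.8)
The plan is to reduce the evolution to a genuine (parabolic-type) quasilinear equation by breaking the re-parameterization invariance, apply an abstract maximal-regularity/fixed-point scheme in the little Hölder scale, and then recover the geometric statement and the uniqueness-up-to-reparameterization from the structure of the equation. First, using Theorem~\ref{thm:QuasilinearStructure} we write $H^\alpha(\g) = P^\bot_{\g'}\bigl(Q(\g)\bigr)$, where $Q$ is a quasilinear operator whose leading part is (up to a positive factor built from $|\g'|$ and a constant depending only on $\alpha$) a fractional Laplacian-type operator $(-\partial_x^2)^{(\alpha+1)/2}$ acting on $\g$. The normal projection is the obstruction to strict parabolicity, so the standard trick is to add a tangential term: instead of solving $\partial_t\g = -H^\alpha(\g)+\lambda\kappa_\g$ we solve the \emph{parabolic} system
\begin{equation*}
  \partial_t\g = -Q(\g) + \lambda\,\partial_s^2\g + \bigl(\text{lower-order tangential correction}\bigr),
\end{equation*}
chosen so that its normal part agrees with the geometric right-hand side and its leading symbol is that of a positive elliptic operator of order $\alpha+1\in(3,4)$. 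One then localizes/linearizes at $\g_0$: the linearization is a non-local operator $A_0$ of order $\alpha+1$ with $C^\infty$-smooth (indeed $h^{\beta-1-\alpha}$) coefficients, and the key analytic input is that $A_0$ generates an analytic semigroup and has maximal regularity on the little Hölder space $h^\beta$, with the continuous-interpolation pair $(h^\beta, h^{\beta+\alpha+1})$; this is exactly the maximal regularity result the paper promises to prove in detail in Section~\ref{sec:ShortTimeExistence}.

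With maximal regularity in hand, I would set up Banach's fixed-point theorem on the space $C([0,T], h^\beta_{i,r})$ (for $T$ small, staying in the open set of injective regular curves): write the equation as $\partial_t\g + A_0\g = F(\g)$ with $F(\g) = A_0\g - Q(\g) + (\text{l.o.t.})$ collecting the difference between the frozen-coefficient operator and the true quasilinear operator plus all lower-order terms. The quasilinear structure (coefficients depending on $\g$ and derivatives of order $<\alpha+1$, with the relevant Hölder differences controlled by $\|\g-\g_0\|_{C^\beta}$ since $\beta>\alpha$) makes $F$ a locally Lipschitz map from the maximal-regularity space into $h^\beta$ with small Lipschitz constant on a small time interval around data close to $\g_0$; the variation-of-constants formula then gives a contraction. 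Smoothing: because the equation is parabolic of order $\alpha+1$ and the nonlinearity is smooth in its arguments, a bootstrap in time (parabolic smoothing / the implicit function theorem applied to the time-translated flow, as in Angenent's or Da Prato–Grisvard's framework) upgrades the solution to $C^1((0,T), C^\infty)$ on $(0,T)$ — this is where the gain from $h^\beta$ data at $t=0$ to $C^\infty$ for $t>0$ comes from, and it is why the problem statement only demands continuity at $t=0$.

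For the recovery of the geometric problem: if $\g$ solves the parabolic system, its normal velocity is $P^\bot_{\g'}\partial_t\g = -H^\alpha(\g)+\lambda\kappa_\g$, which is $\partial^\bot_t\g$; the tangential part of $\partial_t\g$ is a smooth vector field along the curve, so solving the ODE $\partial_t\phi = -(\text{tangential speed})/|\g'\circ\phi|$ produces the re-parameterization $\phi$ appearing in the statement and converts the solution of the parabolic system into a solution of the stated initial value problem with data $\g_0\circ\phi$. Uniqueness up to reparameterization: given two solutions $\g,\tilde\g$ in the stated class with the same geometric velocity, one constructs a family of diffeomorphisms $\phi_t$ (by integrating the ODE matching their tangential components) so that $\tilde\g(t,\phi_t(\cdot))$ and $\g(t,\cdot)$ both solve the \emph{same} parabolic system with the same initial data for $t>0$ (where both are smooth), and then invokes uniqueness for that parabolic Cauchy problem; the smoothness of $\phi_t$ follows from smoothness of the solutions on $(0,T)$.

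The main obstacle is the maximal-regularity step: one must show that the frozen, non-local operator of fractional order $\alpha+1$ — a perturbation of $|\g_0'|^{-(\alpha+1)}(-\partial_x^2)^{(\alpha+1)/2}$ by lower-order non-local terms with variable coefficients — has bounded $H^\infty$-calculus, or at least generates an analytic semigroup with the required maximal regularity on the little Hölder spaces $h^\beta$. Fractional-order operators and the subtlety of continuous-interpolation (little Hölder) spaces make this more delicate than the classical second-order case; estimating the principal-value integral kernels precisely enough to control the commutators between the fractional operator and the variable coefficients (using $\beta>\alpha$ to absorb the coefficient regularity) is the technical heart, and it is precisely what Theorem~\ref{thm:QuasilinearStructure} and the self-contained maximal-regularity argument in Section~\ref{sec:ShortTimeExistence} are designed to supply.
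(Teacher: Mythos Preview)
Your strategy is a legitimate route, but it differs from the paper's in one structural way that you should be aware of, and there is one point where your proposal is imprecise.

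\textbf{Gauge-fixing: DeTurck versus normal graphs.} You propose to break the reparameterization invariance by adding a tangential correction so that the full velocity becomes strictly parabolic (a DeTurck-type trick). The paper instead fixes a \emph{smooth} reference curve and writes the evolving curve as a normal graph $\g_M + N_t$ over it; the evolution equation for the scalar components of $N_t$ in a smooth normal frame $\nu_1,\dots,\nu_{n-1}$ is then the quasilinear equation to which Proposition~\ref{prop:GeneralShortTimeExistence} is applied. Both are standard ways to kill the gauge freedom, but they lead to different technical work: your approach keeps the equation for the full $\mathbb R^n$-valued map $\g$, while the paper's reduces to an $(n-1)$-component scalar system with the advantage that the frame $\nu_i$ and the reference curve are $C^\infty$.

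\textbf{The key ingredient you are missing for non-smooth data.} The paper's proof of Theorem~\ref{thm:ShortTimeExistenceNonSmooth} is \emph{not} a direct application of maximal regularity with rough coefficients. Instead, it is deduced from Theorem~\ref{thm:ShortTimeExistenceForGraphs} (which already handles $h^\beta$ initial normal fields, but over a \emph{smooth} reference) via the approximation Lemma~\ref{lem:VrCoverAll}: given $\g_0\in h^\beta_{i,r}$, one mollifies to a nearby smooth curve $\gamma$ and produces a diffeomorphism $\psi$ and a normal field $N$ with $\g_0\circ\psi = \gamma + N$ and $N\in\mathcal V_{r,\beta}(\gamma)$. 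One then runs the normal-graph flow over $\gamma$. This is why the reparameterization $\phi$ appears already in the initial condition of the theorem. Your proposal does not mention this step; instead you linearize at the rough $\g_0$ itself.

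\textbf{A regularity slip.} You write that the linearization at $\g_0$ has ``$C^\infty$-smooth (indeed $h^{\beta-1-\alpha}$) coefficients''. This is self-contradictory, and the index is off: the leading coefficient is essentially $|\g_0'|^{-(\alpha+1)}$, which for $\g_0\in h^\beta$ lies only in $h^{\beta-1}$, not in $C^\infty$, and $\beta-1-\alpha$ can be negative. This is precisely why the paper arranges to work over a smooth reference curve: the Leibniz-type manipulations in Section~\ref{subsec:ProofOfShortTimeExistence} (splitting $Q^\alpha(\phi_i\nu_i)$ from $(Q^\alpha\phi_i)\nu_i$) rely on the smoothness of the frame $\nu_i$, and the abstract Proposition~\ref{prop:GeneralShortTimeExistence} needs $a(u)$ to take values in $C^{\beta'}$ for the relevant~$\beta'$. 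Your DeTurck approach can in principle be made to work with $h^{\beta-1}$ coefficients, but you would have to redo the maximal-regularity argument at that lower regularity; the paper avoids this entirely by the approximation trick.

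In summary: your outline is a reasonable alternative gauge-fixing, but the paper's actual argument for non-smooth data hinges on Lemma~\ref{lem:VrCoverAll}, which lets one always linearize around a $C^\infty$ curve. Your proposal does not contain this idea and, as written, is vague about exactly what regularity the frozen coefficients have and why the maximal-regularity machinery still applies.
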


As in the special case of the M\"obius energy dealt with in \cite{Blatt2012b}, these results are based on
the fact that the functional $H^\alpha$ possesses a quasilinear
structure -- a statement that will be proven in the next subsection. After that
we build a short time existence theory for the linearization of these equations from scratch 
and prove a maximal regularity result for this equation.

To a get a solution of the evolution equation \eqref{eq:EvolutionEquation}, we first have to break the symmetry that comes from the invariance of the equation under re-parameterizations. We do this by writing the time dependent family of curves $c$ as a normal graph over some fixed smooth curve $c_0$. Applying  Banach's fixed-point theorem as done in \cite{Angenent1990}
for nonlinear and quasilinear semiflows,
we get short time existence for the evolution equation of the normal graphs and 
and continuous dependence of the solution on the initial data. A standard re-parameterization then gives Theorem~\ref{thm:ShortTimeExistence} while Theorem~\ref{thm:ShortTimeExistenceNonSmooth} is obtained via an approximation argument.

\subsection{Quasilinear Structure of the Gradient}

By exchanging every appearance of $|\g(u+w)-\g(u)|$ and $d_\g(u+w,u)$
by their first order Taylor expansion $|\g'(u)||w|$  and
$|\g'(u+w)|$ by $|\g'(u)|$ in the formula for 
$\tilde H^\alpha$, we are led to the conjecture that the leading order term of ${\mathcal H}^\alpha$ is the normal part of  $\frac {\alpha} {|\g'|^{\alpha +1 }} Q ^\alpha (\g)$ where
\begin{equation*}
  Q^\alpha (\g) := p.v.
  \int_{[-l,l]} \left(2 \frac {\g(u+w)- \g(u) - w \g'(u)  }{w^2} - \g''(u) \right)\frac{dw}{|w|^\alpha}.
\end{equation*}
 
This heuristic can be made rigorous using Taylor's expansions of the error terms and estimates for multilinear Hilbertransforms
(cf. Lemma~\ref{lem:HilberttransformGeneral}) leading to the next theorem. It will be essential later on that the remainder term is an analytic operator between certain function spaces -- which we denote by $C^\omega$.

\begin{theorem}[Quasilinear structure]\label{thm:QuasilinearStructure}

For $\alpha \in (2,3)$ there is a mapping 
\begin{equation*}
F^\alpha \in \bigcap_{\beta>0}
C^{\omega}(C_{i,r}^{\alpha+\beta}(\mathbb{R}/\mathbb{Z},\mathbb{R}^{n}),C^{\beta}(\mathbb{R}/\mathbb{Z},\mathbb{R}^{n}))
\end{equation*}
such that 
\begin{equation*}
H^\alpha\g=\frac{\alpha}{|\g'|^{\alpha+1}}P_{\g'}^{\bot}(Q^{\alpha}\g)+F^\alpha \g
\end{equation*} for all $\g\in
H_{i,r}^{\alpha+1}(\mathbb{R}/\mathbb{Z},\mathbb{R}^{n})
$.

\end{theorem}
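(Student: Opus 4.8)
The plan is to split the principal-value integral defining $H^\alpha\g$ in \eqref{eq:GradientOfEAlpha} into a ``frozen-coefficient'' leading part and a genuinely lower order remainder: in each of the three summands one replaces $|\g(x+w)-\g(x)|$ and $d_\g(x+w,x)$ by $|\g'(x)|\,|w|$ and $|\g'(x+w)|$ by $|\g'(x)|$, calls the result the leading part, and then shows on the one hand that it equals $\frac{\alpha}{|\g'|^{\alpha+1}}P^\bot_{\g'}(Q^\alpha\g)$ up to an explicit analytic term, and on the other hand that the error of the replacements is an analytic operator $F^\alpha$ which maps $C^{\alpha+\beta}_{i,r}$ into $C^\beta$, i.e.\ loses one derivative less than the leading part. (Since $H^{\alpha+1}_{i,r}\hookrightarrow C^{\alpha+\beta}_{i,r}$ for $\beta<\tfrac12$ by Sobolev embedding, everything then makes sense on all of $H^{\alpha+1}_{i,r}$ and the identity, being an equality of functions, may be verified pointwise for such $\g$.) The first step is to record the normalised geometric quantities
\begin{equation*}
 g_\g(x,w):=\frac{|\g(x+w)-\g(x)|^2}{|\g'(x)|^2 w^2},\qquad
 h_\g(x,w):=\frac{d_\g(x+w,x)^2}{|\g'(x)|^2 w^2},\qquad
 v_\g(x,w):=\frac{|\g'(x+w)|}{|\g'(x)|},
\end{equation*}
which for $\g\in C^{\alpha+\beta}_{i,r}$ extend continuously to $w=0$ with value $1$, stay bounded away from $0$ and $\infty$ (by the bi-Lipschitz bounds for regular injective curves together with compactness) and depend analytically on $\g$ as elements of suitable H\"older spaces of functions of $(x,w)$, being assembled from $\g$ by evaluation, multiplication, integration, division by strictly positive quantities and the real-analytic maps $t\mapsto t^s$ on $(0,\infty)$.

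Next I would carry out the extraction. Using $|\g(x+w)-\g(x)|^{-2-\alpha}=|\g'(x)|^{-2-\alpha}|w|^{-2-\alpha}g_\g^{-(2+\alpha)/2}$, $d_\g(x+w,x)^{-\alpha}=|\g'(x)|^{-\alpha}|w|^{-\alpha}h_\g^{-\alpha/2}$ and $\kappa_\g=|\g'|^{-2}P^\bot_{\g'}\g''$, each summand of the bracket in \eqref{eq:GradientOfEAlpha} multiplied by $|\g'(x+w)|$ becomes $|\g'(x)|^{-1-\alpha}|w|^{-2-\alpha}(\g(x+w)-\g(x))\,\Phi_\g$, respectively $|\g'(x)|^{-1-\alpha}|w|^{-\alpha}\,P^\bot_{\g'(x)}\g''(x)\,\Phi_\g$, with $\Phi_\g$ an analytic-in-$\g$ factor satisfying $\Phi_\g(x,0)=1$ and $\Phi_\g-1=O(|w|)$. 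Writing $\Phi_\g=1+\psi_\g$ and applying $P^\bot_{\g'(x)}$, which annihilates the tangential part $w\g'(x)$ and lets us replace $\g(x+w)-\g(x)$ by the second difference $\g(x+w)-\g(x)-w\g'(x)$, the three ``$1$''-parts recombine -- the coefficients $-(\alpha-2)$ and $-2$ of the last two summands adding up to $-\alpha$ -- into exactly $\frac{\alpha}{|\g'|^{\alpha+1}}P^\bot_{\g'}\,p.v.\!\int_{-1/2}^{1/2}\big(\tfrac{2(\g(x+w)-\g(x)-w\g'(x))}{w^2}-\g''(x)\big)\tfrac{dw}{|w|^\alpha}$, which differs from $\frac{\alpha}{|\g'|^{\alpha+1}}P^\bot_{\g'}(Q^\alpha\g)$ only by the integral over $\tfrac12<|w|<1$; that remaining piece is a manifestly analytic map $C^{\alpha+\beta}_{i,r}\to C^{\alpha+\beta-2}\hookrightarrow C^\beta$ and is absorbed into $F^\alpha$.

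The core of the argument is that each ``$\psi_\g$''-part is an analytic map $C^{\alpha+\beta}_{i,r}\to C^\beta$. After $P^\bot_{\g'(x)}$ such a term is of the form $a_\g(x)\,p.v.\!\int_{-1/2}^{1/2}\psi_\g(x,w)\,K_\g(x,w)\,dw$ with $a_\g$ analytic in $\g$ and $K_\g$ one of the second-difference kernels $\frac{P^\bot_{\g'(x)}(\g(x+w)-\g(x)-w\g'(x))}{|w|^{2+\alpha}}$ or $\frac{P^\bot_{\g'(x)}\g''(x)}{|w|^\alpha}$. Splitting off the linear part, $\psi_\g(x,w)=\psi_\g^{(1)}(x)\,w+\tilde\psi_\g(x,w)$ with $\psi_\g^{(1)}=\partial_w\psi_\g(\cdot,0)$ analytic in $\g$ and $\tilde\psi_\g(x,w)=O(|w|^{\min\{2,\alpha+\beta-1\}})$, the $\tilde\psi_\g$-contribution is an absolutely convergent integral whose integrand is $O(|w|^{\beta-1})$ near $w=0$ and which depends analytically on $\g$ with values in $C^\beta$; the $\psi_\g^{(1)}w$-contribution equals $\psi_\g^{(1)}(x)$ times a principal-value operator whose Fourier symbol -- after the odd-in-$w$ cancellation and using $\alpha\in(2,3)$ -- is of order $\alpha$, hence bounded $C^{\alpha+\beta}\to C^\beta$. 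Here one invokes Lemma~\ref{lem:HilberttransformGeneral} to control uniformly the fractional Taylor remainders of $g_\g^{-(2+\alpha)/2}$, $h_\g^{-\alpha/2}$, $v_\g$ and of $\psi_\g$ and to read off analyticity, using that a bounded multilinear map is real-analytic and that analyticity is preserved under composition and products. Assembling the ``$1$''-part, the three ``$\psi_\g$''-parts and the $\tfrac12<|w|<1$ correction yields the decomposition, which by the embedding noted above holds in particular for all $\g\in H^{\alpha+1}_{i,r}$.

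I expect the main obstacle to be precisely this last step: keeping the singular-integral remainders under control \emph{uniformly for $\beta$ arbitrarily close to $0$} -- where $g_\g$, $h_\g$, $v_\g$ cannot be Taylor-expanded to integer order and one has to go through the fractional expansions and multilinear Hilbert-transform bounds of Lemma~\ref{lem:HilberttransformGeneral} -- and, simultaneously, verifying that every intermediate object (the normalised quantities and their real powers, the truncated kernels, the linear coefficients $\psi_\g^{(1)}$) depends \emph{analytically}, not merely smoothly, on $\g$ in the right H\"older norms, so that the resulting $F^\alpha$ genuinely belongs to $\bigcap_{\beta>0}C^\omega(C^{\alpha+\beta}_{i,r},C^\beta)$.
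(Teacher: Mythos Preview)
Your proposal is correct and follows essentially the same strategy as the paper: freeze the coefficients to extract the leading $\frac{\alpha}{|\g'|^{\alpha+1}}P^\bot_{\g'}Q^\alpha$--part, Taylor--expand the resulting remainders to one further order, and control them via the multilinear Hilbert--transform bounds (Lemma~\ref{lem:HilberttransformGeneral}) together with the analyticity lemmata of Appendix~A. The paper's presentation is only organisationally different---it first passes to the unprojected $\tilde H^\alpha$, writes out four explicit remainder terms $R_1^\alpha,\dots,R_4^\alpha$, and uses the integral form of the Taylor remainder to cast each directly in the template of Lemma~\ref{lem:estimateForQuasilinearStructure}---but the underlying analysis is the same.
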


\begin{proof}[Proof of \protect{Theorem~\ref{thm:QuasilinearStructure}}]

It is enough to show that there is a mapping 
\begin{equation*}
\tilde F^\alpha \in \bigcap_{\beta>0}
C^{\omega}(C_{i,r}^{\alpha+\beta}(\mathbb{R}/\mathbb{Z},\mathbb{R}^{n}),C^{\beta}(\mathbb{R}/\mathbb{Z},\mathbb{R}^{n}))
\end{equation*}
such that \[
\tilde H^\alpha\g=\frac{\alpha}{|\g'|^{\alpha+1}}Q^{\alpha}\g +
\tilde F^\alpha \g
\] for all $\g\in
H^{\alpha+1}(\mathbb{R}/\mathbb{Z},\mathbb{R}^{n})$, where 
\begin{multline} \label{eq:DefinitionOfTildeH}
  \tilde H^\alpha \g = \lim_{\varepsilon \searrow 0}  \int_{w \in
   I_{\epsilon,l}} \bigg\{2 \alpha \frac  {\g(x+w)
   -\g(x) - w \g'(s)}{|\g(x+w)-\g(x)|^{2+\alpha}}  
 -(\alpha-2) \frac
 {\g''(x)} {|\g'(x)|^2d_\g(x+w,x)^\alpha}
 \\
-2 \frac
 {\g''(x)}{|\g'|^2|\g(x+w)-\g(x)|^\alpha} \bigg\} |\g'(x+w)|
dw.
\end{multline}
The theorem then follows easily using $H^\alpha \g =
P_{\g'} ^\bot \tilde H^\alpha = \tilde H^\alpha - \langle H^\alpha, \frac {c'}{|c'|} \rangle \frac{c'} {|c'|}$.

We decompose
\begin{equation} \label{eq:DecompositionForQuasilinearStructure}
	\tilde H^{\alpha} \g 
= 
	\frac {\alpha} {|\g'|^{\alpha+1}}	Q^\alpha \g 
	+2 \alpha R^\alpha_1 \g  -  (\alpha - 2) R^\alpha_2 \g
	-2 R^\alpha_3 \g + \alpha R^\alpha_4 \g
\end{equation}
where
\begin{align*}
		(R^\alpha_1 \g) (x) 
	&:=  \int_{\mathbb R/ \mathbb Z} 
			\left( \g(x+w)-\g(x)-w \g'(x) \right) \\ & \quad \quad \quad \quad \quad
			 \bigg( 
				 \frac 1 {|\g(x+w) -\g(x)|^{\alpha+2}}  
				- \frac 1 {|\g'(x)|^{\alpha+2} |w|^{\alpha+2}
			}\bigg) 
		|\g'(x+w)|dw,
\\
		(R^\alpha_2 \g) (x) 
	&:=  \int_{\mathbb R/ \mathbb Z} 
			\frac{\g''(x)}{|\g'(x)|^2}
			\bigg(
				 \frac 1 {d_{\g}(x+w , x)^\alpha}  
				 - \frac 1 {|\g'(x)|^\alpha w^\alpha)}
			\bigg)
  		|\g'(x+w)|dw ,
\\
		(R^\alpha_3 \g) (x) 
	&:=  \int_{\mathbb R/ \mathbb Z} 
		 			\frac{\g''(x)}{|\g'(x)|^2}
			\bigg(
				 \frac 1 {|\g(x+w) -\g( x)|^\alpha}  
				 - \frac 1 {|\g'(x)|^\alpha w^\alpha)}
			\bigg)
  		|\g'(x+w)|dw ,
\\
		(R^\alpha_4 \g)(x)
	&:= \frac 1 {|c'|^{\alpha +2}} \int_{\mathbb R/ \mathbb Z}  			
			\bigg(
				 2 \frac {\g(x+w)-\g(x)-w\g'(x)}{w^2} 
				 -  \g''(x)
			\bigg)
  		\frac {|\g'(x+w)|-|\g'(x)|}{|w|^\alpha}dw.
\end{align*}

Using Taylor's expansion up to first order, we get
\begin{align*}
	d_\g(x+w, w) 
	&= w \g'(x) 
	+ w^2 \int_{0}^1 
		\left\langle 
			\frac {\g'(u+\tau w)}{|\g'(u+\tau w)|}, \g''(u+\tau w)
		\right\rangle d\tau 
	\\& = w \g'(x) (1+ w \tilde X_\g(x,w))
\end{align*}
where 
\begin{align*}
	\tilde X_\g(x,w) 
	:=
	\frac 1 {|\g'|} 
	\int_{0}^1 
		\left\langle 
			\frac {\g'(u+\tau w)}{|\g'(u+\tau w)|}, \g''(u+\tau w)
		\right\rangle
	d\tau,
\end{align*}
and
\begin{align*}
	|\g(x+w) - \g(x)|^2  
	= \left|w\g'(u)+w^2 \int_0^1 \g''(u+\tau w)d\tau \right |^2 
	= w^2 |\g'(x)| (1 +w X_\g(x,w) )
\end{align*}
where
\begin{align*}
	X_{\g}(x,w):= \frac 1 {|\g'|^2 } \left( \g'(u) \int_0^1 \g''(u+\tau w) d\tau +
	w \left(\int_0^1 \g''(u+\tau w) d\tau\right)^2\right).
\end{align*}
Together with the Taylor expansion
\begin{align*}
	|1+x|^{-\sigma} = 1 - \sigma x + \sigma (\sigma+1) x^2
	 \int_{0}^1 (1-\tau) |1+\tau x|^{-\sigma-2} d\tau
\end{align*}
for $\sigma >0$ and $x >-1$, this leads to 
\begin{align*}
		&\frac 1 {|\g(x+w)-\g(x)|^\sigma} 
		- \frac 1{|\g'(x)|^\sigma |w|^\sigma} 
	=	
		\frac 1 {|\g'(x)|^\sigma |w|^\sigma} 
		\left( (1+X_\g(x,w))^{-\frac \sigma 2}
			-1
		\right)
\\
	&=
		 \frac 1 {|\g'(x)|^\sigma |w|^\sigma}  
		\Bigg(
			-\frac \sigma 2 w X_\g(x,w) 
			\\  & \quad \quad \quad \quad \quad \quad \quad \quad + \frac \sigma 2 \left(\frac \sigma 2+1\right) w^2 X_\g (u,w)^2 
			\int_0^1 (1+ \tau w X_\g(x,w))^{-\frac \sigma 2-2}d\tau
		\Bigg)	
\end{align*}
and
\begin{multline*}
		\frac 1 {d(x+w,x)^\sigma} 
		- \frac 1{|\g'(x)|^\sigma |w|^\sigma}
\\
	=\frac 1 { |\g'(u)|^\sigma |w|^\sigma} 
		\left( - \sigma w\tilde X_\g(x,w) + \sigma (\sigma+1) w^2 \tilde X_{\g}(u,w)^2
		\int_0^1 (1+ \tau w \tilde X)^{-\sigma-2} d\tau\right).	
\end{multline*}

Furthermore, we will use the identities
\begin{align*}
 \g(x+w) - \g(x) - w \g'(x) = w^2 \int_{0}^1 (1-\tau) \g''(x+\tau w) d \tau 
\end{align*}
and
\begin{equation*}
 |\g'(x+w)| - |\g'(x)| = w\int_{0}^1  \left\langle \frac {\g'(x+\tau w)}{|\g'(x+\tau w|}, \g''(x+\tau w)\right\rangle d\tau.
\end{equation*}

Plugging these formulas into the expressions for the terms $R^\alpha_1, R^\alpha_2, R^\alpha_3, R^\alpha_4$
and factoring out, we see that they can be written as the sum of integrals as in the following Lemma~\ref{lem:estimateForQuasilinearStructure}. Hence, Lemma~\ref{lem:estimateForQuasilinearStructure} completes the proof.
\end{proof}

\begin{lemma} \label{lem:estimateForQuasilinearStructure}
For $\tilde l_1 \leq l_1$, $\tilde l_2 \leq l_2$, and $\tilde l_3 \leq l_3$ and a multilinear operator $M$ let
\begin{align*}
	I := \int_{[0,1]^{{\tilde l}_1+ {\tilde l}_2+{\tilde l}_3}}
	&M(\g''(x+\tau_1 w),\ldots,\g''(x+\tau_{l_1}w), 
		\g'(x+\tau_{l_1+1}), \ldots, \g'(x+\tau_{l_1 +l_2} w),
\\
		&\frac {\g'(x+\tau_{l_1+l_2+1}w)} {|\g'(x+\tau_{l_1+l_2+1}w)|}, 
		\ldots ,
		\frac {\g'(x+\tau_{l_1+l_2+l_3}w)} {|\g'(x+\tau_{l_1+l_2+l_3})|} \bigg)
\\ & \quad \quad \quad \quad \quad \quad
	d\tau_1 \cdots d\tau_{{\tilde l}_1} d\tau_{l_1+1} \cdots d\tau_{l_1+{\tilde l}_2} d\tau_{l_1+l_2 +1} \cdots d\tau_{l_1+l_2+{\tilde l}_3}, 
\end{align*}
i.e. we integrate over some of the $\tau_i$ but not over all.
Then for $\tilde \alpha \in (0,1)$ the functionals
\begin{align*}
	\tilde T_1(\g)(x) &:= 
	\int_{-1/2}^{1/2} 
		\frac {I}
		{w |w|^{\tilde \alpha}} dw, \\
	\tilde T_2(\g)(x) &:= 
	\int_{-1/2}^{1/2} 
		\frac {I (\int_0^1 (1+\tau w \tilde X(x,w))^{-\sigma})d\tau}
		{|w|^{\tilde \alpha}} 
	dw,
	\\
	T_1(\g)(x) &:= 
	\int_{-1/2}^{1/2} 
		\frac {I }
		{w |w|^{\tilde \alpha}} 
	dw, \\
	\intertext{and}
	T_2(\g)(x) &:= 
	\int_{-1/2}^{1/2} 
		\frac {I (\int_0^1 (1+\tau w X(x,w))^{-\sigma})d\tau}
		{|w|^{\tilde \alpha}} 
	dw
\end{align*}
are analytic from $C^{\beta + \tilde \alpha+2}$ to $C^{\beta}$ for all 
$\beta>0$.
\end{lemma}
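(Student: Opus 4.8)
The plan is to write each of the four functionals as a \emph{bounded multilinear} operator in the building blocks $\g''$, $\g'$ and $\g'/|\g'|$, precomposed with the map $\g\mapsto\bigl(\g'',\dots,\g'';\,\g',\dots,\g';\,\tfrac{\g'}{|\g'|},\dots,\tfrac{\g'}{|\g'|}\bigr)$, which is analytic from $C^{\beta+\tilde\alpha+2}$ into a finite product of copies of $C^{\beta+\tilde\alpha}$ and $C^{\beta+\tilde\alpha+1}$ (on the open set of curves for which the functionals are defined, i.e.\ regular and, where chord lengths enter, injective). Here one uses that differentiation, pointwise multiplication, and Nemytskii operators induced by real-analytic functions -- in particular $v\mapsto v/|v|$ and $t\mapsto(1+t)^{-\sigma}$ on suitable open sets -- act analytically between H\"older spaces, and that a bounded multilinear map between Banach spaces is a polynomial, hence analytic; so it suffices to treat the two resulting model operators. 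The scalar factor $\int_0^1(1+\tau w\tilde X_\g(x,w))^{-\sigma}\,d\tau$ in $\tilde T_2$ and its analogue with $X_\g$ in $T_2$ are harmless: $1+\tau w\tilde X_\g$ and $1+\tau wX_\g$ interpolate between $1$ and the positive quantities $d_\g(x+w,x)/(|w|\,|\g'(x)|)$, resp.\ $|\g(x+w)-\g(x)|^2/(w^2|\g'(x)|^2)$, hence stay in a compact subinterval of $(0,\infty)$ locally uniformly in $\g$ and uniformly in $(\tau,w)$; thus for fixed $w$ this factor depends analytically on $\g\in C^{\beta+\tilde\alpha+2}$ with values in $C^{\beta+\tilde\alpha}$, with all derivatives bounded uniformly in $w\in[-\tfrac12,\tfrac12]$, and we absorb it into the integrand.

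The weakly singular model ($\tilde T_2,T_2$) is easy. Since $\tilde\alpha<1$, the kernel $|w|^{-\tilde\alpha}$ is integrable on $[-\tfrac12,\tfrac12]$. For each fixed $w$ the integrand is, as a function of $x$, a finite product of translates of $\g''$, $\g'$, $\g'/|\g'|$ (some averaged over a $\tau\in[0,1]$, which by translation invariance does not increase the $C^\beta$-norm) and of the bounded factor above; hence it lies in $C^{\beta+\tilde\alpha}\subset C^\beta$ with $C^\beta$-norm bounded \emph{uniformly in $w$}, by the multiplicativity of the $C^\beta$-norm. Minkowski's inequality for the $C^\beta$-valued integral $\int_{-1/2}^{1/2}(\,\cdot\,)\,|w|^{-\tilde\alpha}dw$ then shows $\tilde T_2,T_2\colon C^{\beta+\tilde\alpha+2}\to C^\beta$ boundedly, and composing with the analytic building-block map proves analyticity.

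The singular model ($\tilde T_1,T_1$) is the heart of the matter, and is precisely the regime of the estimate for multilinear Hilbert transforms in Lemma~\ref{lem:HilberttransformGeneral}. The kernel $\tfrac{1}{w|w|^{\tilde\alpha}}=\tfrac{\operatorname{sgn}w}{|w|^{1+\tilde\alpha}}$ is odd and non-integrable, so cancellation is essential. Writing the integrand as $\Phi_\g(x,w):=M(\dots)$, at $w=0$ every averaged or evaluated factor collapses to its value at $x$, so $\Phi_\g(x,0)=M\bigl(\g''(x),\dots;\g'(x),\dots;\tfrac{\g'}{|\g'|}(x),\dots\bigr)$, and since $\operatorname{p.v.}\int_{-1/2}^{1/2}\tfrac{dw}{w|w|^{\tilde\alpha}}=0$ one may replace $\Phi_\g(x,w)$ by $\Phi_\g(x,w)-\Phi_\g(x,0)$ without changing the operator. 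By multilinearity and the H\"older continuity of $w\mapsto\g^{(k)}(x+\tau w)$ one has $|\Phi_\g(x,w)-\Phi_\g(x,0)|\le C|w|^{\delta}$ with $\delta:=\min\{1,\beta+\tilde\alpha\}>\tilde\alpha$ -- the roughest factors, the $\g''$'s, being only $C^{\beta+\tilde\alpha}$, but $\beta+\tilde\alpha>\tilde\alpha$ since $\beta>0$. Hence $\tfrac{(\Phi_\g(x,w)-\Phi_\g(x,0))\operatorname{sgn}w}{|w|^{1+\tilde\alpha}}=O(|w|^{\delta-1-\tilde\alpha})$ is absolutely integrable near $0$, the principal value exists, and $\tilde T_1$ maps boundedly into $C^0$. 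For the $C^\beta$-seminorm one splits, for $x,x'$ with $h=|x-x'|$, the $w$-integral at $|w|=h$: the near part $\{|w|\le h\}$ is controlled by the $|w|^\delta$-bound applied at $x$ and at $x'$, and on the far part $\{|w|>h\}$ one distributes both the $x$-increment and the subtraction of $\Phi_\g(\cdot,0)$ over the individual factors (using that they lie in $C^{\beta+\tilde\alpha}$, resp.\ the smoother $C^{\beta+\tilde\alpha+1}$), after which the remaining $w$-integral converges because $\tilde\alpha<1$ and $\beta>0$; this near/far decomposition is exactly what Lemma~\ref{lem:HilberttransformGeneral} provides. As the kernel has order $\tilde\alpha$ and the roughest inputs are $C^{\beta+\tilde\alpha}$, the output is $C^\beta$, and multilinearity upgrades this to analyticity.

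The single genuine obstacle is the singular model, i.e.\ the $C^\beta$-continuity half of Lemma~\ref{lem:HilberttransformGeneral} (the near/far estimate just sketched) -- which is exactly why the paper isolates that statement. Everything else (analyticity of the building blocks, the integrable-kernel case, and the check that every term arising when the Taylor expansions of the chord- and arc-length errors are substituted into $R^\alpha_1,\dots,R^\alpha_4$ is literally of the form $\tilde T_1,\tilde T_2,T_1$ or $T_2$ with $\tilde\alpha=\alpha-2\in(0,1)$) is routine homogeneity bookkeeping.
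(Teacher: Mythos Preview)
Your approach is essentially the same as the paper's. Both treat the singular operators $T_1,\tilde T_1$ via the multilinear Hilbert transform estimate (the paper cites Remark~\ref{rem:MultilineraHilbertTransform}, you re-sketch its near/far proof inline), handle the non-multilinear ingredients $\g'/|\g'|$ and $(1+\tau w X_\g)^{-\sigma}$ as analytic Nemytskii operators (the paper's Lemma~\ref{lem:AnalyticityOfCompositions}), and pass from uniform-in-$(w,\tau)$ bounds on the integrand to analyticity of the integral (the paper's Lemma~\ref{lem:AnalyticityOfParametricIntegrals}, your Minkowski argument). One small wording issue: for $T_2,\tilde T_2$ your sentence ``Minkowski's inequality\dots shows $\tilde T_2,T_2\colon C^{\beta+\tilde\alpha+2}\to C^\beta$ boundedly, and composing with the analytic building-block map proves analyticity'' is not quite right, because the factor $(1+\tau w X_\g)^{-\sigma}$ is genuinely nonlinear in $\g$ and cannot be absorbed into a bounded \emph{multilinear} map; what you need (and stated one sentence earlier) is that all Fr\'echet derivatives of the integrand in $\g$ are bounded uniformly in $w$, which is exactly the hypothesis of Lemma~\ref{lem:AnalyticityOfParametricIntegrals}.
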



\begin{proof}

The statement of the lemma for $\tilde T_1$ and $T_1$ follows immediately from the boundedness of the multilinear Hilberttransform in H\"olderspaces as stated in Remark \ref{rem:MultilineraHilbertTransform} combined with the Lemmata~\ref{lem:AnalyticityOfCompositions} and \ref{lem:AnalyticityOfParametricIntegrals}.

Using a similar argument, one deduces that $\g \rightarrow 1+ \tau X_\g$ is analytic, hence we get that for a given $\g_0$ there is a neighborhood
$U$ such that
\begin{equation*}
	\|D_\g^m (1+\tau X_\g(\cdot,w)\|_{L(C^{\beta+2},C^{\beta})} \leq C m!
\end{equation*}
for all $\g \in U$ where $C$ does not depend on $w$ and $\tau$. 
Using that $v \rightarrow |v|^{\sigma}$
is analytic away from $0$, we deduce that
\begin{equation*}
	\|D^m_\g (1+\tau X_\g(\cdot,w)^{\sigma}\|_{L(C^{\beta+2},C^{\beta})} \leq C m!
 \end{equation*} 
using Lemma~\ref{lem:AnalyticityOfCompositions}. Hence, the integrands in the definitions of $T_2$ and $
\tilde T_2$ satisfy the assumptions of Lemma~\ref{lem:AnalyticityOfParametricIntegrals}. Hence, $T_2$ and $\tilde T_2$ are even analytic operators from
$C^{\beta + 2}(\mathbb R / \mathbb Z, \mathbb R^n)$ to 
$C^{\beta} (\mathbb R / \mathbb Z, \mathbb R^n)$. 

\end{proof}

\subsection{Short Time Existence}

Using the quasilinear form of $\mathcal H^\alpha$, we derive short time existence
results for the gradient flow of  O'Hara's energies in this section.
For this task, we will work with families of curves that are normal graphs over a
fixed smooth curve $c_0$ and whose normal part belongs to a small neighborhood of $0$ in $h^\beta$, $\beta > \alpha$.


To describe these neighborhoods, note that there is a strictly positive,
lower semi-continuous
function $r:C_{i,r}^2(\mathbb R / \mathbb Z, \mathbb R^n) \rightarrow
(0,\infty)$ such that 
\begin{equation*} 
 \g + \{N\in C^1(\mathbb R / \mathbb Z, \mathbb
 R^n)^\bot_{\g_0} : \|N\|_{C^1} <r(\g)\}
\end{equation*}
only contains regular embedded curves for all $\g \in C_{i,r}^{1}(\mathbb R / \mathbb Z, \mathbb R^n)$ and
\begin{equation} \label{eq:RadiiComparabilityOfDerivatives}
 r(\g) \leq 1/2 \inf_{x\in \mathbb R / \mathbb Z} |\g'(x)|.
\end{equation}
Here, 
$C^\beta (\mathbb R / \mathbb Z, \mathbb R^n)_\g^\bot$ denotes the space
of all vector fields $N \in C^\beta (\mathbb R / \mathbb Z , \mathbb R^n)$ which are normal to $\g$, i.e. for which
$\langle\g'(u), N(u)\rangle=0$ for all $u \in \mathbb R / \mathbb
Z$. 
Letting 
$$
\mathcal V_{r,\beta} ( \g) := \{N\in h^{\beta}(\mathbb R / \mathbb Z, \mathbb
 R^n)^\bot_{\g} : \|N\|_{C^1} <r(\g)\}
$$ 
we have for all $\g \in h_{i,r}^{\beta}(\mathbb R/ \mathbb Z, \mathbb
R^n)$
\begin{equation} \label{eq:RadiiDontLeaveImbeddings}
 \g + \mathcal V_{r, \beta}(\g)\subset h^{\beta}_{i,r} (\mathbb R /\mathbb
 Z, \mathbb R^n).
\end{equation}

Let $N\in \mathcal V_{r,\beta}(\g)$. Equation~(\ref{eq:RadiiComparabilityOfDerivatives}) guarantees that
$P^{\bot}_{(\g + N)'(u)} $ is an isomorphism from the normal
space along $\g$  at $u$ to the normal space along $\g+N$. Otherwise there would be a $v\not=0$
in the normal space of $\g$ at $u$ such that 
\begin{equation*}
 0 =  P^{\bot}_{(\g + N)'(u)} (v)  = v - \left\langle v, \frac {(\g +
    N)'(u)}{|(\g + N)'(u)|} \right\rangle \frac {(\g +
    N)'(u)}{|(\g + N)'(u)|} 
\end{equation*}
which would contradict
\begin{align*}
\left| v - \left\langle v, \frac {(\g +
    N)'(u)}{|(\g + N)'(u)|} \right\rangle \frac {(\g +
    N)'(u)}{|(\g + N)'(u)|} \right|\geq |v| - \left|\left\langle v, \frac {
    N'(u)}{|(\g + N)'(u)|} \right\rangle| \right|\geq  |v|/2 >0.
\end{align*}
For $\g \in C^1 ((0,T),C_{i,r}^1(\mathbb R / \mathbb Z, \mathbb
R^n))$ we denote by
\begin{equation*}
  \partial_t^\bot \g = P_{\g'}^{\bot}( \partial_t \g)
\end{equation*}
the normal velocity of the family of curves. 

We prove the following strengthened version of the short time existence result mentioned at the beginning 
of Section~\ref{sec:ShortTimeExistence}.

\begin{theorem}[Short time existence for normal graphs] \label{thm:ShortTimeExistenceForGraphs}

Let $\g_0\in C^{\infty}(\mathbb{R}/\mathbb{Z}, \mathbb R^n)$ be an embedded regular
curve, $\alpha \in (2,3)$, and $\beta>\alpha$, $\beta \notin \mathbb N$.
Then for every $N_0 \in \mathcal V_{r,\beta} (\g_0)$ there is a constant $T=T(N_0)>0$ and a neighborhood
$U\subset \mathcal V_{r,\beta}$ of $N_0$
such that for every $\tilde N_0 \in U$ there is a unique solution
$N_{\tilde N_0}\in C([0,T),h^{\beta}(\mathbb{R}/\mathbb{Z})_{\g_0}^{\bot})\cap 
C^1((0,T),C^{\infty}(\mathbb{R}/\mathbb{Z})_{\g_0}^{\bot})$
of 
\begin{equation} \label{eq:EvolutionEquationNormalGraph}
\begin{cases}
  \left\{\partial^\bot_{t}(\g_0+N\right)=-H^\alpha(\g_0+N) + \lambda \kappa_{\g_0+N}& t\in[0,T],\\
  N(0)=\tilde N_{0}.
  \end{cases}
\end{equation}
Furthermore, the flow $(\tilde N_0,t) \mapsto N_{\tilde N_0}(t)$
is in $C^1((U \times (0,T)),C^{\infty}(\mathbb{R}/\mathbb{Z}))$.

\end{theorem}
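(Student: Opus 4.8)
The plan is to cast \eqref{eq:EvolutionEquationNormalGraph} as an abstract quasilinear parabolic evolution equation of order $\alpha+1$ in the little H\"older scale and apply the standard Banach fixed-point / maximal regularity machinery of Angenent. First I would use the quasilinear structure from Theorem~\ref{thm:QuasilinearStructure}: writing $\g=\g_0+N$ with $N\in\mathcal V_{r,\beta}(\g_0)$, the right-hand side $-H^\alpha(\g_0+N)+\lambda\kappa_{\g_0+N}$ decomposes as $-\frac{\alpha}{|\g'|^{\alpha+1}}P^\bot_{\g'}(Q^\alpha\g)+(\text{analytic lower-order terms})$. Since $Q^\alpha$ is, up to a smoothing remainder, the Fourier multiplier $|\xi|^{\alpha-1}$ applied to $\g''$, the principal part acting on the normal graph function $N$ is, after projecting with the isomorphism $P^\bot_{(\g_0+N)'}\colon C^\beta(\mathbb R/\mathbb Z)^\bot_{\g_0}\to C^\beta(\mathbb R/\mathbb Z)^\bot_{\g_0+N}$, a linear operator $A(N)$ that depends (analytically, by Lemma~\ref{lem:estimateForQuasilinearStructure} and the analyticity lemmata) on $N\in h^\beta_{i,r}$ as an element of $L(h^{\beta+\alpha+1}, h^\beta)$. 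Freezing $N=N_0$ gives the linearized operator $A_0:=A(N_0)$; the normal velocity constraint $\partial_t^\bot(\g_0+N)=P^\bot_{(\g_0+N)'}(\partial_t N)$ combined with the isomorphism property turns \eqref{eq:EvolutionEquationNormalGraph} into
\begin{equation*}
  \partial_t N + A_0 N = f(N),\qquad N(0)=\tilde N_0,
\end{equation*}
where $f$ collects the difference $(A_0-A(N))N$ together with all the analytic lower-order terms, and $f$ is an analytic (in particular locally Lipschitz, with small Lipschitz constant near $N_0$ on short time intervals) map from a neighborhood of $N_0$ in $C([0,T],h^\beta_{i,r})\cap(\text{higher regularity})$ into the appropriate data space.

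The key analytic input, which I would either quote or prove from scratch in the subsequent subsection as the paper announces, is a \emph{maximal regularity} statement for the frozen linear problem $\partial_t N+A_0 N=g$, $N(0)=N_{\mathrm{init}}$: namely that $A_0$ generates an analytic semigroup on $h^\beta(\mathbb R/\mathbb Z)^\bot_{\g_0}$ and that the solution operator is an isomorphism between the right continuous-interpolation spaces adapted to the order-$(\alpha+1)$ scale. Concretely this means establishing the resolvent estimate $\|(\lambda+A_0)^{-1}\|_{L(h^\beta)}\le C/|\lambda|$ for $\lambda$ in a sector, which reduces—via the pseudodifferential/Fourier-multiplier representation of the principal symbol $\frac{\alpha}{|\g_0'|^{\alpha+1}}|\xi|^{\alpha-1}$ times the normal projection, plus a perturbation argument absorbing the lower-order analytic remainder $F^\alpha$ and the non-constant coefficient $|\g_0'|^{-(\alpha+1)}$—to the flat, constant-coefficient model operator $|\xi|^{\alpha-1}$ on $\mathbb R/\mathbb Z$, for which the estimate is classical. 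The positivity needed for the sectoriality (ellipticity of the leading symbol in the normal directions) is exactly what the sign of $Q^\alpha$'s symbol gives.

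With maximal regularity in hand, the existence, uniqueness, and smooth dependence on $\tilde N_0$ follow from a contraction mapping argument in the spirit of \cite{Angenent1990}: on the space $\mathbb E_T$ of functions with the maximal-regularity norm on $[0,T]$, the map $N\mapsto$ (solution of $\partial_t\cdot+A_0\cdot=f(N)$, $\cdot(0)=\tilde N_0$) is, for $T$ small enough and $U$ a small enough neighborhood of $N_0$, a self-map and a contraction, uniformly for $\tilde N_0\in U$; its fixed point is the desired solution $N_{\tilde N_0}$. The parabolic smoothing inherent in analytic-semigroup maximal regularity upgrades $N_{\tilde N_0}(t)$ to $C^\infty$ for $t>0$ (bootstrap: once $N\in h^\beta$ the right-hand side lies in $C^\beta$, hence $N\in h^{\beta+\alpha+1-\varepsilon}$ for positive times, and one iterates using $F^\alpha\in\bigcap_{\beta'}C^\omega(C^{\alpha+\beta'},C^{\beta'})$), giving the second regularity statement, while analytic/$C^1$ dependence of the fixed point on the parameter $\tilde N_0$ and on $t$ comes from the implicit function theorem applied to the fixed-point equation, yielding the asserted $C^1((U\times(0,T)),C^\infty)$ regularity of the flow.

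The main obstacle is the maximal regularity result for $A_0$: the operator is \emph{nonlocal} of non-integer order $\alpha+1$, so one cannot invoke textbook parabolic theory directly. I expect the delicate point to be verifying the sectorial resolvent bound for the principal part $N\mapsto\frac{\alpha}{|\g_0'|^{\alpha+1}}P^\bot_{\g_0'}Q^\alpha(\g_0+N)\big|_{\text{linearized}}$ on the little H\"older space $h^\beta$—handling simultaneously the variable coefficient $|\g_0'|^{-(\alpha+1)}$, the normal projection (a zeroth-order but non-scalar multiplier), and the commutator errors between $Q^\alpha$ and multiplication—so that after freezing coefficients and a partition-of-unity/perturbation argument the problem is genuinely reduced to the constant-coefficient multiplier $|\xi|^{\alpha-1}$, whose sectoriality on $h^\beta(\mathbb R/\mathbb Z)$ one then proves by hand. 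Everything else—the analyticity of $f$, the contraction estimates, the bootstrap—is routine given Theorem~\ref{thm:QuasilinearStructure} and the analyticity lemmata already set up.
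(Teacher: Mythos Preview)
Your overall strategy---quasilinear structure from Theorem~\ref{thm:QuasilinearStructure}, maximal regularity for the frozen linear problem, Banach fixed point, then bootstrap---matches the paper's. But there are two points where the paper's route differs from yours in a way that matters.

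First, the paper sidesteps what you call the ``main obstacle''---the non-scalar normal projection in the principal part---by trivializing the normal bundle. One picks smooth orthonormal normal fields $\nu_1,\dots,\nu_{n-1}$ along $\g_0$, writes $N_t=\sum_i\phi_{i,t}\nu_i$, and observes that $P^\bot_{\g'}(Q^\alpha(\g_0+\sum_i\phi_i\nu_i))=\sum_i(Q^\alpha\phi_i)P^\bot_{\g'}\nu_i+\text{(lower order)}$, using the Leibniz-type Lemma~\ref{lem:LeibnizRule} for the commutator $Q^\alpha(\phi_i\nu_i)-(Q^\alpha\phi_i)\nu_i$. Since $\{P^\bot_{\g'}\nu_i\}$ is still a basis of the normal space, inverting this frame map reduces the equation to a genuinely \emph{scalar} quasilinear system $\partial_t\phi=\frac{2}{|\g'|^s}Q^\alpha\phi+(\text{analytic lower order})$ on $\mathbb R^{n-1}$-valued functions. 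The projection, the variable coefficient, and the vector-valued nature all disappear from the principal symbol, and what remains is exactly the scalar operator $a(t,x)Q^\alpha$ covered by the linear theory in Lemmas~\ref{lem:MaximalRegularity} and Theorem~\ref{lem:JIsAnIsomorphism}. So the delicate point you worried about is not handled by a perturbation/commutator argument at the level of the vector operator, but is dissolved by passing to scalar components first.

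Second, the paper does not go through sectoriality and resolvent estimates for $A_0$. Instead it proves maximal regularity directly in the weighted spaces $X^{\theta,\beta}_T$, $Y^{\theta,\beta}_T$ via heat-kernel estimates for $\partial_t+\lambda(-\Delta)^{s/2}$ on $\mathbb R$ (Lemma~\ref{lem:HeatKernelEstimates}), Duhamel's formula, the trace-space identification $(X^{\theta,\beta}_T)|_{t=0}=h^{\beta+s\theta}$, and then a localization/freezing argument to pass from constant coefficients on $\mathbb R$ to $a(t,x)Q^\alpha$ on $\mathbb R/\mathbb Z$ (Lemma~\ref{lem:MaximalRegularity}); existence of the linear inverse is obtained by the method of continuity. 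Your semigroup/sectoriality route would also work in principle, but the paper's approach is more self-contained and avoids invoking pseudodifferential mapping properties on $h^\beta$.
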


The proof of Theorem~\ref{thm:ShortTimeExistenceForGraphs} consists of two steps. First we show that
(\ref{eq:EvolutionEquationNormalGraph}) can be transformed into an
abstract quasilinear system of parabolic type. The second step is to
establish short time existence results for the resulting equation. 

The second step can be done using general results about analytic
semigroups, regularity of pseudo-differential operators with rough
symbols \cite{Bourdaud1988}, and the short time existence results for quasilinear
equations in \cite{Angenent1990} or \cite{Amann1993}. Furthermore,
we need continuous dependence of the solution on the data and
smoothing effects in order to derive the long time existence results in Section~\ref{sec:ExponentialConvergence}.
  
For the convenience of the reader, we go a different way here and present a self-contained
proof of the short time existence that only relies on a
characterization of the little H\"older spaces as trace spaces. In
Subsection~\ref{subsec:APrioriEstimates}, we deduce a maximal
regularity result for solutions of linear equations of type $\partial_t u +a(t) Q u +b(t)u =f$ in little
H\"older spaces using heat kernel estimates. Following ideas from
\cite{Angenent1990}, we then prove short time existence and
differentiable dependence on the data for the quasilinear equation.

%

\subsubsection{The Linear Equation} \label{subsec:APrioriEstimates}

We will derive a priori estimates and existence results
for linear equations of the type
\begin{equation*}
 \begin{cases}  \partial_t u + a Q^s u + bu = f  \text { in } \mathbb R /
   \mathbb Z \times (0,T)\\
   u(0)=u_0
\end{cases}
\end{equation*} 
using little H\"older spaces, 
where $b(t) \in  L(
h^\beta(\mathbb R / \mathbb Z,\mathbb R^n), C^\beta(\mathbb R
/\mathbb Z , \mathbb R^n))$ and $a(t) \in h^\beta(\mathbb R / \mathbb Z,(0,\infty))$.
%

For $\theta \in (0,1)$, $\beta>0$,
and $T>0$ we will consider solutions that lie in the space
\begin{multline*}
X^{\theta,\beta}_T:=\bigg\{g \in C((0,T),h^{\beta+s}(\mathbb R / \mathbb
Z, \mathbb R^n)) \cap C^1((0,T), h^{\beta}(\mathbb R / \mathbb Z,
\mathbb R ^n): 
\\
 \sup_{t\in (0,T)} t^{1-\theta} \left( \|\partial_t
    g(t)\|_{C^{\beta}} + \|g(t)\|_{C^{s +\beta}}\right) < \infty \bigg\}
\end{multline*}
and equip this space with the norm
\begin{equation*}
\|g\|_{X^{\theta,\beta}_{T}} :=\sup_{t\in (0,T)} t^{1-\theta} \left(\|\partial_t
    g(t)\|_{C^{\beta}} + \|g(t)\|_{C^{s+\beta}}\right).
\end{equation*}
The right hand side $f$ of our equation should then belong to the space
\begin{equation*}
Y^{\theta,\beta}_T:=\left\{g \in C((0,T),h^{\beta}(\mathbb R / \mathbb
Z, \mathbb R^n)): \sup_{t\in (0,T)} t^{1-\theta }
\|g(t)\|_{C^{\beta}}<\infty \right\}
\end{equation*}
equipped with the norm
\begin{equation*}
\|g\|_{Y^{\theta,\beta}_{T}} :=\sup_{t\in (0,T)} t^{1-\theta} \|g(t)\|_{C^{\beta}}. 
\end{equation*}

From the trace method
in the theory of interpolation spaces (cf. \cite[Section 1.2.2]{Lunardi1995}), 
the following relation of the space $X_{\theta,\beta}$ to the little H\"older space $h^{\beta+s\theta}$ is well known
if $\beta + s \theta$ is not an integer:

If $u\in X^{\theta, \beta}_{T}$ then $u(t)$
converges in $h^{\beta + s \theta}$ to a function $u(0)$
as $t \searrow 0$ with
\begin{equation} \label{eq:TraceSpace}
  \|u(0)\|_{C^{\beta+s \theta}} \leq C \|u\|_{X^T_{\theta, \beta}}.
\end{equation}
On the other hand, for every $u_0 \in h^{\beta + s \theta}$
there is a $u \in X^{\theta, \beta}_{T}$ such that  $u(t)$
converges in $h^{\beta + s \theta}$ to  $u(0)$ for $t \rightarrow 0$  and
\begin{equation} \label{eq:TraceSpace2}
  \|u\|_{X^T_{\theta, \beta}} \leq \|u_0\|_{C^{\beta+s \theta}}.
\end{equation} 
Given $u_0$ we will see that the 
solution of the initial value problem
\begin{equation*}
 \begin{cases}
  \partial_t u + (-\Delta)^{s/2} u = 0 \quad &\text{ on } \mathbb R^n \\
  u = u_0 &\text{ at } t=0.
 \end{cases}
\end{equation*}
satisfies \eqref{eq:TraceSpace2}.
The well-known embedding $X_T^{\theta, \beta} \subset C^{\theta}((0,T),
C^\beta(\mathbb R / \mathbb Z, \mathbb R ^n))$ will also be essential in the proof.

The aim of this subsection is to prove the following theorem about the
solvability of our linear equation:

\begin{theorem} \label{lem:JIsAnIsomorphism}
Let $T>0$, $\beta>0$,  $\theta\in (0,1)$ with $\beta+s\theta
\notin \mathbb N$, and
$$a\in C^1([0,T],h^\beta (\mathbb R /\mathbb
  Z, [1/\Lambda,\infty))) , \quad  b\in C^0((0,T), L(h^\beta(\mathbb R /
  \mathbb Z, \mathbb R^n),h^\beta(\mathbb R / \mathbb Z, \mathbb R^n)))$$
  with
  \[ \|a\|_{C^1([0,T],C^\beta)}+\sup_{t \in (0,T)}t^{1-\theta} \|b(t)\|_{L({h^\beta, h^\beta})}< \infty \]
Then the mapping $J:u \mapsto (u(0), \partial_t u + a Q^{s-1}u + bu)$
defines an isomorphism between $X^{\theta, \beta}_T$ and
$h^{\beta+\theta s} (\mathbb R/ \mathbb Z, \mathbb R^n)\times Y^{\theta, \beta}_T$.
\end{theorem}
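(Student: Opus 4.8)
The plan is to establish the isomorphism in Theorem~\ref{lem:JIsAnIsomorphism} by first reducing to the constant-coefficient model case and then treating the variable coefficients $a$, $b$ as perturbations. \emph{Step 1: The model operator.} I would first prove the statement for the pure fractional heat operator $\partial_t u + (-\Delta)^{s/2} u$ on $\mathbb R / \mathbb Z$ (equivalently, for $Q^s$ with frozen coefficient $a \equiv 1$, $b \equiv 0$), showing that $u \mapsto (u(0), \partial_t u + (-\Delta)^{s/2}u)$ is an isomorphism from $X^{\theta,\beta}_T$ onto $h^{\beta + \theta s} \times Y^{\theta,\beta}_T$. The key analytic input here is precise estimates on the convolution kernel $k_t$ of the semigroup $e^{-t(-\Delta)^{s/2}}$: one needs the bounds $\|k_t\|_{L^1} \lesssim 1$, $\|\partial_t k_t\|_{\mathcal M} \lesssim t^{-1}$ and the H\"older-type bounds on spatial difference quotients of $k_t$ that yield smoothing from $h^{\beta + \theta s}$ into $h^{\beta + s}$ with the correct $t^{\theta - 1}$ blow-up. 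Given $u_0 \in h^{\beta + \theta s}$ and $f \in Y^{\theta,\beta}_T$, the solution is written by Duhamel's formula $u(t) = e^{-t(-\Delta)^{s/2}} u_0 + \int_0^t e^{-(t-\tau)(-\Delta)^{s/2}} f(\tau)\, d\tau$; the homogeneous part lands in $X^{\theta,\beta}_T$ with the right norm by \eqref{eq:TraceSpace2}, and the inhomogeneous part is estimated by splitting the $\tau$-integral and using the kernel bounds together with the weight $\tau^{\theta-1}$, an essentially standard singular-integral computation. Injectivity/uniqueness follows because a solution with zero data satisfies the homogeneous equation and, by the trace embedding \eqref{eq:TraceSpace}, has vanishing initial trace, so a maximum-principle or energy/Fourier argument forces $u \equiv 0$.

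\emph{Step 2: Freezing coefficients and localization.} For the operator $a(t) Q^{s-1} + b(t)$ with genuinely variable $a$, I would use the standard freezing-of-coefficients technique: cover $\mathbb R / \mathbb Z$ by finitely many small arcs, on each arc replace $a(t,\cdot)$ by its value $a(t_0, x_0)$ at a fixed point, and absorb the difference $a(t,x) - a(t_0,x_0)$ into the right-hand side. Because $a \in C^1([0,T], h^\beta)$ takes values in $[1/\Lambda, \infty)$ and $h^\beta$ is the closure of $C^\infty$ in $C^\beta$, this difference has arbitrarily small $C^\beta$-operator-norm on a sufficiently fine cover and for $T$ small, so a Neumann-series argument built on Step~1 gives the isomorphism first for small $T$; a continuation/concatenation argument in $t$ then upgrades this to arbitrary $T>0$. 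The zeroth-order term $b$ is easier: since $\sup_t t^{1-\theta}\|b(t)\|_{L(h^\beta, h^\beta)} < \infty$, composition with the Duhamel map from Step~1 produces a contraction on $X^{\theta,\beta}_T$ after shrinking $T$ (the extra $\tau^{\theta-1}$ weight on $b$ is exactly compatible with the $\int_0^t (t-\tau)^{\theta-1}\tau^{\theta-1} d\tau$-type bound), and again concatenation removes the smallness of $T$.

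\emph{Main obstacle.} I expect the genuinely hard part to be Step~1, specifically obtaining the sharp mapping properties of the fractional heat semigroup on little H\"older scales with the correct temporal weights — in particular, showing that the Duhamel integral maps $Y^{\theta,\beta}_T$ into $X^{\theta,\beta}_T$ boundedly, which is the maximal-regularity heart of the matter and requires carefully handling the nonlocal kernel's slow ($|x|^{-(n+s)}$-type) spatial decay when estimating $C^\beta$-seminorms, rather than the Gaussian decay available in the classical parabolic case. A secondary technical point is checking that all the relevant sums $\beta$, $\beta + s$, $\beta + \theta s$ avoid integers exactly where the little-H\"older/trace-space identifications \eqref{eq:TraceSpace}–\eqref{eq:TraceSpace2} are invoked, and that the freezing argument respects the normal-bundle constraint (here irrelevant since we work componentwise on $\mathbb R / \mathbb Z \to \mathbb R^n$). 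The perturbative Step~2, by contrast, is routine once Step~1 is in hand.
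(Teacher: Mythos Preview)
Your proposal is essentially correct and follows the same analytic core as the paper: heat-kernel estimates for the fractional semigroup, Duhamel's formula, and freezing of coefficients via localization to reduce the variable-coefficient operator to the constant-coefficient model. The paper carries this out in Lemmata~\ref{lem:HeatKernelEstimates}, \ref{lem:MaximalRegularityForConstantCoefficients}, and \ref{lem:MaximalRegularity} (the last of which already contains precisely the three-step freezing argument you sketch: small $\beta$ with $b=0$, then general $\beta$ by differentiating, then general $b$ by absorption), so your Step~2 matches the paper closely.

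The one genuine methodological difference is how surjectivity of $J$ is obtained. You propose a Neumann-series/perturbation argument followed by concatenation in $t$; the paper instead separates the issues cleanly: it first proves the uniform a~priori estimate (Lemma~\ref{lem:MaximalRegularity}) for the whole family $J_\tau = (1-\tau)J_0 + \tau J$, and then invokes the method of continuity, so that surjectivity reduces to checking it for the single constant-coefficient endpoint $J_0$ (done by an explicit Fourier/Duhamel construction and approximation). Your approach would also work, but the method-of-continuity route avoids having to track smallness constants through the Neumann series and makes the passage to arbitrary $T$ automatic. A minor further point: the paper proves the constant-coefficient maximal regularity on $\mathbb R$ (with compactly supported $u$) and only then localizes to $\mathbb R/\mathbb Z$, whereas you propose working on the torus from the start; either is fine.
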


This will be enough to prove short time existence of a solution for some quasilinear equations later on
using Banach's fixed-point theorem.

Equation~\eqref{eq:TraceSpace} already guarantees that 
$J$ is a bounded linear operator. So we only have to prove that it is onto
for which we  will use some a priori results (also called
maximal regularity results in this context)
together with the method of continuity. 

To derive these estimates, we will freeze the coefficients and use
a priori estimates for $\partial_t u+ \lambda (-\Delta)^{s/2} u = f$ on
$\mathbb R$ where $\lambda >0$ is a constant. He observed 
in \cite{He1999}, that the fractional Laplacian can be expressed by
\begin{equation} \label{eq:DefinitionOfDelta32}
c_s  (-\Delta)^{s/2} u =  \; p.v.\! \int_{-\infty}^\infty
  \left(2\frac{u(x+w)-u(x)-wu'(x)}{|w|^2}-u''(x) \right) \frac{dw}{|w|^{s-1}}
\end{equation}
for a $c_{s}>0$ for all $u \in H^{s}(\mathbb R, \mathbb
 R^n)$. We will use this identity together with a localization argument
 to get from $(-\Delta)^{s/2}$ living on 
 $\mathbb R$ back to 
 our operator $Q^s$ which lives on the circle $\mathbb R / \mathbb Z$.

Note that the fractional Laplacian $(-\Delta)^{s/2}$ on $\mathbb R$ 
is bounded from $
C_0^{s + \beta}(\mathbb R, \mathbb R^n)$ to $C^{\beta}(\mathbb R , \mathbb R^n)$ for all 
$\beta > 0$, $\beta \notin \mathbb N$.

Let us consider the heat kernel of the equation $\partial_tu +
(-\Delta)^{s/2} u  = 0$ which is given by
\begin{equation} \label{eq:DefinitionOfHeatKernel}
  G_t (x):= \frac 1 {2\pi} \int_{\mathbb R} e^{2\pi ikx} e ^{-t|2 \pi k|^{s}} dk.
\end{equation}
for all $t>0$ and $x \in \mathbb R$. 

Since $k\mapsto e^{-t|2 \pi k|^s}$ is a Schwartz function, its
inverse Fourier transform $G_t$  is a Schwartz function as
well. Furthermore, one easily sees using the Fourier transformation that
\begin{equation} \label{eq:HeatKernelSolveEvolutionEquation}
  \partial_t G_t + (-\Delta)^{s/2} G_t =0 \quad \text{on }\mathbb R \quad \forall t>0.
\end{equation}
The most important property for us is  the scaling \begin{equation} \label{eq:ScalingOfHeatKernel}
G_t (x)=
t^{-1/s} G_1 (t^{-1/s}x),
\end{equation}
 from which we deduce 
\begin{equation} \label{eq:ScalingOfDerivativesOfTheHeatKernel}\partial^k_x G_t (x) = t^{-(1+k)/s} (\partial^k_x G_1)
(t^{-1/s}x)
\end{equation} and hence
\begin{equation} \label{eq:EstimateL1NormHeatKernel}
\| \partial_x^k G_t\|_{L^1 (\mathbb R)} \leq C_k t^{-k/s}\|\partial_x^kG_1\|_{L^1 (\mathbb
  R)} \leq C_k t^{-k/s}.
\end{equation}

Combining these relations with standard interpolation techniques, 
we get the following estimates for the heat kernel

\begin{lemma}[heat kernel estimates]  \label{lem:HeatKernelEstimates}
For all $0\leq \beta_1 \leq \beta_2$, and $T >0$ there is a constant $C=C(\beta_1,
  \beta_2, T)< \infty $ such that 
\begin{equation*}
 \|G_t \ast f\|_{C^{\beta_2}} \leq C t^{-(\beta_2-\beta_1)/s}
\|f\|_{C^{\beta_1}} \quad \forall f \in
 C^{\beta_1}(\mathbb R , \mathbb R^n), t\in (0,T].
\end{equation*}
\end{lemma}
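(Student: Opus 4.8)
The plan is to derive everything from three facts already in hand: the scaling relation \eqref{eq:ScalingOfDerivativesOfTheHeatKernel}, the $L^{1}$-bounds \eqref{eq:EstimateL1NormHeatKernel}, and the fact that $G_{1}$ (and hence every $\partial_{x}^{k}G_{1}$) is a Schwartz function, so that in particular $\int_{\mathbb R}\partial_{x}^{k}G_{1}=0$ for $k\geq 1$ and $\int_{\mathbb R}|\partial_{x}^{k}G_{1}(z)|\,|z|^{\sigma}\,dz<\infty$ for every $\sigma\geq 0$. The first building block is the case of an \emph{integer gain}: if $k:=\beta_{2}-\beta_{1}\in\mathbb N_{0}$, write $\beta_{1}=m+\sigma$ with $m\in\mathbb N_{0}$, $\sigma\in[0,1)$; then $\partial_{x}^{j}(G_{t}\ast f)=\partial_{x}^{j-m}G_{t}\ast\partial_{x}^{m}f$ for $m\leq j\leq m+k$, so Young's inequality and \eqref{eq:EstimateL1NormHeatKernel} give $\|\partial_{x}^{j}(G_{t}\ast f)\|_{L^{\infty}}\leq Ct^{-(j-m)/s}\|f\|_{C^{\beta_{1}}}$, and the same estimate bounds the $C^{\sigma}$-seminorm of $\partial_{x}^{m+k}(G_{t}\ast f)$ by $\|\partial_{x}^{k}G_{t}\|_{L^{1}}[\partial_{x}^{m}f]_{C^{\sigma}}\leq Ct^{-k/s}\|f\|_{C^{\beta_{1}}}$; for $j\leq m$ one simply uses $\|G_{t}\|_{L^{1}}=\|G_{1}\|_{L^{1}}<\infty$. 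Since $t\leq T$, every exponent $t^{-(j-m)/s}$ with $j-m\leq k$ is dominated by $t^{-k/s}$, whence $\|G_{t}\ast f\|_{C^{\beta_{1}+k}}\leq Ct^{-k/s}\|f\|_{C^{\beta_{1}}}$; the case $\beta_{1}=\beta_{2}$ is included ($k=0$).

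The second building block handles a \emph{non-integer gain up to the top derivative} by exploiting the cancellation of $G_{t}$. From \eqref{eq:ScalingOfDerivativesOfTheHeatKernel} and the rapid decay of $\partial_{x}^{k}G_{1}$ one has, for $k\geq 1$ and $0\leq\sigma<1$,
\[
\int_{\mathbb R}|\partial_{x}^{k}G_{t}(y)|\,|y|^{\sigma}\,dy=t^{(\sigma-k)/s}\int_{\mathbb R}|\partial_{x}^{k}G_{1}(z)|\,|z|^{\sigma}\,dz\leq C\,t^{(\sigma-k)/s},
\]
and, because $\int_{\mathbb R}\partial_{x}^{k}G_{t}=0$, we may write $(\partial_{x}^{k}G_{t}\ast g)(x)=\int_{\mathbb R}\partial_{x}^{k}G_{t}(y)\,(g(x-y)-g(x))\,dy$, so that $\|\partial_{x}^{k}G_{t}\ast g\|_{L^{\infty}}\leq C\,t^{(\sigma-k)/s}\,[g]_{C^{\sigma}}$ for every $g\in C^{\sigma}$.

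With these blocks I would finish by distinguishing three cases. If $\beta_{2}-\beta_{1}\in\mathbb N_{0}$, we are done by the first block. If $\beta_{2}\notin\mathbb N$ and $\beta_{2}-\beta_{1}\notin\mathbb N$, put $k:=\lfloor\beta_{2}-\beta_{1}\rfloor$ and $\theta:=(\beta_{2}-\beta_{1})-k\in(0,1)$; the first block shows that $f\mapsto G_{t}\ast f$ maps $C^{\beta_{1}}$ into $C^{\beta_{1}+k}$ with norm $\leq Ct^{-k/s}$ and into $C^{\beta_{1}+k+1}$ with norm $\leq Ct^{-(k+1)/s}$, and since $C^{\beta_{2}}=(C^{\beta_{1}+k},C^{\beta_{1}+k+1})_{\theta,\infty}$ — the exponent $\beta_{2}=\beta_{1}+k+\theta$ being non-integral — real interpolation of operators gives the norm bound $C(t^{-k/s})^{1-\theta}(t^{-(k+1)/s})^{\theta}=Ct^{-(\beta_{2}-\beta_{1})/s}$. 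Finally, if $\beta_{2}=m_{2}\in\mathbb N$ but $\beta_{2}-\beta_{1}\notin\mathbb N$ (so $\beta_{1}=m_{1}+\sigma_{1}$ with $\sigma_{1}\in(0,1)$ and $m_{2}-m_{1}\geq1$), I would bound the top derivative $\partial_{x}^{m_{2}}(G_{t}\ast f)=\partial_{x}^{m_{2}-m_{1}}G_{t}\ast\partial_{x}^{m_{1}}f$ directly with the second block, applied to $g=\partial_{x}^{m_{1}}f$, $\sigma=\sigma_{1}$, $k=m_{2}-m_{1}$; this produces exactly $t^{(\sigma_{1}-(m_{2}-m_{1}))/s}=t^{-(\beta_{2}-\beta_{1})/s}$, while the lower-order derivatives are estimated as in the first block and absorbed using $t\leq T$.

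There is no deep obstacle here; the single point requiring care is that one must \emph{not} try to reach an integral target exponent $\beta_{2}$ through interpolation, since $(C^{a},C^{a+1})_{\theta,\infty}$ is a Zygmund space strictly larger than $C^{a+\theta}$ precisely when $a+\theta\in\mathbb N$, and interpolation would then only yield the weaker Zygmund bound — this is why the integer-target case is split off and treated by hand via the cancellation estimate of the second block. A reader preferring to avoid interpolation theory entirely can instead bound the H\"older seminorm $[\partial_{x}^{\ell}G_{t}\ast g]_{C^{\sigma}}$ by splitting $\int_{\mathbb R}(\partial_{x}^{\ell}G_{t}(y-h)-\partial_{x}^{\ell}G_{t}(y))(g(x-y)-g(x))\,dy$ into the regions $|y|\leq|h|$ and $|y|>|h|$ and using \eqref{eq:ScalingOfDerivativesOfTheHeatKernel} together with the Schwartz decay of $G_{1}$ on each piece; this is a routine Schauder-type computation yielding the same powers of $t$.
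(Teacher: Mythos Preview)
Your proof is correct and rests on the same two elementary ingredients as the paper's: the $L^{1}$ bound \eqref{eq:EstimateL1NormHeatKernel} together with Young's inequality, and the cancellation $\int\partial_{x}^{k}G_{t}=0$ combined with the weighted $L^{1}$ bound coming from the scaling \eqref{eq:ScalingOfDerivativesOfTheHeatKernel}. The difference lies only in how you assemble these pieces. The paper writes $\beta_{i}=k_{i}+\tilde\beta_{i}$, derives the three intermediate estimates \eqref{eq:IntermediateEstimatesHeatKernel1}--\eqref{eq:IntermediateEstimatesHeatKernel3} (which are exactly your two ``blocks'' specialised to the relevant exponents), and then applies the elementary pointwise interpolation inequality \eqref{eq:InterpolationHoelderSpaces} to the top-order H\"older seminorm, splitting the two cases $\tilde\beta_{2}\ge\tilde\beta_{1}$ and $\tilde\beta_{2}<\tilde\beta_{1}$. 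You instead split on whether $\beta_{2}-\beta_{1}$ or $\beta_{2}$ is an integer and, in the generic case, invoke the real interpolation identity $(C^{a},C^{a+1})_{\theta,\infty}=C^{a+\theta}$ for non-integer $a+\theta$. Both routes are valid; the paper's is entirely self-contained (no abstract interpolation theory), while yours is a bit more structural and correctly flags the Zygmund-space obstruction at integer targets --- which is precisely why the paper's direct computation with \eqref{eq:InterpolationHoelderSpaces} is a convenient shortcut, since it sidesteps that issue altogether.
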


\begin{proof}
 Let $k_i \in \mathbb N_0$ and $\tilde \beta_i \in [0,1)$ be
  such that $\beta_i = k_i +\tilde \beta_i$ for $i=1,2$.

For $l>m$ and $\beta\in (0,1)$, we deduce from $\int_{\mathbb R
} \partial_x^{l-m}G_t(y)dy=0$ and the fact that $G_1$ is a Schwartz function 
\begin{align*} 
 |\partial_x^{l} (G_t \ast f)(x)| &=  \left| \int_{\mathbb R} \partial^{l-m}_x G_t
    (y) (\partial_x^{m}f(x-y) - \partial_x^{m}f(x)) dy \right| \\
  &\lstackrel{(\ref{eq:ScalingOfDerivativesOfTheHeatKernel})}{\leq} \int_{\mathbb R} t^{-(1+l-m)/s}|(\partial^{l-m}_x G_1)(y/t^{1/3})
    | \cdot |(\partial_x^{m}f(x-y) - \partial_x^{m}f(x)) |dy \\
 & \lstackrel{z= y /t^{1/s}}{\leq}  \int_{\mathbb R} t^{-(l-m)/s}|(\partial^{l-m}_x G_1)(z)
  | \cdot|\partial_x^{m}f(x-t^{1/s}z) - \partial_x^{m}f(x)) |dz \\
    & \leq  t^{-(l-(m+\beta))/s} \hoel_{\beta} (\partial_x^m f)
    \int_{\mathbb R} |(\partial^{l-m}_x G_1)(z) ||z|^\beta dz \\
 &\leq
    C(l,m,\beta)  t^{-(l-(m+\beta))/s} \hoel_{\beta} (\partial_x^m f).
\end{align*}
For all $l\geq m$, $\beta \in (0,1)$ we have
\begin{equation*}
  \hoel_{\beta} (\partial_x^{l} (G_t \ast f)) \leq C(l-m)t^{l-m}
  \hoel_\beta (\partial_x^{m} f)
\end{equation*}
as for all $x_1,x_2 \in \mathbb R$
\begin{align*}
|\partial_x^{l} (G_t \ast f)(x_1) &- \partial_x^{l} (G_t \ast f)(x_2)|\\  &=  \left| \int_{\mathbb R} \partial^{l-m}_x G_t
    (y) (\partial_x^{m}f(x_1-y) - \partial_x^{m}f(x_2-y)) dy \right|
  \\
  &\leq \|\partial_x^{l-m}G_t\|_{L^1} \hoel_{\beta} (\partial_x^m f) |x_1 -x_2|^\beta \\
    & \lstackrel{~(\ref{eq:EstimateL1NormHeatKernel})}\leq C(l-m)
    t^{-((l-m))/s} \hoel_{\beta} (\partial_x^m f) |x_1 -x_2|^\beta .
\end{align*}
In a similar way  we obtain for all $l\geq m$
\begin{equation*}
  \|\partial_x^{l} (G_t \ast f)\|_{L^\infty} \leq C(l-m)t^{-(l-m)/s} \|\partial_x^{m}f\|_{L^\infty}. 
\end{equation*}
Combining these three estimates, we get
\begin{align} \label{eq:IntermediateEstimatesHeatKernel1}
 \|G_t \ast f\|_{C^{k_2+\tilde \beta_1}} &\leq C t^{-(k_2 -k_1)/s} \|
 f\|_{C^{k_1+\tilde \beta_1}}, \\
 \|G_t \ast f\|_{C^{k_2+1}} &\leq C t^{-((k_2+1) -(k_1+\tilde \beta_1)/s} \|
 f\|_{C^{k_1+\tilde \beta_1}}, \label{eq:IntermediateEstimatesHeatKernel2}
\end{align}
and if $k_2>k_1$
\begin{align} 
\|G_t \ast f \|_{C^{k_2}} \leq C  t^{-(k_2-(k_1+\tilde \beta_1))/s} \|
 f\|_{C^{k_1+\tilde \beta_1}} \label{eq:IntermediateEstimatesHeatKernel3}
\end{align}
Furthermore, we will use that for $0\leq \alpha \leq \beta \leq \gamma
\leq 1$, $\alpha \not= \gamma$, and
$f\in C^\gamma$ we have the interpolation inequality
\begin{equation} \label{eq:InterpolationHoelderSpaces}
 \| f \|_{C^\beta}\leq 2 \|f\|_{C^\gamma}^{\frac {\beta -
     \alpha}{\gamma -\alpha}}  \| f\|_{C^\alpha}^{\frac {\gamma-\beta}{\gamma -\alpha}}.
\end{equation}
For $\tilde \beta_2 \geq \tilde \beta_1$ we get
\begin{align*}
  \|G_t \ast f\|_{C^{k_2+\tilde \beta_2}} &\leq C \left( \|\partial_x ^{k_2}
    (G_t \ast f)\|_{C^{\tilde \beta_2}} + \|G_t \ast f\|_{L^\infty} \right) \\
&\lstackrel{(\ref{eq:InterpolationHoelderSpaces})}{\leq} C ( \|\partial_x ^{k_2}
 (G_t \ast f)\|_{C^{\tilde
     \beta_1}}^{\frac{1-\tilde\beta_2}{1-\tilde \beta_1}}\|\partial_x ^{k_2+1}
  (G_t \ast f)\|_{C^{0}}^{\frac{\tilde \beta_2 -\tilde
      \beta_1}{1-\tilde \beta_1}} +
  \|f\|_{L^\infty}) 
\\
&\lstackrel{(\ref{eq:IntermediateEstimatesHeatKernel1})\&
  (\ref{eq:IntermediateEstimatesHeatKernel2})}{\leq} C (t^{-(\beta_2 - \beta_1)/s} +1) \|f\|_{C^{\beta_1}}.
\end{align*}
For $\tilde \beta_1> \tilde \beta_2$ and hence $k_1<k_2$, we obtain
\begin{align*}
  \|G_t \ast f\|_{C^{k_2+\tilde \beta_2}} &\leq C \left( \|\partial_x ^{k_2}
    (G_t \ast f)\|_{C^{\tilde \beta_2}} + \|G_t \ast f)|_{L^\infty} \right) \\
&\lstackrel{(\ref{eq:InterpolationHoelderSpaces})} {\leq} C (
\|\partial_x ^{k_2} (G_t \ast
f)\|_{C^{\tilde \beta_1}}^{\frac{\tilde \beta_2}{\tilde \beta_1}}\|\partial_x ^{k_2}
  (G_t \ast f)\|_{C^0}^{\frac{\tilde \beta_1 -\tilde
      \beta_2}{\tilde \beta_1}} +
  \|G_t \ast f\|_{L^\infty}) \\
&\lstackrel{(\ref{eq:IntermediateEstimatesHeatKernel1})\&
  (\ref{eq:IntermediateEstimatesHeatKernel3})} {\leq} C (t^{-(\beta_2 -\beta_1)/s} +1) \|f\|_{C^{\beta_1}}.
\end{align*}
\end{proof}

To derive a representation formula for the solution of $\partial_t u +
(-\Delta)^{s/2}u =f$, we need the following simple fact
\begin{lemma} \label{lem:ConvergenceOfGt}For all $t>0$ we have
\begin{equation*}
 \int_{\mathbb R} G_t(x)dx =1.
\end{equation*}
Furthermore, for all $f\in h^\beta(\mathbb R,\mathbb R^n)$, $\beta\notin \mathbb
N$, there holds
\begin{equation*}
  G_t \ast f \xrightarrow{t\downarrow 0} f \quad 
\text{ in }h^\beta(\mathbb R, \mathbb R^n).
\end{equation*}
\end{lemma}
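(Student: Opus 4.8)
The plan is to establish the two assertions separately, using the Fourier-analytic definition~\eqref{eq:DefinitionOfHeatKernel} for the first and the heat kernel estimates of Lemma~\ref{lem:HeatKernelEstimates} together with an approximation argument for the second. For the mass normalization $\int_{\mathbb R} G_t = 1$, I would simply evaluate the Fourier transform at frequency zero: from~\eqref{eq:DefinitionOfHeatKernel} one has, formally, $\int_{\mathbb R} G_t(x)\,dx = \widehat{G_t}(0) = e^{-t|0|^{s}} = 1$. To make this rigorous one notes that $G_t$ is a Schwartz function (as remarked in the text, being the inverse Fourier transform of the Schwartz function $k \mapsto e^{-t|2\pi k|^{s}}$), so the inversion formula applies without any convergence issues, and the integral of $G_t$ against the constant function $1$ is exactly the value of its Fourier transform at the origin. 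Alternatively, using the scaling~\eqref{eq:ScalingOfHeatKernel} one reduces to the single statement $\int_{\mathbb R} G_1 = 1$, which again is $\widehat{G_1}(0)$.

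For the convergence $G_t \ast f \to f$ in $h^\beta$ as $t \downarrow 0$, the key point is that the convolution operators $f \mapsto G_t \ast f$ are uniformly bounded on $C^\beta$ for $t \in (0,1]$ (this follows from Lemma~\ref{lem:HeatKernelEstimates} with $\beta_1 = \beta_2 = \beta$), hence also uniformly bounded on $h^\beta$. By a standard density argument it therefore suffices to prove the convergence on a dense subset of $h^\beta$, for which I would take $C^\infty$ functions with, say, $C^{\beta+1}$ control (recall $h^\beta$ is by definition the closure of $C^\infty$ in the $C^\beta$-norm). For such a smooth $g$ one estimates $\|G_t \ast g - g\|_{C^\beta}$: writing $(G_t \ast g)(x) - g(x) = \int_{\mathbb R} G_t(y)\bigl(g(x-y) - g(x)\bigr)\,dy$ using $\int G_t = 1$, one splits the integral over $|y| \le \rho$ and $|y| > \rho$. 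On $|y| \le \rho$ one uses the extra regularity of $g$ to gain a factor $|y|$; after the substitution $z = t^{-1/s} y$ and~\eqref{eq:ScalingOfHeatKernel} this contributes $O(t^{1/s})$ (times a constant depending on $\rho$ and $\|g\|_{C^{\beta+1}}$), uniformly in $x$, and the same bound holds for the $C^\beta$-seminorm since one may differentiate and take Hölder differences of $g$ under the integral. On $|y| > \rho$ one uses the rapid decay of the Schwartz function $G_1$ together with the scaling to bound $\int_{|y|>\rho}|G_t(y)|\,dy$ by $C_m (t^{1/s}/\rho)^m$ for any $m$, which is $o(1)$ as $t \downarrow 0$. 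Combining the two pieces gives $\|G_t \ast g - g\|_{C^\beta} \to 0$.

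I do not expect any serious obstacle here; this is a routine mollifier-convergence argument and the only mild subtlety is keeping track of the $C^\beta$-seminorm (as opposed to just the sup norm) in the near-diagonal part of the splitting, which is handled by differentiating under the integral sign and applying the same Hölder-difference estimate to the derivatives of the smooth approximant $g$. The uniform boundedness supplied by Lemma~\ref{lem:HeatKernelEstimates} is what makes the passage from the dense class to all of $h^\beta$ automatic, and the restriction $\beta \notin \mathbb N$ is only needed so that $h^\beta$ and $C^\beta$ interact well with the interpolation and heat-kernel estimates already established.
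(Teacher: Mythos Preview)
Your proof is correct. For the second assertion the strategy matches the paper's exactly: uniform $C^\beta$-boundedness of $f\mapsto G_t\ast f$ from Lemma~\ref{lem:HeatKernelEstimates}, density of $C^\infty$ in $h^\beta$, and convergence on the dense class. The paper simply invokes ``convergence results for smoothing kernels'' for the smooth case, whereas you spell out the near/far splitting; this is a matter of detail, not of approach.

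For the mass normalization $\int_{\mathbb R} G_t=1$, however, the paper takes a genuinely different and more circuitous route: it first shows $G_t\ast f\to f$ in $L^2$ for every $f\in L^2$ via Plancherel and dominated convergence, then specializes to $f=\chi_{[-1,1]}$ and uses the scaling~\eqref{eq:ScalingOfHeatKernel} to identify the pointwise limit of $(G_t\ast f)(x)$ on $(-1,1)$ as $\int_{\mathbb R}G_1$, forcing this integral to be $1$. Your direct Fourier evaluation $\int G_t=\widehat{G_t}(0)=e^{0}=1$ is considerably shorter and more natural; the only thing it buys the paper to argue the way it does is that the $L^2$-convergence statement is reused implicitly later, but that is not essential. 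Your route is preferable here.
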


\begin{proof}
For $g \in L^2(\mathbb R)$ let $\hat g$ denote
the Fourier transform of $g$. 

For $t>0$ and $f\in L^2(\mathbb R)$ we obtain from  Lebesgue's
theorem of dominated convergence
\begin{equation*}
 (G_t \ast f) ^{\wedge}= e^{-t|2\pi \cdot |^s} \hat f \xrightarrow{t \downarrow
   0} \hat f
 \quad \text{ in } L^2.
\end{equation*}
Hence, Plancherel's formula shows
\begin{equation*}
  G_t \ast f \rightarrow f \quad \text{ in } L^2.
\end{equation*} 
Setting $f= \chi_{[-1,1]}$ and observing
\begin{equation*}
  \lim_{t\searrow 0}(G_t \ast f) (x) =\lim_{t\searrow 0}
  \int_{[t^{-1/s}(x-1),t^{1/s} (x+1)]} G_1 dy =\int_{\mathbb R} G_1 dy, \quad
  \forall x\in (-1,1),
\end{equation*}
we deduce that
\begin{equation*}
\int_{\mathbb R} G_1 dy =1.
\end{equation*}

To prove the second part, let $f\in h^\beta(\mathbb R, \mathbb R^n)$.
From convergence results for smoothing kernels we get for all $\tilde
f \in C^\infty(\mathbb R)$
\begin{align*}
 \limsup_{t\downarrow 0}\| f -G_t \ast f\|_{C^{\beta}} &\leq
 \limsup_{t\downarrow 0}\|(f-\tilde f) -G_t\ast (f-\tilde f)\|_{C^{\beta}} +
 \|\tilde f - G_t \ast \tilde f \|_{C^\beta} 
\\ &=  \limsup_{t\downarrow
   0}\|(f-\tilde f) -G_t\ast (f-\tilde f)\|_{C^\beta}
\\ &\lstackrel{\text{Lemma~\ref{lem:HeatKernelEstimates}}}{\leq} C  \|(f-\tilde f)\|_{C^\beta}.
\end{align*}
Since $h^\beta(\mathbb R,  \mathbb R^n)$ is the closure of
$C^\infty(\mathbb R , \mathbb R ^n)$ under $\|\cdot
\|_{C^\beta}$, this proves the statement.
\end{proof}

Linking the heat kernel $G_t$ to the evolution equation
$\partial_t + \lambda (-\Delta)^{s/2} =f $ for constant $\lambda>0$ we derive
the following a priori estimates

\begin{lemma}[Maximal regularity for constant coefficients] \label{lem:MaximalRegularityForConstantCoefficients}
For all $\beta>0 $, $\theta \in (0,1)$ with $ \beta + s \theta
\notin \mathbb N$, and $0<T<\infty $, $\lambda>0$ there is  a  constant
$C=C(\beta, \theta,T,\lambda)$ such that the following holds:

Let $u\in C^1((0,T),
h^\beta(\mathbb R , \mathbb R^n))\cap C^0((0,T),h^{s+\beta}(\mathbb R )) \cap C^0([0,T), h^{\beta + s\theta}(\mathbb R))$ such that $u(t)$ has compact support
for all $t\in (0,T)$. Then
\begin{multline} 
 \sup_{t\in (0,T]}t^{1-\theta}
 \left(\|\partial_tu\|_{C^{\beta}}
   +\|u\|_{C^{s+\beta}}\right)  \\ \leq 
C \left(\sup_{t\in(0,T]}t^{1-\theta}\|\partial_t u + \lambda
(-\Delta)^{s/2})u\|_{C^{\beta}} + \|u(0)\|_{h^{\beta+s\theta}} \right)
\end{multline} 
\end{lemma}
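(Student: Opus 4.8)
The plan is to establish the estimate by means of an explicit representation formula for the solution built from the heat kernel $G_t$ of $\partial_t + (-\Delta)^{s/2}$, and then to estimate each of the two resulting pieces (the part carrying the initial data and the Duhamel part carrying the right-hand side) using the heat kernel estimates of Lemma~\ref{lem:HeatKernelEstimates} together with the $L^1$-bounds \eqref{eq:EstimateL1NormHeatKernel}. First I would reduce to $\lambda = 1$ by the rescaling $t \mapsto \lambda t$, which changes $C$ only by a factor depending on $\lambda$ and $T$; note \eqref{eq:DefinitionOfHeatKernel} and \eqref{eq:ScalingOfHeatKernel} are compatible with this. Writing $f := \partial_t u + (-\Delta)^{s/2} u$ and $u_0 := u(0)$, I would use \eqref{eq:HeatKernelSolveEvolutionEquation}, Lemma~\ref{lem:ConvergenceOfGt}, and Duhamel's principle to obtain
\begin{equation*}
  u(t) = G_t \ast u_0 + \int_0^t G_{t-\tau} \ast f(\tau)\, d\tau,
\end{equation*}
valid on $\mathbb R$ for compactly supported data (so that convolution and the $L^2$/$L^\infty$ arguments from Lemma~\ref{lem:ConvergenceOfGt} apply); uniqueness in this class follows from the same representation applied to the difference of two solutions.

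Next I would estimate the homogeneous part. Differentiating in $t$ and using \eqref{eq:HeatKernelSolveEvolutionEquation} gives $\partial_t (G_t \ast u_0) = -(-\Delta)^{s/2}(G_t \ast u_0)$, so it suffices to bound $\|G_t \ast u_0\|_{C^{s+\beta}}$. By Lemma~\ref{lem:HeatKernelEstimates} with $\beta_1 = \beta + s\theta$ and $\beta_2 = s + \beta$ (legitimate since $\theta < 1$), one gets $\|G_t \ast u_0\|_{C^{s+\beta}} \leq C t^{-(s - s\theta)/s}\|u_0\|_{C^{\beta+s\theta}} = C t^{\theta - 1}\|u_0\|_{C^{\beta+s\theta}}$, which is exactly the required decay; the boundedness of $(-\Delta)^{s/2}$ from $C_0^{s+\beta}$ to $C^\beta$ (noted just before \eqref{eq:DefinitionOfHeatKernel}) then controls the $\partial_t$ term. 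For the Duhamel part, I would split $\int_0^t G_{t-\tau}\ast f(\tau)\,d\tau$ and estimate its $C^{s+\beta}$-norm by $\int_0^t \|G_{t-\tau}\ast f(\tau)\|_{C^{s+\beta}}\,d\tau$; applying Lemma~\ref{lem:HeatKernelEstimates} with $\beta_1 = \beta_2 = \beta$ on a piece and with $\beta_1 = \beta$, $\beta_2 = s + \beta$ on another — or more directly, using Lemma~\ref{lem:HeatKernelEstimates} to get $\|G_{t-\tau}\ast f(\tau)\|_{C^{s+\beta}} \leq C (t-\tau)^{-1}\|f(\tau)\|_{C^\beta}$ — together with $\|f(\tau)\|_{C^\beta} \leq \tau^{\theta-1}\sup_\tau \tau^{1-\theta}\|f(\tau)\|_{C^\beta}$, one reduces matters to the scalar integral $\int_0^t (t-\tau)^{-1}\tau^{\theta-1}\,d\tau$. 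This integral diverges logarithmically, so the naive splitting is too crude and must be refined: I would instead use the cancellation $\int_{\mathbb R}\partial_x^{l-m}G_t = 0$ exactly as in the proof of Lemma~\ref{lem:HeatKernelEstimates} to gain a Hölder exponent, producing an integrand of the form $(t-\tau)^{-1+\epsilon}\tau^{\theta-1}$ for a small $\epsilon>0$, which is integrable and yields a bound $C t^{\theta-1+\epsilon} \leq C T^\epsilon t^{\theta-1}$.

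The main obstacle is precisely this borderline behavior of the Duhamel term: the "gain of $s$ derivatives in time $t$" of the kernel is exactly balanced against the weight $t^{1-\theta}$, so one must exploit the Hölder-type cancellation of $G_t$ (rather than mere $L^1$-bounds on its derivatives) to extract the extra room. Concretely, for the top-order seminorm $\hoel_{\tilde\beta}(\partial_x^{k}(\cdot))$ appearing in the $C^{s+\beta}$-norm one estimates a second difference of $f$ against $\partial_x^{k'}G_{t-\tau}$ for a suitable $k'$, which after the substitution $z = y/(t-\tau)^{1/s}$ converts a power of $|y|$ into the missing power of $(t-\tau)$; choosing how many derivatives to move onto $G$ versus onto $f$ gives the trade-off parameter $\epsilon$. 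A parallel but easier argument, using \eqref{eq:IntermediateEstimatesHeatKernel1}--\eqref{eq:IntermediateEstimatesHeatKernel3} and the interpolation inequality \eqref{eq:InterpolationHoelderSpaces}, handles the lower-order norms and the $\partial_t u$ term (via $\partial_t u = f - (-\Delta)^{s/2}u$). Collecting the homogeneous and Duhamel estimates, multiplying through by $t^{1-\theta}$, and taking the supremum over $t \in (0,T]$ yields the claimed inequality, with $C$ depending only on $\beta$, $\theta$, $T$, and $\lambda$.
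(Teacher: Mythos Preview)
Your overall architecture is right — reduce to $\lambda=1$, establish Duhamel's formula, bound the homogeneous piece by Lemma~\ref{lem:HeatKernelEstimates} with $\beta_1=\beta+s\theta$, $\beta_2=s+\beta$, and recover $\partial_t u$ from $\partial_t u=f-(-\Delta)^{s/2}u$ — and this matches the paper. The gap is in the Duhamel term.

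You correctly observe that the direct bound $\|G_{t-\tau}\ast f(\tau)\|_{C^{s+\beta}}\le C(t-\tau)^{-1}\|f(\tau)\|_{C^\beta}$ leads to a divergent integral $\int_0^t (t-\tau)^{-1}\tau^{\theta-1}\,d\tau$. But your proposed repair — extracting an extra $(t-\tau)^{\epsilon}$ from the cancellation $\int\partial_x^{l-m}G_t=0$ — cannot work as stated. That cancellation is already fully exploited in Lemma~\ref{lem:HeatKernelEstimates}, and the exponent $-1$ there is sharp: to get $(t-\tau)^{-1+\epsilon}$ you would need $f(\tau)\in C^{\beta+\epsilon s}$, which is not available. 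Your ``second difference of $f$'' idea would likewise require either extra spatial regularity of $f$ or H\"older continuity of $f$ in time, neither of which is assumed (membership in $Y^{\theta,\beta}_T$ gives only weighted continuity in $t$).

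The paper resolves the $\tau\to t$ singularity by a different device. It splits $v(t)=\int_0^t G_{t-\tau}\ast f\,d\tau$ at $t/2$ into $v_1(t)=G_{t/2}\ast v(t/2)$ and $v_2(t)=\int_{t/2}^t G_{t-\tau}\ast f\,d\tau$. The piece $v_1$ is harmless: the extra convolution with $G_{t/2}$ provides the full $s$ derivatives, and on $[0,t/2]$ one only faces the integrable singularity $\tau^{\theta-1}$. For $v_2$ the paper does \emph{not} estimate $\|v_2(t)\|_{C^{s+\beta}}$ directly. Instead it fixes an auxiliary $\eta\in(0,1)$, smooths $v_2(t)$ once more by $G_\xi$, and bounds
\[
\xi^{1-\eta}\|G_\xi\ast v_2(t)\|_{C^{2s+\beta-s\eta}}
\quad\text{and}\quad
\xi^{1-\eta}\Big\|\tfrac{d}{d\xi}(G_\xi\ast v_2(t))\Big\|_{C^{s+\beta-s\eta}}
\]
uniformly in $\xi>0$. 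The semigroup property $G_\xi\ast G_{t-\tau}=G_{t-\tau+\xi}$ turns the singular factor into $(t-\tau+\xi)^{-2+\eta}$, and $\xi^{1-\eta}\int_{t/2}^t (t-\tau+\xi)^{-2+\eta}\,d\tau\le (1-\eta)^{-1}$ uniformly. One then invokes the trace characterization \eqref{eq:TraceSpace} of $h^{s+\beta}$ (with the pair $(h^{s+\beta-s\eta},h^{2s+\beta-s\eta})$) to read off $\|v_2(t)\|_{C^{s+\beta}}\le C t^{\theta-1}\|f\|_{Y^{\beta,\theta}_T}$. This use of the interpolation/trace identity to sidestep the endpoint singularity is the missing idea in your proposal.
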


\begin{proof}

Setting $\tilde u(t,x):=u(t, \lambda^{1/s}x)$ and observing that 
$\partial_t \tilde u (x,t)+ (-\Delta)^{s/2} \tilde u(x,t) = \partial_t
u (t, \lambda^{1/s}x) + \lambda (-\Delta)^{s/2} u(t,\lambda^{1/s} x)$,
one sees that it is enough to prove the lemma for
$\lambda =1$ 

To this end, we first show that Duhamel's formula 
\begin{equation} \label{eq:ReprensentationFormula}
  u(t,\cdot)= \int_0^t G_{t-\tau} \ast f(\tau,\cdot) d\tau + G_t \ast u(0)
\end{equation}
holds, where $f=\partial_t u + (-\Delta)^{s/2}u$. For fixed $t>0$ we decompose the integral
in equation~(\ref{eq:ReprensentationFormula})
into 
\begin{equation*}
I_\varepsilon := \int_{t-\varepsilon}^{t} G_{t-\tau} \ast f(\tau,\cdot) dx d\tau 
\end{equation*} and
\begin{equation*}
J_\varepsilon := \int_{0}^{t-\varepsilon} G_{t-\tau} \ast f(\tau,\cdot) dx d\tau
\end{equation*}
and see that 
\begin{equation*}
\|I_\varepsilon\|_{L^\infty} \stackrel{Lemma~\ref{lem:HeatKernelEstimates}}{\leq }C\varepsilon
 \sup_{\tau\in (t-\varepsilon,t)}\|f(\tau,\cdot)\|_{L^\infty}\xrightarrow{\varepsilon \downarrow 0} 0.
\end{equation*}
As our assumptions imply that $u(t) \in H^s(\mathbb R, \mathbb R^n)$,
we get, comparing the Fourier transform of both sides ,
\begin{equation} \label{eq:Selfadjointness}
\left(G_{t-\tau}  \ast ((-\Delta)^{s/2} u(\tau,\cdot)) \right) (x)=
\left(((-\Delta)^{s/2}G_{t-\tau}) \ast u(\tau,\cdot) \right)(x).
\end{equation}
Partial integration in time and equation~(\ref{eq:Selfadjointness}) yields
\begin{align*}
J_\varepsilon & =  \int_{0}^{t-\varepsilon} G_{t-\tau} \ast \partial_t
u(\tau,\cdot) d\tau + \int _0^{t-\varepsilon} G_{t-\tau} \ast (-\Delta)^{s/2}u(\tau,\cdot) 
d\tau \\
& = G_{\varepsilon}\ast u (t-\varepsilon,\cdot )-G_{t} \ast u(0,\cdot ) + \int_0^{t-\varepsilon} (\partial_\tau ( G_{t-\tau}) + (-\Delta)^{s/2}
G_{t-\tau})  \ast u d\tau \\
& \lstackrel{~(\ref{eq:HeatKernelSolveEvolutionEquation})}{=}
G_{\varepsilon}\ast u (t-\varepsilon,\cdot )-G_{t} \ast u(0,\cdot ) \xrightarrow{Lemma~\ref{lem:ConvergenceOfGt}} u(t, \cdot ) - G_{t} \ast u(0, \cdot).
\end{align*}
in $C^\beta$ as $\varepsilon \searrow 0$. This proves Equation~(\ref{eq:ReprensentationFormula}).  

From Lemma~\ref{lem:HeatKernelEstimates} we get
\begin{equation} \label{eq:EstimateHomogeneousTerm}
 \|G_t \ast u_0\|_{C^{s+\beta}} \leq C t^{\theta-1} \|u_0\|_{C^{\beta
     + s \theta}} 
\end{equation}
We decompose $v(t):= \int_0^t G_{t-\tau} \ast f(\tau,\cdot)d\tau = v_1 (t) + v_2(t)$ where
\begin{equation*}
  v_1(t)= G_{t/2} \ast v(t/2), \quad \quad v_2(t)= \int_{\tau=t/2}^t
  G_{t-\tau}\ast f(\tau, \cdot)d\tau.
\end{equation*}
Then the definition of $\|\cdot\|_{Y^{\beta,\theta}_T}$ and the
estimates for the heat kernel in Lemma~\ref{lem:HeatKernelEstimates}
lead to 
\begin{equation} \label{eq:EstimateV1}
  \|v_1(t)\|_{C^{s+\beta}} \leq C (t/2)^{-1}
  \|f\|_{Y^{\beta,\theta}_T} \int_{0}^{t/2}
  \tau^{\theta-1} d\tau \leq C (t/2)^{\theta-1} \|f\|_{Y^{\beta,\theta}_T}.
\end{equation}
For $\xi >0$ and $\eta \in (0,1)$ we get 
\begin{align*}
  \|\xi^{1-\eta}(G_\xi \ast v_2(t))\|_{C^{2s+\beta-s\eta}}
    &=\Big\|\xi^{1-\eta}\int_{t/2}^t(G_{t-\tau+\xi} \ast f )d\tau \Big\|_{C^{2s+\beta-s\eta}}
    \\
    &\lstackrel{Lemma~\ref{lem:HeatKernelEstimates}}{\leq} C \xi^{1-\eta}\int_{t/2}^t (t-\tau+\xi)^{-2+\eta}\tau^{\theta-1} d\tau  \|f\|_{Y^{\beta,\theta}_T}
    \\
    & \leq C(t/2)^{\theta-1} \|f\|_{Y^{\beta,\theta}_T}
\end{align*}
and
\begin{align*}
  \|\xi^{1-\eta} \frac d {d\xi}(G_\xi \ast v_2)\|_{C^{s+\beta-s\eta}}
  &=\Big\|\xi^{1-\eta}\int_{t/2}^t(\partial_tG_{t-\tau+\xi} \ast f\Big\|_{C^{s+\beta-s\eta}}
    \\
    &\leq C \xi^{1-\eta}\int_{t/2}^t (t-\tau+\xi)^{-2+\eta}\tau^{\theta-1} d\tau  \|f\|_{Y^{\beta,\theta}_T}
    \\
    & \leq C(t/2)^{\theta-1} \|f\|_{Y^{\beta,\theta}_T}
\end{align*}
as
\begin{equation*}
  \xi^{1-\eta}\int_{\frac t2}^t (t-\tau+\xi)^{-2+\eta} d\tau = \frac
{\xi^{1-\eta} }{1-\eta} (\xi ^{\eta -1} - (\frac t 2 +\xi)^{\eta-1})
  \leq \frac 1 {1-\eta}.
\end{equation*}
Hence, by the estimate \eqref{eq:TraceSpace}
\begin{equation}\label{eq:EstimateV2}
\begin{aligned} 
  \|v_2 (t)&\|_{C^{s+\beta}}  \\&\leq C \sup_{\xi \in (0,T/2)}
  \left(\xi ^{1-\eta}\left( \| (G_{\xi}
   \ast v_2(t))\|_{C^{6+\beta-s\eta}}\right) + \|\partial_\xi (G_{\xi} \ast
   v_2(t))\|_{C^{s+\beta-s\eta}}\right) \\
  &\leq C t^{\theta-1}\|f\|_{Y^{\beta,\theta}_T}.
\end{aligned}
\end{equation}
From (\ref{eq:ReprensentationFormula}),\eqref{eq:EstimateHomogeneousTerm}, \eqref{eq:EstimateV1}, and
\eqref{eq:EstimateV2} we obtain the desired estimate for $\|u\|_{C^{s+\beta}}.$

The estimate for $\partial_t u$ then follows from $\partial_t u = f
- (-\Delta) ^{s/2} u$ and the triangle inequality.
\end{proof}

\begin{lemma}[Maximal regularity] 
\label{lem:MaximalRegularity} 
Let $\Lambda,T>0$, $n \in
  \mathbb N$, and $\beta>0$, $\theta\in (0,1)$ with $\beta+s\theta \notin
  \mathbb N$ be given. 
  Then there is a constant $C=C(\Lambda, \beta, \theta,n,T),
  <\infty$ such that the following holds:
For all $$a\in C^1([0,T],h^\beta (\mathbb R /\mathbb
  Z, [1/\Lambda,\infty))) , \quad  b\in C^0((0,T), L(h^\beta(\mathbb R /
  \mathbb Z, \mathbb R^n),h^\beta(\mathbb R / \mathbb Z, \mathbb R^n)))$$
  with
  \[\|a\|_{C^1([0,T],C^\beta)}+t^{1-\theta} \|b(t)\|_{L({h^\beta, h^\beta})}\leq \Lambda \] and all $u\in C^1((0,T),
h^\beta(\mathbb R / \mathbb Z, \mathbb R^n))\cap C^0((0,T),h^{s+\beta}) \cap C^0([0,T], h^{\beta + s\theta})$
we have
\begin{align*}
 &\sup_{t\in[0,T]}t^{1-\theta}\left\{\|\partial_t u(t)\|_{C^\beta} +
   \|u(t)\|_{C^{s+\beta}}\right\} \\
& \quad \leq C \left(
 \sup_{t\in[0,T]}t^{1-\theta}\|\partial_t u(t) + a(t) Q^{s-1}u(t) +b(t)u(t)\|_{C^{\beta}} +
 \|u(0)\|_{h^{\beta + s \theta}}\right). 
\end{align*}
\end{lemma}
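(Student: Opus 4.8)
The plan is to derive the variable-coefficient estimate on the circle from the constant-coefficient estimate on $\mathbb R$ proved in Lemma~\ref{lem:MaximalRegularityForConstantCoefficients}, via the classical freezing-of-coefficients and localization scheme; the only genuinely delicate point will be keeping track of the singular weight $t^{1-\theta}$ while absorbing the error terms. As a preliminary reduction I would first show that it suffices to prove the estimate for $T\le\rho$, where $\rho=\rho(\Lambda,\beta,\theta,n)>0$ is to be chosen: given the small-time estimate, an arbitrary interval $[0,T]$ is split into finitely many pieces $[t_k,t_{k+1}]$ of length $\le\rho$, on each of which one applies the small-time estimate with initial datum $u(t_k)$; the latter lies in $h^{\beta+s\theta}$ by the trace estimate \eqref{eq:TraceSpace}, and since $t^{1-\theta}$ is comparable to a positive constant away from $t=0$, only the first piece carries the singular weight, so assembling the pieces causes no loss. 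I would also use \eqref{eq:DefinitionOfDelta32} to record that, on the circle, $Q^{s-1}$ applied to a function supported in a short arc agrees with $c_s(-\Delta)^{s/2}$ applied to a compactly supported extension up to an operator bounded from $C^{2+\beta}$ to $C^\beta$, hence of order $2<s$; this remainder will simply be thrown into the lower-order errors treated below.

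For the small-time estimate I would localize: fix a finite cover of $\mathbb R/\mathbb Z$ by arcs of length $2\delta$ (with $\delta$ small), a subordinate smooth partition of unity $\{\varphi_j\}$ with $\sum_j\varphi_j\equiv1$, and base points $x_j$. Then $u_j:=\varphi_j u$ solves $\partial_t u_j + a(0,x_j)\,Q^{s-1}u_j = g_j$ with $g_j=\varphi_j f-\varphi_j b u-E_j u$, where $E_j u:=\varphi_j a\,Q^{s-1}u-a(0,x_j)\,Q^{s-1}(\varphi_j u)$ gathers the frozen-coefficient error and the commutator $[\varphi_j,Q^{s-1}]$. Since $a(0,x_j)\in[1/\Lambda,\Lambda]$, Lemma~\ref{lem:MaximalRegularityForConstantCoefficients} (after extending $u_j$ to $\mathbb R$ and folding the $Q^{s-1}$–$(-\Delta)^{s/2}$ remainder into $g_j$) bounds $\sup_t t^{1-\theta}(\|\partial_t u_j\|_{C^\beta}+\|u_j\|_{C^{s+\beta}})$ by $C(\sup_t t^{1-\theta}\|g_j\|_{C^\beta}+\|u_j(0)\|_{h^{\beta+s\theta}})$ with $C$ uniform in $j$. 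Summing over the finitely many $j$ and using that the $X^{\theta,\beta}_T$-norm of $u=\sum_j u_j$ is controlled by the sum of those of the $u_j$, I arrive at an inequality of the form $\|u\|_{X^{\theta,\beta}_T}\le C(\,\sup_t t^{1-\theta}\|f\|_{C^\beta}+\|u(0)\|_{h^{\beta+s\theta}}+\sum_j\sup_t t^{1-\theta}(\|\varphi_j b u\|_{C^\beta}+\|E_j u\|_{C^\beta})\,)$.

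The heart of the argument is then to show that the error terms are of lower order and can be absorbed. Splitting $E_j u$ into $\varphi_j(a(t,\cdot)-a(0,x_j))Q^{s-1}u$ and $a(0,x_j)[\varphi_j,Q^{s-1}]u$ and using the Hölder product estimate, the first piece is controlled in $C^\beta$ by $C(\|a(t,\cdot)-a(0,x_j)\|_{C^0}\|u\|_{C^{s+\beta}}+\|a\|_{C^1([0,T],C^\beta)}\|u\|_{C^s})$; on $[0,\rho]$ and over the $j$-th arc one has $\|a(t,\cdot)-a(0,x_j)\|_{C^0}\le\Lambda\rho+\omega_a(\delta)$, which is small for $\rho$ and $\delta$ small. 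The commutator $[\varphi_j,Q^{s-1}]$ gains a derivative, hence is bounded from $C^{s-1+\beta}$ to $C^\beta$, and $\|\varphi_j b u\|_{C^\beta}\le\Lambda t^{\theta-1}\|u\|_{C^\beta}$. Feeding these into interpolation inequalities for Hölder spaces of the form $\|u\|_{C^{s-1+\beta}}\le\varepsilon\|u\|_{C^{s+\beta}}+C_\varepsilon\|u\|_{C^0}$ (and similarly for $\|u\|_{C^\beta}$, $\|u\|_{C^s}$, $\|u\|_{C^{2+\beta}}$), then choosing $\varepsilon$ small to absorb the resulting $\varepsilon\sup_t t^{1-\theta}\|u\|_{C^{s+\beta}}$ into the left-hand side and $\rho,\delta$ small to absorb the coefficient-difference contributions, I reduce to an estimate with only the extra term $\sup_t t^{1-\theta}\|u(t)\|_{C^0}$ on the right. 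This last term I close off by the fundamental theorem of calculus: $\|u(t)\|_{C^0}\le\|u(0)\|_{C^0}+\int_0^t\|\partial_t u(\tau)\|_{C^0}\,d\tau\le\|u(0)\|_{C^0}+\tfrac{t^\theta}{\theta}\|u\|_{X^{\theta,\beta}_T}$, so for $T\le\rho$ with $\rho$ small its principal part is absorbed as well, completing the small-time estimate and hence, via the first paragraph, the lemma.

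I expect the main obstacle to be precisely this bookkeeping with the singular weight: one has to be sure that, after freezing and localizing, every error term genuinely lives at strictly lower spatial order (so that interpolation applies), that the surviving $C^0$-in-space remainder can be closed by shrinking the time horizon rather than by a circular appeal to the estimate being proved, and that the reduction to small $T$ and the passage between the circle and $\mathbb R$ are arranged so that the only singularity of the weight sits at $t=0$.
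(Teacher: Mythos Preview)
Your freezing--localizing--FTC scheme is exactly the paper's approach. The paper organizes it into three explicit steps---first $\beta\in(0,1)$ with $b=0$, then an induction on $\lfloor\beta\rfloor$ by differentiating the equation, then $b\neq0$ treated as a separate perturbation---whereas you do all $\beta$ and $b$ at once; this is a cosmetic difference, and the commutator estimate (Lemma~\ref{lem:LeibnizRule}), the H\"older smallness of $a(t,\cdot)-a(0,x_j)$ on short arcs and times, and the final FTC closure are used identically.

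One piece of your bookkeeping needs repair. For the $b$-term you have $t^{1-\theta}\|\varphi_j b(t)u(t)\|_{C^\beta}\le\Lambda\|u(t)\|_{C^\beta}$, i.e.\ the weight \emph{cancels}. If you now interpolate $\|u(t)\|_{C^\beta}\le\varepsilon\|u(t)\|_{C^{s+\beta}}+C_\varepsilon\|u(t)\|_{C^0}$ and take the supremum in $t$, the high-order piece becomes $\varepsilon\sup_t\|u(t)\|_{C^{s+\beta}}$ \emph{without} the factor $t^{1-\theta}$; this is not controlled by $\|u\|_{X^{\theta,\beta}_T}$ (indeed it may be infinite, since $u\in X^{\theta,\beta}_T$ only gives $t^{1-\theta}\|u(t)\|_{C^{s+\beta}}$ bounded) and therefore cannot be absorbed. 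The fix---and this is precisely what the paper does in both Step~1 and Step~3---is to skip interpolation for this term and run the FTC argument directly in $C^\beta$ rather than in $C^0$:
\[
\|u(t)\|_{C^\beta}\le\|u(0)\|_{C^\beta}+\int_0^t\|\partial_\tau u(\tau)\|_{C^\beta}\,d\tau\le\|u(0)\|_{C^{\beta+s\theta}}+\tfrac{T^\theta}{\theta}\sup_{\tau}\tau^{1-\theta}\|\partial_\tau u(\tau)\|_{C^\beta},
\]
which absorbs for $T$ small. Your interpolation step is correct (and needed) for the commutator and lower-order coefficient-difference terms, since there the weight $t^{1-\theta}$ survives.
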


\begin{proof}

Note that it is enough to prove the statement for small $T$. Let us
fix $T_0>0$ and assume that $T \leq T_0$. Furthermore, we use the
embedding $h^\beta (\mathbb R / \mathbb Z, \mathbb R^n) \rightarrow
h^\beta (\mathbb R , \mathbb R^n)$ and extend the definition of $Q^{s-1}$
to functions $f$ defined on $\mathbb R$ by setting
\begin{equation*}
  Q^{s-1}f(x) := \lim_{\varepsilon \searrow 0}
  \int_{[-1/2,1/2]-[\varepsilon, \varepsilon]} \left(2\frac
    {f(u+w)-f(u)-wf'(u)}{w^2} - f''(x) \right) \frac {dw}{w^2}.
\end{equation*}

\vspace{2em}

 \noindent {\bf Step 1:  $\beta \in (0,1)$ and $b=0$}

\vspace{2em}

\noindent Let $\phi, \psi \in C^\infty (\mathbb R)$ be two cutoff functions
satisfying 
\begin{gather*}
\chi_{B_{1/2}(0)} \leq \phi \leq \chi_{B_1 (0)} \\
\chi_{B_{2}(0)} \leq \psi \leq \chi_{B_4 (0)}. 
\end{gather*}
and $\phi_r(x) := \phi(x/r )$, $\psi_r(x) =\psi(x/r)$. We set 
\begin{equation*}
  f = \partial_t u + a Q^{s-1} u.
\end{equation*}
For $r <
1/8$ we set $a_0 = a(0,0)$ and calculate
\begin{align*}
  \partial_t (u\phi_r) +  a_0 c_s &(-\Delta_{\mathbb R})^{s/2} (u\phi_r) =
  (\partial_t u + a Q^{s-1}u)  \phi_r - a ( Q^{s-1}(u) 
  \phi_r  -Q^{s-1}(u \phi_r))  \\
& \quad - (a-a_0) Q^{s-1} (u \phi_r) - a_0
  (Q^{s-1} (u \phi_r) - c_s (-\Delta_{\mathbb R})^{s/2} (u\phi_r)  \\ &= f
 \phi_r - f_1 - f_2 - f_3, 
\end{align*}
where
\begin{align*}
f_1&:=a ( Q^{s-1}(u) 
  \phi_r  -Q^{s-1}(u \phi_r)) \\
f_2&:= (a-a_0) Q^{s-1} (u \phi_r) \\
f_3&:=  a_0
  (Q^{s-1} (u \phi_r) -c_s(-\Delta_{\mathbb R})^{s/2} (u\phi_r).\\
\end{align*}
From Lemma~\ref{lem:MaximalRegularityForConstantCoefficients} we get
\begin{align*}
  t^{1-\theta}&(\|\partial_t u(t) \phi_r\|_{C^\beta} +
    \|u(t) \phi_r\|_{C^{s+\beta}} ) \\ &\leq C
 \bigg( \sup_{s\in [0,T]}s^{1-\theta}\left(\|f(s)\phi_r\|_{C^\beta}
    +\|f_1(s)\|_{C^\beta} + \|f_2(s)\|_{C^\beta} +
    \|f_3(s)\|_{C^\beta } 
  \right) \\
&+ \|u(0)\|_{C^{\beta+s\theta}}\bigg).
\end{align*}
Using Lemma~\ref{lem:LeibnizRule}, we obtain
\begin{equation*}
  \|f_1(s)\|_{C^\beta}\leq C \Lambda \|u(s)\|_{C^{2+\beta}}
  \|\phi_r\|_{C^{s+\beta}}.
\end{equation*}
Using $|a(x,t)- a_0| \leq \Lambda (|x|^\beta +
T)$, we derive
\begin{align*}
  \|f_2\|_{C^\beta} &
\leq \| \psi_r (a-a_0) Q^{s-1}(u\phi_r)\|_{C^\beta}
  + \|(\psi_r -1) (a-a_0) Q^{s-1}(u\phi_r)\|_{C^\beta} \\
& \leq C_1 \Lambda ( (2r)^{\tilde\beta} + T) \|u\phi_r\|_{C^{s+\beta}}+  \|(\psi_r -1) (a-a(0)) Q^{s-1}(u\phi_r)\|_{C^\beta}
\end{align*}
where $C_1$ does not depend on $r$ or $T$.
Since $\spt{1-\psi_r} \subset \mathbb R -B_{4r}(0)$ and $\spt \phi_r
\subset B_r(0)$, we see that
\begin{multline*}
  (\psi_r -1) (a-a_0) Q^{s-1}(u\phi_r) (x)  \\= (\psi_r(x)-1) (a(x) -a(0)) \! \!
  \int_{[-1/2,1/2]-[-r,r]} \! \! \! \frac {u(x+w)\phi_r(x+r)}{w^2} dw
\end{multline*}
and hence
\begin{equation*}
  \|(\psi_r -1) (a-a_0) Q^{s-1}(u\phi_r)\|_{C^\beta}\leq C(\Lambda, \psi,
  \phi, r) \|u\|_{C^\beta}.
\end{equation*}
This leads to 
\begin{equation*}\label{eq:EstimateForf1}
\|f_2(s)\|_{C^\beta}  \leq C_1 \Lambda ( (2r)^{\tilde\beta} + T)
\|u(s)\phi_r\|_{C^{s+\beta}}+  C \|u(s)\|_{C^\beta}.
\end{equation*}

Furthermore,
\begin{align*}
  \|f_3\|_{C^\beta}\leq C(\Lambda, \phi,r) \|u\|_{C^{2+\beta}}
\end{align*}
since for $v\in C^{s+\beta}(\mathbb R)$ with compact support we have
\begin{equation*}
Q^{s-1} (v) -c_s (-\Delta_{\mathbb R})^{s/2} (v)  =
-\int_{\mathbb R -[-1/2,1/2] } \left(2 \frac {v(u+w)-v(u)}{w^2},
  v''(u)\right) \frac {dw}{w^2}. 
\end{equation*}
and hence
\begin{equation*}
  \|Q^{s-1} (v) - c_s (-\Delta_{\mathbb R})^{s/2} (v)\|_{C^\beta} \leq (4
  \|v\|_{C^\beta} + \|v\|_{C^{2+\beta}}) \cdot 2 \int_{\frac 1
    2}^\infty \frac 1 {w^2} dw \leq 24 \|v\|_{C^{2+\beta}}.
\end{equation*}

Summing up, we thus get 
\begin{multline*}
 \sup_{t \in (0,T]} t^{1-\theta}(\|\partial_t (u \phi_r)(t)\|_{C^\beta} +
    \|u(t)\phi_r\|_{C^{s+\beta}} ) \\ \leq C_1 \Lambda ((2r)^{\tilde\beta} + T)
  \sup_{s\in (0,T]}s^{1-\theta}
    \|u\phi_r\|_{C^{s+\beta}}  \\
  + C
    (\phi,\psi,r ,\Lambda))  \bigg(\sup_{s\in (0,T]}(s^{1-\theta}
    \|f(s)\|_{C^\beta} + s^{1-\theta}\|u(s)\|_{C^{s}}   )+\|u_0\|_{C^{\beta + s \theta}}\bigg),
\end{multline*}
where $C_1$ does not depend on $r$.
Choosing $r$ and $T$ small enough and absorbing the first term on the
right hand side, leads to
\begin{align*}
 \sup_{t\in (0,T]}& t^{1-\theta}\left(\|\partial_t u\|_{C^\beta(B_{r/2}(0))} +
    \|u\|_{C^{s+\beta}(B_{r/2}(0))} \right) \\ &\leq  C
    (\phi,\psi,r,\Lambda)) \bigg( \sup_{s\in (0,T]}(s^{1-\theta}
    \|f(s)\|_{C^\beta} +s^{1-\theta} \|u(s)\|_{C^{s}} )+\|u(0)\|_{C^{\beta + s \theta}}\bigg).
\end{align*}
Of course, the same inequality holds for all balls of radius
$r/4$. Thus, covering $[0,1]$  with balls of radius $r/4$ we obtain
\begin{align*}
 \sup_{t \in (0,T]}t^{1-\theta}& \big(\|\partial_t u(t)\|_{C^\beta} + \|u(t)\|_{C^{s+\beta}}
\big) \\
&\leq C\left(
 \sup_{s\in (0,T]} (s^{1-\theta}\|f(s)\|_{C^\beta} +s^{1-\theta}
  \|u(s)\|_{C^{s}}   )+ \|u(0)\|_{C^{\beta + s \theta}}\right).
\end{align*}
Using the interpolation inequality for H\"older spaces
\begin{equation*}
  \|u\|_{C^s } \leq \varepsilon \|u\|_{C^{s+\beta}} + C(\varepsilon) \|u\|_{C^\beta}
\end{equation*}
 and absorbing, this leads to 
\begin{multline*}
\sup_{t \in (0,T]}t^{1-\theta}\left(\|\partial_t u(t)\|_{C^\beta} + \|u(t)\|_{C^{s+\beta}}
\right) \\
\leq C \left(
  \sup_{s\in (0,T]} \left( s^{1-\theta}\|f(s)\|_{C^\beta} + \|u(s)\|_{C^\beta
    } \right) + \|u(0)\|_{C^{\beta + s \theta}}  \right).
\end{multline*}
Since 
\begin{align*}
\|u(s) \|_{C^\beta} &\leq  \int_0^s \|\partial_t u
  (\tau) \|_{C^\beta} d\tau + \|u(0)\|_{C^{\beta+ s \theta}} \\
&\leq \int_0^T\tau^{\theta -1 } d\tau \sup_{\tau \in [0,T]}
\tau^{1-\theta}\|\partial_t u(\tau)\|_{C^\beta}  + \|u(0)\|_{C^{\beta+ s \theta}}
\\
&\leq \frac 1 {\theta} T^\theta
\sup_{\tau \in (0,T]}\tau^{1-\theta}\|\partial_t u(\tau)\|_{C^\beta} +\|u(0)\|_{C^{\beta+ s \theta}},
\end{align*}
 we  can absorb the first term for $T>0$ small enough to obtain
\begin{multline*}
\sup_{s\in (0,T)}s^{1-\theta}\left(\|\partial_t u(s)\|_{C^\beta} + \|u(s)\|_{C^{s+\beta}}
\right) \\
\leq C(\Lambda))
  \left( \left(\sup_{s\in [0,T]}s^{1-\theta}\|f\|_{C^\beta}\right)   +
    \|u(0)\|_{C^{\beta+ s \theta}} \right).
\end{multline*}

\vspace{2em}

\noindent {\bf Step 2: General $\beta$ but $b=0$}

\vspace{2em}

\noindent
Let $k\in \mathbb N_0$, $\tilde \beta \in (0,1)$ and let the lemma be
true for $\beta = k+\tilde \beta$. We deduce the statement for $\beta =
k+1 + \tilde \beta$. 

From $\partial_t u + a Q^{s-1}u =f$ we deduce that 
\begin{equation*}
  \partial_t (\partial_x u ) + a Q^{s-1} \partial_x u =\partial_x f -
  (\partial_x a) Q^{s-1}u 
\end{equation*}
and we obtain by applying the induction hypothesis to get
\begin{align*}
 \|\partial_x u\|_{X^{k+\beta,\theta}_T} &\leq C  \left(\|\partial_x
 f\|_{Y^{k+\tilde \beta, \theta}_T} +  \|(\partial_x a)
 Q^{s-1}u\|_{Y^{k+\tilde \beta, \theta}_T} + \|\partial_x u(0)\|_{C^{\beta+s\theta}}\right)
\\
&\leq C \left( \|f\|_{Y^{k+1+\tilde \beta, \theta}_T} + \Lambda
\|u\|_{X^{k+\tilde \beta, \theta}_T} + \|\partial_x
u(0)\|_{C^{\beta+s\theta}}\right) 
\\
&\leq  C \left(
\|f\|_{Y^{k+1+\tilde \beta, \theta}_T}  + \|\partial_x
u(0)\|_{C^{\beta+s\theta}} \right).
\end{align*}

\vspace{2em}

\noindent  {\bf Step 3: General $\beta$ and $b$}

\vspace{2em}

\noindent From Step 2 we get
\begin{equation*}
  \|u\|_{X^{\beta, \theta}_T} \leq C\left(\|f\|_{Y^{\beta,\theta}_T}
    + \|((t,x) \mapsto b(t)(u(t))(x))\|_{Y^{\beta,\theta}_T} +\|u_0\|_{C^{\beta+s\theta}}\right).
\end{equation*}
As 
\begin{equation*}
  \|((t,x) \mapsto b(t)(u(t))(x))\|_{Y^{\beta,\theta}_T}= \sup_{s\in (0,T]} t^{1-\theta}
  \|b(t)(u(t))\|_{C^\beta} \leq \Lambda \sup_{s\in (0,T]}\|u(s)\|_{C^\beta}
\end{equation*}
and
\begin{align*}
\|u(s) \|_{C^\beta} &\leq  \int_0^s \|\partial_t u
  (\tau) \|_{C^\beta} d\tau + \|u_0\|_{C^{\beta+ s \theta}} \\
&\leq \int_0^T\tau^{\theta -1 } d\tau \sup_{\tau \in [0,T]}
\tau^{1-\theta}\|\partial_t u(\tau)\|_{C^\beta}  + \|u(0)\|_{C^{\beta+ s \theta}}
\\
&\leq \frac 1 {\theta} T^\theta
\sup_{\tau \in (0,T]}\tau^{1-\theta}\|\partial_t u(\tau)\|_{C^\beta} +\|u(0)\|_{C^{\beta+ s \theta}} .
\end{align*}
 we  get, absorbing the first term for $T>0$ small enough, 
\begin{equation*}
\|u\|_{X^{\beta, \theta}_T} \leq C\left(\|f\|_{Y^{\beta,\theta}_T}
   +\|u_0\|_{C^{\beta+s\theta}}\right).
\end{equation*}
\end{proof}

Now we can finally prove Theorem~\ref{lem:JIsAnIsomorphism}.

\begin{proof} [Proof of Theorem~\ref{lem:JIsAnIsomorphism}]
It only remains to show that this mappping $J$ is onto. To prove this, we use the method of
continuity for the family of operators $J_\tau: u \mapsto (u(0),\partial_t u +
((1-\tau)\lambda Q^{s-1} u + \tau (a Q^{s-1}u + bu)$. In view of Lemma 5.2 in
\cite{Gilbarg2001}, we have to show is that $J_0$ is onto. 

By \cite[Lemma 2.3]{He2000} and \cite[Proposition~1.4]{Reiter2012} we have for all  $f \in H^s (\mathbb R / \mathbb
Z, \mathbb R^n )$
\begin{equation*}
  Q^{s-1}(f)^\wedge(k)= \lambda_k|2\pi k|^s \hat f(k) 
\end{equation*}
where
\begin{equation*}
  \lambda_k =c_s + O(\frac 1 k).
\end{equation*}
for some positive constants $c_s$.
For $u_0,f$ in $C^\infty$
a smooth solution of the equation
\begin{equation*}
\begin{cases}
  \partial_t u + \lambda Q^{s-1}u = f \quad  &\forall t \in (0,T] \\
  u(0)=u_0
\end{cases}
\end{equation*}
can be given by Duhamel's formula
\begin{equation*}
  u(t,x) = \sum_{k\in \mathbb Z} \hat{u}_0(k)e^{-t\lambda \lambda_k|2\pi k|^s}
  e^{2\pi ikx} + \int_0^t \sum_{k\in \mathbb Z} (f(s))^\wedge(k)
  e^{-(t-s) \lambda \lambda_k|2\pi k|^s} ds.
\end{equation*}

Let now $u_0 \in h^{\beta +  s \theta }(\mathbb R / \mathbb Z,
\mathbb R ^n)$ and $f \in Y_T ^{\beta, \theta}$. We set $f_k(t) :=
f(t+1/k)$ and observe that 
\begin{equation*}
  f_k \rightarrow f  \quad \text{in }C^0((0,T],h^\beta(\mathbb R /
  \mathbb Z, \mathbb R ^n) ).
\end{equation*}
 Since $f_k \in C^0 ([0, T-1/k], h^\beta(\mathbb R / \mathbb Z,
 \mathbb R^n))$, we can
find functions $f_{n,k} \in C^{\infty } ([0, T-1/k] \times \mathbb R /
\mathbb Z, \mathbb R^n) $ such that $f_{n,k} \rightarrow f_n$ in $C^0
([0, T-1/k], C^\beta)$ for $n\rightarrow \infty$ and smooth $u_0^{(k)}$
converging to $u_0$ in $h^{\beta+s\theta}$. Let $u_{n,k} \in C^\infty$ be the solution of
\begin{equation*}
  \begin{cases}
    \partial_t u_{n,k} + Q^{s-1} u_{n,k} = f_{n,k} \\
    u_{n,k} (0) = u^{(k)}_0.
  \end{cases}
\end{equation*}
Using the a priori estimate of Lemma~\ref{lem:MaximalRegularity}, one
deduces that the sequence $\{u_{n,k}\}_{n \in \mathbb N}$ is a Cauchy
sequence in $X_{T-\varepsilon} ^{\beta, \theta}$ for every
$\varepsilon >0$. The limit $u_n$ solves the
equation
\begin{equation*}
  \begin{cases}
    \partial_t u_{n} + Q^{s-1} u_{n} = f_{n} \\
    u_{n} (0) = u_0.
  \end{cases}
\end{equation*}
 
Using the a priori estimates again, one sees that
$\{u_n\}_{n\in \mathbb N}$ is bounded in $X_{T-\varepsilon}^{\beta,\theta}$. Since
$X^{\beta,\theta}_{T-\varepsilon}$ is embedded continuously in $C^{\theta/2}([0,T-\varepsilon],
h^{\beta+s \frac \theta2 })$ and $C^{1-\eta}([\delta,T-\varepsilon],
h^{\beta+s\eta})$ for all $\eta \in (0,1), \varepsilon, \delta >0$, we can assume, after going to a subsequence,
that there is a $u_\infty \in X^{\beta, \theta}_T$ such that
\begin{align*}
  u_n & \rightarrow u_\infty &\text{in } C^{0}((0,T-\varepsilon), C^{s+\beta}) 
\end{align*}
for $0\leq \beta < \beta, \varepsilon >0$ and
\begin{equation*}
 u_\infty (0) = u_0.
\end{equation*}
Hence we get
\begin{equation*}
    \partial_t u_n = f_n -  \lambda Q^{s-1} u_n \rightarrow f + \lambda Q^{s-1}
    u_{\infty} \text{ in } C^{0}((0,T-\varepsilon), C^{s+\beta})
\end{equation*}
for all $\varepsilon >0$ which implies that $u_\infty$ solves
\begin{equation*}
 \begin{cases}
   \partial_t u_\infty +  \lambda Q^{s-1} u_\infty = f  \\
    u_{\infty} (0) = u_0.
 \end{cases}
\end{equation*}
\end{proof}

\subsubsection{The Quasilinear Equation}
Now we are in position to prove short time existence for quasilinear equations and $C^1$-dependence on the initial data.
\begin{proposition} [Short time existence] \label{prop:GeneralShortTimeExistence}
Let $0<\beta$, $0 < \theta < \sigma <1 $, $\beta,\beta+s\theta,
\beta +s \sigma \notin \mathbb N_0$, $U \subset
C^{\beta+s\theta}(\mathbb R / \mathbb Z, \mathbb R^n)$ be open and let
$a \in  C^1(U,
C^\beta(\mathbb R / \mathbb Z, (0,\infty)))$, $f \in  C^1(U,
C^\beta(\mathbb R / \mathbb Z, \mathbb R^n))$.

Then for every $u_0 \in h^{\beta+s\sigma}(\mathbb R
/ \mathbb Z, \mathbb R^n)\cap U$ there is a constant $T>0$ and 
a unique $u \in C^0([0,T),h^{\beta+ s \sigma}(\mathbb R / \mathbb Z,
\mathbb R ^n))
\cap C^1((0,T), h^{s+\beta}(\mathbb R / \mathbb Z, \mathbb R^n))$ such that 
\begin{equation*}
  \begin{cases} \partial_t u +a(u) Q^{s-1}(u) =f(u) \\
    u(0)=u_0.
  \end{cases}
\end{equation*}
\end{proposition}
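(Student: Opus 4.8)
The plan is to follow the fixed-point scheme for quasilinear parabolic problems of Angenent. Fix $u_0\in h^{\beta+s\sigma}(\mathbb R/\mathbb Z,\mathbb R^n)\cap U$, put $a_0:=a(u_0)\in C^\beta(\mathbb R/\mathbb Z,(0,\infty))$, and let $u^{*}:=J_0^{-1}(u_0,0)$ be the solution of the frozen homogeneous equation $\partial_t u+a_0Q^{s-1}u=0$, $u(0)=u_0$, which exists by Theorem~\ref{lem:JIsAnIsomorphism} applied with the time-independent coefficient $a_0$ and $b=0$. A curve $u$ with $u(0)=u_0$ then solves the equation exactly when it is a fixed point of
\begin{equation*}
\Phi(u):=J_0^{-1}\bigl(u_0,\;f(u)+(a_0-a(u))Q^{s-1}u\bigr).
\end{equation*}
I would look for $u$ in the closed ball $\mathcal M_{T,\rho}:=\{u\in X^{\theta,\beta}_T:\ u(0)=u_0,\ \|u-u^{*}\|_{X^{\theta,\beta}_T}\le\rho\}$, noting that by the trace estimate~\eqref{eq:TraceSpace} every such $u$ satisfies $\sup_{t\le T}\|u(t)-u_0\|_{C^{\beta+s\theta}}\le C\rho+\varepsilon(T)$ with $\varepsilon(T)\to0$ as $T\to0$, so that, for $\rho$ and $T$ small, $u(t)$ stays in a fixed bounded convex subset $V$ of $U$ on which $a$ and $f$ are Lipschitz from $C^{\beta+s\theta}$ to $C^\beta$ and $a(u(t))$ is bounded below.

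Next I would verify that $\Phi$ maps $\mathcal M_{T,\rho}$ into itself and is a contraction there. Both estimates follow from the uniform bound $\|J_0^{-1}\|\le C$ (for $T\le T_0$), the boundedness of $Q^{s-1}\colon C^{s+\beta}\to C^\beta$, the product rule in $C^\beta$, and the Lipschitz bounds for $a,f$ on $V$, provided one controls the a priori size of the ingredients. The decisive point is the \emph{smallness of the reference solution}: since $u_0\in h^{\beta+s\sigma}$, estimate~\eqref{eq:TraceSpace2} at level $\sigma$ gives $u^{*}\in X^{\sigma,\beta}_T$ with $\|u^{*}\|_{X^{\sigma,\beta}_T}\le C\|u_0\|_{C^{\beta+s\sigma}}$, hence, using $\sigma>\theta$,
\begin{equation*}
\|u^{*}\|_{X^{\theta,\beta}_T}=\sup_{0<t\le T}t^{1-\theta}\bigl(\|\partial_tu^{*}(t)\|_{C^\beta}+\|u^{*}(t)\|_{C^{s+\beta}}\bigr)\le C\,T^{\sigma-\theta}\|u_0\|_{C^{\beta+s\sigma}}\xrightarrow[T\to0]{}0 .
\end{equation*}
Thus on $\mathcal M_{T,\rho}$ one has $\|u\|_{X^{\theta,\beta}_T}\le\rho+C\,T^{\sigma-\theta}\|u_0\|_{C^{\beta+s\sigma}}$, while the coefficient perturbation $a_0-a(u(t))$ multiplying $Q^{s-1}u$ is small because it vanishes at $t=0$ (indeed $\|a_0-a(u(t))\|_{C^\beta}\le C\,(C\rho+\varepsilon(T))$); together with the extra powers of $T$ coming from the time weight of $\|f(u)\|_{Y^{\theta,\beta}_T}\le T^{1-\theta}\sup_t\|f(u(t))\|_{C^\beta}$, choosing first $\rho$ and then $T$ sufficiently small makes the self-mapping constant $\le\rho$ and the Lipschitz constant of $\Phi$ strictly less than $1$. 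Banach's fixed-point theorem then yields a unique $u\in\mathcal M_{T,\rho}$ with $\Phi(u)=u$, i.e.\ a solution of the initial value problem.

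It remains to upgrade the regularity and to prove uniqueness in the full class. For $t>0$ the fixed point lies in $X^{\theta,\beta}_T$, hence is of class $C^1$ in time into $h^\beta$ and continuous into $h^{s+\beta}$; a standard parabolic bootstrap (reapplying the result from a positive time, where the datum is smoother, and using uniqueness, or differentiating the equation and invoking the variable-coefficient maximal regularity of Lemma~\ref{lem:MaximalRegularity} on $[t_0,T)$) improves this to $C^1((0,T),h^{s+\beta})$. Continuity up to $t=0$ into the stronger space $h^{\beta+s\sigma}$ does not follow formally from membership in $X^{\theta,\beta}_T$ and is obtained by a Gronwall bootstrap: writing $w:=u-u^{*}=J_0^{-1}(0,g(u))$ with $g(u)=f(u)+(a_0-a(u))Q^{s-1}u$, one has $f(u)\in C^0([0,T],C^\beta)\subset Y^{\sigma,\beta}_T$, and for $\omega(t):=\|a_0-a(u(t))\|_{C^\beta}\le C\|u(t)-u_0\|_{C^{\beta+s\theta}}$ the interpolation bound $\|u^{*}(t)-u_0\|_{C^{\beta+s\theta}}\le C\,t^{\sigma-\theta}$ together with $\|w(t)\|_{C^{\beta+s\theta}}\le C\|g(u)\|_{Y^{\theta,\beta}_t}\le C\bigl(T^{1-\theta}+\|u\|_{X^{\theta,\beta}_T}\sup_{\tau\le t}\omega(\tau)\bigr)$ and the smallness of $\|u\|_{X^{\theta,\beta}_T}$ give $\omega(t)\le C\,t^{\sigma-\theta}$. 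Hence $t^{1-\sigma}\|(a_0-a(u(t)))Q^{s-1}u(t)\|_{C^\beta}\le C\,\omega(t)\,t^{1-\sigma}\|u(t)\|_{C^{s+\beta}}\le C\|u\|_{X^{\theta,\beta}_T}$, so $g(u)\in Y^{\sigma,\beta}_T$, therefore $w\in X^{\sigma,\beta}_T\hookrightarrow C^0([0,T],h^{\beta+s\sigma})$ and $u=u^{*}+w\in C^0([0,T),h^{\beta+s\sigma})$ with $u(0)=u_0$. Uniqueness in the stated class follows by first deriving a weighted a priori bound placing any such solution in some $\mathcal M_{T',\rho}$ for short time, and then using the uniqueness of the fixed point together with a continuation argument.

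I expect the main obstacle to be precisely the construction of a contraction with constant $<1$. A naive fixed point directly in $X^{\sigma,\beta}_T$ fails, because the coefficient perturbation $a(u_0)-a(u)$ multiplied by the a priori large quantity $Q^{s-1}u$ (of size $\sim\|u_0\|_{C^{\beta+s\sigma}}\,t^{\sigma-1}$) cannot be absorbed into the ball radius; one genuinely has to work in the weaker space $X^{\theta,\beta}_T$, where the strict gap $\theta<\sigma$ and the hypothesis $u_0\in h^{\beta+s\sigma}$ force $\|u^{*}\|_{X^{\theta,\beta}_T}=O(T^{\sigma-\theta})$. The second delicate step is the Gronwall bootstrap above, which is what recovers the continuity of the solution into $h^{\beta+s\sigma}$ at $t=0$.
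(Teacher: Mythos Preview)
Your fixed-point argument is essentially correct and follows the Angenent scheme, but it takes a different route from the paper, and your diagnosis of why the ``naive'' approach should fail is mistaken. The paper runs the contraction \emph{directly in} $X^{\beta,\sigma}_T$, centred at the solution $v$ of $\partial_t v + a(u_0)Q^{s-1}v = f(u_0)$, $v(0)=u_0$. The reason this works is the embedding $X^{\beta,\sigma}_T \hookrightarrow C^{\sigma-\theta}([0,T],h^{\beta+s\theta})$: since every competitor $w$ satisfies $w(0)=u_0$, one gets $\|w(t)-u_0\|_{C^{\beta+s\theta}} \le C\,t^{\sigma-\theta}\|w\|_{X^{\beta,\sigma}_T}$ and likewise $\|w_1(t)-w_2(t)\|_{C^{\beta+s\theta}} \le C\,t^{\sigma-\theta}\|w_1-w_2\|_{X^{\beta,\sigma}_T}$. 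Combining this with the Lipschitz property of $a$ (from $C^{\beta+s\theta}$ to $C^\beta$) and with $t^{1-\sigma}\|w(t)\|_{C^{s+\beta}}\le\|w\|_{X^{\beta,\sigma}_T}$ yields a contraction constant $C\,T^{\sigma-\theta}(\|v\|_{X^{\beta,\sigma}_T}+r)$, which goes to zero with $T$. So the coefficient perturbation $(a(u_0)-a(w))Q^{s-1}w$ \emph{can} be absorbed in the $\sigma$-space; the gap $\theta<\sigma$ is exploited not by measuring the solution in $X^{\theta,\beta}_T$, but by measuring the \emph{argument of $a$} in $C^{\beta+s\theta}$ via the H\"older-in-time embedding.

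What your approach buys is a slightly different bookkeeping: you work in the weaker space $X^{\theta,\beta}_T$, where the reference solution $u^*$ is small of order $T^{\sigma-\theta}$, and then bootstrap to recover membership in $X^{\sigma,\beta}_T$ and hence $C^0([0,T),h^{\beta+s\sigma})$. This is a legitimate variant, and your Gronwall step for $\omega(t)=\|a_0-a(u(t))\|_{C^\beta}$ is sound once one uses the smallness of $\|u\|_{X^{\theta,\beta}_T}$ to absorb. But it is more circuitous than necessary: the paper gets the $h^{\beta+s\sigma}$-continuity for free from the trace characterisation once the fixed point lies in $X^{\beta,\sigma}_T$, without any a posteriori bootstrap. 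Both arguments rely on exactly the same structural input---the gap $\theta<\sigma$ and the $C^1$-dependence of $a,f$ on the $C^{\beta+s\theta}$-level---so neither is more general than the other.
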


\begin{proof}

Let us first prove the existence. We set $\tilde
X^{\beta,\sigma}_T:=\{w \in X^{\beta,\sigma}_T: w(0)=u_0\}$. For $w \in
\tilde X^{\beta,\sigma}_T(\mathbb R / \mathbb Z, \mathbb R^n)$ let $\Phi w$
denote the solution $u$ of the problem
\begin{equation*}
\begin{cases}
  \partial_t u + A_0 u = B(w)w + f(w), \\
  u(0)=u_0
\end{cases}
\end{equation*}
where $A_0 = a(u_0)Q^{s-1}$ and $B(w) = (a(u_0)-a(w)) Q^{s-1} $.

Let $v$ 
be the solution of 
\begin{equation*}
  \begin{cases} \partial_t  v +a(u_0) Q^{s-1}(v) =f(u_0) \\
   \tilde  v(0)=u_0.
  \end{cases}
\end{equation*}
and $\mathcal B_r (v):=\{w \in \tilde X_T^{\beta, \sigma}:
\|w-v\|_{X_T^{\beta,\sigma}}\leq r\}.$
We will show that $\Phi$ defines a contraction on $\mathcal B_r (v)$
if $r,T>0$ are small enough.

Since $a\in C^1(U, h^\beta(\mathbb R / \mathbb Z, \mathbb R^n ))$, we get
$\|B(z)\|_{L(C^{\beta+s}, C^\beta)} \leq
C\|z-u\|_{C^{\beta+s\theta}}$ for all $z,u \in C^{\beta+s\theta}$ close to $u_0$.  

Let $w_1,w_2 \in \mathcal B_r (v)$, $r \leq 1$.
Using that  the space $X^{\beta,\sigma}_T $ is embedded continuously in $C^{\sigma-\theta}([0,T],h^{\beta+s\theta}(\mathbb R / \mathbb Z,
\mathbb R^n))$ and $w_1(0)=w_2(0)=v(0)=u_0$ we get 
\begin{align}
  \|w_2(t)-u_0\|_{C^{\beta + s \theta}} &\leq C t^{\sigma-\theta}
  \|w_2 \|_{X^{\beta,\sigma}_T} \leq C t^{\sigma - \theta}
  (\|v\|_{X^{\beta,\sigma}_T} +r) , \label{eq:EstimateW1U0}\\
 \|w_1(t)-w_2(t)\|_{C^{\beta + s \theta}} &\leq C t^{\sigma-\theta}
  \|w_1 -w_2 \|_{X^{\beta,\sigma}_T}  \label{eq:EstimateW1W2}.
\end{align}

Using Lemma~\ref{lem:MaximalRegularity}, we estimate
\begin{align*}
 \|\Phi w_1-\Phi w_2\|_{X_T^{\beta,\sigma}} &
\leq C \|B(w_1)w_1-B(w_2)w_2\|_{Y_T^{\beta,\theta}}  + C
   \|fw_1-fw_2\|_{Y_T^{\beta,\sigma}}.
\end{align*}
and
\begin{align*}
  t^{1-\sigma}\|f(w_1(t))-f(w_2(t))\|_{C^\beta} &
  \leq C t^{1-\sigma}\|w_1(t) -w_2(t)\|_{C^{\beta+s\theta}}\\
  &\lstackrel{\eqref{eq:EstimateW1W2}}\leq C  T^{1-\theta} \|w_1 -w_2\|_{X^{\beta,\sigma}_T}.
\end{align*}
Furthermore,
\begin{align*}
  \|B(w_1)w_1 &-B(w_2)w_2\|_{Y^{\beta,\sigma}_T}  \\&\leq \|(B(w_1)-B(w_2))w_1\|_{Y^{\beta,\sigma}_T} + \|B(w_2)(w_1-w_2)\|_{Y^{\beta,\sigma}_T} \\
&\leq C \sup_{t \in (0,T]} t^{1-\sigma} \big( \|w_1(t) -w_2(t)\|_{C^{\beta+ s \theta}}
\|w_1(t)\|_{C^{s+\beta}}  
\\ & \quad \quad \quad \quad \quad \quad \quad \quad \quad+ \|w_2(t)-u_0\|_{C^{\beta +s\theta}}
\|w_1(t)-w_2(t)\|_{C^{s+\beta}} \big) \\
& \lstackrel{(\ref{eq:EstimateW1W2})\&(\ref{eq:EstimateW1U0})}{\leq}C \sup_{t \in (0,T]} \big( t^{\sigma-\theta}\|w_1
-w_2\|_{X_T^{\beta, \sigma}}
\|w_1\|_{X_T^{\beta,\sigma}}  \\ & \quad \quad \quad  \quad \quad \quad
\quad
\quad \quad + t^{\sigma-\theta} (\|v\|_{X^{\beta, \sigma}_T}+r)
t^{1-\sigma}\|w_1(t)-w_2(t)\|_{C^{s+\beta}} \big)  \\
& \leq C (  T ^{\sigma-\theta}
(\|v\|_{X^{\beta,\sigma}_T} + r) \|w_1 -w_2\|_{X^{\beta, \sigma}_T}.
\end{align*}
Thus,
\begin{align*}
 \|\Phi(w_1)-\Phi(w_2)\|_{Y^{\beta,\sigma}_T} \leq C
 ( T^{1-\sigma} + T^{\sigma-\theta} (\|v\|_{X^{\beta,\sigma}_T} + r)) \|w_1 -w_2 \|_{Y^{\beta,\sigma}_T}
\end{align*}
and hence $\Phi$ is a contraction on $\mathcal B_r(v)$ if $T$ and $r$ are
small enough.

Similarly, we deduce from the definition of $v$ that
\begin{align*}
  \|\Phi(w)-v&\|_{X^{\beta,\sigma}_T} \leq C \|B(w)w\|_{Y^{\beta,\sigma}_T} +
  \|f(w)-f(u_0)\|_{Y^{\beta,\sigma}_T} \\
&\leq C T^{\sigma-\theta} \|w\|_{X^{\beta,\sigma}_T} \|w-v\|_{X^{\beta,\theta}_T} +
T^{1-\theta} \|w-v\|_{X^{\beta,\sigma}_T} + \|v-u_0\|_{C^{\beta + s
    }} 
\\
&< \|w-v\|_{X^{\beta,\sigma}_T}
\end{align*}
if $T$ and $r$ are small enough. Then $\phi(\mathcal B_r(v)) \subset \phi(\mathcal B_r
(v))$. Hence, by Banach's fixed-point theorem there is a
unique $u \in B_r (v)$ with $\partial_t u + a(u) Q^{s-1}(u) u = f(u)$.

For the uniqueness statement, we only have to guarantee that every solution is in
$Y^{\beta, \sigma}_T$.  But this follows from Lemma~\ref{lem:MaximalRegularity}.
\end{proof}

\begin{proposition}[Dependence on the data] \label{prop:ContinuousDependence}
Let $a,b$ be as in Proposition~\ref{prop:GeneralShortTimeExistence}
and $u\in Y^{\theta, \beta}_T$ be a solution of  the quasilinear equation
\begin{equation*}
  \begin{cases} \partial_t u +a(u) Q^{s-1}(u) =0 \\
    u(0)=u_0.
  \end{cases}
\end{equation*}
Then there is a neighborhood $U$ of $u_0$ in $h^{\beta + s\theta}$
such that for all $x \in U$ there is a solution $u_x$ of 
\begin{equation*}
  \begin{cases}
    \partial_t u + a(u)Q^{s-1}u = 0 \\
    u(0)=x
  \end{cases}
\end{equation*}
Furthermore, the mapping
\begin{gather*}
 U  \rightarrow Y_T^{\theta, \beta} \\
 x \mapsto u_x
\end{gather*}
is $C^1$.
\end{proposition}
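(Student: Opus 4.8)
The strategy is the one that is standard for quasilinear parabolic problems, following \cite{Angenent1990}: realise the solution map $x\mapsto u_x$ through the implicit function theorem applied to the map sending a time--dependent function to the pair (its initial value, its parabolic residual). Because solutions can be glued and, by the smoothing already present in the short--time theory, $u(t)$ is smooth for $t>0$, it suffices to produce the map on a small time interval and iterate; so I would first shrink $T$ as much as needed. On a neighbourhood $V\times W$ of $(u_0,u)$ in $h^{\beta+s\theta}(\mathbb R/\mathbb Z,\mathbb R^n)\times X^{\theta,\beta}_T$ — with $\theta$ chosen strictly below the regularity exponent at which the given solution $u$ actually lives, so that $u\in X^{\theta',\beta}_T$ for some $\theta'>\theta$ — I would set
\begin{equation*}
\Phi(x,w):=\bigl(w(0)-x,\ \partial_t w+a(w)Q^{s-1}w-f(w)\bigr),
\end{equation*}
with values in $h^{\beta+s\theta}(\mathbb R/\mathbb Z,\mathbb R^n)\times Y^{\theta,\beta}_T$ (I keep a lower--order inhomogeneity $f$ as in Proposition~\ref{prop:GeneralShortTimeExistence}; the stated case is $f\equiv 0$, and the general one is identical). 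Then $\Phi(u_0,u)=(0,0)$ is precisely the statement that $u$ solves the equation with $u(0)=u_0$.

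The first, routine, point is that $\Phi$ is well defined and of class $C^1$ (indeed real analytic wherever $a,f$ are). This rests on the trace estimate \eqref{eq:TraceSpace}, which makes $w\mapsto w(0)$ bounded linear into $h^{\beta+s\theta}$; on the boundedness of $\partial_t$ from $X^{\theta,\beta}_T$ to $Y^{\theta,\beta}_T$ and of $Q^{s-1}$ from $h^{s+\beta}$ to $h^{\beta}$, which gives $\|Q^{s-1}w(t)\|_{C^\beta}\lesssim t^{\theta-1}\|w\|_{X^{\theta,\beta}_T}$; on the continuous embedding of $X^{\theta,\beta}_T$ into $C([0,T],h^{\beta+s\theta_0})$ for some $\theta_0<\theta$ on whose domain $a$ and $f$ are $C^1$; and on boundedness of the pointwise product on H\"older spaces together with the composition lemmas already used for the analyticity statements in Section~\ref{sec:ShortTimeExistence}. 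The genuinely delicate point is the invertibility of $D_w\Phi(u_0,u)$. Differentiating,
\begin{equation*}
D_w\Phi(u_0,u)\,h=\bigl(h(0),\ \partial_t h+a(u_0)Q^{s-1}h+Rh\bigr),
\end{equation*}
where $Rh:=\bigl(a(u(\cdot))-a(u_0)\bigr)Q^{s-1}h+\bigl(Da(u(\cdot))[h]\bigr)Q^{s-1}u(\cdot)-Df(u(\cdot))[h]$. The frozen operator $h\mapsto(h(0),\,\partial_t h+a(u_0)Q^{s-1}h)$ has time--independent, strictly positive leading coefficient $a(u_0)\in h^\beta$ (after shrinking $V$), hence is an isomorphism from $X^{\theta,\beta}_T$ onto $h^{\beta+s\theta}\times Y^{\theta,\beta}_T$ by Theorem~\ref{lem:JIsAnIsomorphism} (applied with vanishing zeroth--order term), with inverse bounded uniformly for $T$ in a fixed bounded range.

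It then remains to see that $R$ has small operator norm $X^{\theta,\beta}_T\to Y^{\theta,\beta}_T$ once $T$ is small, so that $D_w\Phi(u_0,u)$ is invertible by a Neumann series. For the first summand, $\|(a(u(t))-a(u_0))Q^{s-1}h(t)\|_{C^\beta}\lesssim\|u(t)-u_0\|_{C^{\beta+s\theta_0}}\,\|Q^{s-1}h(t)\|_{C^\beta}$; since $u\in X^{\theta',\beta}_T$ with $\theta'>\theta\ge\theta_0$, the embedding into H\"older--continuous $h^{\beta+s\theta_0}$--valued paths gives $\|u(t)-u_0\|_{C^{\beta+s\theta_0}}\lesssim t^{\theta'-\theta_0}$, and together with $\|Q^{s-1}h(t)\|_{C^\beta}\lesssim t^{\theta-1}\|h\|_{X^{\theta,\beta}_T}$ and the weight $t^{1-\theta}$ this produces a factor $T^{\theta'-\theta_0}$. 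For the middle summand one uses $\|Da(u(t))[h(t)]\|_{C^\beta}\lesssim\|h\|_{X^{\theta,\beta}_T}$ (again by the embedding into $h^{\beta+s\theta_0}$) while $\|Q^{s-1}u(t)\|_{C^\beta}\lesssim\|u(t)\|_{C^{s+\beta}}\lesssim t^{\theta'-1}\|u\|_{X^{\theta',\beta}_T}$, so after multiplying by $t^{1-\theta}$ one gains a factor $T^{\theta'-\theta}$; the $Df$ term is even better behaved. Hence $R$ is small for small $T$, and the implicit function theorem yields a neighbourhood $U$ of $u_0$ in $h^{\beta+s\theta}$ and a $C^1$ map $U\ni x\mapsto u_x\in X^{\theta,\beta}_T$ with $\Phi(x,u_x)=(0,0)$, i.e. $u_x$ solves the equation with $u_x(0)=x$. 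Composing with the continuous embedding $X^{\theta,\beta}_T\hookrightarrow Y^{\theta,\beta}_T$ gives the asserted $C^1$ dependence, uniqueness of $u_x$ near $u$ is part of the implicit function theorem, its agreement with the solution of Proposition~\ref{prop:GeneralShortTimeExistence} follows from the uniqueness there, and the restriction to small $T$ is removed by covering $[0,T)$ by short intervals and composing the corresponding $C^1$ solution maps, using smoothness of $u(t)$ for $t>0$.

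The main obstacle is precisely this invertibility of $D_w\Phi(u_0,u)$: one must split the linearisation into a frozen, constant--coefficient operator handled by Theorem~\ref{lem:JIsAnIsomorphism} plus a remainder, and then exploit both the continuity of $t\mapsto a(u(t))$ at $t=0$ and the slight regularity surplus of the base solution $u$ to make each piece of the remainder carry a positive power of $T$. This is the structural reason for building the short--time theory in the weighted scale $X^{\theta,\beta}_T/Y^{\theta,\beta}_T$ with a pair of exponents $\theta<\theta'$.
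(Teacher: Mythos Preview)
Your argument is correct and follows the same core strategy as the paper: apply the implicit function theorem to
\[
\Phi(x,w)=\bigl(w(0)-x,\ \partial_t w + a(w)Q^{s-1}w\bigr)
\]
on $h^{\beta+s\theta}\times X^{\theta,\beta}_T$. The difference lies only in how invertibility of $D_w\Phi(u_0,u)$ is verified. You freeze the leading coefficient at $a(u_0)$, invoke Theorem~\ref{lem:JIsAnIsomorphism} for the constant-coefficient operator, and then control the remainder $R$ by a Neumann-series argument that exploits the regularity surplus $\theta'>\theta$ of the base solution. The paper is shorter: it observes that the full derivative
\[
D_w\Phi(u_0,u)h=\bigl(h(0),\ \partial_t h + a(u(t))Q^{s-1}h + a'(u(t))[h]\,Q^{s-1}u(t)\bigr)
\]
is already of the form handled by Theorem~\ref{lem:JIsAnIsomorphism} with \emph{time-dependent} coefficients $a(t)=a(u(t))$ and $b(t)h=a'(u(t))[h]\,Q^{s-1}u(t)$ (the hypothesis $\sup_t t^{1-\theta}\|b(t)\|<\infty$ follows since $\|Q^{s-1}u(t)\|_{C^\beta}\lesssim t^{\theta-1}$). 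Thus no freezing, no perturbation, and no shrinking of $T$ or iteration are needed. Your route is more hands-on and makes the role of the surplus exponent explicit, but it duplicates work already absorbed into the linear theory of Section~\ref{subsec:APrioriEstimates}; the paper's route is precisely why that theory was set up for variable $a(t),b(t)$ from the outset.
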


\begin{proof}
We define $\Phi:h^{\beta + s \theta}(\mathbb R/ \mathbb Z, \mathbb
R^n) \times X^{\beta, \theta}_T \rightarrow Y^{\beta, \theta}_T $ by 
\begin{equation*}
  \Phi(x,u):= (u(0)-x, \partial_t u + a(u) Q^{s-1}u)
\end{equation*}
Then the Fr{\'e}chet derivative of $\phi$ with respect to $u$ reads as 
\begin{equation*}
  \frac{\partial \phi(x,u)}{\partial u} (h) = (h, \partial_t h +
  a(u)Q^{s-1}h + a'(u)h Q^{s-1}u).
\end{equation*}
Setting $a(t) =a(u(t))$ and $b(t)(h) = a'(u) h Q^{s-1}u$,
Lemma~\ref{lem:MaximalRegularity} tells us that this is an isomorphism between $X^{\theta,
  \beta}_T$ and $h^\beta\times Y^{\theta, \beta}_T $. 
Hence, the statement of the lemma follows from the implicit function
theorem on Banach spaces.
\end{proof}

\subsubsection{Proof of
  Theorem~\protect{\ref{thm:ShortTimeExistenceForGraphs}} } \label{subsec:ProofOfShortTimeExistence}

%

Since the normal bundle of a curve
is trivial, we can find smooth normal vector fields $\nu_{1},\dots,\nu_{n-1}\in C^{\infty}(\mathbb{R}/\mathbb{Z},\mathbb{R}^{n})$
such that for each of $x\in\mathbb{R}/\mathbb{Z}$ the vectors $\nu_{1}(x),\ldots,\nu_{n-1}(x)$
form an orthonormal basis of the space of all normal vectors to $\g_{0}$
at $x$. Let $\tilde {\mathcal{V}}_r (\g) := \{(\phi_1, \ldots, \phi_{n-1}) \in h^{\beta}(\mathbb
R/ \mathbb Z, \mathbb R^{n-1}): \sum_{i=1}^{n-1} \phi_i \nu_i \in
\mathcal V_r(\g)\}$. 

 If we have $N_{t}=
\sum_{i=1}^{n-1} \phi_{i,t}\nu_{i}$, $(\phi_{1,t}, \dots \phi_{n-1,t}) \in \tilde{ \mathcal V}_r (\g)$, then \eqref{eq:EvolutionEquationNormalGraph},
using Theorem~\ref{thm:QuasilinearStructure}, can be written as \begin{align*}
 \sum_{i=1}^{n-1}\left(\partial_{t}\phi_{i,t}\right)\left(P_{\g'(u)}^{\bot}\nu_{i}\right)
& =- \frac 2 {|\g'|^{s}} P_{\g'}^{\bot}
\left( Q^{\alpha}\left(\g_{0}+ \sum_{i=1}^{n-1} \phi_{i,t}\nu_{i}\right)\right)+F(\g_{0}+  \sum_{i=1}^{n-1} \phi_{i,t}\nu_i) + \lambda
\kappa
\\
 &
 =-\frac{2}{|\g'|^{s}}   \sum_{i=1}^{n-1} \left( Q^{\alpha}\phi_{i,t}\right)P_{\g'}^{\bot}\nu_{i}-F(\g_{0}+  \sum_{i=1}^{n-1} \phi_{i,t}\nu_{i})
 \\
&\quad - \frac 2 {|\g'|^{s}}P_{\g'}^{\bot}\left(  \sum_{i=1}^{n-1} Q^{\alpha}\left(\phi_{i,t}\nu_{i}\right)-\sum_{i=1}^{n-1} \left(   Q^{\alpha}\phi_{i,t}\right)\nu_{i}+Q^{\alpha}\g_{0}\right)\\
 & \quad -F(\g_{0}+ \sum_{i=1}^{n-1}\phi_{i,t}\nu_{i}) + \lambda \kappa\\
 & = -\frac{2}{|\g'|^{s}}   \sum_{i=1}^{n-1} \left(Q^{\alpha}\phi_{i,t}\right) P_{\g'}^{\bot}\nu_{i}+\tilde{F}_{\g_{0}}(\phi_{t})
 + \lambda \kappa
\end{align*}
where
\begin{align*}
\tilde{F}_{\g_{0}}(\phi_{t})&=-F(\g_{0}+   \sum_{i=1}^{n-1}\phi_{i,t}\nu_{i})-\frac{2}{|\g'|^{s}}P_{\g'}^{\bot}\left(   \sum_{i=1}^{n-1} Q^{\alpha}\left(\phi_{i,t}\nu_{i}\right)-  \sum_{i=1}^{n-1} \left(Q^{\alpha}\phi_{i,t}\right)\nu_{i}+Q^{\alpha}\g_{0}\right)\\
& \quad+\frac{2}{|\g'|^{s}}\left(   \sum_{i=1}^{n-1} \left(Q^{\alpha}\phi_{i,t}\right)
P_{\g'}^{\bot}\nu_{i} - P_{\g'}^{\bot}\left(   \sum_{i=1}^{n-1} Q^{\alpha}\phi_{i,t}
\nu_{i}\right)\right).
\end{align*}
Using Lemma~\ref{lem:LeibnizRule}we see that $\tilde F\in
C^w(C^{\alpha + \beta}, C^{\beta})$ for all $\beta >0$. Furthermore, the term $\lambda \kappa$
belongs to $C^\omega(C^{\alpha + \beta}, C^{\beta})$.
Since (\ref{eq:RadiiComparabilityOfDerivatives}) implies that
$\{P_{\g'}^{\bot}\nu_{r}: r=1, \ldots, n-1\}$ is a basis of the normal space
of the curve $\g$ at the point $u$, the mapping $A:\mathbb{R}^{n-1}\rightarrow\left(\mathbb R \g'(u)\right)^{\bot},$
$(x_{1},\dots,x_{n-1})\mapsto   \sum_{i=1}^{n-1} x_{i}P_{\g'(u)}^{\bot}\nu_{i}$
is invertible as long as $\|\g'-\g_0'\|_{L^\infty } < 1$.

So we have brought the evolution equation into the form
\begin{equation} \label{eq:KeyToShortTimeExistence}
\partial_{t}\phi_{t}=\frac 2 {|\g'|^{s}}Q^{\alpha} \phi_{t}+A^{-1}\left(\tilde{F}(\phi_{t})\right)\end{equation}
where $A^{-1}\left(\tilde{F}(\phi_{t})\right)\in C^{\omega}(h^{\alpha+ \beta},h^{\beta})$ for all $\beta >0$. Now the statement follows from
Proposition~\ref{prop:GeneralShortTimeExistence},
Proposition~\ref{prop:ContinuousDependence}, and a standard
bootstrapping argument.
%


\subsubsection{ Proof of
  Theorem~\protect{\ref{thm:ShortTimeExistence} }}
   From Theorem~\ref{thm:ShortTimeExistenceForGraphs} we get a smooth solution of
   \begin{equation*}
    \left(\partial_t \gamma \right)^\bot = H^\alpha (\gamma_t) + \lambda \kappa_{\gamma_t}.
   \end{equation*}
  Let $\phi_t(x)$ for $(x,t) \in \mathbb R / \mathbb Z \times [0,T)$ denote the solution of
   \begin{equation*}
    \partial_t \phi_t (x) = \frac{- (\partial_t \gamma (\phi_t(x)))^T} {\gamma_t'(\phi_t(x))}.
   \end{equation*}
  We caculate for ${\tilde \gamma}_t = \gamma_t \circ \phi_t$
   \begin{equation*}
    \partial_t ({\tilde  \gamma}_t) = (\partial_t \gamma)^\bot \circ \phi_t = H^\alpha {\tilde \gamma}_t + \lambda \kappa_{{\tilde \gamma}_t}.
   \end{equation*}

\subsubsection{ Proof of
  Theorem~\protect{\ref{thm:ShortTimeExistenceNonSmooth} }}

The proof of Theorem~\ref{thm:ShortTimeExistenceNonSmooth} is an immediate consequence of Theorem~\ref{thm:ShortTimeExistenceForGraphs} and the following approximation argument
\begin{lemma} \label{lem:VrCoverAll}
  Let $r:h^{2,\beta}_{i,r} (\mathbb R / \mathbb Z , \mathbb R^n)
 \rightarrow (0,\infty)$ be a lower semi-continuous function. Then for every $\tilde \gamma \in h^{2+\beta}_{i,r}
(\mathbb R / \mathbb Z , \mathbb R^n)$ there is a $\gamma \in
C^\infty_{i,r}(\mathbb R/ \mathbb Z, \mathbb R^n)$, $N \in
\mathcal V_r (\gamma)$ and a diffeomorphism $\psi \in
C^{2+\beta}(\mathbb R / \mathbb Z , \mathbb R / \mathbb Z)$ such that
$\tilde \gamma \circ \psi = \gamma + N $.
\end{lemma}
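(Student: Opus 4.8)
The plan is to take $\gamma$ to be a smooth mollification of $\tilde\gamma$ and to produce $\psi$ and $N$ from the tubular neighbourhood of $\gamma$, i.e.\ from the nearest--point projection onto $\im\gamma$. Since $h^{2+\beta}$ is the closure of $C^\infty$ in the $C^{2+\beta}$--norm and since injectivity together with regularity is an open condition in the $C^1$--topology, we may first choose $\gamma\in C^\infty_{i,r}(\mathbb R/\mathbb Z,\mathbb R^n)$ with $\|\tilde\gamma-\gamma\|_{C^{2+\beta}}$ as small as we like; how small will be pinned down at the very end.

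Being a smooth embedded regular curve, $\gamma$ possesses a tubular neighbourhood: there are $\rho>0$ and a smooth map $\pi$ on $U_\rho:=\{y\in\mathbb R^n:\operatorname{dist}(y,\im\gamma)<\rho\}$ with values in $\mathbb R/\mathbb Z$ such that $\pi(\gamma(x)+v)=x$ whenever $v\perp\gamma'(x)$, $|v|<\rho$; in particular $\pi\circ\gamma=\operatorname{id}$. If $\|\tilde\gamma-\gamma\|_{C^0}<\rho$, then $\im\tilde\gamma\subset U_\rho$ and we set $\sigma:=\pi\circ\tilde\gamma\colon\mathbb R/\mathbb Z\to\mathbb R/\mathbb Z$. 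Composition with the smooth map $\pi$ preserves $h^{2+\beta}$--regularity (cf.\ Lemma~\ref{lem:AnalyticityOfCompositions}), so $\sigma\in h^{2+\beta}$; and from $\sigma-\operatorname{id}=\pi\circ\tilde\gamma-\pi\circ\gamma$ we get $\|\sigma-\operatorname{id}\|_{C^1}\le C(\gamma)\|\tilde\gamma-\gamma\|_{C^1}$, so for $\gamma$ close enough to $\tilde\gamma$ the map $\sigma$ is a degree--one diffeomorphism of $\mathbb R/\mathbb Z$, $C^1$--close to the identity. Put $\psi:=\sigma^{-1}$, which lies in $h^{2+\beta}\subset C^{2+\beta}$ by the inverse function theorem.

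Now define $N:=\tilde\gamma\circ\psi-\gamma$. By the defining property of $\pi$, the vector $\tilde\gamma(\psi(x))-\gamma(x)$ is normal to $\gamma$ at $x$, hence $N\in h^{2+\beta}(\mathbb R/\mathbb Z,\mathbb R^n)^{\bot}_{\gamma}$ and $\tilde\gamma\circ\psi=\gamma+N$; that $N$ indeed lies in the little H\"older space uses that a composition $f\circ\phi$ of two $C^{k+\beta}$--maps with $k\ge 1$ stays in $C^{k+\beta}$ (here $k=2$), together with a density argument to pass from $C^{2+\beta}$ to $h^{2+\beta}$. Finally $\|N\|_{C^1}\le\|\tilde\gamma\circ\psi-\tilde\gamma\|_{C^1}+\|\tilde\gamma-\gamma\|_{C^1}$, and the first term is controlled by $\|\psi-\operatorname{id}\|_{C^1}$ and hence, by the previous paragraph, by $\|\tilde\gamma-\gamma\|_{C^1}$; thus $\|N\|_{C^1}$ can be made smaller than $\tfrac12 r(\tilde\gamma)$. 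Since $r$ is lower semicontinuous and $r(\tilde\gamma)>0$, we also have $r(\gamma)>\tfrac12 r(\tilde\gamma)$ for $\gamma$ close enough to $\tilde\gamma$, and therefore $N\in\mathcal V_r(\gamma)$.

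The geometric construction itself is routine; the point that needs care is keeping track of little H\"older regularity under the compositions and the inversion, which is why everything is routed through the \emph{smooth} projection $\pi$, so that only compositions of a smooth map with an $h^{2+\beta}$--map, or of two $h^{2+\beta}$--maps at regularity level $2+\beta>1$, occur. The remaining bookkeeping is to fix the single smallness parameter $\|\tilde\gamma-\gamma\|_{C^{2+\beta}}$ small enough to serve simultaneously to keep $\gamma$ embedded and regular, to place $\im\tilde\gamma$ inside the $\rho$--tube, to make $\sigma$ a diffeomorphism, and to ensure $\|N\|_{C^1}<r(\gamma)$ via the lower semicontinuity of $r$.
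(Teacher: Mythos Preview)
Your proof is correct and follows essentially the same strategy as the paper's: approximate $\tilde\gamma$ by a smooth mollification $\gamma$, use the (smooth) nearest--point retraction onto $\gamma$ to define the reparametrisation, take $N$ to be the resulting normal deviation, and invoke lower semicontinuity of $r$ to land in $\mathcal V_r(\gamma)$. The paper organises the smallness argument slightly differently---it works with the continuous family $\gamma_\varepsilon=\phi_\varepsilon*\tilde\gamma$ and exploits that $\psi_0=\operatorname{id}$, $N_0=0$, whereas you fix a single $\gamma$ and estimate directly; both amount to the same thing, with your implicit uniformity of $\rho$ and $C(\gamma)$ over curves $C^{2}$--close to $\tilde\gamma$ being exactly what the paper's continuity in $\varepsilon$ encodes.
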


\begin{proof}[Proof of
  Lemma~\protect{\ref{lem:VrCoverAll}}]

Let $\tilde \g \in h_{i,r}^{2+\beta}(\mathbb R / \mathbb Z, \mathbb R^n)$
and let us set $\g_\varepsilon := \phi_\varepsilon \ast \tilde \g$
where $\phi_\varepsilon (x)= \varepsilon^{-1} \phi(x/\varepsilon)$ is
a smooth smoothing kernel. Since $h_{i,r}^{2+\beta}(\mathbb R /
\mathbb Z, \mathbb R^n)$ is an open subset of $h^{2+\beta}(\mathbb R /
\mathbb Z, \mathbb R^n)$ and $(\varepsilon \rightarrow
\g_\varepsilon) \in C^0([0,\infty), h^{2+\beta}(\mathbb R /
\mathbb Z, \mathbb R^n)$, we get $\g_\varepsilon \in
h^{2+\beta}_{i,r}(\mathbb R/ \mathbb Z, \mathbb R^n)$ for
$\varepsilon$ small enough.

Since $\g_{\varepsilon} \in C^0 ([0,\infty),
h^{2+\beta}(\mathbb R / \mathbb Z, \mathbb R^n))$ there is an open neighborhood
$U$ of the set $\g(\mathbb R / \mathbb Z)$ and an 
$\varepsilon_0 >0$ such that the nearest neighborhood retract
$r_\varepsilon:U \rightarrow \mathbb Z / \mathbb R$ onto $\g_\varepsilon$
is defined on $U$ simultaneously for all $0 \leq \varepsilon < \varepsilon_0$. 
Note, that these retracts $r_\varepsilon$ are smooth as the curves $\g_\varepsilon$
are smooth and
\begin{gather*}
 [0,\varepsilon_0) \times U \rightarrow \mathbb R / \mathbb Z \\
 (\varepsilon,x) \mapsto r_{\varepsilon}(x)
\end{gather*}
belongs to $C^0 ([0,\varepsilon_0),C^{1+\beta})$.

We set $\psi_\varepsilon(x):= r_\varepsilon(\g(x))$. Now $\psi_{0} = id_{\mathbb R / \mathbb Z}$,
$\psi_{\varepsilon}$ is a $C^{1+\beta}$ diffeomorphism for
$\varepsilon>0$ small enough since the subset of diffeomorphism is open
in $C^{1+\beta}$.
Hence, we can set 
$N_\varepsilon(x) = \g_\varepsilon(
\psi_{\varepsilon}^{-1}(x))-\g_0 (x)$  for $\varepsilon_0$ small enough. We will show that
$\g:=\g_\varepsilon$, $N:=N_\varepsilon$, and
$\psi:=\psi_\varepsilon$  satisfy the statement of the lemma if
$\varepsilon$ is small enough.


 From $\psi_\varepsilon(x):=
r_\varepsilon(\g(x))$ we deduce that $\psi_{\varepsilon} $ is in
fact a $C^{2+\beta}$ diffeomorphism, as $r_\varepsilon$ is smooth.

Since
\begin{align*}
  N_{\varepsilon} = \g_\varepsilon\circ
\psi_{\varepsilon}^{-1} -\g_0  \in
C^0([0,\infty),C^{1+\beta}(\mathbb R / \mathbb Z, \mathbb R^n))
\end{align*}
and $N_0 =0$, we furthermore we get
\begin{equation*}
  \|N_\varepsilon\|_{C^1} \xrightarrow{\varepsilon \searrow 0}0.
\end{equation*}
Since $r$ is lower semi-continuous and
$r{(\g_{0})} >0$, we hence get
$\|N_{\varepsilon}\|_{C^1} < r({\g_{\varepsilon}})$ for
small $\varepsilon$. As $N_{\varepsilon} \in h^{2,\beta}$ we deduce
that $N_\varepsilon  \in \mathcal V_{r} (\g_\varepsilon)$ if $
\varepsilon$ is small enough.

 \end{proof}


\section{Long Time 
Existence} \label{sec:LongTimeExistence}

The aim of this section is to prove the following long time existence result. 

\begin{theorem}\label{thm:LongTimeExistence}
Let $\g_0 \in C^{\infty}(\mathbb R / \mathbb Z, \mathbb R^n)$. Then
there exists a unique solution 
$\g \in C^0(\mathbb [0,\infty), C^\infty (\mathbb R / \mathbb Z, \mathbb R^n))
\cap C^{\infty}((0,\infty),C^\infty(\mathbb R / \mathbb Z, \mathbb R^n))$
to \eqref{eq:EvolutionEquation} with initial data $\g(0) = \g_0$. This solution subconverges, after suitable re-parameterizations and translations, to a smooth critical point of 
$E^\alpha + \lambda_\infty L$ where $\lambda_\infty = \lim_{t \rightarrow \infty} 
\lambda(t)$.
\end{theorem}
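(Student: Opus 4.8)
The strategy is to continue the short time solution provided by Theorem~\ref{thm:ShortTimeExistence} to all of $[0,\infty)$ by proving a priori bounds on every Sobolev norm of an arc-length parametrization of the evolving curve, and then to extract a subsequential limit as $t\to\infty$. Let $\g$ be the unique maximal smooth solution on $[0,T_{\max})$ obtained from Theorem~\ref{thm:ShortTimeExistence}. Since \eqref{eq:EvolutionEquation} is the negative $L^2$-gradient flow of $E:=E^\alpha+\lambda L$, we have $\frac{d}{dt}E(\g_t)=-\int_{\mathbb R/\mathbb Z}|\partial_t^\bot\g|^2\,|\g'|\,dx\le 0$, hence $E^\alpha(\g_t)\le E(\g_0)$, $L(\g_t)\le\lambda^{-1}E(\g_0)$, and $\int_0^{T_{\max}}\!\int_{\mathbb R/\mathbb Z}|\partial_t^\bot\g|^2\,|\g'|\,dx\,dt\le E(\g_0)$ for all $t$. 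Because $E^\alpha$ blows up under rescaling toward zero length when $\alpha>2$, the lengths $L(\g_t)$ are also bounded below; combining the uniform energy bound with these length bounds as in the classification of finite-energy curves in \cite{Blatt2012} shows that the bi-Lipschitz constants of $\g_t$ stay in a fixed compact subinterval of $(0,\infty)$ uniformly in $t$. Finally, parametrizing $\g_t$ proportionally to arc length, the coercivity estimate of Theorem~\ref{thm:Coercivity} gives $|\g_t'|_{W^{(\alpha-1)/2,2}}\le C\,E^\alpha(\g_t)\le C\,E(\g_0)$, so $\g_t$ is bounded in $W^{(\alpha+1)/2,2}$, and hence in $C^{1,(\alpha-1)/2}$, uniformly in $t$.

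Next we bootstrap to all orders. Writing everything in the arc-length parametrization and using the quasilinear decomposition of $H^\alpha$ from Theorem~\ref{thm:QuasilinearStructure}, we derive the evolution equations for the geometric energies $\mathcal E^k$ in \eqref{eq:Energies}. They have the schematic form $\frac{d}{dt}\mathcal E^k+c\,|\partial_s^{k}\kappa|_{W^{(\alpha+1)/2,2}}^2\le \Phi_k\bigl(\mathcal E^0,\dots,\mathcal E^{k-1},\text{lower order terms}\bigr)$, where the error terms come from differentiating the non-local operator $Q^\alpha$ (producing commutators between $\partial_s$ and $Q^\alpha$), from the analytic remainder $F^\alpha$, and from the metric factor $|\g'|$. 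Estimating these terms with Gagliardo--Nirenberg--Sobolev inequalities on Besov spaces and commutator estimates, and using that $\alpha<3$ makes the flow sub-critical, each error term is of strictly lower order and can be absorbed by the top-order term together with the already-controlled $W^{(\alpha+1)/2,2}$-norm from the first paragraph. This yields, inductively in $k$, bounds $\mathcal E^k(\g_t)\le C_k$ uniform in $t$; together with the uniform bi-Lipschitz control this gives uniform-in-$t$ bounds on $\|\g_t\|_{C^m}$ for every $m\in\mathbb N$ in the arc-length parametrization. If $T_{\max}<\infty$, transporting these bounds back to a fixed parametrization via the tangential reparametrization flow used in the proof of Theorem~\ref{thm:ShortTimeExistence} shows that $\g_t$ converges in $C^\infty$ as $t\uparrow T_{\max}$ to a smooth embedded regular curve, and restarting the flow there with Theorem~\ref{thm:ShortTimeExistence} contradicts maximality; hence $T_{\max}=\infty$. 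Uniqueness on $[0,\infty)$ follows from the short-time uniqueness together with a standard open--closed continuation argument.

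For the asymptotics, fix a sequence $t_j\to\infty$, translate so that $\g_{t_j}(0)=0$, and keep the arc-length parametrization. By the uniform $C^m$-bounds and Arzel\`a--Ascoli a subsequence converges in $C^\infty$ to a curve $\g_\infty$, which by the first paragraph is embedded, regular, and bi-Lipschitz. From $\int_0^\infty\!\int_{\mathbb R/\mathbb Z}|\partial_t^\bot\g|^2|\g'|\,dx\,dt<\infty$ we get $\int_{t_j}^{t_j+1}\!\int_{\mathbb R/\mathbb Z}|\partial_t^\bot\g|^2|\g'|\,dx\,dt\to 0$, and since the uniform regularity makes $t\mapsto\|\partial_t^\bot\g_t\|_{L^2}$ uniformly continuous this forces $\partial_t^\bot\g_{t_j}\to 0$, first in $L^2$ and then, by the uniform higher-order bounds, in $C^\infty$ along the subsequence. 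Passing to the limit in \eqref{eq:EvolutionEquation} and using that $H^\alpha$ is continuous on embedded regular $C^{\alpha+\beta}$-curves (Theorem~\ref{thm:QuasilinearStructure}) gives $H^\alpha(\g_\infty)=\lambda\kappa_{\g_\infty}$, i.e. $\g_\infty$ is a critical point of $E^\alpha+\lambda L$, which equals $E^\alpha+\lambda_\infty L$ since the coefficient $\lambda$ in \eqref{eq:EvolutionEquation} is constant along the flow.

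The main obstacle is the bootstrap in the second paragraph: one must read off from the non-local, fractional-order (order $\alpha+1\in(3,4)$) operator $H^\alpha$ the correct differential inequalities for the $\mathcal E^k$, and, crucially, check that \emph{every} error term --- especially the commutators between $\partial_s$ and the principal-value operator $Q^\alpha$ and the contributions of the analytic remainder $F^\alpha$ --- is genuinely of lower order, so that it is absorbable. This is precisely where the sub-criticality $\alpha<3$ and sharp Gagliardo--Nirenberg inequalities in Besov spaces enter, and it is the step that requires estimates sharper than those available in the comparable literature.
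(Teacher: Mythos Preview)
Your proposal is correct and follows essentially the same route as the paper: energy monotonicity and the scaling argument give uniform length and bi-Lipschitz bounds, Theorem~\ref{thm:Coercivity} gives the uniform $W^{(\alpha+1)/2,2}$ bound, and then the evolution equations for $\mathcal E^k$ (Corollary~\ref{cor:EvolutionOfEnergies}) combined with Gagliardo--Nirenberg and commutator estimates (Lemmata~\ref{lem:GagliardoNirenbergTypeEstimates}, \ref{lem:EstimateOfTypicalTerm}, and Theorem~\ref{thm:CommutatorEstimate}) yield uniform control of all $\mathcal E^k$, from which long time existence and subconvergence follow as you describe. The only minor deviation is that you phrase the differential inequality for $\mathcal E^k$ inductively in $k$, whereas the paper shows directly (Lemma~\ref{lem:EstimateEvolutionEnergies}) that every error term can be absorbed into the coercive top-order term plus a constant depending only on the already-controlled $W^{(\alpha+1)/2,2}$ norm, with no need to appeal to $\mathcal E^0,\dots,\mathcal E^{k-1}$; both arguments work.
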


Let me first sketch the strategy of the proof. Since we are looking at negative gradient flows
of $E^{\alpha}$ we have for a solution $\g(t)$ of equation \eqref{eq:EvolutionEquation}
\begin{equation*}
E^\alpha(\g(t)) + \lambda L(\g) \leq E^{\alpha}(\g(0)) + \lambda L(\g(0))
\end{equation*}
for all $t \in [0,T)$. So both, the energy and the length of the curve, is uniformly bounded in time. As Abrams et al \cite{Abrams2003} have shown that 
$$
 E^\alpha (\g) \geq E^\alpha(\mathbb S^1) = m_\alpha >0
$$
for all closed curves $\g$ of unit length, we get from the scaling of the energy
$$
 E^\alpha ( \g) = L^{2-\alpha}E^\alpha (\frac \g {L(\g)}) \geq L^{2-\alpha}m_\alpha
$$
and thus
\begin{equation}
 L(\g_t) \geq \left(\frac {m_\alpha}{E^{\alpha}(\g_t)}\right)^{\frac 1 {\alpha -2}} \geq \left(\frac {m_\alpha}{E^{\alpha}(\g_0)}\right)^{\frac 1 {\alpha -2}} >0
\end{equation}
uniformly in $t$.

We will show that the energies $E^{\alpha}$ are coercive in $W^{\frac {\alpha +1 }2, 2}$ (cf. Theorem~\ref{thm:Coercivity}) in Section~\ref{sec:Coercivity}.
Together with the above inequalities this implies that the 
 $W^{\frac {\alpha +1 }2 , 2}$ norm of the unit tangents of the curve is uniformly bounded.
 
To get higher order estimates, we calculate the evolution equations of the terms
 \begin{equation*}
 	\mathcal E^k = \int_{\mathbb R / \mathbb Z} |\partial_s^k \kappa|^2 ds
 \end{equation*}
(cf. \ref{lem:EvolutionOfDkappa}) in  Section \ref{sec:EvolutionEquations} and show that the resulting terms
can be estimated using Gagliardo-Nirenberg-Sobolev inequalities for fractional Sobolev spaces and Besov spaces. 
 
 In the Subsections \ref{sec:ProofOfLongTimeExistence} we put all these pieces together
 to show that the solution to the flow exists for all time and subconverges after suitable translations and re-parameterizations if necessary to
 a critical point.

\subsection{Coercivity of the Energy} \label{sec:Coercivity}

Theorem 1.1 in \cite{Blatt2012} states that for curves
parameterized by arc length, the energy $E^\alpha$ is finite if and only if the curve 
is injective and belongs to 
$W^{\frac {\alpha+1} 2,2}(\mathbb R / \mathbb Z, \mathbb R^n)$. 
One of the most important ingredients in the proof of the long time existence result 
is the following quantitative version
of the regularizing effects  of Theorem 1.1 in \cite{Blatt2012}:

\begin{theorem} [Coercivity of $E^\alpha$ ] \label{thm:Coercivity}
Let $\g \in C^{1}(\mathbb R / l \mathbb Z , \mathbb R^n )$, $l >0$, be a curve parametrized 
by arc length and $\alpha \in [2,3)$. Then there exists
a constant $C=C(\alpha)< \infty$ depending only on $\alpha$ such that
\begin{equation*}
	|\g'|_{W^{\frac {\alpha-1}{2},2}} 
	\leq C E^\alpha (\g).
\end{equation*}
\end{theorem}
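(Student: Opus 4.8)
The plan is to bound the Gagliardo seminorm $|\g'|_{W^{(\alpha-1)/2,2}}^2 = \iint |\g'(x)-\g'(y)|^2 / |x-y|^{\alpha} \, dx\, dy$ directly by $E^\alpha(\g)$, exploiting the arc length parametrization so that $|\g'|\equiv 1$ everywhere and $d_\g(x,y) = \min(|x-y|, l-|x-y|)$ is essentially $|x-y|$ for nearby points. The starting observation is the algebraic identity, valid for unit vectors, $|\g'(x)-\g'(y)|^2 = 2 - 2\langle \g'(x),\g'(y)\rangle$; I would instead work with the secant direction. Writing $w = y-x$ and expanding, one has for the chord $|\g(x+w)-\g(x)|^2 = w^2 - |\int_0^w (\g'(x)-\g'(x+\tau))\,d\tau|^2$ type estimates, so that the integrand of $E^\alpha$,
\begin{equation*}
\frac{1}{|\g(x+w)-\g(x)|^\alpha} - \frac{1}{d_\g(x,x+w)^\alpha},
\end{equation*}
is comparable, for $|w|$ small, to $|w|^{-\alpha}$ times a quantity controlling how much $\g$ deviates from a straight line on the interval $[x,x+w]$. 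The first main step is therefore a pointwise lower bound: there is $c=c(\alpha)>0$ such that the $E^\alpha$ integrand is bounded below by
\begin{equation*}
c\,\frac{1}{|w|^{\alpha+2}}\left( |w|^2 - |\g(x+w)-\g(x)|^2 \right)
= c\,\frac{1}{|w|^{\alpha+2}}\left( |w|^2 - \Big|\int_0^1 \g'(x+\tau w)\,d\tau\Big|^2 |w|^2\right),
\end{equation*}
using $1 - (1-t)^{-\alpha/2}\cdot(\cdots)$-type Taylor estimates together with the bi-Lipschitz bound (which, by Theorem~1.1 of \cite{Blatt2012}, follows from finiteness of $E^\alpha$, with quantitative constants depending only on $\alpha$ — this is where one must be a little careful that the constant does not degenerate).

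The second step is to recognize the right-hand side as a genuine square. Since $\big|\int_0^1 \g'(x+\tau w)d\tau\big| \le 1$ with equality only if $\g'$ is constant on the interval, one has
\begin{equation*}
1 - \Big|\int_0^1 \g'(x+\tau w)\,d\tau\Big|^2 = \int_0^1\int_0^1 \Big(1 - \langle \g'(x+\tau w),\g'(x+\sigma w)\rangle\Big)\,d\tau\,d\sigma = \tfrac12 \int_0^1\int_0^1 |\g'(x+\tau w) - \g'(x+\sigma w)|^2 \,d\tau\,d\sigma.
\end{equation*}
Substituting this in and integrating over $x\in \mathbb R/l\mathbb Z$ and over $w$ in a neighborhood of $0$ (the contribution of $|w|$ bounded away from $0$ is harmless: there $d_\g$ and the chord are both comparable to constants and the whole seminorm restricted to $|x-y|\gtrsim 1$ is controlled by the diameter, hence by the length and the bi-Lipschitz constant), one gets
\begin{equation*}
E^\alpha(\g) \ge c \iint \frac{1}{|w|^{\alpha+2}} \int_0^1\int_0^1 |\g'(x+\tau w)-\g'(x+\sigma w)|^2\,d\sigma\,d\tau\, dx\, dw.
\end{equation*}
Now one performs the change of variables: for fixed $\tau,\sigma$, set $u = x+\tau w$, $v = x + \sigma w$, so that $u - v = (\tau-\sigma) w$ and the Jacobian in $(x,w)\mapsto(u,v)$ is $|\tau-\sigma|^{-1}$; this turns the expression into $\int_0^1\int_0^1 |\tau-\sigma|^{\alpha+1}\,d\tau\,d\sigma$ (a finite constant since $\alpha+1 > -1$) times $\iint |\g'(u)-\g'(v)|^2 / |u-v|^{\alpha+2}\cdot|u-v|\,du\,dv$ — wait, tracking powers carefully, $|w|^{-(\alpha+2)} = |\tau-\sigma|^{\alpha+2} |u-v|^{-(\alpha+2)}$ and $dx\,dw = |\tau-\sigma|^{-1} du\,dv$, giving exactly $|u-v|^{-(\alpha+2)}$... but the target exponent is $\alpha$, not $\alpha+2$.

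Let me reconcile: the seminorm $|\g'|_{W^{(\alpha-1)/2,2}}^2$ has exponent $1 + 2\cdot\frac{\alpha-1}{2} = \alpha$ in the denominator. So in fact the pointwise lower bound I should aim for is sharper by a factor $|w|^2$: the $E^\alpha$ integrand is bounded below by $c|w|^{-\alpha}\cdot|w|^{-2}\cdot(\text{deviation})$ where the deviation already carries a $|w|^2$, i.e. the integrand $\ge c|w|^{-\alpha}\cdot\fint_0^1\fint_0^1 |\g'(x+\tau w)-\g'(x+\sigma w)|^2$. This is the correct scaling — the chord defect $|w|^2 - |\g(x+w)-\g(x)|^2$ is of order $|w|^2\cdot\osc(\g')^2$ and dividing by $|\g(x+w)-\g(x)|^{\alpha+2}\approx |w|^{\alpha+2}$ gives $|w|^{-\alpha}\osc(\g')^2$. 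Then the change of variables above produces $\iint |\g'(u)-\g'(v)|^2|u-v|^{-\alpha}\,du\,dv$ with a finite constant $\int_0^1\int_0^1 |\tau-\sigma|^{\alpha-1}d\tau\,d\sigma \cdot$(stuff), which is precisely $|\g'|_{W^{(\alpha-1)/2,2}}^2$. So the argument closes.

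\textbf{Main obstacle.} The delicate point is the first step: establishing the pointwise lower bound for the $E^\alpha$ integrand with a constant depending \emph{only} on $\alpha$ and not on the curve. This requires controlling $|\g(x+w)-\g(x)|$ both from below and above by $|w|$ up to a uniform multiplicative constant — i.e. a quantitative bi-Lipschitz estimate — on the relevant range of $w$. One expects this to come out of the classification in \cite{Blatt2012}, but extracting a constant that is uniform over all curves (rather than merely qualitative) is the crux; presumably one first localizes to scales $|w| \le \rho$ where $\rho = \rho(E^\alpha(\g),\alpha)$ is chosen so that $\g$ is close to linear, handles $|w|\ge\rho$ trivially, and then on small scales uses that $E^\alpha$ finite forces $\int_0^\rho \osc_{[x,x+w]}(\g')^2 \,|w|^{-\alpha}\,d|w|$ to be small, which feeds back into the bi-Lipschitz bound via a bootstrap. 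Making this self-improving estimate quantitative and $\g$-independent is where the real work lies; the change-of-variables and the square-identity manipulations are routine once it is in place.
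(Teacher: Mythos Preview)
Your overall strategy---the pointwise lower bound on the integrand, the square identity for $1-\big|\int_0^1 \g'(x+\tau w)\,d\tau\big|^2$, and the change of variables $(x,w)\mapsto(u,v)$---is correct and is exactly what the paper does. But you have misidentified where the difficulty lies.

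\medskip

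\textbf{The ``main obstacle'' is not an obstacle.} You worry that the pointwise lower bound on the $E^\alpha$ integrand requires a quantitative bi-Lipschitz estimate with constant depending only on $\alpha$, and you propose a bootstrap to extract it from \cite{Blatt2012}. This is unnecessary. For a curve parametrized by arc length one has the one-sided bound $|\g(x+w)-\g(x)|\le |w|$ for free (it is just $\big|\int_x^{x+w}\g'\big|\le\int_x^{x+w}|\g'|=|w|$), and $d_\g(x,x+w)=|w|$ for $|w|\le l/2$. Writing $a=|\g(x+w)-\g(x)|/|w|\in(0,1]$, the integrand equals
\[
\frac{1}{|w|^\alpha}\big(a^{-\alpha}-1\big)
\;=\;\frac{1}{|w|^\alpha}\cdot\frac{1-a^\alpha}{a^\alpha}
\;\ge\;\frac{1-a^\alpha}{|w|^\alpha}
\;\ge\;\frac{1-a^2}{|w|^\alpha},
\]
the last step using $a\le 1$ and $\alpha\ge 2$. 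This is the desired lower bound with constant $1$; no lower bi-Lipschitz control, no bootstrap, no appeal to \cite{Blatt2012} is needed. This is precisely the paper's first chain of inequalities.

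\medskip

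\textbf{Where some care \emph{is} needed.} Conversely, the change of variables you call routine does require attention to the domain of integration. After translating $u\mapsto u+\tau_1 w$ and rescaling $w\mapsto (\tau_2-\tau_1)w$, the $w$-range shrinks from $[-l/2,l/2]$ to $[-|\tau_2-\tau_1|\,l/2,\;|\tau_2-\tau_1|\,l/2]$, so one does not immediately recover the full Gagliardo integral. The paper handles this by restricting to $(\tau_1,\tau_2)\in[0,1/4]\times[3/4,1]$, where $|\tau_2-\tau_1|\ge 1/2$, obtaining the seminorm with $w$-range $[-l/4,l/4]$, and then passing to the full range $[-l/2,l/2]$ by a simple splitting/doubling argument (split $\g'(u)-\g'(u+w)$ through the midpoint $u+w/2$). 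Your sketch omits this, and your side remark about ``$|w|$ bounded away from $0$'' being handled by diameter and bi-Lipschitz bounds again invokes machinery that is not needed and not available with $\alpha$-only constants.

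\medskip

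In summary: the argument is the paper's argument, but the step you flagged as delicate is a two-line algebraic observation requiring only $|\g'|\equiv 1$, while the step you treated as routine is where the (minor) bookkeeping actually sits.
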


\begin{proof}
We have
\begin{align*}
	E^{\alpha} (\g) &= \int_{\mathbb R / l\mathbb Z} \int_{-\frac l 2} ^{\frac l 2}
	\bigg(
		 \frac 1  {|\g(u+w)- \g(u)|^\alpha} -  \frac 1 {|w|^\alpha} 
	\bigg)dw du 
	\\
	&= 
		\int_{\mathbb R / l \mathbb Z} \int_{\mathbb R} 
			\frac {|w|^\alpha}{|\g(u+w) - \g(u)|^\alpha} 
			\left( \frac {1- \frac {|\g(u+w)-\g(u)|^\alpha}{|w|^\alpha}}
			{|w|^\alpha} \right) 
		dw du
	\\
	& \lstackrel{|\g(u+w)- \g(u)|\geq |w|}{\geq}
		\int_{\mathbb R / l \mathbb Z} \int_{-\frac l 2} ^{\frac l 2}
			\left( \frac {1- \frac {|\g(u+w)-\g(u)|^\alpha}{|w|^\alpha}}
			{|w|^\alpha} \right) 
		dw du
	\\
	& \lstackrel{1- a ^\alpha \geq 1-a^2 \text { for } a \in [0,1]}{\geq}
	\int_{\mathbb R / l \mathbb Z} \int_{-\frac l 2} ^{\frac l 2}
			\left( \frac {1- \frac {|\g(u+w)-\g(u)|^2}{|w|^2}}
			{|w|^\alpha} \right) 
		dw du
	\displaybreak[0] \\
	& = \int_{\mathbb R / l \mathbb Z} \int_{-\frac l 2} ^{\frac l 2}
			\left( \frac {1-
				\int_{0}^1 \int_{0}^1 
					\left\langle \g'(u+\tau_1 w), \g'(u + \tau_2 w) \right\rangle
				d\tau_1 d\tau_2
			} {|w|^\alpha} \right) 
		dw du
	\displaybreak[0] \\
	& \lstackrel {|\g'| = 1} {=}
		\int_{\mathbb R / l \mathbb Z} \int_{-\frac l 2} ^{\frac l 2}
			\left( \frac{
				\int_{0}^1 \int_{0}^1 
					| \g'(u+\tau_1 w) -  \g'(u + \tau_2 w)| ^2 
				d\tau_1 d\tau_2
			} {|w|^\alpha} \right) 
		dw du
\end{align*}
Using Fubini and successively substituting $u$ by $u+\tau_1 w$ and then $w$ by 
$(\tau_2 - \tau_1)w$,
we get
\begin{align*}
	E^{\alpha} (\g) 
	&\geq 
		\int_{\mathbb R / l \mathbb Z} \int_{-\frac l 2} ^{\frac l 2}
		 \int_{0}^1 \int_{0}^1 
			\left( \frac{
					| \g'(u+\tau_1 w) -  \g'(u + \tau_2 w)| ^2 
			} {|w|^\alpha} \right) 
		d\tau_1 d\tau_2 dw du
	\\
	&\geq \int_{0}^1 \int_{0}^1 \Bigg(|\tau_2 - \tau_1|^{\alpha-1} 
		\int_{\mathbb R / l \mathbb Z}
		 \int_{-\frac{ |\tau_2 - \tau _1| l} 2} ^{\frac{\|\tau_2 - \tau_1| l} 2}
			\left( \frac{
					| \g'(u) -  \g'(u + w)| ^2 
			} {|w|^\alpha} \right) 
		dw du \Bigg) d\tau_1 d\tau_2
	 \\
	&\geq 
		\left(\frac 12 \right) ^{\alpha-1}
		\int_{0}^{1/4} \int_{3/4}^1 
		\int_{\mathbb R / l \mathbb Z}
		 \int_{-\frac l 4} ^{\frac  l 4}
			\left( \frac{
					| \g'(u) -  \g'(u + w)| ^2 
			} {|w|^\alpha} \right) 
		dw du d\tau_1 d\tau_2
	\\
	&\geq 
		\left(\frac 12 \right) ^{\alpha+5}
		\int_{\mathbb R / l \mathbb Z}
		 \int_{-\frac l 4} ^{\frac  l 4}
			\left( \frac{
					| \g'(u) -  \g'(u + w)| ^2 
			} {|w|^\alpha} \right) 
		dw du 
\end{align*}
and finally
\begin{align*}
		|\g'|^2_{W^{\frac {\alpha -1} 2 }} &=
		\int_{\mathbb R / l \mathbb Z}
	 	\int_{-\frac l 2} ^{\frac  l 2}
			\left( \frac{
					| \g'(u) -  \g'(u + w)| ^2 
			} {|w|^\alpha} \right) 
		dw du 
	 \\
	& \leq C
		\int_{\mathbb R / l \mathbb Z}
	 	\int_{-\frac l 2} ^{\frac  l 2}
			\left( \frac{
					| \g'(u) -  \g'(u + w/2)| ^2 
			} {|w|^\alpha} \right) 
		dw du 
		\\
		&\quad+ C  \int_{\mathbb R / l \mathbb Z}
	 	\int_{-\frac l 4} ^{\frac  l 4}
			\left( \frac{
					| \g'(u+w/2) -  \g'(u + w)| ^2 
			} {|w|^\alpha} \right) 
		dw du 
	\\
	& \leq
		C \int_{\mathbb R / l \mathbb Z}
	 	\int_{-\frac l 4} ^{\frac  l 4}
			\left( \frac{
					| \g'(u) -  \g'(u + w/2)| ^2 
			} {|w|^\alpha} \right) 
		dw du 
	\\
	& \leq
		 C E^\alpha (\g).
\end{align*}
\end{proof}

\subsection{Evolution Equations of Higher Order Energies} \label{sec:EvolutionEquations}

As for most of our estimates the precise algebraic form of the terms
does not matter, we will use the following notation to describe the 
essential structure of the terms.

For two Euclidean
vectors $v,w$, $v \ast w$ stands for a bilinear operator in $v$ and $w$
into another Euclidean vector space. For a regular curve $\g$, let $\partial_s = \frac {\partial_x}{|\g'|}$ denote the derivative with respect to arc length. For $\mu, \nu \in \mathbb N$,  a
regular curve $\g \in C^\infty (\mathbb R / \mathbb Z, \mathbb R^n)$
and a function $f: \mathbb R / \mathbb Z \rightarrow \mathbb R^k$ we
let $P^{\mu}_\nu(f)$ be a linear combination of terms of the form
$\partial_s^{j_1} f \ast \cdots \ast \partial_s^{j,\nu} f$, $j_1 +
\cdots + j_\nu =\mu$. Furthermore, given a second function $g: \mathbb R /
\mathbb Z \rightarrow \mathbb R^k$ the expression
$P^{\mu}_{\nu}(g,f)$ denotes a linear combination of terms of the
form $\partial_s^{j_1} g \ast \partial_s^{j_2} f \ast  \partial_s^{j_3} f \ast  \cdots \ast \partial_s^{j,\nu} f$, $j_1 +
\cdots + j_\nu =\mu$.

Let $\g_t$ be a smooth family of smooth closed curves moving only in
normal direction, i.e., $V:=\partial_t \g_t$ is normal along
$\g_t$. Furthermore, let us denote by $s$ the arc length
parameter. It is well known that
\begin{equation} \label{eq:CommutationOfDerivatives}
  \partial_t \partial_s = \partial_s \partial_t + \langle \kappa, V
  \rangle \partial_s
\end{equation} 
and
\begin{equation} \label{eq:EvolutionEquationOfKappa}
 \partial_t \kappa = \partial_s^2 V + \partial_s (\langle \kappa, V
 \rangle \tau ) + \langle \kappa, V \rangle \kappa.
\end{equation}

Using these equations, we inductively deduce the following evolution equations of arbitrary derivatives of the curvature.

\begin{lemma} \label{lem:EvolutionOfDkappa}Let $I\subset \mathbb R$ be open and $\g: I \times
  \mathbb R / l\mathbb Z \rightarrow \mathbb R$ be a smooth family of
  curves such that $V:= \partial_t \g$ is normal along $\g$,
  i.e. $\langle V(x,t), \g'(x,t)\rangle = 0$. Then
\begin{equation*}
\partial_t (\partial_s ^k \kappa) = \partial^{k+2} V + \partial_s \left(
P^{k}_2(V,\kappa)  \tau \right) + P_3^{k}(V,\kappa)
\end{equation*}
for all $k \in \mathbb N_0.$
\end{lemma}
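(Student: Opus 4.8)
I would prove the lemma by induction on $k$, using only the commutation relation \eqref{eq:CommutationOfDerivatives}, the evolution equation \eqref{eq:EvolutionEquationOfKappa} of the curvature, the Frenet relation $\partial_s\tau=\kappa$, and two bookkeeping rules for the schematic symbols. For $k=0$ the claimed identity is exactly \eqref{eq:EvolutionEquationOfKappa}, once one notes that the scalar $\langle\kappa,V\rangle$ is of type $P^0_2(V,\kappa)$ (one undifferentiated factor $V$, one undifferentiated factor $\kappa$, contracted by a bilinear $\ast$) and that $\langle\kappa,V\rangle\kappa$ is of type $P^0_3(V,\kappa)$. The two bookkeeping rules I will use repeatedly are: applying $\partial_s$ to $P^\mu_\nu(V,\kappa)$ produces, via the Leibniz rule, a linear combination of terms of the same shape with $\mu$ raised by one, so $\partial_s P^\mu_\nu(V,\kappa)\subseteq P^{\mu+1}_\nu(V,\kappa)$; and multiplying $P^\mu_\nu(V,\kappa)$ by an undifferentiated $\kappa$ yields a term of type $P^\mu_{\nu+1}(V,\kappa)$.

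For the inductive step, assume the formula holds for some $k$. I would write $\partial_t(\partial_s^{k+1}\kappa)=\partial_t\partial_s(\partial_s^k\kappa)$ and commute $\partial_t$ past $\partial_s$ with \eqref{eq:CommutationOfDerivatives}, obtaining
\begin{equation*}
\partial_t(\partial_s^{k+1}\kappa)=\partial_s\bigl(\partial_t(\partial_s^k\kappa)\bigr)+\langle\kappa,V\rangle\,\partial_s^{k+1}\kappa,
\end{equation*}
where the second summand is $V\ast\kappa\ast\partial_s^{k+1}\kappa\in P^{k+1}_3(V,\kappa)$. Substituting the inductive hypothesis into the first summand and distributing the outer $\partial_s$ gives
\begin{equation*}
\partial_s\bigl(\partial_t(\partial_s^k\kappa)\bigr)=\partial_s^{k+3}V+\partial_s^2\bigl(P^k_2(V,\kappa)\tau\bigr)+\partial_s\bigl(P^k_3(V,\kappa)\bigr);
\end{equation*}
here $\partial_s^{k+3}V$ is the desired leading term $\partial_s^{(k+1)+2}V$, and $\partial_s P^k_3(V,\kappa)\in P^{k+1}_3(V,\kappa)$. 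For the middle term, $\partial_s\bigl(P^k_2(V,\kappa)\tau\bigr)=\bigl(\partial_s P^k_2(V,\kappa)\bigr)\tau+P^k_2(V,\kappa)\,\partial_s\tau=P^{k+1}_2(V,\kappa)\tau+P^k_3(V,\kappa)$, so applying $\partial_s$ once more and using the bookkeeping rules yields $\partial_s^2\bigl(P^k_2(V,\kappa)\tau\bigr)=\partial_s\bigl(P^{k+1}_2(V,\kappa)\tau\bigr)+P^{k+1}_3(V,\kappa)$. Collecting everything and absorbing the several separate $P^{k+1}_3(V,\kappa)$ contributions into a single one gives the formula for $k+1$.

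There is no substantial obstacle: the argument is mechanical once \eqref{eq:CommutationOfDerivatives} and \eqref{eq:EvolutionEquationOfKappa} are available. The only points requiring mild care are the schematic bookkeeping --- in particular one must not fully expand $\partial_s\bigl(P^{k+1}_2(V,\kappa)\tau\bigr)$, which would spuriously create a $\partial_s^{k+2}$-factor and leave the admissible classes, but instead keep the outermost derivative intact so that the middle term lands exactly in the stated shape --- and the observation that the $P^\mu_\nu$ symbols impose no tangency condition, so derivatives of the normal field $V$ may be arbitrary vectors and no correction terms beyond those from \eqref{eq:CommutationOfDerivatives} arise.
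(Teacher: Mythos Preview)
Your proof is correct and follows essentially the same approach as the paper's own proof: induction on $k$, with the base case given by \eqref{eq:EvolutionEquationOfKappa} and the inductive step obtained by commuting $\partial_t$ past $\partial_s$ via \eqref{eq:CommutationOfDerivatives} and then applying the Leibniz-type bookkeeping rules for the schematic symbols $P^\mu_\nu$. Your exposition is in fact slightly more explicit about the bookkeeping than the paper's.
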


\begin{proof}
By (\ref{eq:EvolutionEquationOfKappa}) the statement is true for
$k=0$. If the statement was true for some $k$ then 
\begin{align*}
  \partial_t (\partial_s^{k+1} \kappa) &= \partial_s (\partial_t
  (\partial_s^k \kappa)) + \langle \kappa, V\rangle \partial_s^{k+1}
  \kappa \\
 & = \partial_s \left( \partial^{k+2} V + \partial_s \left(
P^{k}_2(V,\kappa) \tau\right) + P_3^{k}(V,\kappa) \right) + \langle \kappa,
V\rangle \partial_s^{k+1}
  \kappa \\
& = \partial_s \left( \partial^{k+2} V + 
P^{k+1}_2(V,\kappa) \tau + P^{k+1}_3 (V,\kappa) + P_3^{k}(V,\kappa) \right) +
\langle \kappa, V\rangle \partial_s^{k+1}
  \kappa \\
 & =  \partial^{(k+1)+2} V + \partial_s \left(
P^{k+1}_2(V,\kappa) \tau \right) + P_3^{k+1}(V,\kappa). 
\end{align*} 
Hence, induction gives the assertion.
\end{proof}

An immediate corollary of Lemma~\ref{lem:EvolutionOfDkappa} and
\[
 \partial_t \left(|\g'| \right) 
 = -\left\langle \kappa, \partial_s V\right\rangle |\g'|\]
is the following.

\begin{corollary} \label{cor:EvolutionOfEnergies}
 Let $\g$ be a family of curves moving with normal speed
 $V$. Then
\begin{align*}
  \partial_t \int_{\mathbb R / \mathbb Z} |\partial_s^{k} \kappa|^2 ds
  &= 2 \int_{\mathbb R / \mathbb Z}
  \langle \partial_s^{k+2}V,\partial_s^k \kappa\rangle ds 
 + 2\int
  \langle P^{k}_2(V,\kappa)  \tau, \partial^{k+1}_s
\kappa\rangle ds 
\\
& \quad + 2 \int \langle P_3^{k}(V,\kappa), \partial_s^{k} \kappa
\rangle ds
- \int |\partial_s^k \kappa|^2 \langle \kappa ,  V\rangle ds.
\end{align*}
\end{corollary}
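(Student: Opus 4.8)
The plan is to differentiate $\mathcal E^k=\int_{\mathbb R/\mathbb Z}|\partial_s^k\kappa|^2\,ds$ directly, keeping track of the two sources of $t$-dependence — the integrand and the arc-length element $ds=|\g'|\,dx$ — separately. First I would record that, since $V=\partial_t\g$ is normal along $\g$, one has $\partial_t|\g'|=-\langle\kappa,V\rangle|\g'|$; this is the stated fact, and it is exactly the identity that makes the commutator formula \eqref{eq:CommutationOfDerivatives} consistent (comparing $\partial_t\partial_s=\frac{1}{|\g'|}\partial_x\partial_t-\frac{\partial_t|\g'|}{|\g'|}\partial_s$ with \eqref{eq:CommutationOfDerivatives} forces precisely this sign). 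Hence $\partial_t(ds)=-\langle\kappa,V\rangle\,ds$, and writing the integral over the fixed parameter circle and pulling $\partial_t$ inside gives
\[
 \partial_t\int_{\mathbb R/\mathbb Z}|\partial_s^k\kappa|^2\,ds
 =\int_{\mathbb R/\mathbb Z}2\langle\partial_t(\partial_s^k\kappa),\partial_s^k\kappa\rangle\,ds
 -\int_{\mathbb R/\mathbb Z}|\partial_s^k\kappa|^2\langle\kappa,V\rangle\,ds,
\]
which already produces the last term in the assertion.

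Next I would substitute the evolution equation of Lemma~\ref{lem:EvolutionOfDkappa}, namely $\partial_t(\partial_s^k\kappa)=\partial_s^{k+2}V+\partial_s(P_2^k(V,\kappa)\tau)+P_3^k(V,\kappa)$, into the first integral. Pairing $\partial_s^{k+2}V$ and $P_3^k(V,\kappa)$ with $\partial_s^k\kappa$ yields directly the first and third terms of the corollary. For the remaining contribution $2\int\langle\partial_s(P_2^k(V,\kappa)\tau),\partial_s^k\kappa\rangle\,ds$ I would integrate by parts with respect to arc length: on the closed curve $\int_{\mathbb R/\mathbb Z}\partial_s f\,ds=\int_{\mathbb R/\mathbb Z}\partial_x f\,dx=0$, so this equals $-2\int\langle P_2^k(V,\kappa)\tau,\partial_s^{k+1}\kappa\rangle\,ds$. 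The sign is irrelevant because $P_2^k$ stands for an arbitrary linear combination of terms of the prescribed type, so $-P_2^k$ is again a $P_2^k$; collecting the four pieces gives the claimed formula.

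There is no genuine obstacle here — this is a bookkeeping corollary of Lemma~\ref{lem:EvolutionOfDkappa} and the first-variation-of-length formula. The only points deserving a moment's care are: (i) pinning down the sign in $\partial_t(ds)$, which I would cross-check against \eqref{eq:CommutationOfDerivatives}; (ii) observing that the boundary term in the arc-length integration by parts vanishes because the curve is closed; and (iii) being comfortable that the schematic $P$-notation absorbs both the sign flip and the extra factor $\tau$ (a unit-length vector field, hence harmless for the order count of the $P_2^k$ term).
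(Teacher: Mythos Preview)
Your proposal is correct and is exactly the argument the paper has in mind: the paper does not spell out a proof but simply declares the corollary an immediate consequence of Lemma~\ref{lem:EvolutionOfDkappa} together with $\partial_t(|\g'|)=-\langle\kappa,V\rangle|\g'|$, and you have filled in precisely those details (differentiate under the integral, use the arc-length evolution for the measure, substitute the lemma, integrate the $\partial_s(P_2^k\tau)$ term by parts and absorb the sign into the schematic $P$-notation). Note incidentally that the displayed formula just above the corollary in the paper has a typo (it reads $\langle\kappa,\partial_sV\rangle$ where $\langle\kappa,V\rangle$ is meant); your version is the correct one and is what actually produces the last term of the stated identity.
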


\subsection{Interpolation Estimates} \label{sec:Estimates}

In this section we will prove several estimates that will be needed later in the proof of the
long time existence result. In a natural way the Besov spaces $B^{s,p}_q$ appear during our calculations.

\begin{lemma} [Gagliardo-Nirenberg-Sobolev type estimates for a typical term] \label{lem:GagliardoNirenbergTypeEstimates} 
Let $j_1,  \ldots, j_{\nu+2} \in \mathbb N$, $j_1, \ldots , j_{\nu}\geq 2$, 
and $s > \frac 3 2$ be such that there are 
 $p_1, \ldots, p_{\nu + 2} \in [1, \infty]$ with 
\begin{equation*}
	\sum_{i=1}^{\nu + 2} \frac 1 {p_i} =1,
\end{equation*} 
$
 	\frac \alpha 2 \leq j_i - \frac 1 {p_i} \leq s +  \frac \alpha 2
$
 for $i=1, \ldots, \nu$ and 
$
 	\frac 1 2 j_{i} - \frac 1 {p_i} \leq s+ \frac 1 2
$ for $i=\nu+1, \nu+2$.  Let $\theta:= (\sum_{i=1}^{\nu+2} j_i - \nu \frac \alpha 2 -2 )/s$.

 Then for all $\Lambda < \infty$ there is a $C=C(\Lambda)$ such that
 \begin{multline*}
 	\int_{\mathbb R / l \mathbb Z} \int_{- \frac l2} ^{\frac l 2}
	 		\left(\prod_{i=1}^{\nu} |\partial ^{j_i} f (u + \sigma_i w)| \right)
	 		\\ \times   
	 		\left( \prod_{i=\nu+1}^{\nu+2}
	 		\frac {
	 			\int_0^1 \int_0^1 
	 			|\partial^{j_{i}}f(u+ \tau_1 w) - \partial^{j_{i}} f(u+\tau_2 w)|
				d\tau_1 d\tau_2} {|w|^\alpha}
 		\right) 
	dw du
	\\
	\leq C \|f\|_{W^{\frac {\alpha+1} 2 +s,2}} ^\theta
	\|f\|_{W^{\frac {\alpha+1} 2 ,2}} ^{\nu +2 - \theta}
 \end{multline*}
 holds for all $f \in C^{\infty}( \mathbb R / l \mathbb Z, \mathbb R^n)$ if 
 $\Lambda^{-1} \leq l \leq \Lambda$, $\sigma_i \in \mathbb R$.
 
\end{lemma}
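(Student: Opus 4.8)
The plan is to bound the double integral by Hölder's inequality, to recognise each of the resulting $\nu+2$ factors as the (homogeneous part of a) norm of a derivative of $f$ in an $L^{p_i}$-based Sobolev--Slobodeckij or Besov space, and then to interpolate each such factor between $W^{\frac{\alpha+1}{2},2}$ and $W^{\frac{\alpha+1}{2}+s,2}$ by the fractional Gagliardo--Nirenberg--Sobolev inequality; the value of $\theta$ is then forced by a homogeneity count. First I would reduce to a fixed interval length, say $l=1$, by the rescaling $f\mapsto f(l\,\cdot)$: since $l\in[\Lambda^{-1},\Lambda]$ this only affects constants.

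Next, I would view the integrand as a product of $\nu+2$ factors --- the $\nu$ pointwise factors $|\partial^{j_i}f(u+\sigma_i w)|$ and the two difference--quotient factors --- and apply Hölder's inequality in $(u,w)$ with exponents $p_1,\dots,p_{\nu+2}$, distributing the weight $|w|^{-2\alpha}$ among \emph{all} $\nu+2$ factors. Putting a small power of $|w|$ on a pointwise factor is harmless (after integrating in $u$ by periodicity it contributes only a finite constant $\int_{-1/2}^{1/2}|w|^{-\beta_ip_i}\,dw$) and gives that factor the bound $\|\partial^{j_i}f\|_{L^{p_i}}$. For each difference factor I would use Jensen's inequality to pull the power $p_i$ inside the $\tau_1,\tau_2$-average, then the change of variables $u\mapsto u-\tau_1w$ followed by $w\mapsto(\tau_2-\tau_1)w$, together with the elementary bound $\int_0^1\int_0^1|\tau_1-\tau_2|^{a}\,d\tau_1\,d\tau_2<\infty$, exactly as in the proof of Theorem~\ref{thm:Coercivity}; this identifies each difference factor with a Slobodeckij seminorm $|\partial^{j_i}f|_{W^{\sigma_i,p_i}}$ (equivalently an $L^{p_i}$-modulus-of-smoothness quantity, whence the Besov scale alluded to in the statement), replacing $\partial^{j_i}f$ by higher-order differences if some $\sigma_i\ge 1$.

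At this point every factor is a homogeneous norm $\|f\|_{W^{\rho_i,p_i}}$, and the hypotheses are tailored so that its scaling exponent $\rho_i-\tfrac1{p_i}$ lies in the admissible interval $\bigl[\tfrac\alpha2,\ \tfrac\alpha2+s\bigr]$ spanned by the scaling exponents of $W^{\frac{\alpha+1}{2},2}$ and $W^{\frac{\alpha+1}{2}+s,2}$: for the pointwise factors this is precisely $\tfrac\alpha2\le j_i-\tfrac1{p_i}\le s+\tfrac\alpha2$, and for the two difference factors the condition $\tfrac12 j_i-\tfrac1{p_i}\le s+\tfrac12$ plays the analogous role. Hence the fractional Gagliardo--Nirenberg--Sobolev inequality in multiplicative form, $\|f\|_{W^{\rho_i,p_i}}\le C\,\|f\|_{W^{\frac{\alpha+1}{2}+s,2}}^{\vartheta_i}\,\|f\|_{W^{\frac{\alpha+1}{2},2}}^{1-\vartheta_i}$ with $\vartheta_i\in[0,1]$ determined by matching scaling exponents, applies to each factor. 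Multiplying the $\nu+2$ estimates and reading off the total exponents $\theta:=\sum_i\vartheta_i$ and $\nu+2-\theta$, it only remains to check that $\theta$ equals the asserted quantity; this is a pure bookkeeping step, most cleanly verified by confirming that the claimed right-hand side transforms under $f\mapsto f(\lambda\,\cdot)$, $l\mapsto l/\lambda$ with exactly the homogeneity degree of the left-hand side, which pins $\theta$ down uniquely.

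The routine but delicate part --- and the step I expect to cost the most care --- is exactly this simultaneous verification: that the weight $|w|^{-2\alpha}$ can be split so that all the $u$-, $\tau$- and $w$-integrals converge, that the orders $\sigma_i$ produced for the two difference factors are admissible for a (possibly higher-order) Slobodeckij seminorm, and that every interpolation parameter $\vartheta_i$ as well as $\theta=\sum_i\vartheta_i$ stays in its legitimate range; it is here that the standing assumption $s>\tfrac32$ and the precise inequalities on the $j_i-\tfrac1{p_i}$ are consumed.
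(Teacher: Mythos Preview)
Your overall strategy---H\"older, then identify each factor as a Sobolev/Besov seminorm, then fractional Gagliardo--Nirenberg---is exactly the paper's. The execution differs in one place: you apply H\"older jointly in $(u,w)$ and then have to \emph{distribute} the singular weight $|w|^{-\alpha}$ among all $\nu+2$ factors, whereas the paper applies H\"older in $u$ alone first. By periodicity, $\|\partial^{j_i}f(\cdot+\sigma_i w)\|_{L^{p_i}}$ is independent of $w$, so the $\nu$ pointwise factors drop out immediately as $\|\partial^{j_i}f\|_{L^{p_i}}$ with no weight to carry. What remains is $\int_{-l/2}^{l/2}|w|^{-\alpha}\prod_{i=\nu+1}^{\nu+2}\|\partial^{j_i}f(\cdot+(\tau_1-\tau_2)w)-\partial^{j_i}f\|_{L^{p_i}}\,dw$, and a single Cauchy--Schwarz in $w$ (not H\"older with exponents $p_{\nu+1},p_{\nu+2}$) splits this into two factors, each of which---after the substitution $w\mapsto(\tau_1-\tau_2)w$ you describe---is precisely the Besov seminorm $\|\partial^{j_i}f\|_{B^{(\alpha-1)/2,\,p_i}_{2}}$. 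The fractional order $(\alpha-1)/2\in(\tfrac12,1)$ is then automatic, so no passage to higher-order differences is ever needed.

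In short: your route works, but doing H\"older in $u$ first and Cauchy--Schwarz in $w$ second removes the weight-splitting bookkeeping entirely and produces Besov norms with third index $2$ rather than the Slobodeckij $B^{\sigma_i,p_i}_{p_i}$ norms your joint H\"older yields. Either set of norms feeds into the same Gagliardo--Nirenberg interpolation (Theorem~\ref{thm:MultiplicativeSobolevInequalities}) and recovers the stated $\theta$; your scaling check is a clean way to confirm this.
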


\begin{proof}
Using H\"older's inequality for the integration with respect to $u$, we get 
\begin{align*}
		&\int_{\mathbb R / l \mathbb Z} \int_{- \frac l2} ^{\frac l 2}
	 		\left(\prod_{i=1}^{\nu} |\partial_s ^{j_i} f (u+\sigma_i w)| \right)    \\ & \quad \quad \quad \quad \quad \quad \quad \quad \quad 
	 		\left( \prod_{i=\nu+1}^{\nu+2}
	 		\frac {
	 			\int_0^1 \int_0^1 
	 			|\partial^{j_{i}}f(u+ \tau_1 w) - \partial^{j_{i}} f(u+\tau_2 w)|
				d\tau_1 d\tau_2} {|w|^\alpha}
 			\right) 
		dw du
	\\
	& \leq \left(\prod_{i=1}^{\nu} \|\partial^{j_i}f\|_{L^{p_i}} \right) 
		\int_0^1 \int_0^1 \int_{-\frac l2 }^{\frac l 2}  \frac{
			\prod_{i=\nu+1}^{\nu+2} \|\partial^{j_{i} }
			f (\cdot + (\tau_1 - \tau_2) w )-\partial^{j_{i} }
			f \|_{L^{p_{i}}} }{|w|^\alpha} 
		dw d\tau_1 d\tau_2 
	\\
	&\leq \left(\prod_{i=1}^{\nu} \|\partial^{j_i}f\|_{L^{p_i}} \right)
	\int_{0}^1 \int_0^1
		\prod_{i=\nu+1}^{\nu+2}
		 \left( \int_{-\frac l2 }^{\frac l 2}  
		 	\frac{\|\partial^{j_{i} }
			f (\cdot + (\tau_1 - \tau_2) w )-\partial^{j_{i} }
			f \|^2_{L^{p_{i}}} }{|w|^\alpha} 
		dw \right)^{\frac 1 2} d\tau_1 d\tau_2.
\end{align*}
Substituting $w$ by $(\tau_1 - \tau_2) w$ we can estimate this further by
\begin{align*}
	&\leq C \left(\prod_{i=1}^{\nu} \|\partial^{j_i}f\|_{L^{p_i}} \right)
	\int_{0}^1 \int_0^1 |\tau_1 - \tau_2|^{\alpha-1}
		\prod_{i=\nu+1}^{\nu+2}
		 \left( \int_{-\frac l2 }^{\frac l 2}  
		 	\frac{\|\partial^{j_{i} }
			f (\cdot + w )-\partial^{j_{i} }
			f \|^2_{L^{p_{i}}} }{|w|^\alpha} 
		dw \right)^{\frac 1 2} d\tau_1 d\tau_2
	\\
	&\leq C
		\left(\prod_{i=1}^{\nu} \|\partial^{j_i}f\|_{L^{p_i}} \right) 
		\|\partial^{j_{\nu+1}} f\|_{B^{\frac {\alpha-1} 2, p_{\nu +1}}_{2}}
		\|\partial^{j_{\nu+2}} f\|_{B^{\frac {\alpha-1} 2, p_{\nu +2}}_{2}}.
\end{align*}
Scaling the Gagliardo-Nirenberg-Sobolev estimates (Theorem \ref{eq:GagliardoNirenbergEstimates}),
the last term can be estimated from above by
\begin{align*}
	C \prod_{i=1}^{\nu+2} \|f\|^{\theta_i}_{W^{s+\frac{\alpha +1 }2,2}} \|f\|^{1-\theta_i}_{W^{\frac {\alpha +1} 2,2}}
\end{align*}
where $\theta_i := \frac {\left(j_i - \frac 1 {p_i}\right) - \frac \alpha 2} s $ for $i =1, \ldots, \nu$
and  $\theta_i := \frac {(j_i - \frac 1 {p_i}) - \frac 1 2} s $ for $i=\nu+1, \nu+2$. Thus the
assertion of the theorem follows.
\end{proof}

\begin{lemma} [Estimates for terms containing the energy integrand] \label{lem:EstimateOfTypicalTerm}
For all $\Lambda <\infty$ there is a $C(\Lambda) < \infty$ such that the following holds:

Let $\Lambda^{-1} \leq l \leq \Lambda$ and $\g \in C^{\infty} (\mathbb R / l\mathbb Z, \mathbb R^n)$ 
be a curve parameterized by arc length satisfying the bi-Lipschitz estimate 
 $$|w|
\leq \Lambda |\g(u+w) - \g(u)| \quad \forall u\in \mathbb R / l\mathbb Z,
 w \in [- l/2,  l/2], $$ 
 Then the functions
 \begin{align*}
 	g_\beta:\mathbb R / \mathbb Z &\rightarrow \mathbb R,&
 	\quad g_\beta(s)&:=
 	\int_{-\frac l 2}^{\frac l 2} 
 		\left(
 		\frac {|w|^\beta} {|\g(s+w) - \g(s)|^{\alpha+\beta}} - \frac {|w|^\beta} {|w|^{\alpha+\beta}}
 		 \right)
	dw
 \end{align*}
 are in $C^{\infty}(\mathbb R / l\mathbb Z , \mathbb R)$ for $\beta >0$. If furthermore $\mu \leq s + \frac \alpha 2  $  and $k +1  \leq s$, we have
 \begin{multline} \label{eq:GN}
 \left| 
 \int_{\mathbb R /l \mathbb Z} \partial_s^k 
 	\left\{
     		\int_{-\frac l2}^{\frac l2} 
     		\left(
     			\frac {|w|^\beta} {|\g(s+w) -\g(s)|^{\alpha+\beta} }
     			 - \frac {|w|^\beta} {|w|^{\alpha+\beta}}
		\right) 
		dw 
	\right\}  P^\mu_{\nu}
   	(\g' ) (s) ds 
\right| \\ \leq C \sum_{l=1}^{k}\
			\|\g\|^{\theta_l}_{W^{\frac {\alpha +1}2 + s,2}(\mathbb R /\mathbb Z , \mathbb R^n)}
			\|\g\|^{l+\nu+2-\theta_l}_{W^{\frac {\alpha+1}2,2}(\mathbb R / \mathbb Z, \mathbb R^n)}
 \end{multline}
 where $\theta_l := (k +(l+2) + \mu +  \nu - (l+\nu) \frac \alpha 2 - 2 )/s < (k+\mu)/s.$ 
 If $(k+\mu /s \leq 2$, this implies that for every $\varepsilon >0$ there is a constant $C(\varepsilon,\|\g\|_{W^{\frac {\alpha+1}2,2}(\mathbb R / \mathbb Z, \mathbb R^n)}) < \infty$
 such that
 \begin{multline}  \label{eq:GNInterpolation}
 \left| 
 \int_{\mathbb R / l\mathbb Z} \partial_s^k 
 	\left\{
     		\int_{-\frac l2}^{\frac l2} 
     		\left(
     			\frac {|w|^\beta} {|\g(s+w) -\g(s)|^{\alpha+\beta} }
     			 - \frac {|w|^\beta} {|w|^{\alpha+\beta}}
		\right) 
		dw 
	\right\}  P^\mu_{\nu}
   	(\g' ) (s) ds 
\right| \\ \leq \varepsilon 
			\|\g\|^{2}_{W^{\frac {\alpha +1}2 + s,2}(\mathbb R /\mathbb Z , \mathbb R^n)}
			+C(\varepsilon,\|\g\|_{W^{\frac {\alpha+1}2,2}(\mathbb R / \mathbb Z, \mathbb R^n)})
 \end{multline}
\end{lemma}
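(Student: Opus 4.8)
The plan is to reduce both assertions to the Gagliardo--Nirenberg--Sobolev estimate of Lemma~\ref{lem:GagliardoNirenbergTypeEstimates}, after first extracting from the energy integrand the cancellation that is built into it. Since $\g$ is parametrised by arc length one has the exact identity
\[
  |\g(s+w)-\g(s)|^{2}=w^{2}\bigl(1-G(s,w)\bigr),\qquad
  G(s,w):=\tfrac12\int_{0}^{1}\!\!\int_{0}^{1}\bigl|\g'(s+\tau_{1}w)-\g'(s+\tau_{2}w)\bigr|^{2}d\tau_{1}d\tau_{2},
\]
and the bi-Lipschitz hypothesis is \emph{exactly} the statement $1-G(s,w)\ge\Lambda^{-2}$, i.e. $0\le G(s,w)\le 1-\Lambda^{-2}<1$. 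Hence, with $\sigma:=\tfrac{\alpha+\beta}2$,
\[
  \frac{|w|^{\beta}}{|\g(s+w)-\g(s)|^{\alpha+\beta}}-\frac1{|w|^{\alpha}}
  =\frac{(1-G(s,w))^{-\sigma}-1}{|w|^{\alpha}},
\]
whose numerator vanishes to second order in $G$ and therefore to fourth order in $w$. This already yields the smoothness of $g_{\beta}$: differentiating under the integral sign, Fa\`a di Bruno writes each $\partial_{s}^{j}\bigl[(1-G)^{-\sigma}\bigr]$ as a finite sum of terms $(1-G)^{-\sigma-l}\prod_{i}\partial_{s}^{m_{i}}G$, and every $\partial_{s}^{m}G$ is again $O(w^{2})$ because it is a sum of inner products of \emph{differences} $\partial_{s}^{c}\g'(s+\tau_{1}w)-\partial_{s}^{c}\g'(s+\tau_{2}w)$, each $O(w)$; so every $s$-derivative of the integrand is $O(|w|^{2-\alpha})$, which is integrable for $\alpha<3$.

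For the quantitative estimate \eqref{eq:GN} (the case $k\ge1$; for $k=0$ the left-hand side is directly of the form in Lemma~\ref{lem:GagliardoNirenbergTypeEstimates}) I would use the same expansion, this time tracking weights. From
\[
  \partial_{s}^{k}\!\left(\frac{|w|^{\beta}}{|\g(s+w)-\g(s)|^{\alpha+\beta}}-\frac1{|w|^{\alpha}}\right)
  =\frac{1}{|w|^{\alpha}}\,\partial_{s}^{k}\bigl[(1-G(s,w))^{-\sigma}\bigr],
\]
Fa\`a di Bruno and Leibniz express $\partial_{s}^{k}[(1-G)^{-\sigma}]$, for $l=1,\dots,k$, as a combination of $(1-G(s,w))^{-\sigma-l}\prod_{j=1}^{l}\partial_{s}^{m_{j}}G(s,w)$ with $m_{1}+\dots+m_{l}=k$, each $\partial_{s}^{m_{j}}G$ being a combination over $c+d=m_{j}$ of double integrals of $\bigl\langle\partial_{s}^{c}\g'(s+\tau_{1}w)-\partial_{s}^{c}\g'(s+\tau_{2}w),\,\partial_{s}^{d}\g'(s+\tau_{1}w)-\partial_{s}^{d}\g'(s+\tau_{2}w)\bigr\rangle$. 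I would bound $(1-G)^{-\sigma-l}\le\Lambda^{2(\sigma+l)}$ by the Lipschitz bound, keep two of the $2l$ difference factors and convert the rest to pointwise factors via $|a-b|\le|a|+|b|$, and multiply by $P^{\mu}_{\nu}(\g')(s)$; the factor $|w|^{\beta}$ in the numerator cancels the $|w|^{\alpha+\beta}$ down to the single weight $|w|^{-\alpha}$. The outcome is that the integral in \eqref{eq:GN} is dominated by a finite sum of integrals of precisely the shape handled by Lemma~\ref{lem:GagliardoNirenbergTypeEstimates} with $f=\g$: two difference factors $\partial^{c}\g,\partial^{d}\g$ of order $\le k+1\le s$ (this is where $k+1\le s$ enters) and pointwise factors $\partial^{j_{i}}\g$, $j_{i}\ge 2$, whose orders add up to $k+\mu+\nu$ plus twice the number of $\partial_{s}^{m}G$'s that were expanded; the hypothesis $\mu\le s+\tfrac\alpha2$ is exactly what lets one choose exponents $p_{i}$ with $\tfrac\alpha2\le j_{i}-\tfrac1{p_{i}}\le s+\tfrac\alpha2$. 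Applying Lemma~\ref{lem:GagliardoNirenbergTypeEstimates} to each summand and collecting by $l$ gives \eqref{eq:GN}, the exponent $\theta_{l}$ arising by adding the individual interpolation exponents; that $\theta_{l}<(k+\mu)/s$ is read off the displayed formula, using $\alpha\ge 2$ and $\nu\ge 0$.

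Finally, when $(k+\mu)/s\le 2$ one has $\theta_{l}<2$, so applying Young's inequality $ab\le\varepsilon a^{2/\theta_{l}}+C(\varepsilon)b^{2/(2-\theta_{l})}$ to each term $\|\g\|_{W^{\frac{\alpha+1}2+s,2}}^{\theta_{l}}\|\g\|_{W^{\frac{\alpha+1}2,2}}^{l+\nu+2-\theta_{l}}$ on the right of \eqref{eq:GN} produces $\varepsilon\|\g\|_{W^{\frac{\alpha+1}2+s,2}}^{2}$ plus a power of $\|\g\|_{W^{\frac{\alpha+1}2,2}}$, which is \eqref{eq:GNInterpolation}. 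I expect the main obstacle to be the combinatorial bookkeeping of the middle step: one must check that \emph{every} term produced by Fa\`a di Bruno and Leibniz genuinely has the product structure required by Lemma~\ref{lem:GagliardoNirenbergTypeEstimates}, that all the index conditions there are met, and that the resulting interpolation exponents never exceed the stated $\theta_{l}$. The conceptual fact that makes this work — in contrast to a naive differentiation, which would generate the non-integrable weights $|w|^{-\alpha-2l}$ — is that every arc-length derivative of the energy integrand produces \emph{difference} factors $\partial_{s}^{c}\g'(\cdot+\tau_{1}w)-\partial_{s}^{c}\g'(\cdot+\tau_{2}w)$, because $|\g'|\equiv 1$ forces $\langle\g',\g''\rangle=0$, and each such difference carries an extra power of $w$ that exactly offsets the extra singular factors.
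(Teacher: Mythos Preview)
Your proposal is correct and follows essentially the same approach as the paper: factor the integrand using the arc-length identity to expose two difference factors of $\g'$, differentiate via Fa\`a di Bruno, bound the smooth prefactor using the bi-Lipschitz estimate, and feed the resulting terms into Lemma~\ref{lem:GagliardoNirenbergTypeEstimates}; the final step via Young's inequality is the same. The only cosmetic difference is that the paper writes the integrand as $G_\beta\!\bigl(\tfrac{\g(s+w)-\g(s)}{w}\bigr)\cdot \tfrac{G(s,w)}{|w|^\alpha}$ with an auxiliary $G_\beta(v)=|v|^{-\alpha-\beta}\tfrac{1-|v|^{\alpha+\beta}}{1-|v|^2}$ and applies the chain rule to the vector-valued secant, whereas you write it as $\tfrac{(1-G)^{-\sigma}-1}{|w|^\alpha}$ and apply Fa\`a di Bruno to the scalar $G$; both factorizations produce exactly the product structure required by Lemma~\ref{lem:GagliardoNirenbergTypeEstimates}. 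One small slip: $(1-G)^{-\sigma}-1$ vanishes to \emph{first} order in $G$, not second, hence the integrand is $O(|w|^{2-\alpha})$, not $O(|w|^{4-\alpha})$ --- but you in fact state and use the correct order $|w|^{2-\alpha}$ two lines later, so this does not affect the argument.
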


\begin{proof}
For $\frac l2 > \varepsilon >0$ and $I_{l, \varepsilon}= [-\frac l2 , \frac l2] \setminus [-\varepsilon,\varepsilon]$ we set  
\begin{align*}
g_{\beta}^{(\varepsilon)}(s) &:= \int_{w \in I_{l,\varepsilon}} h_\beta(s,w) dw \\
\intertext{and}
h_{\beta}(s,w) &:= \frac {|w|^\beta} {|\g(s+w) - \g(s)|^{\alpha+\beta}}
		- \frac {|w|^\beta}{|w|^{\alpha+\beta}}
\end{align*}
for all $s \in \mathbb R / l \mathbb Z$ and $w \in [-l/2, l/2]$, $w \not=0$. Then due to the bi-Lipschitz estimate
for $\g$ we have $g_{\beta} ^{(\varepsilon)} \in C^\infty (\mathbb R / \mathbb Z, \mathbb R^n)$ and
\begin{equation*}
	\partial_s^k g_\beta^{(\varepsilon)} (s) = \int_{w \in I_{l,\varepsilon}} \partial_w^k h_{\beta} (s,w) dw.
\end{equation*}

With 
$G_\beta(v) := \frac 1 {|v|^{\alpha+\beta}} 
			\frac {1-|v|^{\alpha+\beta}} {1-|v|^2}$ 
we get  
\begin{multline*}
		\frac {|w|^\beta} {|\g(s+w) - \g(s)|^{\alpha+\beta}}
		- \frac {|w|^\beta}{|w|^{\alpha+\beta}}
	 = 
		 G_\beta \left(\frac {\g(s+w) - \g(s)}{w} \right) \cdot 
		\frac {1-\frac {|\g(s+w) - \g(s)|^2 }{w^2 }}{|w|^{\alpha}}
	\\
	=
		\frac 1 2 G_\beta \left(\frac {\g(s+w) - \g(s)}{w} \right) \cdot 
		\int_0^1 \int_0^1
			\frac{|\g(s+\tau_1 w) - \g(s + \tau_2 w)|^2 }
			{|w|^{\alpha}}
		d\tau_1 d\tau_2.
\end{multline*}
Note that $G_{\beta}$ is a smooth function on $\mathbb R^n / \{0\}$. Since
for $s \in \mathbb R / \mathbb Z$, $w \in [-l/2, l/2] - \{0\}$ we have 
$1 \geq |\frac  {\g(s+w) - \g(s)}{w}| \geq \Lambda ^{-1}$,
and there exist constants $C(k)$ such that
 \begin{equation} \label{eq:EstimatesForTheAuxiliaryFunction}
 	\bigg|(D^k G) \Big(\frac {\g(s+w)- \g(s)} {w}\Big) \bigg| \leq C(k) \quad 
 	\forall s \in \mathbb R / \mathbb Z, w \in [-l/2, l/2]- \{0\}. 
 \end{equation}
 Using the product rule together with F\'aa di Bruno's formula for higher derivatives
 of composite functions, we conclude that 
\begin{equation*}
	\partial_s^{ k} h_\beta (s,w) = \partial_s^{ k} 
	\left (
		 \frac {|w^\beta|} {|\g(s+w) - \g(s)|^{\alpha + \beta}}
		-  \frac {|w^\beta|} {|w|^{\alpha + \beta}}
	\right)
\end{equation*}
is a linear combination of terms $T_{k_1, \ldots, k_{l+2}}$ of the form
\begin{multline} \label{eq:ATypicalTerm}
	D^{l } G \left( \frac {\g(s+w) - \g(s)} {w} \right)
	\left( \frac{ \partial_s^{k_1} \g(s+w) - \partial_s^{k_1}\g (s)}{w}.
		\ldots,
		\frac{ \partial_s^{k_l} \g(s+w) - \partial_s^{k_l}\g (s)}{w}
	\right)
	\\
	\times	
	\int_{0}^1 \int_{0}^1 
		\frac{
			\left\langle \partial_s^{k_{l+1}+1}\g(s+\tau_1 w) - \partial_s^{k_{l+1}+1} \g(s + \tau_2 w),
				 \partial_s^{k_{l+2}+1}\g(s+\tau_1 w) - \partial_s^{k_{l+2}+1} \g(s + \tau_2 w)
		\right\rangle }
			{|w|^{\alpha}}
	d\tau_1 d\tau_2
\end{multline}
where $l,k_1, \ldots k_{l+2} \in \mathbb N_0$,
$ k_1, \ldots k_l \geq 1$,
and
\begin{equation*}
	\sum_{i=1} ^{l+2} k_i = k.
\end{equation*}
Using \eqref{eq:EstimatesForTheAuxiliaryFunction} and the fundamental theorem of calculus, such terms can be estimated by
\begin{multline*}
	C(k) \prod_{i=1}^l \left( \int_{0}^1 \partial_s ^{k_i +1}\g(s+ \sigma_i w) d \sigma_i \right) 
	\\
	\times
	\int_0^1 \int_0^1 \frac{
			\left\langle \partial_s^{k_{l+1}+1}\g(s+\tau_1 w) - \partial_s^{k_{l+2}+1} \g(s + \tau_2 w),
				 \partial_s^{k_{l+2}+1}\g(s+\tau_1 w) - \partial_s^{k_{l+2}+1} \g(s + \tau_2 w)
		\right\rangle }
			{|w|^{\alpha}}
	d\tau_1 d\tau_2 
	\\
	\stackrel{\alpha <3}\leq \frac{C \|\g\|_{C^{k+2}}}{|w|^{\alpha-2}}.
\end{multline*}
 Hence,
\begin{equation*}
	|\partial_s^k h_{\beta}(s,w)| \leq \frac{C \|\g\|_{C^{k+2}}}{|w|^{\alpha-2}}.
\end{equation*}
 From this we deduce that for $\varepsilon_1 > \varepsilon_2 >0$ we have  
\begin{align*}
		|\partial^k g_{\beta}^{\varepsilon_1} (s) - \partial^k g_\beta^{(\varepsilon_2)} (s)| 
	&\leq  \int_{w\in [-\varepsilon_1, \varepsilon_1] \setminus [-\varepsilon_2, \varepsilon_2]}| \partial_s^i h(s,w) |dw
	\\ &\leq  C \|\g\|_{C^{k+2}} \int_{\varepsilon_2}^{\varepsilon_2}\frac 1 {|w|^{\alpha -2}} dw \leq  C \|\g\|_{C^{k+2}} \varepsilon_1^{3- \alpha}.
\end{align*}
As $\alpha < 3$, $g_\beta^{\varepsilon}$ converges smoothly to a smooth representative of $g_\beta$ and
\begin{equation*}
		\partial_s^k g_{\beta} (s) 
	= 
		\int_{-\frac l2}^{\frac l2} \partial_s^k h(s,w) dw
	=
		\int_{-\frac l2}^{\frac l2} \partial_s^k  
			\left (
				 \frac {|w^\beta|} {|\g(s+w) - \g(s)|^{\alpha + \beta}}
				-  \frac {|w^\beta|} {|w|^{\alpha + \beta}}
			\right)
		dw.
\end{equation*}
Using that $\partial_s^k h(s,w)$ is a linear combination of terms like \eqref{eq:ATypicalTerm} 
together with Lemma \ref{lem:GagliardoNirenbergTypeEstimates}, we obtain
\begin{equation*}
	 	\left| 
 			\int_{\mathbb R / l \mathbb Z} \left( \left(\partial_s^k 
 				g_\beta (s)\right) P^\mu_{\nu}
   				(\kappa ) (s) \right)
			 ds 
		\right| 
	\leq 
		C \sum_{l=1}^{k}\
			\|\g\|^{\theta_l}_{W^{\frac {\alpha +1}2 + s,2}(\mathbb R /\mathbb Z , \mathbb R^n)} 
			\|\g\|^{l+\nu+2-\theta_l}_{W^{\frac {\alpha+1}2,2}(\mathbb R / \mathbb Z, \mathbb R^n)}
 \end{equation*}
 where $\theta_l := (k +(l+2) + \mu +  \nu - (l+\nu) \frac \alpha 2 - 2 )/s < (k +l+ \mu +  \nu - l-\nu  )/s\leq (k+\mu)/s.$ 
 This proves inequality \eqref{eq:GN} from which one obtains \eqref{eq:GNInterpolation} using Cauchy-Schwartz.
\end{proof}

\subsection{Proof of Long Time Existence} \label{sec:ProofOfLongTimeExistenceConstantLambda}
\label{sec:ProofOfLongTimeExistence}
First we will derive the following estimate from the evolution equation of the higher order energies Corollary~\ref{cor:EvolutionOfEnergies} and Lemma~\ref{lem:EstimateOfTypicalTerm}. For a periodic function $f \in C^\infty (\mathbb R / l\mathbb Z, \mathbb R^n)$ we use the shorthand
$$
  D^{s} f = (-\Delta)^{\frac s2} f 
$$
for the fractional Laplacian.

\begin{lemma} \label{lem:EstimateEvolutionEnergies}
For every $k\in \mathbb N$  and $\varepsilon>0$ there are constants $C_\varepsilon < \infty$ such that 
\begin{equation*}
	\partial_t \int_{\mathbb R/ \mathbb Z} | \partial_s^k \kappa_{\g_t}(s) |^2 ds 
	+ c_\alpha \int_{\mathbb R / \mathbb Z} |(\tilde D_s)^{k+2+\frac{\alpha+1}2} \kappa| ds 
	\leq \varepsilon \int_{\mathbb R / \mathbb Z} |(\tilde D_s)^{{\alpha+1}/2} \kappa|^2 ds
	+  C_\varepsilon.
\end{equation*}
\end{lemma}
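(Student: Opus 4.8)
The plan is to start from the evolution equation of $\mathcal E^k$ provided by Corollary~\ref{cor:EvolutionOfEnergies} with normal velocity $V = -H^\alpha(\g_t) + \lambda\kappa_{\g_t}$, and to identify the principal (highest-order, sign-definite) term while absorbing all remaining terms using the interpolation estimates of Section~\ref{sec:Estimates}. First I would substitute $V$ into the four terms of Corollary~\ref{cor:EvolutionOfEnergies}. The dominant contribution comes from $2\int \langle \partial_s^{k+2}V, \partial_s^k\kappa\rangle\,ds$; using the quasilinear structure of $H^\alpha$ from Theorem~\ref{thm:QuasilinearStructure} together with He's representation \eqref{eq:DefinitionOfDelta32} of the fractional Laplacian, the leading part of $-H^\alpha$ in arc-length parametrization is $-\frac{\alpha}{c_s}\,P^\perp_{\g'}\big((-\Delta)^{(\alpha-1)/2}\g''\big)$ up to lower-order (and bounded) operators; similarly $\lambda\kappa$ contributes the parabolic second-order term. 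After integrating by parts to move $\partial_s^{k+2}$ onto the curvature, the principal term becomes $-c_\alpha\int |(\tilde D_s)^{k+2+(\alpha+1)/2}\kappa|^2\,ds$ plus commutator errors coming from (i) the discrepancy between $(-\Delta)^{(\alpha-1)/2}$ and the operator $Q^\alpha$ living on the circle, (ii) the projection $P^\perp_{\g'}$, and (iii) the commutation of $\partial_s$ with $\partial_t$ via \eqref{eq:CommutationOfDerivatives}. This is where the work of Section~\ref{sec:Estimates} enters: all these error terms, when written out, have the form of the typical terms controlled by Lemma~\ref{lem:EstimateOfTypicalTerm} and Lemma~\ref{lem:GagliardoNirenbergTypeEstimates}.

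Concretely, I would organize the remaining terms into two classes. The first class consists of purely ``polynomial'' curvature terms $\int P^\mu_\nu(\kappa)\,ds$ with $\nu\geq 3$ (or $\nu=2$ with an extra factor $\langle\kappa,V\rangle$), arising from $2\int\langle P^k_2(V,\kappa)\tau,\partial_s^{k+1}\kappa\rangle + 2\int\langle P^k_3(V,\kappa),\partial_s^k\kappa\rangle - \int|\partial_s^k\kappa|^2\langle\kappa,V\rangle$; here $V$ itself, via Theorem~\ref{thm:QuasilinearStructure}, is an operator of order $\alpha-1$ applied to $\kappa$ plus lower-order and bounded pieces, so these terms fit the scaling counting of Lemma~\ref{lem:GagliardoNirenbergTypeEstimates} with the total scaling parameter $\theta<2$ (this is exactly the sub-criticality coming from $\alpha<3$). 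The second class consists of terms containing the ``energy integrand'' $\frac{1}{|\g(s+w)-\g(s)|^{\alpha+\beta}}-\frac{1}{|w|^{\alpha+\beta}}$, which arise from $R^\alpha_1,\dots,R^\alpha_4$ in the decomposition of $\tilde H^\alpha$; these are handled verbatim by Lemma~\ref{lem:EstimateOfTypicalTerm}, and again the exponent condition $k+\mu/s\leq 2$ needed for \eqref{eq:GNInterpolation} is guaranteed by $\alpha\in(2,3)$ and $s=\frac{\alpha+1}{2}$ being large relative to the orders that appear. In every case the relevant quantity $\|\g'\|_{W^{(\alpha+1)/2,2}}$ is uniformly bounded in time by Theorem~\ref{thm:Coercivity} combined with the energy and length bounds from the start of Section~\ref{sec:LongTimeExistence}, so these lemmas produce bounds of the form $\varepsilon\|\g\|_{W^{(\alpha+1)/2+s,2}}^2 + C_\varepsilon$; translating back, $\|\g\|_{W^{(\alpha+1)/2+s,2}}^2$ is comparable to $\int|(\tilde D_s)^{(\alpha+1)/2}\kappa|^2\,ds$ after accounting for the two derivatives relating $\g$ and $\kappa$, which is precisely the term allowed on the right-hand side.

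The last step is the absorption and bookkeeping: one chooses the $\varepsilon$'s in the interpolation estimates small relative to $c_\alpha$ so that every term of order $k+2+(\alpha+1)/2$ in $\kappa$ except the good one on the left is absorbed, while all genuinely lower-order contributions collapse into the constant $C_\varepsilon$ (using the uniform-in-time bound on the base norm and Young's inequality to trade off powers). A small technical point I would address explicitly is replacing the operator $Q^\alpha$ and the intrinsic arc-length Laplacian by $\tilde D_s := (-\Delta)^{1/2}$ in the arc-length variable: the difference is a pseudodifferential operator of strictly lower order with smooth symbol depending on $\g$, so it only generates terms of the first class above. I expect the \textbf{main obstacle} to be precisely this reduction of the highest-order term — carefully extracting $-c_\alpha\int|(\tilde D_s)^{k+2+(\alpha+1)/2}\kappa|^2$ from $2\int\langle\partial_s^{k+2}V,\partial_s^k\kappa\rangle$ with the correct constant and showing that the commutator between $\partial_s^{k+2}$, the nonlocal operator $Q^\alpha$, and the projection $P^\perp_{\g'}$ is genuinely of lower order, so that Lemma~\ref{lem:EstimateOfTypicalTerm} and the Gagliardo--Nirenberg estimates for Besov spaces actually apply with $\theta<2$. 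Once that counting is in place, the remainder of the argument is routine absorption.
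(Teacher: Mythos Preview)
Your plan is essentially the paper's own argument: start from Corollary~\ref{cor:EvolutionOfEnergies}, separate the leading $Q^\alpha$-part from remainder terms (the paper uses an arc-length-specific decomposition \eqref{eq:DecompositionOfTildeH} with only two remainder pieces $R^\alpha_1,R^\alpha_2$ rather than the four from Theorem~\ref{thm:QuasilinearStructure}, but the effect is the same), bound all lower-order pieces via Lemma~\ref{lem:GagliardoNirenbergTypeEstimates} and Lemma~\ref{lem:EstimateOfTypicalTerm}, and extract the coercive term from $\int\langle\partial_s^{k+2}Q^\alpha\g,\partial_s^k\kappa\rangle\,ds$ using $Q^\alpha=c_\alpha D^{\alpha+1}+\tilde R$. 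For your flagged ``main obstacle''---the projection $P^\perp_{\g'}$ in the highest-order term---the paper does \emph{not} use Lemma~\ref{lem:EstimateOfTypicalTerm}: instead it writes $P^\perp_{\g'}D^{\alpha+1}\g = D^{\alpha+1}\g - \langle D^{\alpha+1}\g,\g'\rangle\g'$, uses that $\langle\g'',\g'\rangle=0$ in arc length so that the tangential part $\langle D^{\alpha-1}\g'',\g'\rangle$ is a pure commutator, and controls it by the Kato--Ponce estimate (Theorem~\ref{thm:CommutatorEstimate}).
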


\begin{proof}
Corollary~\ref{cor:EvolutionOfEnergies} tells us
that
\begin{multline} \label{eq:EvolutionOfEnergies}
  \partial_t \int_{\mathbb R / \mathbb Z} |\partial_s^{k} \kappa|^2 ds
  = 2 \int_{\mathbb R / \mathbb Z}
  \langle \partial_s^{k+2}V,\partial_s^k \kappa\rangle ds 
 + 2\int
  \langle P^{k}_2(V,\kappa)  \tau, \partial^{k+1}_s
\kappa\rangle ds 
\\
 \quad + 2 \int \langle P_3^{k}(V,\kappa), \partial_s^{k} \kappa
\rangle ds
- \int |\partial_s^k \kappa|^2 \langle \kappa ,  V\rangle ds
\end{multline}
where $V= -H^\alpha \g + \lambda \kappa $.

Let us now fix the time $t$ and let us re-parameterize $\g_t$ for this
fixed time  by arc length to estimate the right-hand side of this equation and let $l$ denote the length of the curve at time $t$.

We decompose
\begin{equation} \label{eq:DecompositionOfTildeH}
		\tilde H ^\alpha \g 
	= 
		\alpha Q^\alpha \g	 + 	(\alpha-2) R_1^\alpha\g 	+	 2\alpha R_2^\alpha \g
\end{equation} 
where
\begin{align*}
(Q^\alpha\g) (s) &:= p.v. \int_{-\frac l2}^{\frac l2}
\bigg\{2 \frac {\g(s+w) - \g(s) - w
  \g'(s)}{|w|^{2}} - \g''(s)) \bigg\} \frac{dw}{|w|^\alpha}, \\
(R^\alpha_1\g)(s) & := p.v. \int_{-\frac l2}^{\frac l2} \g''(s)
\bigg(\frac 1 {|\g(s+w) -\g(s)|^\alpha }- \frac 1
  {|w|^\alpha}\bigg) dw ,\\
(R^\alpha_2\g)(s) & := p.v. \int_{-\frac l2}^{\frac l2}
\left( \g(s+w)-\g(s) - w
  \g'(s) \right)
\bigg(\frac 1 {|\g(s+w) -\g(s)|^{\alpha + 2 }}- \frac 1
  {|w|^{\alpha+2}}\bigg) dw.
\end{align*}
Using the fundamental theorem of calculus, we rewrite $R^\alpha_2 \g (s) $ as
\begin{equation*}
	(R^\alpha_2\g)(s)  := \frac 1 2 \lim_{\varepsilon \downarrow 0} \int_{0}^1
\int_{I_{l,\varepsilon}}  \g''(s+ \tau_1 w) w
\bigg(\frac {w^2} {|\g(s+w) -\g(s)|^{\alpha + 2 }}- \frac {w^2}
  {|w|^{\alpha+2}}\bigg) dw.
\end{equation*}
and set
\begin{equation*}
	R^\alpha \g := (\alpha-2)R^{\alpha}_1 \g + 2 \alpha R^{\alpha}_2 \g.
\end{equation*}
Hence,
\begin{equation*}
	V = -P^\bot_{\g'} (\alpha Q^\alpha \g + R^\alpha \g) + \lambda \kappa.
\end{equation*}

Now we estimate all the terms appearing in \eqref{eq:EvolutionOfEnergies} except for the term 
$$ \int_{\mathbb R / \mathbb Z}
  \langle \partial_s^{k+2}Q^\alpha \g,\partial_s^k \kappa\rangle d,s
$$
using H\"older's inequality together with the standard Gagliardo-Nirenberg-Sobolev inequality or the version in Lemma~\ref{lem:GagliardoNirenbergTypeEstimates}. We get that for all $\varepsilon>0$ there is a constant $C_\varepsilon < \infty$ such that all these terms can be estimated from above by 
$$
 \varepsilon \|D^{k+ 2 + \frac {\alpha +1 } 2} \g \|^2_{L^2}
	+ C(\varepsilon).
$$
We will only give the details for some exemplary term as the estimates for all the other terms can be estimated following  exactly the same line of arguments.
We start by estimating the terms containing the remainder $R^{\alpha}$. For the first term
in Equation \eqref{eq:EvolutionOfEnergies} we get using Lemma~\ref{lem:GagliardoNirenbergTypeEstimates}
\begin{align*}
	\bigg|
	\int_{\mathbb R / l \mathbb Z}
		 \left\langle \partial_s^{k+2} (P^{\bot}_{\g'} R^{\alpha}(\g)),
		 	\partial_s^k \kappa \right\rangle 
	ds
	\bigg|
	&=
	\bigg|
	\int_{\mathbb R / l \mathbb Z}
		 \left\langle \partial_s^{k+1} (P^{\bot}_{\g'} R^{\alpha}(\g)),
		 	\partial_s^{k+1} \kappa \right\rangle 
	ds
	\bigg|
\\
	&\leq 
	\bigg|
	\int_{\mathbb R / l \mathbb Z}
		 \left\langle \partial_s^{k+1} ( R^{\alpha}(\g) ),
		 	\partial_s^{k+1} \kappa \right\rangle 
	ds 
	\bigg|
\\
	& \quad +
	\bigg|
	\int_{\mathbb R / l \mathbb Z}
		 \left\langle \partial_s^{k+1} \left( 
		 	\left \langle R^{\alpha}(\g) , \g' \right\rangle  \g'\right),
		 	\partial_s^{k+1} \kappa \right\rangle 
	ds  
	\bigg|
\\
	& \leq 
	\varepsilon \|D^{k+ 2 + \frac {\alpha +1 } 2} \g \|^2_{L^2}
	+ C(\varepsilon)
\end{align*} 
since $(2k+2+2)/(k+2) =2$.
Similarly we get
\begin{align*}
\bigg|
	\int_{\mathbb R / l \mathbb Z}
		\langle P^{k}_2(P^\bot_{\g'}R^\alpha \g,\kappa)  \tau, \partial^{k+1}_s
		\kappa\rangle
	ds
	\bigg|
	&\leq 
	\bigg|
	\int_{\mathbb R / l \mathbb Z}
		\langle P^{k}_2(R^\alpha \g,\kappa)  \tau, \partial^{k+1}_s
		\kappa\rangle
	ds 
	\bigg|
\\
	& \quad +
	\bigg|
	\int_{\mathbb R / l \mathbb Z}
		 \langle
		 	 P^{k}_2(\left\langle R^\alpha \g, \g'\right\rangle \g'
		 	 ,\kappa)  \g',
		 	  \partial^{k+1}_s\kappa
		\rangle
	ds  
	\bigg|
\\
	& \leq 
	\varepsilon \|D^{k+ 2 + \frac {\alpha +1 } 2 } \g \|^2_{L^2}
	+ C(\varepsilon)
\end{align*}
since $(k+1+1+k+2)/(k+2)=2$. Along the same lines we get
\begin{align*}
\bigg|
	\int_{\mathbb R / l \mathbb Z}
		\langle P^{k}_3(P^\bot_{\g'}R^\alpha \g,\partial^k_s\kappa) \rangle
	ds
	\bigg|+ \bigg| \int_{\mathbb R / l \mathbb Z}|\partial_s^k \kappa |^2  \langle \kappa, P^\bot R^\alpha \g \rangle \bigg| 
  \leq 
	\varepsilon \|D^{k+ 2 + \frac {\alpha +1 } 2 } \g \|^2_{L^2}
	+ C(\varepsilon)
\end{align*}

To estimate the terms containing $Q^\alpha$, we will use  the 
fact that $Q^\alpha = c_\alpha D^{\alpha+1} + \tilde R$ where $\tilde R$ is a bounded operator 
from $W^{s+2,p}$ to $W^{s,p}$. For $k_1,k_2 \in \mathbb N_0$ with $k_1 + k_2 =k$ we estimate
using H\"older's inequality and the Gagliardo-Nirenberg-Sobolev inequality
\begin{align*}
 \bigg|
	\int_{\mathbb R / l \mathbb Z} \langle 
		\partial_s^{k_1} Q^\alpha \g \ast \partial_s^{k_2}\kappa)  \tau, \partial^{k+1}_s
		\kappa\rangle
	ds
	\bigg| &\leq \|Q^\alpha \partial_s^{k_1} \g\|_{L^2} \|\partial_s^{k_2}\|_{L^4} \|\partial^{k+1}_s \kappa\|_{L^4} 
	\\ &
	\leq \| \partial_s^{k_1} \g\|_{W^{\frac {\alpha +1 }2,2}} \|\partial_s^{k_2}\|_{L^4} \|\partial^{k+1}_s \kappa\|_{L^4} \\
	&\leq \varepsilon \|D^{k+ 2 + \frac {\alpha +1 } 2 } \g \|^2_{L^2}
	+ C(\varepsilon).
\end{align*}
Similarly we obtain, using the Leibniz rule,
\begin{align*}
 \bigg|
	\int_{\mathbb R / l \mathbb Z} \langle 
		\partial_s^{k_1} P^T Q^\alpha \g \ast \partial_s^{k_2}\kappa)  \tau, \partial^{k+1}_s
		\kappa\rangle
	ds
	&\leq \varepsilon \|D^{k+ 2 + \frac {\alpha +1 } 2 } \g \|^2_{L^2}
	+ C(\varepsilon).
\end{align*}
Hence,
\begin{align*}
  \bigg|
	\int_{\mathbb R / l \mathbb Z} \langle P^k_2 ( 
		\partial_s^{k_1} P^T Q^\alpha \g , \partial_s^{k_2}\kappa)  \tau, \partial^{k+1}_s
		\kappa\rangle
	ds
	&\leq \varepsilon \|D^{k+ 2 + \frac {\alpha +1 } 2 } \g \|^2_{L^2}
	+ C(\varepsilon).
\end{align*}
Similarly,  one gets
\begin{align*}
 \bigg|
	\int_{\mathbb R / l \mathbb Z}
		\langle P^{k}_3(P^\bot_{\g'}Q^\alpha \g,\partial^k_s\kappa) \rangle
	ds
	\bigg|+ \bigg| \int_{\mathbb R / l \mathbb Z}|\partial_s^k \kappa |^2  \langle \kappa, P^\bot Q^\alpha \g \rangle \bigg| 
  \leq 
	\varepsilon \|D^{k+ 2 + \frac {\alpha +1 } 2 } \g \|^2_{L^2}
	+ C(\varepsilon).
\end{align*}
For the terms containing $\lambda \kappa$ we use H\"older's inequality and standard Gagliardo-Nirenberg-Sobolev estimates together with Cauchy's inequality as above, to estimate these terms by
\begin{align*}
 \varepsilon \|D^{k+ 2 + \frac {\alpha +1 } 2 } \g \|^2_{L^2}
	+ C(\varepsilon).
\end{align*}

Let us finally turn to the term
$$ \int_{\mathbb R / \mathbb Z}
  \langle \partial_s^{k+2}Q^\alpha \g,\partial_s^k \kappa\rangle ds = \int_{\mathbb R / \mathbb Z}
  \langle \partial_s^{k+2}D^{\alpha+1} \g,\partial_s^k \kappa\rangle ds + \int_{\mathbb R / \mathbb Z}
  \langle \partial_s^{k+2}\tilde R \g,\partial_s^k \kappa\rangle ds
$$
that contains the highest order part. Again H\"older's inequality together with interpolation estimates yields
$$
 \int_{\mathbb R / \mathbb Z}
  \langle \partial_s^{k+2}\tilde R \g,\partial_s^k \kappa\rangle ds \leq  \varepsilon \|D^{k+ 2 + \frac {\alpha +1 } 2 } \g \|^2_{L^2}
	+ C(\varepsilon).
$$
Furthermore,
\begin{align*}
	\int_{\mathbb R / l \mathbb Z} 
		\left\langle
			\partial_s^{k+2} (P_{\g'}^\bot D^{\alpha+1}\g), 
			\partial_s^k \kappa
		\right\rangle
	ds
&=
	\int_{\mathbb R / l \mathbb Z} 
		\left\langle
			\partial_s^{k+2} (D^{\alpha+1}\g),
			\partial_s^k \kappa
		\right\rangle
	ds 
\\
&\quad -
	\int_{\mathbb R / l \mathbb Z} 
		\left\langle
			\partial_s^{k+2} (\left\langle D^{\alpha+1}\g,  \g'\right\rangle \g')
			, \partial_s^k \kappa 
		\right \rangle
	ds 
\\
& = 
	\int_{\mathbb R / l \mathbb Z} 
		|D^{k+2 + \frac{\alpha+1 }2} \g|^2 
	ds
\\
&\quad -
	\int_{\mathbb R / \mathbb Z}	 
		\left\langle
			D^{k+\frac { 3- \alpha} 2 } (\left\langle D^{\alpha-1}\g'',  \g'\right\rangle \g')
			, D^{k +\frac {\alpha +1}2}\kappa 
		\right \rangle
	ds 
\end{align*}
Using the commutator estimate (Theorem~\ref{thm:CommutatorEstimate}), we get that
\begin{align*}
	\|\left\langle D^{\alpha-1} \g'', \g' \right\rangle\|_{W^{k+\frac{3-\alpha} 2,2}}
	&= \|
		D^{\alpha-1}\left\langle\g'', \g' \right\rangle
		-\left\langle D^{\alpha-1} \g'', \g' \right\rangle
		\|_{W^{k+\frac{3-\alpha} 2,2}} \\
	&\leq 
		C ( \|\g'\|_{W^{1,\infty}} \|\g''\|_{W^{k-1+{\alpha+1}/2}}
		+ \|\g'\|_{W^{k+\frac {\alpha+1}/2},2} \|\g''\|_{L^\infty})
	 \\ &\leq \varepsilon \|D^{k+2 \frac {\alpha+1} 2} \g\|_{L^2 } 
	 	+ C(\varepsilon)
\end{align*}
and hence
\begin{align*}
 \int_{\mathbb R / \mathbb Z}
  \langle \partial_s^{k+2}Q^\alpha \g,\partial_s^k \kappa\rangle ds \leq - c_\alpha \int_{\mathbb R / l \mathbb Z} 
		|D^{k+2 + \frac{\alpha+1 }2} \g|^2 
	ds -  \varepsilon \|D^{k+2\frac {\alpha+1} 2} \g\|^2_{L^2 } 
	 	+ C(\varepsilon) 
\end{align*}

Summing up these estimates proves Lemma~\ref{lem:EstimateEvolutionEnergies}.
\end{proof}

A standard argument now concludes the proof of Theorem~\ref{thm:LongTimeExistence}:

\begin{proof}[Proof Theorem~\ref{thm:LongTimeExistence}]
 Let us assume that $[0,T)$ is the maximal interval of existence and $T < \infty$. 
 If we apply Lemma~\ref{lem:EstimateEvolutionEnergies} with $\varepsilon = \frac {c_\alpha} 2$ we get
 $$
  \frac d {dt} \mathcal E^k + \frac{c_\alpha}2 \|D^{k+2+\frac {\alpha +1} 2} \g\|_{L^2}^2 \leq C_k.
 $$
 Together with the Poincare inequality
 \begin{equation*}
  \mathcal E^k \leq C (\|D^{k+2+\frac {\alpha +1} 2} \g\|_{L^2}^2  + \|D^{\frac {\alpha +1} 2} \g\|_{L^2}^2),
 \end{equation*}
 this yields
 $$
  \frac d {dt} \mathcal E^k + \mathcal E ^k \leq C_k.
 $$
 Hence,
 \begin{equation*}
 	\sup_{t\in [0,T)}\|\partial_s^k \kappa \|_{L^2} < \infty
\end{equation*} 
for all $k \in \mathbb N$. Furthermore,
\begin{equation*}
	\partial_t |\g'| = \left\langle V, \kappa \right\rangle |\g'|
\end{equation*}
and hence there is a constant $C>0$ such that
\begin{equation*}
	C^{-1 } \leq |\g'| \leq C. 
\end{equation*}

Thus, there is a subsequence $t_i\rightarrow T$ such that 
$\g_{t_i}$ converges smoothly to a $\tilde \g(T) $ and we can use the
short time existence result Theorem~\ref{thm:ShortTimeExistence} to extend the flow beyond $T$.

Let us finally prove the subconvergence to a critical point. 
From Lemma~\ref{lem:EstimateEvolutionEnergies} we get
\begin{equation*}
	\partial_ t \|\partial_s^k \kappa\|_{L^2} + c \|\partial_s^k \kappa\| \leq C (k).
\end{equation*}
But this implies 
\begin{equation*}
	\|\partial_s^k \kappa\|_{L^2 } \leq C(k) \quad \forall k \in \mathbb N.
\end{equation*}
Hence, there is a subsequence $t_i \rightarrow \infty$ that $\g_{t_i}$, after 
re-parameterization by arc length and suitable translations, converges to a smooth curve $\g_\infty$ 
parameterized by arc length.  Since
$$
 \int_{0}^\infty \Bigg( \int_{\mathbb R / \mathbb Z} |\partial_t c_t|^2 ds \Bigg) dt \leq E(\g_0) < \infty,
$$
we can furthermore choose this subsequence such that $c_\infty$ is a critical point of $E=E^\alpha + \lambda L.$
\end{proof}

\section{Asymptotics of the Flows} \label{sec:ExponentialConvergence}

\subsection{{\L}ojasiewicz-Simon Gradient Estimate}

In order to prove convergence to critical points of the complete flow without taking care of translations, we will prove a {\-L}ojawiewicz-Simon Gradient estimate.

\begin{theorem} [{\L}ojasiewicz-Simon Gradient Estimate] \label{thm:LojasiewichSimonGradientEstimate}
Let $\g_M$ be a smooth critical point of $E:=E^{\alpha} + \lambda L$ for some $\alpha \in (2,3)$. Then there
 are constants
$\theta\in[0,1/2]$, $\sigma,c>0$, such that every $\g\in 
H_{i,r}^{\alpha+1}(\mathbb{R}/\mathbb{Z},\mathbb{R}^{n})$
with $\|\g-\g_{M}\|_{H^{\alpha+1}}\leq\sigma$ satisfies \[
|E(\g)-E(\g_M)|^{1-\theta}\leq c\cdot \bigg(\int_{\mathbb R /
  \mathbb Z}|(V\g)(x)|^2 |\g'(x)|dx\bigg)^{1/2},\]
where $Vc=-H^\alpha \g + \lambda \kappa_c$.

\end{theorem}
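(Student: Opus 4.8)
The plan is to derive the {\L}ojasiewicz--Simon gradient estimate from the abstract gradient inequality for analytic functionals on Banach spaces (see e.g.\ the versions in Chill \cite{Chill2003} or the Feehan--Maridakis framework), applied after breaking the reparametrization invariance of $E$. First I would fix the smooth critical point $\g_M$ and choose an $L^2$-orthonormal frame $\nu_1,\dots,\nu_{n-1}$ of the normal bundle along $\g_M$ exactly as in Section~\ref{subsec:ProofOfShortTimeExistence}. Writing a nearby curve $\g$ as a normal graph $\g = \g_M + \sum_{i} \phi_i \nu_i$ over $\g_M$ (possible, after a reparametrization, for $\|\g-\g_M\|_{H^{\alpha+1}}$ small by the tubular neighbourhood construction already used in Lemma~\ref{lem:VrCoverAll}), I reduce to studying the functional $\mathcal{E}(\phi) := E(\g_M + \sum_i \phi_i \nu_i)$ on a neighbourhood of $0$ in the Hilbert space $H^{(\alpha+1)/2}(\mathbb{R}/\mathbb{Z},\mathbb{R}^{n-1})$, with $L^2$-gradient $\nabla\mathcal{E}$. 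Since $E$ is invariant under reparametrizations, the full $L^2$-gradient $V\g = -H^\alpha\g + \lambda\kappa_\g$ is normal, and one checks that $\|\nabla\mathcal{E}(\phi)\|_{L^2}$ is comparable to $\big(\int |V\g|^2 |\g'|\big)^{1/2}$ up to the bi-Lipschitz constants — so it suffices to prove the gradient inequality for $\mathcal{E}$ at $\phi=0$.

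The key analytic input is twofold. First, $\mathcal{E}$ is an \emph{analytic} map from a neighbourhood of $0$ in $H^{(\alpha+1)/2+\epsilon}$ (in fact on the graph patch it lands in the right space by the analyticity statements of Theorem~\ref{thm:QuasilinearStructure} and Lemma~\ref{lem:estimateForQuasilinearStructure}, together with the known analyticity of $E^\alpha$ itself as a function of the curve, which follows from writing the integrand via Taylor expansions of $|\g(x+w)-\g(x)|^{-\alpha}$); the length term $\lambda L$ is manifestly analytic. Second, the second variation $D^2\mathcal{E}(0) =: \mathcal{L}$ is a self-adjoint operator on $L^2$ which, by Theorem~\ref{thm:QuasilinearStructure}, has leading part $\frac{\alpha}{|\g_M'|^{\alpha+1}} P^\bot_{\g_M'} Q^\alpha$ acting on normal fields, hence is (up to lower-order analytic perturbations) a positive elliptic pseudodifferential operator of order $\alpha+1$ with discrete spectrum; in particular $\mathcal{L}: H^{\alpha+1}_{\mathrm{nor}} \to L^2_{\mathrm{nor}}$ is Fredholm of index zero. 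With these two facts, the standard Lyapunov--Schmidt reduction behind the {\L}ojasiewicz--Simon inequality applies: one splits $H^{(\alpha+1)/2}$ along the finite-dimensional kernel $\ker\mathcal{L}$ and its complement, solves the infinite-dimensional part by the implicit function theorem (using the invertibility of $\mathcal{L}$ on the complement and the maximal-regularity type estimate $\|\phi\|_{H^{\alpha+1}} \lesssim \|\mathcal{L}\phi\|_{L^2} + \|\phi\|_{L^2}$ that is implicit in the elliptic estimates of Section~\ref{sec:ShortTimeExistence}), and reduces to a finite-dimensional analytic function to which the classical {\L}ojasiewicz inequality for real-analytic functions gives the exponent $\theta\in(0,1/2]$.

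Concretely the steps are: (i) set up the normal-graph chart and record that $\nabla\mathcal{E}(\phi)=0 \iff \phi$ corresponds to a critical point, with $\|\nabla\mathcal{E}(\phi)\|_{L^2}\simeq \big(\int|V\g|^2|\g'|\big)^{1/2}$; (ii) prove analyticity of $\mathcal{E}$ near $0$ as a map into a negative-order space, and analyticity of $\nabla\mathcal{E}$ as a map $H^{\alpha+1}\to L^2$, invoking Theorem~\ref{thm:QuasilinearStructure} and the analyticity lemmas; (iii) identify $\mathcal{L}=D\nabla\mathcal{E}(0)$ as a self-adjoint Fredholm operator of order $\alpha+1$ with the quoted elliptic regularity estimate; (iv) carry out the Lyapunov--Schmidt reduction and apply the finite-dimensional {\L}ojasiewicz inequality; (v) translate the resulting estimate back through the chart, controlling the (analytic, hence Lipschitz) dependence of all constants on $\g$ for $\|\g-\g_M\|_{H^{\alpha+1}}\le\sigma$, and converting the $H^{(\alpha+1)/2}$-gradient norm into the stated integral. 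I expect step (ii) to be the main obstacle: one must check carefully that the gradient $H^\alpha$, which a priori is only defined and regularity-controlled via principal-value integrals, depends \emph{analytically} on $\g$ in the scale of H\"older or Sobolev spaces at the critical point — this is exactly the content of the quasilinear-structure theorem plus the analyticity of the remainder operators, so the work is in assembling those pieces (and in verifying that the composition $\g\mapsto E^\alpha(\g)$ itself, not just $\nabla E^\alpha$, is analytic into $\mathbb{R}$, which requires the bi-Lipschitz bound to keep the integrand bounded away from its singularity). The Fredholm/spectral property in step (iii) is comparatively routine given \cite{Reiter2012} and the symbol computation already quoted after Theorem~\ref{lem:JIsAnIsomorphism}.
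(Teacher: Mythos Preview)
Your proposal is essentially the same approach as the paper's: both break reparametrization invariance by passing to normal graphs over $\g_M$ via an orthonormal normal frame $\nu_1,\dots,\nu_{n-1}$, verify analyticity of the $L^2$-gradient from the quasilinear structure (Theorem~\ref{thm:QuasilinearStructure}), show the second variation is Fredholm of index zero by identifying its leading part with $(-\Delta)^{(\alpha+1)/2}$ acting componentwise (using \cite{Reiter2012}), invoke Chill's abstract {\L}ojasiewicz--Simon inequality \cite[Corollary~3.11]{Chill2003}, and then transfer back via the tubular-neighbourhood Lemma~\ref{lem:TubularNeighborhoodAndNormalGraphs}. One slip to fix: the relevant Hilbert-space pair for the Chill framework here is $H^{\alpha+1}_{\mathrm{nor}}\to L^2_{\mathrm{nor}}$ (as you yourself use in the elliptic estimate and Fredholm statement), not $H^{(\alpha+1)/2}$ --- the gradient $V\g$ is an operator of order $\alpha+1$, so analyticity of $\nabla\mathcal E$ into $L^2$ requires the domain $H^{\alpha+1}$.
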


\begin{proof}

After scaling the curve we can assume that $\g_M$ is parameterized by arc length and that the length of the curve $\g_M$ is $1$.

Let $H^{\alpha+1}(\mathbb R/\mathbb Z, \mathbb R^n)^\bot_{\g_M}$ denote the space of all
vector fields $N\in H^{\alpha+1}(\mathbb R / \mathbb Z , \mathbb R^n)$ which are
orthogonal to $\g'_M$.

We will first prove that there are constants $\theta\in[0,1/2]$, $\tilde \sigma,\tilde c>0$  such that 
\begin{equation} \label{eq:LojasiewiczNormalGraph}
|E(\g_{M}+N)-E(\g_{M})|^{1-\theta}\leq c\cdot\left(
  \int_{\mathbb R / \mathbb Z} |V(\g_M+N)|^2 dx
\right)^{1/2} 
\end{equation}
for all $\phi\in H^{\alpha+1}(\mathbb{R}/\mathbb{Z},\mathbb{R}^{n})^{\bot }$
with $\|\phi\|_{H^{\alpha+1}}\le\tilde{\sigma}$. That is, we show that the functional
\begin{gather*}
  \tilde E: H^{\alpha+1}(\mathbb R / \mathbb Z, \mathbb R^n)_{\g_M}^\bot \rightarrow \mathbb
  R \\
N \mapsto E^\alpha(\g_M+N) + \lambda L(\g_M + N)
\end{gather*}
satisfies a {\L}ojasiewicz-Simon gradient estimate. 
By \cite[Corollary~3.11]{Chill2003} is suffices to show that $E'$ is analytic with values in $L^2$ and that $E''$ is a Fredholm operator from $H^{\alpha + 1}(\mathbb{R}/\mathbb{Z},\mathbb{R}^{n})^{\bot}_{\g_M}$ to
$L^2(\mathbb{R}/\mathbb{Z},\mathbb{R}^{n})^{\bot}_{\g_M}$.

It is easy to see that $\kappa$ defines an analytic operator on a neighborhood of $0$ from $(H^{\alpha+1})^\bot_{\g_M}$ to
$(L^2)^\bot_{\g_M}$ using the fact that $H^{\alpha+1}$ is embedded in $C^2$. That the same is true for
$\mathcal H^{\alpha}$ can be seen from
Lemma~\ref{thm:QuasilinearStructure},  using the fact that $H^{\alpha+1}(\mathbb R
/\mathbb Z, \mathbb R^n)$ is embedded in $C^{\alpha + \beta}$ for every
$0 < \beta < 1/2$ and $C^{\beta}$ is embedded in $L^2$.

We calculate the second variation of $\tilde E$ at $0$ and try to write it as a compact perturbation of a Fredholm operator of index $0$.  Using $|\g'_M|=1$ and the fact that $V$ is the gradient of $E^\alpha + \lambda L$, we get
\begin{multline} \label{eq:SecondVariation}
  \tilde E''(0)(h_1,h_2) \\= \lim_{t\rightarrow 0} \frac {\int_{\mathbb R
      / \mathbb Z} \left<V(\g_M+ t h_1),h_2\right> |\g_M'+ t h_1'|dw - \int_{\mathbb R
      / \mathbb Z} \left<V(\g_M),h_2\right> |\g_M'| dw}{t} \\
 =\left\langle \nabla_{h_{1}}V\g_M,h_{2}\right\rangle _{L^{2}}+\left\langle L_{1}h_{1},h_{2}\right\rangle _{L^{2}}
\end{multline}
where $L_{1}h_1=H\g_M\cdot\left\langle \g_M',h_{1}'\right\rangle$
is a differential operator of order $1$ in $h_1$. 

We know from Theorem~\ref{thm:QuasilinearStructure}
that \[
V\g=H^\alpha \g  + \lambda \kappa_{\g} = \frac{2}{|\g'|^{3}}P_{\g_M'}^\bot(Q^{\alpha}\g_M)+F(\g_M)\]
where $F\in C^{\omega}(C^{\alpha+\beta},C^{\beta})$ for all $\beta>0.$
Thus\begin{equation}\label{eq:FormOfNablaH}
P_{\g'_M}^\bot  (\nabla_{h}H(\g'_M))=\frac{2}{|\g'|^{3}}(P_{\g_M'}^{\bot}(Q^{\alpha}h)+L_2(h))\end{equation}
where \begin{align*}
L_2(h) &=-\frac{6}{|\g'_M|^{5}}{\left\langle \g_M',h'\right\rangle
} P_{\g'_M}^\bot(Q^{\alpha} \g_M) + P_{\g'_M}^\bot \left( \nabla_{h}F(\g_M)
+ (\nabla_h P^\bot_{\g_M '})(Q^{\alpha}\g_M)\right) \\
&\in C^{\omega}(C^{\alpha+\beta},C^{\beta}) \quad \forall \beta>0.\end{align*}
Now let $\nu_{i}$, $i=1, 2, \ldots ,(n-1)$, be smooth functions such that $\nu_{1}(u),\ldots,\nu_{n-1}(u)$
is an orthonormal basis of the normal space on $\g$ at $u$.
Then each $\phi\in H^{\alpha + 1}(\mathbb{R}/\mathbb{Z}, \mathbb R^n)^{\bot}$ can be written
in the form \[
\phi= \sum_{i=1}^{n-1}\phi_{i}\nu_{i},\]
where $\phi_{i}:=\left\langle \phi,\nu_{i}\right\rangle \in H^{\alpha +1 }(\mathbb{R}/\mathbb{Z})$.
We calculate 
\begin{equation} \label{eq:FormOfPQ}
\begin{aligned}
P_{\g_M'}^{\bot}\left(Q^{\alpha}\phi\right) &
=  P_{\g'_M}^{\bot}\left(Q^{\alpha} \left( \sum_{i=1}^{n-1} \phi_{i}\nu_{i} \right) \right)= \sum_{i=1}^{n-1}\left( Q^{\alpha}\phi_{i} \right) \nu_{i} +P_{\g'_M}^{\bot}
(Q^{\alpha}(\phi_{i}\nu_{i})-(Q^{\alpha}\phi_{i})\nu_{i})\\
 & =\sum_{i=1}^{n-1} \left( Q^{\alpha}\phi_{i} \right) \nu_{i}+L_{3}(\phi)
\end{aligned}
\end{equation}
where $L_{3}\in C^{\omega}(C^{\alpha+\beta},C^{\beta})$ by the fractional Leibniz rule
Lemma~\ref{lem:LeibnizRule}. 

From \cite[Proposition 2.3]{Reiter2012} we know that there is a constant $a^{(\alpha)} >0$ such that
that $L_4:=Q^{\alpha}-a^{\alpha} (-\Delta)^{{(\alpha +1)}/2}$ is a bounded linear operator from
$H^2 $ to $L^2$.
Combining \eqref{eq:SecondVariation}, \eqref{eq:FormOfNablaH}, and \eqref{eq:FormOfPQ}, we get
\begin{equation} \label{eq:CompactPerturbation}
 E''(h_1, h_2) = \left\langle \sum_{i=1}^{n-1} a^{\alpha}  \left((-\Delta^{\frac {\alpha +1 }2 })  \left \langle h_1,\nu_i\right \rangle  \right) \nu_i+ L h_1, h_2\right\rangle
\end{equation}
where
\begin{equation*}
 L := L_1 + L_2 + L_3 + L_4 
\end{equation*}
is a bounded operator from $C^{\alpha + \varepsilon}$ to $L^2$ for all $\varepsilon >0$. 

Since the linear mapping
$$
 h_1 \rightarrow \begin{pmatrix}
                  \left\langle h_1, \nu_1\right\rangle  \\
                  \vdots \\
                  \left\langle h_1, \nu_{n-1}\right\rangle
                 \end{pmatrix}
$$
defines an homeomorphism between $H^{s} (\mathbb R / \mathbb Z, \mathbb R^n)^\bot$ and $H^{s} (\mathbb R / \mathbb Z, \mathbb R^{n-1})$ for all $s \geq 0$ 
and $(-\Delta)^{(\alpha +1)/2}$ is a Fredholm operator of
index zero from $H^{\alpha +1}(\mathbb R / \mathbb Z, \mathbb
R^{n-1})$ to $L^2(\mathbb R/ \mathbb Z, \mathbb
R^{n-1})$, the operator
\begin{align*}
 A:  H^{\alpha +1 }(\mathbb R / \mathbb Z, \mathbb R^n)^\bot_{\g_M}
 &\rightarrow L^2 (\mathbb R / \mathbb Z, \mathbb R^n)^\bot_{\g_M} \\
 \phi &\mapsto \sum_{i=1}^{n-1}\left((-\Delta)^{\frac{\alpha+1}2} \left \langle \phi, \nu_i\right \rangle \right) \nu_i
\end{align*}
is Fredholm of order $0$.
Hence, 
$\sum_{i=1}^{n-1} a^{\alpha} (-\Delta^{\frac {\alpha +1 }2 })h_1 + L h_1$ as a compact perturbation of $A$  is a Fredholm operator as well.
This implies that $E''$
is a Fredholm operator from $H^{\alpha + 1}(\mathbb R / \mathbb Z, \mathbb
R^n)^\bot _{\g_M}$ to $L^2(\mathbb R/ \mathbb Z, \mathbb
R^n)^\bot_{\g_M}$ of index $0$. 
The proof of \eqref{eq:LojasiewiczNormalGraph} is complete.

To prove the full estimate of Theorem~\ref{thm:LojasiewichSimonGradientEstimate}, we use Lemma~\ref{lem:TubularNeighborhoodAndNormalGraphs} to
 write curves close to $c_M$ as normal graphs over $c_M$.  More precisely, 
we can choose $0 < \sigma< \tilde \sigma$ such that for all $\g\in H^{\alpha +1}(\mathbb{R}/\mathbb{Z})$
with $\|\g-\g_{M}\|_{H^{\alpha+1}}\leq\sigma$ there is a re-parameterization
$\psi\in H^{\alpha+1}(\mathbb{R}/\mathbb{Z},\mathbb{R}/\mathbb{Z})$ and
a $\phi\in H^{\alpha+1}(\mathbb{R}/\mathbb{Z},\mathbb{R}^{n})^{\bot}$ such
that\[
\g\circ\psi=\g_{M}+N_\g\]
and \[
\|N_c\|_{H^{\alpha+1}}\leq C\cdot\|\g-\g_{M}\|_{H^{\alpha+1}}.\]
Making $\sigma >0$ smaller if necessary, using that $\g_M$ is parameterized by arc length and that $H^{\alpha +1 }$ is embedded continuously in $C^1$, we furthermore can achieve that
\begin{equation*}
 |\g'| > |\g'_M| - |\g'_M - \g| \geq \frac 1 2.
\end{equation*}
Thus,
\begin{align*}
\left(E(\g)-E(\g_{M})\right)^{1-\theta}&=\left(E(\g_{M}+N_{\g})-E(\g_{M})\right)^{1-\theta}
\\
 & \leq
c  \left(\int_{\mathbb R / \mathbb Z} |V(\g_M+N_{\g})(x)|^2 dx
\right)^{1/2}
\\
&\leq c \sqrt 2 \left( \int_{\mathbb R / \mathbb Z}
|V(\g_M+N_{\g})(x)|^2 |\g'(x)| dx \right)^{1/2}.
\end{align*}
%

\end{proof}

\subsection{The Flow Above Critical Points}

In this section we apply the {\L}ojasiewich-Simon gradient estimate to reprove long-time existence for solutions that approach a critical point from above as stated
in Theorem~\ref{thm:LongTimeExistenceI}. Using the techniques from this section, we will show that even the complete flow converges to a critical point without applying any translations.

%

Theorem~\ref{thm:LongTimeExistenceI} will follow  easily from the following long time
existence result for normal graphs over a critical point of the
energies $E_\lambda = H^\alpha + \lambda L$:

\begin{theorem}[Long time existence and asymptotics for normal
  graphs]\label{thm:LongTimeExistenceForGraphs}

Let $\g_{M}\in C^{\infty}(\mathbb{R}/\mathbb{Z}, \mathbb R^n)$ be a critical
point of $E_\lambda$ and let $k\in \mathbb N$,
$\delta>0$ and $\beta>\alpha$. Then there is an open neighborhood $U'$ of $0$ in $\left(C^{\beta}\right)^\bot$ such that the following holds:

Suppose that  $N\in C([0,T), h^{\beta}(\mathbb R / \mathbb Z,
\mathbb R^n)_{\g_M}^\bot ) \cap  C([0,T), C^\infty(\mathbb R / \mathbb Z,
\mathbb R^n)_{\g_M}^\bot )$ is a maximal solution of the equation
\begin{equation*}
  \partial_t^\bot (\g_M+N_t )= - H^\alpha(\g_M+ N_t) + \lambda \kappa
\end{equation*}
with
\begin{equation*}
  N_0 \in U'
\end{equation*}
and 
\begin{equation*}
 E(\g_t) \geq E(\g_M) 
\end{equation*}
whenever  $\|N(t)\|_{C^{k}}\leq \delta$.

Then $T=\infty$ and $N_t$ converges
smoothly to an $N_\infty\in C^\infty(\mathbb R / \mathbb Z, \mathbb R^n)^\bot_{\g_M}$ 
satisfying \[
E(\g_{M} + N_\infty)=E(\g_{M}).\] Furthermore $\g_M + N_\infty$
is a critical point of the energy.
\end{theorem}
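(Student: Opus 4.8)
The argument is the now-standard {\L}ojasiewicz--Simon scheme for gradient flows; the three ready-made ingredients are the gradient estimate of Theorem~\ref{thm:LojasiewichSimonGradientEstimate} attached to $\g_M$, the short time existence with $C^1$-dependence on the data and parabolic smoothing from Theorem~\ref{thm:ShortTimeExistenceForGraphs}, and the uniform higher order estimates of Lemma~\ref{lem:EstimateEvolutionEnergies}. Write $\g_t := \g_M + N_t$. First I would record two structural facts. Since \eqref{eq:EvolutionEquationNormalGraph} is the negative $L^2$-gradient flow of $E$ and $V\g = -H^\alpha\g + \lambda\kappa_\g$ is $(-1)$ times that gradient, which is normal along $\g_t$, one has the dissipation identity
\[
 \frac{d}{dt} E(\g_t) \;=\; -\int_{\mathbb R/\mathbb Z} |V(\g_t)|^2\,|\g_t'|\,dx \;\le\; 0 .
\]
Secondly, from $P^\bot_{\g_t'}(\partial_t N_t) = \partial_t^\bot \g_t = V(\g_t)$ and the fact used in Section~\ref{sec:ShortTimeExistence} that $P^\bot_{\g_t'}$ restricts to an isomorphism of the normal bundle of $\g_M$ onto that of $\g_t$ with inverse bounded uniformly while $\|N_t\|_{C^1}$ is small, together with $|\g_t'| \ge \tfrac12$, one gets a pointwise bound $|\partial_t N_t| \le c_0\,|V(\g_t)|$ and hence $\|\partial_t N_t\|_{L^2} \le c_0\bigl(\int |V(\g_t)|^2|\g_t'|dx\bigr)^{1/2}$.

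Next is the core differential inequality. Let $\theta\in[0,\tfrac12]$, $\sigma, c$ be the constants of Theorem~\ref{thm:LojasiewichSimonGradientEstimate}. As long as $\|N_t\|_{H^{\alpha+1}}\le\sigma$ and $\|N_t\|_{C^k}\le\delta$ (so that, by the hypothesis on the flow, $E(\g_t)\ge E(\g_M)$) and moreover $E(\g_t) > E(\g_M)$, the function $h(t) := (E(\g_t)-E(\g_M))^\theta$ is differentiable and, chaining the dissipation identity, the gradient estimate, and the last bound of the previous paragraph,
\[
 -\frac{d}{dt}h(t) \;=\; \theta\,(E(\g_t)-E(\g_M))^{\theta-1}\!\!\int |V(\g_t)|^2|\g_t'|dx \;\ge\; \frac{\theta}{c}\Bigl(\int|V(\g_t)|^2|\g_t'|dx\Bigr)^{1/2} \;\ge\; \frac{\theta}{c\,c_0}\,\|\partial_t N_t\|_{L^2}.
\]
(If $E(\g_{t_*}) = E(\g_M)$ for some $t_*<T$ with $\|N_{t_*}\|_{C^k}\le\delta$, then monotonicity and the hypothesis force $E\equiv E(\g_M)$ near $t_*$, hence $V(\g_t)\equiv 0$, so $N$ is stationary there and the conclusion is immediate; so I may assume $E(\g_t)>E(\g_M)$.) Integrating, the $L^2$-length of the curve $\tau\mapsto N_\tau$ over any interval on which the smallness persists is at most $\tfrac{c\,c_0}{\theta}(E(\g_0)-E(\g_M))^\theta$.

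The heart of the proof, and the step I expect to be the main obstacle, is the \emph{trapping}: choosing $U'$ small enough that $\|N_t\|_{H^{\alpha+1}}\le\sigma$ and $\|N_t\|_{C^k}\le\delta$ hold on all of $[0,T)$. I would argue by continuity. Since $\g_M$ is critical, $N\equiv 0$ solves \eqref{eq:EvolutionEquationNormalGraph}, so by $C^1$-dependence on the data there are a fixed $t_0>0$ and, for every $\epsilon>0$, a neighborhood of $0$ in $(C^\beta)^\bot$ on which the solution exists on $[0,t_0]$ with $\sup_{[0,t_0]}\|N_t\|_{C^\beta}\le\epsilon$; by the smoothing in Theorem~\ref{thm:ShortTimeExistenceForGraphs} the map $\tilde N_0\mapsto N_{\tilde N_0}(t_0)$ is Lipschitz into $C^{\beta'}$ and sends $0\mapsto 0$, so also $\|N_{t_0}\|_{C^{\beta'}}\le C(t_0)\epsilon$, where $\beta'$ is fixed large enough that $C^{\beta'}\hookrightarrow C^k$ and $C^{\beta'}\hookrightarrow H^{\alpha+1}$ (here $\beta>\alpha$ enters). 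Restarting at $t_0$, Lemma~\ref{lem:EstimateEvolutionEnergies} gives, on every subinterval of $[t_0,T)$ where $\|N_t\|_{C^k}\le\delta$ and $\|N_t\|_{H^{\alpha+1}}\le\sigma$, uniform bounds $\|N_t\|_{H^m}\le C(m)$ for all $m$. Let $T^*$ be the supremum of times for which both smallness conditions hold on $[t_0,t]$. On $[t_0,T^*)$ the differential inequality applies, so $\int_{t_0}^{T^*}\|\partial_t N_\tau\|_{L^2}d\tau\le\tfrac{c\,c_0}{\theta}(E(\g_{t_0})-E(\g_M))^\theta$, which tends to $0$ as $\epsilon\to0$; interpolating this $L^2$-control of the path against the uniform $H^m$-bounds bounds the $C^k$- and $H^{\alpha+1}$-oscillation of $N_\tau$ on $[t_0,T^*)$ by $o(1)$ as $\epsilon\to0$. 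Choosing $\epsilon$, hence $U'$, small enough keeps $\|N_\tau\|_{C^k}$ and $\|N_\tau\|_{H^{\alpha+1}}$ strictly inside the allowed balls, forcing $T^*=T$. This interpolation is the delicate point: one trades the integrability of $\|\partial_t N_\tau\|_{L^2}$ for that of a fractional power $\|\partial_t N_\tau\|_{L^2}^{1-\mu}$ with $\mu$ small, which is admissible because the uniform bounds make $\|\partial_t N_\tau\|_{H^M}$ (via $\partial_t N_\tau = (P^\bot)^{-1}V(\g_\tau)$ and the quasilinear structure of Theorem~\ref{thm:QuasilinearStructure}) available for $M$ arbitrarily large.

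Finally, the trapping yields $\int_0^T\|\partial_t N_\tau\|_{L^2}d\tau<\infty$, so $N_t$ is Cauchy in $L^2$ as $t\uparrow T$; the uniform $H^m$-bounds upgrade this by interpolation to smooth convergence $N_t\to N_T\in C^\infty(\mathbb R/\mathbb Z)^\bot_{\g_M}$. Were $T<\infty$, one could restart the flow from the smooth curve $\g_M+N_T$ by Theorem~\ref{thm:ShortTimeExistenceForGraphs}, contradicting maximality; hence $T=\infty$, and the same argument gives smooth convergence $N_t\to N_\infty$ as $t\to\infty$. From $\int_0^\infty\!\int|V(\g_t)|^2|\g_t'|dx\,dt = E(\g_0)-\lim_{t}E(\g_t)<\infty$ there is a sequence $t_i\to\infty$ with $\int|V(\g_{t_i})|^2|\g_{t_i}'|dx\to0$, so $V(\g_M+N_\infty)=0$ by smooth convergence, i.e.\ $\g_M+N_\infty$ is a critical point of $E$. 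Since $\|N_\infty\|_{H^{\alpha+1}}\le\sigma$, Theorem~\ref{thm:LojasiewichSimonGradientEstimate} applies to $\g_M+N_\infty$ and gives $|E(\g_M+N_\infty)-E(\g_M)|^{1-\theta}\le c\,\|V(\g_M+N_\infty)\|_{L^2}=0$, hence $E(\g_M+N_\infty)=E(\g_M)$, which completes the proof.
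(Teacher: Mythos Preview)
Your argument follows the same {\L}ojasiewicz--Simon scheme as the paper: use smoothing to land in the regime where the gradient estimate applies, derive the differential inequality $-\tfrac{d}{dt}(E-E(\g_M))^\theta \ge c\,\|\partial_t N_t\|_{L^2}$, integrate to bound the $L^2$-length of the path, and interpolate against uniform higher-order bounds to trap the solution. The one structural variation is that you supply the uniform higher-order bounds via Lemma~\ref{lem:EstimateEvolutionEnergies}, whereas the paper gets them purely by iterating the smoothing in Theorem~\ref{thm:ShortTimeExistenceForGraphs}: once $\|N_t\|_{C^\beta}\le\delta_1$, restarting the flow at $t$ gives $\|N_{t+T/2}\|_{C^{\tilde k}}\le\delta$ automatically, so the paper only needs to trap the low norm $\|N_t\|_{C^\beta}$. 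Both routes are valid.

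One point needs correction. Your description of the interpolation as ``trading the integrability of $\|\partial_t N_\tau\|_{L^2}$ for that of $\|\partial_t N_\tau\|_{L^2}^{1-\mu}$'' does not close as written: $\int f\,d\tau<\infty$ does not imply $\int f^{1-\mu}d\tau<\infty$ on an unbounded interval, and $T^*$ is not bounded a priori (indeed you want $T^*=\infty$). The clean step---which your preceding sentence already hints at and which the paper carries out---is to interpolate the \emph{displacement} rather than the velocity:
\[
\|N_\tau-N_{t_0}\|_{C^k}\;\le\; C\,\|N_\tau-N_{t_0}\|_{C^{\tilde k}}^{1-\mu}\,\|N_\tau-N_{t_0}\|_{L^2}^{\mu}\;\le\; C\,\Bigl(\int_{t_0}^{\tau}\|\partial_t N_s\|_{L^2}\,ds\Bigr)^{\mu},
\]
which is $o(1)$ as $\epsilon\to0$ \emph{uniformly in $\tau$}. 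With this modification the trapping goes through exactly as you intend, and the remainder of your proof (Cauchy convergence in $L^2$ upgraded by interpolation, extension past $T$ to force $T=\infty$, and the final application of the gradient estimate to the limit) is correct and matches the paper.
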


\begin{proof}

Let $\tilde k = \max\{4,k\}$. Then of course we still have \begin{equation*}
 E(\g_t) \geq E(\g_M) 
\end{equation*}
under the stronger condition  $\|N(t)\|_{C^{\tilde k}}\leq \delta$.

It is crucial to use the smoothing properties of our short time existence result Theorem~\ref{thm:ShortTimeExistenceForGraphs}. 
This theorem tells us that there is an $\delta_1>0$ and a time $T>0$ such that the solution to 
\begin{equation*}
  \partial_t^\bot (\g_M+N_t )= H(\g_M+ N_t) + \lambda \kappa_{\g_M+ N_t}
\end{equation*}
with initial data $N_0 \in U=\{N \in C^{\beta}: \|N\|_{C^{\beta}}\leq \delta_1\}$ exists for $t \in [0,T)$ and satisfies
\begin{equation*}
 \|N_t\|_{C^{\tilde k}} \leq \delta
\end{equation*}
for all $t \in [\frac T2, T]$. Furthermore, we know that the flow can be continued at least up to the time $T$ as long as $\|N_t\|_{C^{\beta}} \leq \delta_1.$

Making $\delta_1$ smaller if necessary, we can achieve that the {\L}ojasiewich-Simon gradient estimate 
holds for all $N \in U$ and that both 
\begin{equation} \label{eq:OscProjection}
\|P_{\g_M'+N'} ^\bot - P_{\g_M'}^\bot\| \leq 1/2
\end{equation}
and
\begin{equation} \label{eq:Regular}
|\g_M'+N'| \geq  \frac 1 2 \inf|\g_M'| >0.
\end{equation}

The smoothing properties imply that for every $\varepsilon >0$ we can also get
\begin{equation*}
 \| N_{t} \|_{C^{\beta}} \leq \varepsilon \quad \forall t \in [\frac T2, T)
\end{equation*}
if the initial data belongs to a suitable smaller neighborhood $U' \subset U$ of $0$. Assuming that $\varepsilon< \delta$  we can achieve
$N_{t} \in U$ for all $N_0 \in  U'$ and $t \in [\frac T2,T] $.

%

Now let $N\in C([0,T_{max}), h^{\beta}(\mathbb R / \mathbb Z,
\mathbb R^n)_{\g_M}^\bot ) \cap  C((0,T_{max}), C^\infty(\mathbb R / \mathbb Z,
\mathbb R^n)_{\g_M}^\bot )$ be a maximal solution of the equation
\begin{equation*}
  \partial_t^\bot (\g_M+N_t )= H(\g_M+ N_t)
\end{equation*}
with $N_0 \in U'$ as in the theorem.
%

We know from the considerations above that $T_{max} \geq T$ and 
$$
 \|N_{\frac T2}\|_{C^{\beta}} < \delta_1.
$$
If the solution does not exist for all time, there hence is a $t_0> \frac T 2$ such that
$$
 \|N_t\|_{C^{\beta}} \leq \delta, \quad \forall t \in [\frac T 2, t_0).
$$
but
$$
 \|N_{t_0}\|_{C^{\beta}} = \delta. 
$$
Then we get
$$
 \|N_t\|_{C^5} \leq C, \quad \forall t \in [\frac T2, t_0],
$$
and $N_t$ satisfies a {\L}ojasiewich-Simon gradient estimate and \eqref{eq:OscProjection} and \eqref{eq:Regular} for all $t \in [\frac T2,t_0]$.

%

We can calculate
\begin{equation*} 
\begin{aligned}
\frac{d}{dt}E(\tilde \g_{t}) &
=-\int_{\mathbb R / \mathbb Z} \langle \partial_{t}^{\bot} \tilde \g_t ,
V(\tilde \g_t) \rangle
|\tilde \g_t'| = \\
& =-\int_{\mathbb R / \mathbb Z} |\partial_{t}^{\bot}\tilde
\g_t|^2 |\tilde \g_t'|
 \\
& =-\int_{\mathbb R / \mathbb Z} |V \tilde \g_t|^2 |\tilde \g_t'| 
\end{aligned}
\end{equation*}
and hence \begin{align*}
-\frac{d}{dt}\left(E(\tilde \g_{t})-E(\tilde \g_{M})\right)^{\theta} &
=-\theta\left(E(\tilde
  \g_{t})-E(\g_{M}\right)^{\theta-1}\frac{d}{dt}E(\tilde \g_{t})
\\ 
&\geq\frac{\theta}{\sigma}\left(\int_{\mathbb R / \mathbb
    Z}|\partial_t ^{\bot} \tilde \g_t |^2 |\tilde \g_t' | \right)^{1/2} \\
& \geq c \left(\int_{\mathbb R / \mathbb
    Z}|\partial_t \tilde \g_t|^2 \right)^{1/2}.
\end{align*}
Integrating the above inequality over $(\frac T2 ,t)$ yields
\begin{align*}
\|\tilde{\g}_{t}-\g_{M}\|_{L^{2}} & \leq \|\tilde \g_{\frac T2} - \tilde \g_M\|_{L^2} + \|\tilde
\g_{\frac T2 }-\tilde{\g}_{t}\|_{L^{2}}
 \\ & \leq \|\tilde \g_{\frac T2} - \tilde \g_M\|_{L^2} + \int_0^1 \left(\int_{\mathbb R / \mathbb Z }|\partial_\tau \g|^2 ds \right)^{\frac 1 2}d\tau
 \\ &\leq \|\tilde \g_{\frac T2} - \tilde \g_M\|_{L^2} +
C\left(E(\tilde \g_{\frac T2})-E(\g_{M})\right)^{\theta} \\ 
&\leq \|\tilde \g_{\frac T2} - \tilde \g_M\|_{L^2} +
C\left(E(\tilde \g_{\frac T2})-E(\g_{M})\right)^{\theta}\\
 & \leq C\|\tilde \g_{\frac T2}-\g_{M}\|_{C^{\beta}}^{\theta}.\end{align*}
Using the interpolation inequality
\begin{equation*}
\|f\|_{C^{\beta}} \leq \|f\|_{C^{\tilde k}}^{(1-\sigma)} \|f\|_{L^2}^{\sigma}
\end{equation*}
where $\sigma= \frac{\tilde k -2 - \beta}{\tilde k+\frac 12}$, we get for $t \in [\frac T 2,
t_0]$
\begin{equation} \label{eq:Convergence}
\begin{aligned}
\|\tilde{\g}_{t}-\g_{M}\|_{C^{2 + \beta}(\mathbb{R}/\mathbb{Z},\mathbb{R}^{n})}
& \leq
C\|\tilde{\g}_{t}-\g_{M}\|_{C^{\tilde k}(\mathbb{R}/\mathbb{Z},\mathbb{R}^{n})}^{1-\sigma}\|\tilde
\g_{t}-\g_{M}\|_{L^{2}}^{\sigma}\leq C\|\tilde \g_{\frac T2}-\g_{M}\|_{C^{\beta}}^{\theta\sigma}\\
 & \leq C \varepsilon^{\theta\sigma}
 \end{aligned}
 \end{equation}
 if $\varepsilon >0$ is small enough. 
 
So if $\varepsilon >0$ is small enough we have
$$
 \|N(t)\|_{C^{\beta}} < \frac \delta 4
$$
as long as the flow exists. As this contradicts our choice of $t_0$, we have shown that the flow exists for all time.

From Theorem~\ref{thm:ShortTimeExistenceForGraphs} we get  $\sup_{t \geq \frac T 2}\|\tilde \g_{t}\|_{C^{l}}<\infty$ for
all $l \in \mathbb N$ and hence there is a subsequence $t_i \rightarrow \infty$ such that
$$
 c_{t_i} \rightarrow c_\infty
$$
smoothly. 
 Since \[
 \partial_{t} \tilde \g\in L^{1}([0,\infty),L^{2})\]
we deduce that  $\delta E(\g_{\infty})=0$. Using the {\L}ojasievicz-Simon
gradient inequality again we get \[
(E(\g_{\infty})-E(\g_{M}))^{1-\theta}\leq c
\left(\int_{\mathbb R / \mathbb Z} |H\g_\infty|^2 |\g_\infty'|\right)^{1/2}=0\]
and hence $E(\g_{\infty})=E(\g_{M}).$

To get convergence of the complete flow, we repeat the estimates above with $c_\infty$ in place of $c_M$ to get in view of \eqref{eq:Convergence} that
\begin{equation*}
\begin{aligned}
\|\tilde{\g}_{t}-\g_{\infty}\|_{C^{\beta}(\mathbb{R}/\mathbb{Z},\mathbb{R}^{n})}
& \leq C\|\tilde \g_{t_i}-\g_{\infty}\|_{C^{\beta}}^{\theta\sigma}
 \end{aligned}
 \end{equation*}
for all $t \geq t_i$. So the complete flow converges in $C^{\beta}$ and hence by interpolation in $C^\infty$ to $c_\infty$
\end{proof}

\begin{proof}[Proof of Theorem~\ref{thm:LongTimeExistence}]
Due to Lemma~\ref{lem:TubularNeighborhoodAndNormalGraphs} for all $\g \in C^{\beta}(\mathbb R / \mathbb Z,
\mathbb R ^n ) $ with $\|\g-\g_M \|_{C^{\beta}} \leq \varepsilon$ there
is a diffeomorphism $\phi_\g$ and a vector field $N_{\g} \in C^{\beta}(\mathbb R /
\mathbb Z , \mathbb R^n)$ normal to $\g_M$ such that
\begin{equation}
  \g \circ \phi_\g = \g_M + N_{\g} 
\end{equation}
and
\begin{equation}
 \|N_\g \|_{C^{\beta}} \leq C \|\g - \g_M \|_{C^{\beta}}
\end{equation}
if $\varepsilon>0$ is small enough.

For $\g \in C^{2+\alpha}$ with
\begin{equation*}
  \|\g-\g_M\|_{C^{2+\alpha}} \leq \varepsilon
\end{equation*}
let $(N_t)_{t \in [0,\tilde T)}$ be the maximal solution of 
\begin{equation*}
\begin{cases}
  \partial_t^\bot (\g_M + N_t) = H^\alpha (\g_M + N_t)+ \lambda \kappa_{\g_M + N_t}\\
  N_0 = N_\g.
\end{cases}
\end{equation*}

Then $\tilde T \leq T$ and for all $t \in [0,\tilde T)$ there are diffeomorphisms $\phi_t$ such that $\g_t = (\g_M
+ N_t) (\phi_t)$. Hence $N_t$ satisfies all the assumptions of
Theorem~\ref{thm:LongTimeExistence} if $\varepsilon$ is small enough and thus $\infty= \tilde T$. Form
$\tilde T\leq T$ we deduce $T=\infty$. 
\end{proof}

\subsection{Completion of the Proof of Theorem \ref{thm:LongTimeExistenceResult}}

It is only left to show that we get converges of the flow without applying translations from the smooth
subconvergence of the re-parameterized and translated curves we get from 
Subsections~\ref{sec:ProofOfLongTimeExistence}.
 
 Let $\tilde \g_t$ be the re-parameterizations of $\g_t$ and let
 $t_i \rightarrow \infty$ and $p_i \in \mathbb R^n$ be such that the 
 curves $\tilde \g(t_i)-p_i$ converge smoothly to a curve $\tilde \g_\infty$
 parameterized by arc length. Due to the smooth convergence, the data 
 $\tilde \g_{t_i} - p_i$ satisfies all the assumptions of Theorem~\ref{thm:LongTimeExistenceI} for $i$ large 
 enough. Hence, the statement follows.

\appendix

\section{Analytic Functions on Banach Spaces}

We briefly prove some lemmata about analytic functions on Banach
spaces. A thorough discussion of this subject can be found in \cite[Chapter
2, Section 3]{Hille1957}.

\begin{definition}[Analytic operator]

Let $(X,\|\cdot\|_{X})$, $(Y,\|\cdot\|_{Y})$ be real Banach spaces.
A function $f\in C^{\infty}(A,Y)$, $A\subset X$ open, is called
real analytic if for every $a\in A$ there is a open neighborhood
$U$ of $a$ in $X$ and a constant $C<\infty$ such that\[
\left\Vert D^{m}f(x)\right\Vert \leq C^{m}m!\quad\forall m\in\mathbb{N},\, x\in U.\]

\end{definition}
In this context $\| \cdot \|$ denotes the operator norm.
The next lemmata show how to construct analytic functions:

\begin{lemma} \label{lem:AnalyticityOfCompositions}

Let $g: U\rightarrow\mathbb{R}^{k}$ be a real analytic
function, $U \subset \mathbb R ^n$ be an open subset, and let $V
\subset C^{k,\alpha}(\mathbb R /\mathbb Z, \mathbb R^n)$ be an open subset
such that $\im f \subset U$ for all $f \in V$ . Then
\begin{gather*}
T:V\rightarrow
C^{k,\alpha}(\mathbb{R}/\mathbb{Z},\mathbb{R}^{l})\\ x\rightarrow
g\circ x
\end{gather*} defines a real analytic function.
\end{lemma}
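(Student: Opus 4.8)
The plan is to reduce the claim to the standard fact (see \cite{Hille1957}) that a map between Banach spaces which is represented, in a neighbourhood of each point, by an absolutely convergent series of bounded homogeneous polynomials is real analytic in the sense of the definition above. Fix $f_{0}\in V$. Since $\mathbb{R}/\mathbb{Z}$ is compact and $f_{0}$ is continuous, the image $f_{0}(\mathbb{R}/\mathbb{Z})$ is a compact subset of $U$; choosing $\rho>0$ small enough, its closed $\rho$-neighbourhood $K_{\rho}$ is still contained in $U$, and by covering $K_{\rho}$ by finitely many balls on which the power series of $g$ converges we obtain constants $C_{0},\rho_{0}>0$ with
\begin{equation*}
\sup_{y\in K_{\rho}}\bigl\|D^{m}g(y)\bigr\|\leq C_{0}\,m!\,\rho_{0}^{-m}\qquad\text{for all }m\in\mathbb{N}_{0}.
\end{equation*}
For $h\in C^{k,\alpha}(\mathbb{R}/\mathbb{Z},\mathbb{R}^{n})$ with $\|h\|_{C^{0}}<\rho_{0}$ one then has, pointwise in $x$, the Taylor expansion $g(f_{0}(x)+h(x))=\sum_{m\geq 0}\tfrac1{m!}D^{m}g(f_{0}(x))[h(x),\dots,h(x)]$, which suggests writing $T(f_{0}+h)=\sum_{m\geq 0}A_{m}(h,\dots,h)$, where $A_{m}$ is the symmetric $m$-linear map defined by $A_{m}(h_{1},\dots,h_{m})(x):=\tfrac1{m!}D^{m}g(f_{0}(x))[h_{1}(x),\dots,h_{m}(x)]$.

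The heart of the argument is to show that each $A_{m}$ is a bounded $m$-linear map on $C^{k,\alpha}$ with $\|A_{m}\|\leq C^{m}m!$, the constant $C$ being uniform for $f_{0}$ ranging over a fixed small ball. For this I would first note that $x\mapsto D^{m}g(f_{0}(x))$ lies in $C^{k,\alpha}(\mathbb{R}/\mathbb{Z},\mathrm{Sym}^{m}(\mathbb{R}^{n}))$, and bound its $C^{k,\alpha}$-norm by means of Fa\`a di Bruno's formula for the derivatives (up to order $k$) of $D^{m}g\circ f_{0}$: this produces a combinatorial constant depending only on $k$ (a Bell number, \emph{independent of }$m$), powers of $\|f_{0}\|_{C^{k,\alpha}}$ of total degree at most $k$, and the quantities $\sup_{K_{\rho}}\|D^{m+j}g\|$ for $0\leq j\leq k$. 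Plugging in the analyticity bound, $\sup_{K_{\rho}}\|D^{m+j}g\|\leq C_{0}(m+k)!\,\rho_{0}^{-(m+k)}$, and using that $C^{k,\alpha}$ is a Banach algebra (so that forming the product of the $m$ factors $h_{i}$ costs at most a constant to the power $m$), one arrives at an estimate of the form
\begin{equation*}
\|A_{m}(h_{1},\dots,h_{m})\|_{C^{k,\alpha}}\leq \tfrac1{m!}\,C_{1}\,(m+k)!\,R^{-m}\prod_{i=1}^{m}\|h_{i}\|_{C^{k,\alpha}},
\end{equation*}
with $C_{1}$ and $R>0$ uniform for $f_{0}$ in a fixed small ball. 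Since $(m+k)!/m!\leq (m+k)^{k}$ grows only polynomially in $m$, elementary estimates yield $\|A_{m}\|\leq C^{m}m!$ for a suitable $C$, and moreover the series $\sum_{m}A_{m}(h,\dots,h)$ converges absolutely in $C^{k,\alpha}$ whenever $\|h\|_{C^{k,\alpha}}<\min(R,\rho_{0})$.

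It remains to identify this sum with $T(f_{0}+h)$: absolute convergence in $C^{k,\alpha}$ forces convergence in $C^{0}$, and there the sum is exactly the pointwise Taylor series, so it equals $g\circ(f_{0}+h)=T(f_{0}+h)$. Hence $T$ coincides near $f_{0}$ with an absolutely convergent power series of bounded homogeneous polynomials and is therefore real analytic at $f_{0}$; since $f_{0}\in V$ was arbitrary, the lemma follows. (Equivalently, one may bypass power series: the iterated chain rule shows $T\in C^{\infty}$ with $D^{m}T(f)(h_{1},\dots,h_{m})(x)=D^{m}g(f(x))[h_{1}(x),\dots,h_{m}(x)]$, and the same Fa\`a di Bruno plus Banach-algebra estimate bounds $\|D^{m}T(f)\|\leq C^{m}m!$ uniformly near $f_{0}$.) The main obstacle is precisely the Fa\`a di Bruno bookkeeping in the middle step: one must verify that the factorials arising from differentiating $g$ up to order $m+k$ combine with the (finitely many) algebra constants and the bounded powers of $\|f_{0}\|_{C^{k,\alpha}}$ so as to leave an $m$-dependence that is at worst geometric times polynomial — and it is exactly here that the real analyticity of $g$, in the form of Cauchy estimates on \emph{all} orders of derivatives, is indispensable.
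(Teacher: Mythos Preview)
Your proof is correct and follows essentially the same route as the paper: compactness of $\im f_0$ gives uniform Cauchy bounds $\|D^m g\|\le C^m m!$ on a neighbourhood, the formula $D^mT(f)(h_1,\dots,h_m)(x)=D^mg(f(x))[h_1(x),\dots,h_m(x)]$ is identified, and the Banach-algebra property of $C^{k,\alpha}$ together with the observation that $(m+k)!/m!$ is only polynomial in $m$ yields $\|D^mT(f)\|\le \tilde C^m m!$. Your parenthetical alternative is exactly the paper's argument; the power-series presentation in your main text is just a repackaging of the same estimate, and your write-up is in fact more careful than the paper's (which is rather terse and leaves the Fa\`a di Bruno bookkeeping implicit).
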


\begin{proof}

Let $f_0 \in V$. Since $\im f_0$ is a compact subset of the open set $U$
there is an $\varepsilon>0$ such that $K_\varepsilon := \bigcup_{y\in
  \im f_0}\overline{B_\varepsilon(y)} \subset
U$.

Since $g$ is real analytic and $K_\varepsilon \subset U$ is compact, there is a constant $C\leq\infty$ such
that\[
\|D^{m}g(y)\|\leq C^{m}m!,\quad\forall m\in\mathbb{N},\,y \in K_\varepsilon.\]
As \[
D^{m}T(y)(h_{1},\dots,h_{m})=D^{m}g(y)(h_{1},\dots,h_{m})\]
(can easily be deduced from the Taylor expansion of g) and since
$C^{\alpha}(\mathbb{R}/\mathbb{Z})$ is a Banach algebra, we get
\[
\|D^{m}T(f)\|\leq(k+1)C^{m+k+1}(m+k+1)!\leq\tilde{C}^{m}m!\]
for all $f \in B_\varepsilon (f_0) = \{y\in C^{\alpha} :
\|y\|_{C^\alpha} \leq \varepsilon
\}$ where
$$\tilde{C}:=\sup_{m\in\mathbb{N}_{0}}\left(\frac{(k+1)C^{m+k+1}(m+k+1!)}{m!}\right)^{1/m}$$.
\end{proof}

\begin{lemma}  \label{lem:AnalyticityOfParametricIntegrals}
Let $(X,\|\cdot\|_{X})$ and $(Y,\|\cdot\|_{Y})$ be Banach spaces and
assume that $T_{t}\in C^{\omega}(X,Y)$ for $t\in I$ is such that
the functions $t\rightarrow T_{t}$ are measurable and for all $a\in X$
there is a neighborhood $U$ of a in $X$ such that \begin{equation}
\int_{I}\left(\sup_{y\in U}\|D^{m}T_{t}(y)\|\right)dt\leq C^{m}m!.\label{eq:EstimateDerivativesOfTt}\end{equation}
Then the mapping $T:X\rightarrow Y$ defined by\[
Tx=\int_{I}T_{t}xdt\]
is real analytic.
\end{lemma}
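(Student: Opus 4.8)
The plan is to differentiate under the integral sign and read off the analyticity estimate directly from the hypothesis. First I would fix $a\in X$ and, by \eqref{eq:EstimateDerivativesOfTt}, choose an open neighbourhood $U$ of $a$ and a constant $C<\infty$ with
\[
  \int_I \Big(\sup_{y\in U}\|D^m T_t(y)\|\Big)\,dt \le C^m m! \qquad \forall\, m\in\mathbb N_0 .
\]
The case $m=0$ shows that for every $x\in U$ the map $t\mapsto T_t x$ is bounded in norm by the integrable function $t\mapsto \sup_{y\in U}\|T_t(y)\|$; together with the measurability assumption this makes $t\mapsto T_t x$ Bochner integrable, so $Tx=\int_I T_t x\,dt$ is well defined on $U$, and since $a$ was arbitrary it is defined on all of $X$. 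Recall also that each $T_t$ is smooth, being real analytic.

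Next I would show by induction on $m$ that $T$ is $m$ times continuously Fréchet differentiable on $U$ with
\[
  D^m T(x) = \int_I D^m T_t(x)\,dt ,
\]
the right-hand side a Bochner integral of symmetric-multilinear-map-valued functions (the integrand is measurable in $t$, being a pointwise limit of finite differences of $y\mapsto T_t(y)$, each measurable in $t$; the integral stays in the closed subspace of symmetric $m$-linear maps). For the inductive step, fix $x\in U$ and $h\in X$ with $\|h\|$ so small that $x+sh\in U$ for $|s|\le 1$. Taylor's theorem for the smooth map $T_t$ gives, pointwise in $t$, that the difference quotient $\tfrac1s\big(D^{m-1}T_t(x+sh)-D^{m-1}T_t(x)\big)$ converges as $s\to0$ to the contraction of $D^m T_t(x)$ with $h$, while for $0<|s|\le 1$ this difference quotient is bounded in norm by $\|h\|\,\sup_{y\in U}\|D^m T_t(y)\|$, an $I$-integrable majorant independent of $s$. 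Dominated convergence then lets me pass the limit under $\int_I$, identifying $\int_I D^m T_t(x)\,dt$ as the Gâteaux derivative of $D^{m-1}T$ at $x$; a second application of dominated convergence, with the same majorant, shows $x\mapsto\int_I D^m T_t(x)\,dt$ is continuous on $U$, which upgrades Gâteaux to Fréchet differentiability and closes the induction.

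Finally, from this representation and the hypothesis,
\[
  \|D^m T(x)\| \le \int_I \|D^m T_t(x)\|\,dt \le \int_I \sup_{y\in U}\|D^m T_t(y)\|\,dt \le C^m m! \qquad \forall\, x\in U,\ m\in\mathbb N_0 ,
\]
so $T$ satisfies the defining bound of a real analytic operator on the neighbourhood $U$ of $a$; since $a\in X$ was arbitrary, $T\in C^\omega(X,Y)$.

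I expect the only genuine subtlety to be the legitimacy of differentiating under the Bochner integral for Banach-space-valued maps, specifically the measurability bookkeeping for the symmetric-multilinear-map-valued integrands $t\mapsto D^mT_t(x)$ and the passage from Gâteaux to Fréchet differentiability at each stage. Both are controlled by the single uniform integrable majorant $t\mapsto\sup_{y\in U}\|D^mT_t(y)\|$ furnished by \eqref{eq:EstimateDerivativesOfTt}, which simultaneously dominates the difference quotients and supplies the equicontinuity needed for the Fréchet upgrade; everything else is the standard dominated-convergence argument for parameter-dependent integrals.
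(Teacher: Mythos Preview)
Your proof is correct and follows essentially the same approach as the paper: differentiate under the integral sign to obtain $D^m T(x)=\int_I D^m T_t(x)\,dt$, then read off the analyticity bound from \eqref{eq:EstimateDerivativesOfTt}. The paper's own argument is terse---it simply asserts the differentiation-under-the-integral step as ``well-known facts about differentiation of parameter dependent integrals''---whereas you have carefully spelled out the dominated-convergence justification and the Gâteaux-to-Fréchet upgrade.
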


\begin{proof}

We want to show that \[
D^{m}Tx(h_{1},\dots,h_{m})=\int_{I}D^{m}T_t x(h_{1},\dots,h_{m})dt.\]
from which we get that $T\in C^{\omega}(X,Y)$ using the estimates
\eqref{eq:EstimateDerivativesOfTt}. In fact this follows from well-known
facts about differentiation of parameter dependent integrals. 

\end{proof}

\begin{remark}

In the case that $Y=C^{k,\alpha}(\mathbb{R}/\mathbb{Z}, \mathbb R^n)$ it is well known,
that \[
(Tx)(u)=\int_{I}(T_{t})x(u)dt,\]
i.e., the value of function $Tx$ given by the  Bochner integral at the point $u$ is equal to the Lebesque integral of the functions
$T_t (u)$ evaluated at the point $u$.

\end{remark}

\section{Estimates for a generalization of the Multilinear Hilbert transform}

\begin{lemma} \label{lem:HilberttransformSpecial}

Let $\alpha \in (0,1)$. For  $1-\alpha \geq \beta>0$, $n,m\in\mathbb{N}$, and $t_{i}\in(0,1)$
the singular integral \[
T(\g_{1},\dots\g_{m})(u):= p.v. \int_{-\frac12}^{\frac 12}\frac{1}{w|w|^\alpha}\prod_{i\text{=1}}^{m}\g_{i}(u+t_{i}w)dw\]
defines a bounded multilinear operator from $C^{\alpha+\beta}(\mathbb R /\mathbb Z,\mathbb{R})$
to $C^{\beta}(\mathbb R / \mathbb Z,\mathbb{R})$. 

\end{lemma}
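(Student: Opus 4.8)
The plan is to exploit the oddness of the kernel in order to turn the principal value into an absolutely convergent integral, and then run a Calder\'on--Zygmund type splitting for the H\"older estimate. First I would observe that $w\mapsto \frac{1}{w|w|^{\alpha}}=\frac{\operatorname{sgn} w}{|w|^{1+\alpha}}$ is odd, so for every $\varepsilon\in(0,\tfrac12)$ its integral over the symmetric set $[-\tfrac12,\tfrac12]\setminus[-\varepsilon,\varepsilon]$ vanishes. Hence, writing $f_u(w):=\prod_{i=1}^m\g_i(u+t_iw)$, one obtains
\[
 T(\g_1,\dots,\g_m)(u)=\int_{-1/2}^{1/2}\frac{f_u(w)-f_u(0)}{w|w|^{\alpha}}\,dw .
\]
Since the hypothesis $\beta\le 1-\alpha$ forces $\alpha+\beta\in(0,1]$, the space $C^{\alpha+\beta}(\mathbb R/\mathbb Z,\mathbb R)$ is a Banach algebra; combined with $t_i\in(0,1)$ this yields $\hoel_{\alpha+\beta}(f_u)\le C\prod_{i=1}^m\|\g_i\|_{C^{\alpha+\beta}}$ uniformly in $u$ (with $C=C(m)$). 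In particular $|f_u(w)-f_u(0)|\le C|w|^{\alpha+\beta}\prod_i\|\g_i\|_{C^{\alpha+\beta}}$, and since $(\alpha+\beta)-(1+\alpha)=\beta-1>-1$ the integral converges absolutely with
\[
 \|T(\g_1,\dots,\g_m)\|_{L^{\infty}}\le C\prod_{i=1}^m\|\g_i\|_{C^{\alpha+\beta}} .
\]

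Next I would estimate the H\"older seminorm. Fix $u_1,u_2$ and set $h:=|u_1-u_2|$; if $h\ge\tfrac14$ the bound follows from the $L^\infty$ estimate just obtained since $h^{\beta}$ is then bounded below, so assume $h<\tfrac14$. Splitting the integration domain at $|w|=2h$, I write $T(\g_1,\dots,\g_m)(u_1)-T(\g_1,\dots,\g_m)(u_2)$ as a near part over $\{|w|<2h\}$ plus a far part over $\{2h\le|w|\le\tfrac12\}$. On the near part I bound each of the four terms $f_{u_j}(w)-f_{u_j}(0)$ by $C|w|^{\alpha+\beta}\prod_i\|\g_i\|_{C^{\alpha+\beta}}$ and integrate $|w|^{\beta-1}$ over $\{|w|<2h\}$, which gives $Ch^{\beta}\prod_i\|\g_i\|_{C^{\alpha+\beta}}$. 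On the far part the constant contribution $f_{u_1}(0)-f_{u_2}(0)$ integrates to zero because $\{2h\le|w|\le\tfrac12\}$ is symmetric and the kernel is odd; the remainder is controlled by $|f_{u_1}(w)-f_{u_2}(w)|$, which by telescoping the product and using $|\g_k(u_1+t_kw)-\g_k(u_2+t_kw)|\le\hoel_{\alpha+\beta}(\g_k)\,h^{\alpha+\beta}$ is at most $Ch^{\alpha+\beta}\prod_i\|\g_i\|_{C^{\alpha+\beta}}$; integrating $h^{\alpha+\beta}|w|^{-1-\alpha}$ over $\{|w|\ge 2h\}$ yields $Ch^{\alpha+\beta}h^{-\alpha}=Ch^{\beta}$ times $\prod_i\|\g_i\|_{C^{\alpha+\beta}}$. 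Adding the two parts gives $\hoel_{\beta}(T(\g_1,\dots,\g_m))\le C\prod_i\|\g_i\|_{C^{\alpha+\beta}}$, and together with the $L^\infty$ bound this proves boundedness of $T\colon (C^{\alpha+\beta})^m\to C^{\beta}$. Multilinearity is immediate from the definition.

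I expect the only genuinely delicate point to be the first reduction: because the kernel has singularity order $1+\alpha>1$, oddness alone does not make the principal value exist for a merely bounded argument, so one really has to use the extra $\beta>0$ of H\"older regularity (that is, $\alpha+\beta>\alpha$) to gain the integrable factor $|w|^{\beta-1}$. The hypothesis $\beta\le 1-\alpha$ plays the complementary role of keeping $\alpha+\beta$ in $(0,1]$, so that $C^{\alpha+\beta}$ is an algebra and the composition and telescoping estimates above are clean; the remaining steps are routine one-dimensional integral estimates. If the dependence on $m$ were needed one would simply track the algebra constant through the first step, but since $m$ is fixed in the statement this is unnecessary.
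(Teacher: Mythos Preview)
Your proof is correct and follows essentially the same route as the paper's: both arguments subtract the constant $f_u(0)$ using oddness of the kernel, then estimate the H\"older seminorm by splitting the integration domain at a radius comparable to $|u_1-u_2|$, bounding the near part via $|f_{u_j}(w)-f_{u_j}(0)|\le C|w|^{\alpha+\beta}$ and the far part via $|f_{u_1}(w)-f_{u_2}(w)|\le C|u_1-u_2|^{\alpha+\beta}$. The paper presents the same splitting slightly more tersely (choosing the cut at $a=|u-v|$ rather than $2h$, and leaving the $L^\infty$ bound and the use of oddness implicit), but there is no substantive difference in the argument.
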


\begin{proof} 
For $u,v \in \mathbb R /\mathbb Z$ and $a \in (0,\frac 1
  2 )$ we get 
\begin{align*}
\big| T(\g_1,\dots,\g_m)(u) &-T(\g_1,\dots,\g_m)(v)\big| \\
& \leq\int_{1/2\geq|w|\geq a}\frac{1}{w|w|^\alpha}\left|\prod_{i\text{=1}}^{m}\g_{i}(u+t_{i}w)-\prod_{i\text{=1}}^{m}\g_{i}(v+t_{i}w)\right|dw\\
& \quad+\int_{a\geq |w|}\frac{1}{w|w|^\alpha}\left|\prod_{i\text{=1}}^{m}\g_{i}(u+t_{i}w)-\prod_{i\text{=1}}^{m}\g_{i}(u)\right|dw\\
& \quad +\int_{a\geq |w|}\frac{1}{w|w|^\alpha }\left|\prod_{i=1}^{m}\g_{i}(v+t_{i}w)-\prod_{i=1}^{m}\g_{i}(v) \right| dw
\end{align*}
 Since $|t_{i}|\leq1$, we obtain
 \[
\left|\prod_{i\text{=1}}^{m}\g_{i}(u+t_{i}w)-\prod_{i\text{=1}}^{m}\g_{i}(u)\right|\leq m\prod_{i\text{=0}}^{m}\|\g_{i}\|_{C^{\alpha+\beta}(\mathbb R /\mathbb Z,\mathbb{R}^{n})}|w|^{\alpha+\beta}\]
 and hence, using the H\"older-continuity of the $\g_{i}$,\begin{align*}
\big|T(&\g_{1},\dots,\g_{m})(u)-T(\g_{1},\dots,\g_{m})(v)\big| \\
 & \leq m \prod_{i=0}^{m}\|\g_{i}\|_{C^{\alpha+\beta}(\mathbb R / \mathbb Z,\mathbb{R}^{n})}\left(\int_{1\geq|w|\geq a}\frac{1}{|w|^{1+\alpha}}\left|u-v\right|^{\alpha +\beta}dw+2\int_{|w|\leq a}\frac{1}{|w|^{1-\beta}}dw\right)\\
 & \leq m \prod_{i=0}^{m}\|\g_{i}\|_{C^{\alpha+\beta}(\mathbb R /\mathbb Z,\mathbb{R}^{n})}\left(\frac 1 \alpha a^{-\alpha}\left|u-v\right|^{\alpha+\beta}dw+\frac 1 {\beta} a^{\beta}\right).\end{align*}
Choosing $a=|u-v|$ for $|u-v| \leq \frac 12$ we get \begin{align*}
\big|T(\g_{1},\dots,\g_{m})(u) &-T(\g_{1},\dots,\g_{m})(v)\big| \\
 & \leq C\prod_{i=0}^{m}\|\g_{i}\|_{C^{\alpha+\beta}(\mathbb R /\mathbb Z,\mathbb{R}^{n})}|u-v|^{\beta}.
 \end{align*}

\end{proof}

\begin{lemma} \label{lem:HilberttransformGeneral}
For arbitrary $\beta>0$, $\alpha \in (0,1)$, $n,m\in\mathbb{N}$, and $t_{i}\in(0,1)$
the singular integral \[
T(\g_{1},\dots\g_{m})(u):=p.v. \int_{-\frac 12}^{\frac 12} \frac{1}{w|w|^\alpha}\prod_{i\text{=1}}^{m}\g_{i}(u+t_{i}w)dw\]
defines a bounded multilinear operator from $h^{\alpha+\beta}(\mathbb R /\mathbb Z,\mathbb{R})$
to $h^{\beta}(\mathbb R / \mathbb Z,\mathbb{R})$. 
\end{lemma}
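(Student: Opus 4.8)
The plan is to first establish the corresponding a priori estimate in the ordinary H\"older scale,
\begin{equation*}
\|T(\g_{1},\dots,\g_{m})\|_{C^{\beta}(\mathbb R/\mathbb Z,\mathbb R)}\le C\prod_{i=1}^{m}\|\g_{i}\|_{C^{\alpha+\beta}(\mathbb R/\mathbb Z,\mathbb R)}
\end{equation*}
for every $\beta>0$ with $\alpha+\beta\notin\mathbb N$, and then to deduce the little H\"older statement by a density argument. For the H\"older estimate the starting point is Lemma~\ref{lem:HilberttransformSpecial}, which already gives the bound for $\beta\in(0,1-\alpha]$. The key structural observation is the differentiation identity
\begin{equation*}
\partial_{u}T(\g_{1},\dots,\g_{m})=\sum_{j=1}^{m}T(\g_{1},\dots,\g_{j-1},\g_{j}',\g_{j+1},\dots,\g_{m}),
\end{equation*}
which holds because only the arguments, and not the kernel, are translated; thus $\partial_{u}T$ is again a finite sum of operators of exactly the same type, with one slot differentiated.

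Combining this identity with the continuous embeddings $C^{\alpha+\beta_{0}+1}\hookrightarrow C^{\alpha+\beta_{0}}$ and $\|f\|_{C^{\beta_{0}+1}}\le C(\|f\|_{C^{0}}+\|f'\|_{C^{\beta_{0}}})$, one sees that the estimate at an exponent $\beta_{0}\in(0,1)$ implies the estimate at $\beta_{0}+1$, hence at every $\beta_{0}+k$, $k\in\mathbb N_{0}$. This already covers $\beta\in\bigcup_{k\ge 0}(k,k+1-\alpha]$. The remaining exponents $\beta\in(k+1-\alpha,k+1)$ I would fill in by interpolation: for such a $\beta$ choose $\beta_{0}=k+\tfrac{1-\alpha}{2}$ and $\beta_{1}=k+1+\tfrac{1-\alpha}{2}$, both already covered and satisfying $\beta_{0}<\beta<\beta_{1}$, and interpolate the multilinear map $T$ \emph{simultaneously in all $m$ slots} between the endpoint bounds $(C^{\alpha+\beta_{0}})^{m}\to C^{\beta_{0}}$ and $(C^{\alpha+\beta_{1}})^{m}\to C^{\beta_{1}}$, using $(C^{s_{0}},C^{s_{1}})_{\theta,\infty}=C^{(1-\theta)s_{0}+\theta s_{1}}$ (for non-integer exponents) together with the fact that the value of $\theta$ realizing $\beta=(1-\theta)\beta_{0}+\theta\beta_{1}$ simultaneously realizes $\alpha+\beta=(1-\theta)(\alpha+\beta_{0})+\theta(\alpha+\beta_{1})$ in each input slot.

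For the passage to little H\"older spaces I would use that $T$ maps $C^{\infty}$ into $C^{\infty}$: after subtracting $\prod_{i}\g_{i}(u)$ from the integrand, which is harmless because $w\mapsto 1/(w|w|^{\alpha})$ is odd, the principal value integral becomes absolutely convergent and may be differentiated under the integral sign arbitrarily often. Since $h^{\gamma}(\mathbb R/\mathbb Z,\mathbb R)$ is by definition the closure of $C^{\infty}$ in $C^{\gamma}$, the bounded multilinear map $T\colon(C^{\alpha+\beta})^{m}\to C^{\beta}$ then restricts to a bounded multilinear map $(h^{\alpha+\beta})^{m}\to h^{\beta}$, which is the assertion. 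The step I expect to be the main obstacle is the interpolation filling the intervals $(k+1-\alpha,k+1)$, since the elementary argument behind Lemma~\ref{lem:HilberttransformSpecial} genuinely breaks down once $\alpha+\beta>1$ — one can no longer bound $\prod_{i}\g_{i}(u+t_{i}w)-\prod_{i}\g_{i}(u)$ by an $O(|w|^{\alpha+\beta})$ term — so one is forced either to carry out a careful multilinear real interpolation in all slots at once, or alternatively to invoke a multilinear H\"ormander--Mikhlin multiplier theorem for the symbol $p.v.\int_{-1/2}^{1/2}e^{2\pi iw(t_{1}\xi_{1}+\cdots+t_{m}\xi_{m})}\,\frac{dw}{w|w|^{\alpha}}$, which is a multilinear multiplier of order $\alpha$.
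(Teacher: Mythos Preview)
Your approach is correct and essentially the same as the paper's: base case via Lemma~\ref{lem:HilberttransformSpecial}, the differentiation identity $\partial_u T(\g_1,\dots,\g_m)=\sum_j T(\g_1,\dots,\g_j',\dots,\g_m)$ (which the paper establishes via difference quotients and the translation relation $\tau_h T=T(\tau_h\cdot,\dots,\tau_h\cdot)$) to climb the integer ladder, and real interpolation to fill the remaining exponents. Your explicit density argument for the passage from $C^\beta$ to $h^\beta$ is a detail the paper leaves implicit, and your worry about the multilinear interpolation step is unnecessary---freezing all but one slot and interpolating linearly already suffices, since the endpoint bounds are uniform in the frozen arguments.
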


\begin{proof}

  First, let us note that it is enough to prove the statement for
  $\beta =\tilde \beta + n$, $n \in
  \mathbb N _0$, $1-\alpha \geq \tilde \beta >0$. One then uses real interpolation to prove the full statement. For
  $n=0$ the claim is the content of Lemma~\ref{lem:HilberttransformSpecial}.
  So let us assume that the statement holds for $\beta= \tilde \beta + n$ as above. 
  Let $(\tau_h f)(x):=f(x+h)$. Using the relation $\tau_h(T(\g_1,
  \dots, \g_m))= T(\tau_h(\g_1) \dots \tau_h(\g_m))$ and
  the multilinearity of $T$, the difference quotient can be written as
\begin{align*}
\frac{\tau_h (T(\g_1, \dots, \g_m)-T(\g_1,\dots,
  \g_m))}{h} &= \frac{T(\tau_n(\g_1), \dots, \tau_h(\g_m))-T(\g_1,\dots,
  \g_m))}{h} \\
& \sum_{i=1}^m T\left( \g_1, \dots, (\tau_h
  (\g_i)-\g_i)/h, \dots, \tau_h(\g_m)\right).
\end{align*}
 Since
\begin{gather*} (\tau_h (\g_i)-\g_i)/h
\xrightarrow{h\rightarrow 0} \g' \quad \text{ in } C^\beta \\
(\tau_h(\g_i)\xrightarrow{h \rightarrow 0} \g \quad \text{ in
} C^\beta
\end{gather*} and $T$ a bounded linear operator from $C^{\beta }$ to
$C^{\tilde \beta}$, we get
\begin{equation*} \frac{\tau_h (T(\g_1, \dots,
\g_m)-T(\g_1,\dots, \g_m))}{h} \xrightarrow {h\rightarrow
0} \sum_{i=1}^m T\left( \g_1, \dots, \g_i', \dots,
\g_m\right)
\end{equation*} in $C^{\beta}$. Hence, $T$ is a bounded
multilinear mapping from $C^{\alpha + \beta+1}$ to $C^{
\beta+1}$.
\end{proof}

\begin{remark} \label{rem:MultilineraHilbertTransform}
Let us state a simple extension of
Lemma~\ref{lem:HilberttransformGeneral}.
Given a multilinear form $M: \mathbb R^n \times \dots \times \mathbb
R^n \rightarrow \mathbb R^k$, $1>\alpha>0 $, $\beta>0$, $m\in\mathbb{N}$, and $t_{i}\in(0,1)$
the singular integral \[
T(\g_{1},\dots\g_{m})(u):=p.v. \int_{-\frac 12}^{\frac 12}\frac{1}{w|w|^\alpha}M(\g_{1}(u+t_{1}w),
\ldots, \g_m (u+t_m w)))dw\]
defines a bounded multilinear operator from $h^{\alpha+\beta}(\mathbb R /\mathbb Z,\mathbb{R})$
to $h^{\beta}(\mathbb R / \mathbb Z,\mathbb{R}^{n})$.

This can be
deduced by plugging 
\begin{equation*}
 M(\g_{1}(u+t_{1}w),
\ldots, \g_m (u+t_m w)))= \sum_{j_1, \ldots, j_m =1}^n M(e_{i_1},
\ldots , e_{j_m}) \Pi_{i=1}^m\left< \g_j(u+t_i w),e_{j_i}\right>
\end{equation*}
into the definition of $T$, where $e_1, \ldots e_n$ is the standard basis of $\mathbb R^n$, and
by applying Lemma~\ref{lem:HilberttransformGeneral} to all the
coordinates of the resulting  summands.
\end{remark}

\section{Facts about the Functional \protect{$Q^\alpha$}}

In this section we prove a commutator inequality that we use as a
substitute for the
Leibniz rule for $Q^{\alpha}$.

\begin{lemma}[Leibniz rule for \protect $Q^{\alpha}$] \label{lem:LeibnizRule}
For $f,g \in C^{\beta_1+\alpha}(\mathbb R / \mathbb Z, \mathbb R ^n)$ and
$\beta>0$ we have
\begin{equation*}
  \|Q^{\alpha}(fg)-Q^{\alpha}(f)g\|_{C^{\beta}} \leq C(\alpha, \beta) \|f\|_{C^{\alpha + \beta}}
  \|g\|_{C^{\alpha +1+\beta}}.
\end{equation*}
\end{lemma}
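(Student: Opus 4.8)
The plan is to exploit the explicit integral representation of $Q^\alpha$ and to subtract off the ``diagonal'' part so that what remains is a (generalized) multilinear Hilbert transform to which Lemma~\ref{lem:HilberttransformGeneral} (or Remark~\ref{rem:MultilineraHilbertTransform}) applies. Recall
\begin{equation*}
  Q^\alpha(h)(x) = p.v.\int_{[-l,l]} \left(2\frac{h(x+w)-h(x)-wh'(x)}{w^2} - h''(x)\right)\frac{dw}{|w|^\alpha}.
\end{equation*}
The first step is to write out $Q^\alpha(fg) - Q^\alpha(f)g$ and collect the terms. Using the pointwise Leibniz rule $(fg)'' = f''g + 2f'g' + fg''$ and expanding $f(x+w)g(x+w)$, the combination $Q^\alpha(fg) - Q^\alpha(f)g$ loses the ``pure'' $f''g$ and the $fg$-at-a-point contributions; what survives is a sum of a handful of integrals whose integrands are, schematically,
\begin{equation*}
  \frac{\bigl(f(x+w)-f(x)-wf'(x)\bigr)\bigl(g(x+w)-g(x)\bigr)}{w^2},\quad
  \frac{\bigl(f(x+w)-f(x)\bigr)\bigl(g(x+w)-g(x)-wg'(x)\bigr)}{w^2},
\end{equation*}
and similar products, divided by $|w|^\alpha$. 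The point is that every surviving term carries at least one difference quotient of $f$ \emph{and} one difference quotient of $g$, so there is no genuine second derivative of $g$ left acting pointwise — this is exactly why the right-hand side of the claimed estimate only asks for $\|f\|_{C^{\alpha+\beta}}$ at the ``low'' level and $\|g\|_{C^{\alpha+1+\beta}}$ (one order higher, matching the single $g'$ that can appear).

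The second step is to rewrite each difference quotient via the fundamental theorem of calculus, exactly as in the proof of Theorem~\ref{thm:QuasilinearStructure}: $f(x+w)-f(x)-wf'(x) = w^2\int_0^1(1-\tau)f''(x+\tau w)\,d\tau$ and $g(x+w)-g(x) = w\int_0^1 g'(x+\tau w)\,d\tau$, and likewise for the other combinations. After substituting these and pulling the $\tau$-integrals outside (justified as in Lemma~\ref{lem:AnalyticityOfParametricIntegrals}, or just by Fubini together with the uniform bounds below), each term becomes an integral of the form
\begin{equation*}
  \int_0^1\!\!\int_0^1 \left( p.v.\int_{-l}^{l} \frac{1}{w\,|w|^{\alpha-1}}\, M\bigl(u_1(x+\tau_1 w),\, u_2(x+\tau_2 w)\bigr)\,dw\right) d\tau_1\, d\tau_2,
\end{equation*}
where each $u_i$ is one of $f''$, $g''$, $g'$ (the slots involving a single derivative are handled by treating, e.g., $g'$ rather than $g''$), $M$ is the pointwise multiplication bilinear form, and the kernel $w^{-1}|w|^{-(\alpha-1)}$ has odd-order singularity $\alpha-1\in(1,2)$ — wait, one must be careful here: the effective homogeneity after the reductions is $|w|^{-(\alpha-2)}$ with an odd factor $w^{-1}$ removed, i.e. the kernel is $1/(w|w|^{\tilde\alpha})$ with $\tilde\alpha = \alpha-2 \in (0,1)$, which is precisely the range covered by Lemma~\ref{lem:HilberttransformGeneral} / Remark~\ref{rem:MultilineraHilbertTransform}. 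Thus each inner $p.v.$ integral is a bounded multilinear operator from $h^{\tilde\alpha+\beta}$ to $h^\beta$, hence from $C^{\alpha+\beta}$ (resp.\ $C^{\alpha-1+\beta}$ for the $g''$ slot, resp.\ $C^{\alpha+\beta}$ for the $g'$ slot after absorbing one order) into $C^\beta$, with operator norm bounded uniformly in $\tau_1,\tau_2\in(0,1)$. Counting derivatives: $f''$ needs $f\in C^{\alpha+\beta}$ (since $\tilde\alpha+\beta = \alpha-2+\beta$ and $f''\in C^{\alpha-2+\beta}$), and in the single term where $g''$ appears it is paired with $f'$ which only needs $f\in C^{1+\beta}$ and then $g\in C^{\alpha+1+\beta}$; in all terms the worst demand on $g$ is $C^{\alpha+1+\beta}$ and on $f$ is $C^{\alpha+\beta}$, matching the statement.

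The third step is bookkeeping: integrate the uniform-in-$\tau$ bounds over $[0,1]^2$ (trivial, the measure is finite), sum the finitely many terms, and multiply out the norms using that $C^\beta(\mathbb R/\mathbb Z)$ is a Banach algebra so that products of $C^\beta$ factors are controlled, to arrive at $\|Q^\alpha(fg)-Q^\alpha(f)g\|_{C^\beta} \le C(\alpha,\beta)\,\|f\|_{C^{\alpha+\beta}}\|g\|_{C^{\alpha+1+\beta}}$. The main obstacle I expect is the first step — organizing the algebraic cancellation cleanly so that no term with a bare $g''$ survives paired with a bare $f''$ in a way that would force $\|g\|_{C^{\alpha+\beta}}$ together with $\|f\|_{C^{\alpha+\beta}}$ and thereby overshoot; one has to be slightly careful that the $f''g''/|w|^{\alpha}\cdot w^2$ cross term (if present) actually carries two difference quotients, each contributing a factor $w$, giving overall kernel $|w|^{2-\alpha}\in L^1$ near $0$ so that it is a harmless absolutely convergent integral bounded by $\|f\|_{C^2}\|g\|_{C^2}$ and no singular-integral estimate is even needed there. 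Once the term-by-term structure is pinned down, invoking Lemma~\ref{lem:HilberttransformGeneral} and the Banach-algebra property finishes the argument.
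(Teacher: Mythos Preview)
Your overall strategy---expand the commutator, rewrite difference quotients via Taylor's theorem, then invoke Lemma~\ref{lem:HilberttransformGeneral}---is exactly the paper's approach. But your prediction of the algebraic outcome in Step~1 is wrong, and this matters for the derivative count.

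When you actually carry out the subtraction you find (this is \eqref{eq:Leibniz} in the paper)
\[
Q^\alpha(fg)-Q^\alpha(f)g
= 2\int_{-1/2}^{1/2}\!\Bigl\{\frac{(f(u+w)-f(u))(g(u+w)-g(u))}{w^2}-f'(u)g'(u)\Bigr\}\frac{dw}{|w|^\alpha}
\;+\;f\,Q^\alpha(g).
\]
So your claim that ``every surviving term carries at least one difference quotient of $f$ and one of $g$'' is false: the full term $f\cdot Q^\alpha(g)$ survives with $f$ evaluated pointwise. It is precisely \emph{this} term---not any $f'$--$g''$ combination inside a multilinear Hilbert transform---that forces the asymmetric requirement $g\in C^{\alpha+1+\beta}$, since $Q^\alpha$ is an operator of order $\alpha+1$. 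The $f'$--$g''$ term you describe, once fed into Lemma~\ref{lem:HilberttransformGeneral} with $\tilde\alpha=\alpha-2$, only needs $g''\in C^{\alpha-2+\beta}$, i.e.\ $g\in C^{\alpha+\beta}$; so your norm accounting in Step~2 does not explain the stated bound. The remaining cross integral is Taylor-expanded exactly as you outline and yields two singular terms with kernel $1/(w|w|^{\alpha-2})$ (handled by Lemma~\ref{lem:HilberttransformGeneral}) plus one absolutely convergent $f''\!\cdot g''$ piece with kernel $|w|^{2-\alpha}\in L^1$, as you anticipated; together these contribute only $C\|f\|_{C^{\alpha+\beta}}\|g\|_{C^{\alpha+\beta}}$. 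The extra order on $g$ comes entirely from the final estimate $\|f\,Q^\alpha(g)\|_{C^\beta}\le C\|f\|_{C^\beta}\|g\|_{C^{\alpha+1+\beta}}$.
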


\begin{proof}
We have
\begin{equation} \label{eq:Leibniz}
\begin{aligned} 
  &(Q^{\alpha}(fg)-Q^{\alpha}(f)g)(u) \\ &= \int_{[-1/2,1/2]}\bigg\{ 2\frac
      {f(u+w)g(u+w)-f(u)g(u)-w(f'(u)g(u)-f(u)g'(u))}{w^2} 
      \\ & \quad -
    (f''(u)+2g'(u)f'(u)+f(u)g''(u))\bigg\}\frac{dw}{w^\alpha} \\
    &\quad - \int_{[-1/2,1/2]}\bigg\{ 2 \frac
    {f(u+w)g(u)-f(u)g(u)-wf'(u)g(u)}{w^2} - f''(u)g(u)\bigg\}\frac
    {dw}{w^\alpha} \\
 &= \int_{[-1/2,1/2]} \bigg\{2 \frac {(f(u+w)-f(u))(g(u+w)-g(u))}{w^2}
 - 2 f'(u)g'(u) \\ &  \quad + f(u) \big(2 \frac
 {g(u+w)-g(u)-wg'(u)}{w^2}-g''(u)\big) \bigg\}\frac{dw}{w^\alpha} \\
&= 2\int_{[-1/2,1/2]} \bigg\{ \frac {(f(u+w)-f(u))(g(u+w)-g(u))}{w^2}
 -  f'(u)g'(u) \bigg\} \frac{dw}{|w|^\alpha}\\ &\quad +(fQ^{\alpha}(g))(u).
\end{aligned}
\end{equation}
Taylor expansion yields
\begin{gather*}
  \frac {f(u+w)-f(u)}{w}= f'(u)+w\int_0^1 (1-t)f''(u+tw)dt \\
  \frac {g(u+w)-g(u)}{w}= g'(u)+w\int_0^1 (1-t)g''(u+tw)dt
\end{gather*}
and hence the first term in the last row of Equation (\ref{eq:Leibniz}) can be written
as
\begin{align*}
   2\int_{[-1/2,1/2]} &\bigg\{ \frac {(f(u+w)-f(u))(g(u+w)-g(u))}{w^2}
   -  f'(u)g'(u) \bigg\} \frac {dw}{|w|^\alpha}\\
  &= 2f'(u) \int_{[-1/2,1/2]} \frac {\int_0^1 (1-t)g''(u+tw)dt}{w} \frac {dw}{|w|^{\alpha-2}} \\
 &\quad    + 2g'(u) \int_{[-1/2,1/2]} \frac {\int_0^1
   (1-t)f''(u+tw)dt}{w} \frac {dw}{|w|^{\alpha-2}} \\
 &\quad  + 2 \int_{[-1/2,1/2]} \int_{[0,1]}\int_{[0,1]} (1-t)(1-s)g''(u+tw)f''(u+sw)dtds \frac {dw}{|w|^{\alpha-2}}.
\end{align*}
It is an easy exercise to prove that the last term defines a bounded operator from $C^{2+\beta}$ to $C^\beta$ for all
$\beta >0$. 
Using Lemma~\ref{lem:HilberttransformGeneral} to estimate the first
two terms, we get 
\begin{multline*}
 \|Q^{\alpha}(fg)-Q^{\alpha}(f)g-gQ^{\alpha}(f)\|_{C^\beta} \\ \leq C\big(\|f'\|_{C^\beta}
 \|g''\|_{C^{\alpha-2+\beta}} + \|g'\|_{C^\beta} \|f''\|_{C^{\alpha-2+\beta}} + \|f''\|_{C^\beta}\|g''\|_{C^\beta} \big)
\end{multline*}
and hence 
\begin{equation*}
  \|Q^{\alpha}(fg)-Q^{\alpha}(f)g-fQ^{\alpha}(g)\|_{C^\beta} \leq \|f\|_{C^{\alpha+\beta}} \|g\|_{C^{\alpha+\beta}}.
\end{equation*}
Together with the fact that $Q$ defines a bounded linear operator from $C^ {\alpha+1+\beta}$ to $C^{\beta}$ this proves the estimate.
\end{proof}

\section{Interpolation and Commutator Estimates}

For the convenience of the reader we present mutliplicative Gagliardo-Nirenberg-Sobolev inequalities
and commutator estimates in fractional Sobolev spaces and Besov space that we used in this text. We use the notation 
$$
 B^{s,p}_q 
$$
where $s$ denotes the order of differentiation and $p$ the integrability.
\begin{theorem}[Gagliardo-Nirenberg-Sobolev Inequality]
\label{thm:MultiplicativeSobolevInequalities} \label{eq:GagliardoNirenbergEstimates}
For $p \in [2, \infty)$, $q \in [1,\infty)$ and $s_1,s_2, s_3 \in [0,\infty)$
with $s_1 - 1/2 \leq s_2 - 1/p \leq s_3 - 1/2$  there is a
$C=C(s_1, s_2, s_3,p,q) < \infty$ such that
\begin{equation*}
  \|u\|_{B^{s_2, p}_q} \leq C \|u\|_{W^{s_3,2}} ^\theta
\|u\|_{W^{s_1,2}}^{1-\theta}
\end{equation*}
for all smooth functions $u \in C^\infty(\mathbb R / \mathbb Z)$ where $\theta:= (s_2-s_1 -1/p + 1/2)/(s_3 - s_1)$. Especially, 
\begin{equation*}
  \|u\|_{W^{s_2, p}} \leq C \|u\|_{W^{s_3,2}} ^\theta
\|u\|_{W^{s_1,2}}^{1-\theta}.
\end{equation*}

\end{theorem}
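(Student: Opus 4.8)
The plan is to reduce everything to a Littlewood--Paley decomposition together with Bernstein's inequality. Fix a dyadic partition of unity $(\varphi_j)_{j\geq 0}$ of $\mathbb R$ in frequency, with $\varphi_0$ supported near the origin and $\varphi_j$ supported in an annulus $|\xi|\sim 2^j$ for $j\geq 1$, and let $\Delta_j u$ denote the associated frequency blocks of $u\in C^\infty(\mathbb R/\mathbb Z)$ (i.e.\ the partial sums of the Fourier series of $u$ over $|k|\sim 2^j$). On the circle one has the standard norm equivalences
\begin{equation*}
  \|u\|_{B^{s,p}_q} \sim \big\|\,(2^{js}\|\Delta_j u\|_{L^p})_{j\geq 0}\,\big\|_{\ell^q},
  \qquad
  \|u\|_{W^{s,2}} \sim \big\|\,(2^{js}\|\Delta_j u\|_{L^2})_{j\geq 0}\,\big\|_{\ell^2},
\end{equation*}
the first of which can be taken as the definition of $B^{s,p}_q$, the second being the usual characterisation of $H^s=W^{s,2}$. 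Thus it suffices to prove the corresponding inequality for the block norms.

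First I would record Bernstein's inequality: since $\Delta_j u$ has Fourier support in an interval of length $\sim 2^j$, for $p\in[2,\infty]$ one has $\|\Delta_j u\|_{L^p}\leq C\,2^{j(1/2-1/p)}\|\Delta_j u\|_{L^2}$ with $C$ independent of $j$ and $u$. Next, interpolate frequency by frequency. The hypotheses $s_1-1/2\leq s_2-1/p\leq s_3-1/2$ are exactly equivalent to $s_1\leq s_2+\tfrac12-\tfrac1p\leq s_3$, so with $\theta=(s_2-s_1-1/p+1/2)/(s_3-s_1)\in[0,1]$ one has $s_2+\tfrac12-\tfrac1p=\theta s_3+(1-\theta)s_1$, and therefore
\begin{equation*}
  2^{js_2}\|\Delta_j u\|_{L^p}
  \leq C\,2^{j(s_2+1/2-1/p)}\|\Delta_j u\|_{L^2}
  = C\,\big(2^{js_3}\|\Delta_j u\|_{L^2}\big)^{\theta}\big(2^{js_1}\|\Delta_j u\|_{L^2}\big)^{1-\theta}.
\end{equation*}

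It then remains to take the $\ell^q$ norm of the left-hand side over $j$ and bound the resulting product by a product of $\ell^2$ norms. For $q=2$ --- the only case used in this paper --- this is immediate from H\"older's inequality for sequences with conjugate exponents $1/\theta$ and $1/(1-\theta)$, giving
\begin{equation*}
  \|u\|_{B^{s_2,p}_2}
  \leq C\,\|u\|_{W^{s_3,2}}^{\theta}\,\|u\|_{W^{s_1,2}}^{1-\theta};
\end{equation*}
for general $q\geq 2$ one first uses the embedding $\ell^2\hookrightarrow\ell^q$, and for $q<2$ one uses the (harmless) slack in the exponent hypotheses to gain a geometric factor $2^{-j\delta}$ in the interpolation step, rendering the $\ell^q$ sum summable --- alternatively one invokes that $B^{s_2,p}_q$ is a real interpolation space between two $L^2$-based Sobolev spaces. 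The ``Especially'' assertion then follows from the case $q=2$ and the embedding $B^{s_2,p}_2\hookrightarrow W^{s_2,p}$, valid for $p\geq 2$. The only genuinely non-routine point is setting up the block-norm characterisations and Bernstein's inequality on the compact manifold $\mathbb R/\mathbb Z$ with constants independent of the frequency scale; everything after that is bookkeeping with $\theta$ and H\"older's inequality.
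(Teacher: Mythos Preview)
Your argument is correct for $q\geq 2$ and takes a genuinely different route from the paper. The paper proceeds abstractly: by \cite[Proposition~1.20]{Lunardi2009} the multiplicative inequality follows once one knows that the real interpolation space $(W^{s_1,2},W^{s_3,2})_{\theta,1}$ embeds continuously into $B^{s_2,p}_q$; this embedding is then read off from the identification $(W^{s_1,2},W^{s_3,2})_{\theta,1}=B^{\tilde s,2}_1$ with $\tilde s=\theta s_3+(1-\theta)s_1$ together with the Sobolev embedding $B^{\tilde s,2}_1\hookrightarrow B^{s_2,p}_1\hookrightarrow B^{s_2,p}_q$. Your approach is instead a direct frequency--by--frequency computation via Bernstein and H\"older, which is more elementary and self-contained and in particular avoids quoting the interpolation-theory machinery; since only the case $q=2$ is actually needed in the paper (cf.\ Lemma~\ref{lem:GagliardoNirenbergTypeEstimates}), your argument covers everything the applications require.

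The one point to flag is the case $q<2$. Your ``slack'' argument does not work when $s_2-1/p$ equals one of the endpoints $s_1-1/2$ or $s_3-1/2$, since then there is no $\delta>0$ to spare; and at the endpoint $\theta=1$, say, your block estimate only gives $\|u\|_{B^{s_2,p}_1}\leq C\|u\|_{B^{s_3,2}_1}$, which is strictly stronger than $\|u\|_{W^{s_3,2}}=\|u\|_{B^{s_3,2}_2}$ on the right. Your stated fallback --- invoking that $B^{s_2,p}_q$ is a real interpolation space between $L^2$-based Sobolev spaces --- is exactly the paper's argument, so for $q<2$ your proposal effectively collapses to the paper's proof. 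The advantage of the paper's route is that going through the third index $1$ (the smallest, hence strongest) handles all $q\in[1,\infty)$ uniformly in one line.
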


\begin{proof}
Using \cite[Proposition~1.20]{Lunardi2009}, it is enough to show that the real
interpolation space $(W^{s_1}_2, W^{s_2}_2)_{\theta,1}$ is continuously embedded 
in $B^{s_2,p}_q$. From \cite[Section~1.6.7]{Triebel1992}, we
get $(W^{s_1,2}, W^{s_2,2})_{\theta,1} = B^{\tilde s,2}_{1}$ where $\tilde s :=
 \theta s_3 + (1-\theta) s_1 \geq s_2$. Since $p\geq 2$, we get
 $\tilde s \geq s_2$. As furthermore $\tilde s -1/2 = s_2 -1/p$, the
Sobolev embedding tells us that
\begin{equation*}
  B^{2,\tilde s}_{1} \subset B^{s,p}_{1} \subset B^{s,p}_{q}
\end{equation*}
and especially
\begin{equation*}
  B^{2,1}_{\tilde s} \subset B^{p,1}_{s} \subset B^{s,p}_{p} = W^{s,p}
\end{equation*} 
which completes the proof.
\end{proof}

The following commutator estimate is a periodic version of known results on the Euclidean space. We leave the proof to the reader. 

\begin{theorem}[Commutator estimates alla Kato-Ponce] \label{thm:CommutatorEstimate}
Let $1< r < \infty$, $1<p_1, p_2,q_1,q_2 \leq \infty$ satisfy $\frac 1 r = \frac 1 {p_1} + \frac 1 {q_1} = \frac 1 {p_2} + \frac 1 {q_2}$ and $s >0$. Then there is a constant $C < \infty$ such that
\begin{align*}
 \|D^s [fg] - f D^s g\|_{L^r} \leq C \left( \|\nabla f\|_{L^{p_1}} \|D^{s-1} g\|_{L^{q_1}} + \|D^s f\|_{L^{p_2}} \|g\|_{L^{q_2}} \right)
\end{align*}
for all smooth functions $f,g \in C^\infty(\mathbb R / \mathbb Z).$
\end{theorem}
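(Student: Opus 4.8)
The plan is to reduce the statement to the standard Littlewood--Paley/Bony paraproduct argument, which carries over essentially verbatim from $\mathbb R^n$ to the torus once a dyadic decomposition of the Fourier variable $\mathbb Z$ is fixed. I would pick $\psi\in C_c^\infty(\mathbb R)$ supported in $\{1/2\le|\xi|\le 2\}$ with $\sum_{j\ge 1}\psi(2^{-j}\xi)=1$ for $|\xi|\ge 1$, set $\widehat{\Delta_j f}(k):=\psi(2^{-j}k)\hat f(k)$ for $j\ge 1$ (with $\Delta_0 f$ the zeroth Fourier coefficient) and $S_j:=\sum_{i<j}\Delta_i$. The ingredients I need — the Bernstein inequalities $\|D^s\Delta_j f\|_{L^p}\sim 2^{js}\|\Delta_j f\|_{L^p}$ for $j\ge 1$, the $L^p$-boundedness of $S_j$ uniformly in $j$ for $1\le p\le\infty$, the square-function equivalence $\|D^sf\|_{L^p}\sim\big\|(\sum_j 2^{2js}|\Delta_j f|^2)^{1/2}\big\|_{L^p}$ for $1<p<\infty$, and the $L^r$-almost-orthogonality of families with frequency supports in dilated annuli — all hold on $\mathbb R/\mathbb Z$, either directly or by de~Leeuw transference from $\mathbb R$.

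With these tools I would apply Bony's decomposition
\[ fg = T_fg + T_gf + R(f,g),\qquad T_fg:=\sum_j S_{j-2}f\,\Delta_j g,\quad R(f,g):=\sum_{|j-j'|\le 1}\Delta_j f\,\Delta_{j'}g, \]
to both $fg$ and $fD^sg$ and subtract. After regrouping, $D^s[fg]-fD^sg$ splits into two types. Type (i) is the genuine commutator $\sum_j[D^s,S_{j-2}f]\Delta_j g$: each summand is Fourier-supported in an annulus of size $\sim 2^j$ on which $S_{j-2}f$ contributes only frequencies $\lesssim 2^{j-2}$, and writing $|\xi+\eta|^s-|\xi|^s=\int_0^1 s|\xi+\tau\eta|^{s-2}\langle\xi+\tau\eta,\eta\rangle\,d\tau$ exhibits this operator, up to an admissible bilinear Fourier multiplier, as $\nabla(S_{j-2}f)\cdot D^{s-1}\Delta_j g$; almost-orthogonality, the square-function estimate, and the pointwise bound $|S_{j-2}\nabla f|\lesssim\mathcal M(\nabla f)$ then give the contribution $C\|\nabla f\|_{L^{p_1}}\|D^{s-1}g\|_{L^{q_1}}$. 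Type (ii) collects the remaining pieces $D^s(T_gf)$, $D^sR(f,g)$, $-T_{D^sg}f$, $-R(f,D^sg)$, in each of which the high frequency — hence the full strength of $D^s$ — sits on $f$; a routine Hölder/square-function estimate using $1/r=1/p_2+1/q_2$ (and the diagonal gap $|j-j'|\le 1$ in the resonant terms), together with the pointwise bound $|2^{-js}S_{j-2}D^sg|\lesssim\mathcal Mg$ valid since $s>0$, bounds these by $C\|D^sf\|_{L^{p_2}}\|g\|_{L^{q_2}}$. Adding Type (i) and Type (ii) yields the claim for smooth $f,g$.

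The step I expect to be the main obstacle is twofold: (a) turning the symbol expansion in Type (i) into a clean, $j$-uniform $L^p$ operator bound carrying the correct "$-1$" derivative gain; and (b) the limiting exponents $p_i$ or $q_i\in\{1,\infty\}$, where the $L^p$ square-function equivalence is unavailable and the argument must instead be run through Bernstein block-by-block, exploiting that $r<\infty$ forces at most one extremal factor in each product so that the resulting dyadic sums are geometric — the known refinements of Kato--Ponce (e.g.\ of Grafakos--Oh type) confirm that this works in the stated range. A slicker, bookkeeping-free alternative would be pure transference: take the known commutator estimate on $\mathbb R$, periodize $f$ and $g$, and transfer the Fourier multiplier $D^s$ from $\mathbb R$ to $\mathbb R/\mathbb Z$ using that the torus symbol $k\mapsto|k|^s$ is the restriction of the Euclidean one, so that a de~Leeuw-type transference theorem applies and the constants are inherited unchanged.
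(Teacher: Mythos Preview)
The paper does not prove this theorem: it introduces the statement with the sentence ``The following commutator estimate is a periodic version of known results on the Euclidean space. We leave the proof to the reader.'' So there is nothing to compare your argument against beyond the implicit suggestion that the result is obtained from the classical Kato--Ponce inequality on $\mathbb R$ by passing to the periodic setting.

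Your sketch is a correct outline of the standard paraproduct proof, and your alternative --- simply transferring the known Euclidean inequality to $\mathbb R/\mathbb Z$ --- is exactly the route the paper hints at. One caveat on the endpoints: the original Kato--Ponce estimate (and its usual paraproduct proof) is stated for $1<p_i,q_i<\infty$; the cases $p_i=\infty$ or $q_i=\infty$ genuinely require the Grafakos--Oh refinements you allude to, and the naive de~Leeuw transference of the \emph{bilinear} multiplier does not quite apply off the shelf, so if you want the full range as stated you should commit to the direct periodic Littlewood--Paley argument rather than transference. For the application in the paper (a single instance in Lemma~\ref{lem:EstimateEvolutionEnergies} with $p_1=q_2=\infty$, $p_2=q_1=2$, $r=2$), the endpoint version is indeed needed, so this is not an idle remark.
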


\section{Normal Graphs}
The following lemma is used in the proofs of Lemma~\ref{thm:LojasiewichSimonGradientEstimate} and 
Theorem~\ref{thm:LongTimeExistence}.
\begin{lemma} \label{lem:TubularNeighborhoodAndNormalGraphs}
  Let $\g_0 \in C_{i,r}^\infty (\mathbb R / \mathbb Z, \mathbb R ^n)$ and $W = C^s(\mathbb R / \mathbb Z)$ or $W= H^{s+\frac 1 2}(\mathbb R / \mathbb Z)$ for some $s>1$. Then there is an $\varepsilon>0$ such that
  for all $\g \in W$ with
\begin{equation*}
  \|\g - \g_0\|_{W} \leq \varepsilon,
\end{equation*}
there is a re-parameterization $\phi$ and a function $N \in C^\alpha(\mathbb R/ \mathbb Z, \mathbb R ^n) $ normal to $\g_0$ such that
\begin{equation*}
  \g \circ \phi = \g_0 + N
\end{equation*}
and
\begin{equation*}
  \|N\|_{W}\leq C  \|\g - \g_0\|_{W}.
\end{equation*}
\end{lemma}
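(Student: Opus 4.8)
The plan is to build $\phi$ and $N$ from the tubular neighbourhood of the fixed smooth curve $\g_0$ and its nearest–point projection.

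\textbf{Step 1: tubular neighbourhood.} Since $\g_0\in C^\infty_{i,r}(\mathbb R/\mathbb Z,\mathbb R^n)$ is a smooth embedding, the tubular neighbourhood theorem provides an open set $U\subset\mathbb R^n$ containing $\g_0(\mathbb R/\mathbb Z)$ and a smooth map $\pi\colon U\to\mathbb R/\mathbb Z$ (the nearest–point projection onto $\g_0(\mathbb R/\mathbb Z)$, post-composed with $\g_0^{-1}$) such that $\pi\circ\g_0=\mathrm{id}$ and, for every $y\in U$, the vector $y-\g_0(\pi(y))$ is orthogonal to $\g_0'(\pi(y))$.

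\textbf{Step 2: the reparametrisation.} As $s>1$, the space $W$ embeds continuously into $C^1(\mathbb R/\mathbb Z,\mathbb R^n)$, so for $\varepsilon$ small the bound $\|\g-\g_0\|_W\leq\varepsilon$ forces $\g(\mathbb R/\mathbb Z)\subset U$, and we may set $\tilde\phi:=\pi\circ\g\colon\mathbb R/\mathbb Z\to\mathbb R/\mathbb Z$. Post-composition with the fixed smooth map $\pi$ is a locally Lipschitz operator on $W$ — this is Lemma~\ref{lem:AnalyticityOfCompositions} when $W=C^s$, and a classical composition estimate in fractional Sobolev spaces (see e.g. \cite{Triebel1983}) when $W=H^{s+1/2}$ — so, using $\pi\circ\g_0=\mathrm{id}$, we get $\|\tilde\phi-\mathrm{id}\|_W\leq C\|\g-\g_0\|_W$. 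In particular $\|\tilde\phi-\mathrm{id}\|_{C^1}$ is small, hence $\tilde\phi'>0$ and $\tilde\phi$ has degree one, so $\tilde\phi$ is a $C^1$-diffeomorphism of the circle; since $s>1$ (equivalently $s+\tfrac12>\tfrac32$ in the Sobolev case), $\tilde\phi$ is a near-identity element of the group of $W$-diffeomorphisms of $\mathbb R/\mathbb Z$, which is a topological group, and therefore $\phi:=\tilde\phi^{-1}$ again lies in $W$ with $\|\phi-\mathrm{id}\|_W\leq C\|\g-\g_0\|_W$ after shrinking $\varepsilon$.

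\textbf{Step 3: the normal field and the estimate.} Put $N:=\g\circ\phi-\g_0$. For $u\in\mathbb R/\mathbb Z$ and $x:=\phi(u)$ one has $\pi(\g(x))=\tilde\phi(x)=u$, so $N(u)=\g(x)-\g_0(\pi(\g(x)))$ is orthogonal to $\g_0'(u)$ by Step 1; thus $N$ is normal along $\g_0$ and $\g\circ\phi=\g_0+N$. Writing $N=(\g-\g_0)\circ\phi+(\g_0\circ\phi-\g_0)$, the first summand is bounded by $C\|\g-\g_0\|_W$ because composition with the near-identity $W$-diffeomorphism $\phi$ is bounded on $W$, and the second by $C\|\phi-\mathrm{id}\|_W\leq C\|\g-\g_0\|_W$ because the fixed smooth curve $\g_0$ makes $\phi\mapsto\g_0\circ\phi$ Lipschitz near the identity; this yields $N\in W$ with $\|N\|_W\leq C\|\g-\g_0\|_W$, as claimed.

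\textbf{Main obstacle.} The only non-routine ingredients are the composition facts in the fractional case $W=H^{s+1/2}$: boundedness and Lipschitz dependence of $\g\mapsto\pi\circ\g$ and of $(f,\phi)\mapsto f\circ\phi$, together with invertibility of near-identity maps in $H^{s+1/2}$ with the inverse controlled in the same norm. These hold precisely because $s>1$ places $H^{s+1/2}$ above the $C^1$ threshold, so that the corresponding diffeomorphism group is a topological group; the $C^s$ case is entirely covered by Lemma~\ref{lem:AnalyticityOfCompositions} and elementary chain-rule bounds.
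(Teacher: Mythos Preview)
Your proof is correct and follows essentially the same approach as the paper: both build $\phi$ from the nearest-point projection $\pi$ onto $\g_0$, setting $\tilde\phi=\pi\circ\g$ (the paper's $\psi_\g=r_U\circ\g$), inverting it, and defining $N=\g\circ\phi-\g_0$. The only cosmetic difference is that the paper obtains the bound $\|N\|_W\leq C\|\g-\g_0\|_W$ abstractly from the smoothness of the map $\g\mapsto N_\g$ together with $N_{\g_0}=0$, whereas you write out the decomposition $N=(\g-\g_0)\circ\phi+(\g_0\circ\phi-\g_0)$ and estimate each piece; your version also makes the verification of normality of $N$ explicit, which the paper leaves implicit in the nearest-point construction.
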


\begin{proof}
Note that in the case $W=H^{s+\frac 1 2}$, still $W$ is embedded continuously in $C^s$
Let $U$ be an open neighborhood of $\g_0$ such that there is a nearest point retract
$r_U$, i.e. a $C^\infty$ function $r_U : U \rightarrow \mathbb R / \mathbb Z$ such that
\begin{equation*}
 \g \circ r_U \circ \g = \g
\end{equation*}
and
\begin{equation*}
  |\g(r_U (p))- p| = \inf_{x \in \mathbb R / \mathbb Z} |\g(x)-p|.
\end{equation*}
We  set
\begin{equation*}
  \psi_\g(x) = r_U (\g(x)).
\end{equation*}
Since $\psi_{\g_0} = id$ and the space of diffeomorphisms is open in $C^s$, the function $\psi_{\g}$ is a diffeomorphism for sufficiently small $\varepsilon>0$.
Furthermore, the mapping $\g \rightarrow \psi_\g$ is smooth from $C^s(\mathbb R / \mathbb Z, \mathbb R^n)$ to $C^s(\mathbb R / \mathbb Z, \mathbb R / \mathbb Z)$ in the case that $W=C^s$ and from 
$H^{s+\frac 1 2}(\mathbb R / \mathbb Z, \mathbb R^n)$ to $H^{s + \frac 1 2}(\mathbb R / \mathbb Z, \mathbb R / \mathbb Z)$ in the case that $W=H^{s+\frac 1 2}$.
We set
\begin{equation*}
  \phi_\g = \psi_\g ^{-1}
\end{equation*}
and
\begin{equation*}
  N_\g(x)= \g \circ \psi^{-1} - \g_0.
\end{equation*}
The estimate in the lemma now follows from the fact that $\g \rightarrow N_\g$ is a smooth function for $W$ to $W$ in neighborhood of $\g_0$ with $N_{\g_0}=0$.
\end{proof}

\newcommand{\etalchar}[1]{$^{#1}$}

\end{document}